\newtheorem{thm}{Theorem}[section]
\newtheorem{lem}[thm]{Lemma}
\newtheorem{coro}[thm]{Corollary}
\newtheorem{prop}[thm]{Proposition}
\theoremstyle{definition}
\newtheorem{defn}[thm]{Definition}
\newtheorem{ex}[thm]{Example}
\theoremstyle{remark}
\newtheorem{rem}[thm]{Remark}
\crefname{coro}{Corollary}{Corollaries}%
\crefname{lem}{Lemma}{Lemmas}%
\crefname{ex}{Example}{Examples}%
    \let\Cref\crtCref
    \let\cref\crtcref
\setlist[enumerate,1]{label={\textup{\arabic*.}}}
\setlist[enumerate,2]{label={\textup{\alph*)}}}
\def\Set#1{\Setdef#1\Setdef}
\def\Setdef#1|#2\Setdef{\left\{#1\,\;\mathstrut\vrule\,\;#2\right\}}
\newcommand{\cB}{\mathcal{B}}
\newcommand{\cK}{\mathcal{K}}
\newcommand{\cR}{\mathcal{R}} 
\newcommand{\cX}{\mathcal{X}}
\newcommand{\bS}{\mathbb{S}}
\newcommand{\los}[2][X]{U_{#2}}
\newcommand{\card}[1]{\left|#1\right|}
\newcommand{\Card}[1]{\card{#1}}
\newcommand{\OrdCpx}{\cK}
\newcommand{\Fpst}{\cX}
\newcommand{\geomr}[1]{#1}
\newcommand{\ordcpx}[1]{\OrdCpx\left( #1 \right)}
\newcommand{\geop}[1]{\ordcpx{#1}}
\newcommand{\fpst}[1]{\Fpst\left( #1 \right)}
\newcommand{\Sd}[2][]{{#2}'}
\newcommand{\exch}[1]{\widetilde{{#1}'}}
\newcommand{\susp}{S} 
\DeclareMathOperator{\ost}{ost} 
\DeclareMathOperator{\st}{st} 
\DeclareMathOperator{\lk}{lk} 
\DeclareMathOperator{\supp}{supp} 
\DeclareMathOperator{\mxl}{mxl} 
\DeclareMathOperator{\mnl}{mnl} 
\DeclareMathOperator{\hgt}{h} 
\newcommand{\len}[1]{l\left(#1\right)}
\title{On the weak homotopy types of \\ small finite spaces}
\author{Kango Matsushima \and Shuichi Tsukuda}
\begin{document}

\maketitle

\renewcommand{\thefootnote}{}
\footnotetext{2020 \emph{Mathematics Subject Classification}: Primary 55P15; Secondary 06A99.}
\footnotetext{\emph{Key words and phrases}: Finite topological spaces, minimal finite models, posets, order complexes.}

\begin{abstract}
We show that a connected finite topological space with $12$ or less points has a weak homotopy type of a wedge of spheres. 
In other words, we show that the order complex of a connected finite poset with $12$ or less points has a homotopy type of a wedge of spheres. 
\end{abstract}

\section{Introduction}

Given a topological space $X$, a finite space $Y$, that is, a topological space with finitely many points, 
is a \emph{finite model} of $X$ 
if it is weak homotopy equivalent to $X$,  
and it is called a \emph{minimal finite model} of $X$ if it is a finite model of minimal cardinality. 
McCord \cite{McCord} showed that given a finite simplicial complex $K$, 
there exists a finite topological space $\fpst{K}$ and 
a weak homotopy equivalence $\mu_K\colon \geomr{K} \to \fpst{K}$, 
whence any compact CW-complex has a minimal finite model. 
Cianci and Ottina \cite{CianciOttina} gave a complete characterization of minimal  finite models of 
the real projective plane, the torus, and the Klein bottle. 
In particular, minimal finite models of the real projective plane have $13$ points.

In that paper, they showed that if $X$ is a connected finite space with $\Card{X}\leq 12$, 
then $\pi_1(X)$ is a free group and its integral homology is torsion free, 
where $\Card{X}$ denotes the cardinality of $X$.
In this article, we refine their study and show a stronger result:
\begin{thm}
 \label{mainthm}
 If $X$ is a connected finite space with $\Card{X}\leq 12$, then 
 $X$ has a weak homotopy type of a wedge of spheres, 
  where we consider one point space as a wedge of $0$-copies of spheres.
 %
\end{thm}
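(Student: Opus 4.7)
The plan is to reduce to a core, then stratify by the height of the underlying poset, combining general topological principles at low heights with a cardinality-driven case analysis at high heights. Passing to the Kolmogorov quotient and then iteratively removing beat points (Stong), I may assume $X$ is itself a core with $\card{X}\leq 12$ and the same weak homotopy type. Write $h$ for the height of $X$; then $h=\dim\ordcpx{X}$, and Cianci and Ottina already supply that $\pi_1(X)$ is free and $H_*(X)$ is torsion-free.

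The small-height cases are handled uniformly. When $h=0$, connectedness forces $\card{X}=1$. When $h=1$, $\ordcpx{X}$ is a connected graph and hence a wedge of circles. When $h=2$, $\ordcpx{X}$ is a connected $2$-dimensional CW complex with free fundamental group, and a classical result says that any such complex is homotopy equivalent to a wedge of circles and $2$-spheres: map $\bigvee S^1$ in realizing $\pi_1$; the cofiber is a simply connected $2$-complex with free $H_2$, hence a wedge of $2$-spheres, and the attaching of these $2$-spheres back onto $\bigvee S^1$ must be null-homotopic because the chosen map is already a $\pi_1$-isomorphism.

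For $h\geq 3$ the bound $\card{X}\leq 12$ is severely restrictive, because a connected core of height $h$ requires at least $2h+2$ points. Thus only $h\in\{3,4,5\}$ remain, with $\card{X}$ between $2h+2$ and $12$. I would treat these by structural reductions plus explicit case analysis. If $X$ decomposes as a non-Hausdorff suspension $\rsusp Y$, or more generally as an ordinal sum, then $\ordcpx{X}$ is a topological suspension or join of the order complexes of the smaller cores, both operations preserving the wedge-of-spheres property, and induction on $\card{X}$ applies; in particular $h=5$ forces $\card{X}=12$ and $X$ to be the minimal finite model of the $5$-sphere. The main obstacle is the intermediate range $h\in\{3,4\}$: the non-suspension, non-ordinal-sum cores on at most $12$ points form a finite but nontrivial list that must be verified directly, presumably by systematic enumeration of connected cores of the relevant sizes, and this combinatorial case analysis is where the bulk of the work resides.
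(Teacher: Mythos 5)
Your reductions dispose only of the easy strata, and the place where the theorem actually lives is left unproved. For heights $3$ and $4$ (connected cores with between $8$ and $12$ points) you offer nothing beyond the remark that the remaining cores ``form a finite but nontrivial list that must be verified directly, presumably by systematic enumeration.'' That enumeration is neither performed nor feasible as stated: you give no method for generating the list, no reduction tools beyond the (very thin) suspension/ordinal-sum case, and no mechanism that would let an induction on cardinality get traction on a core that is not such a decomposition. This is exactly the content of the paper: the decomposition $X=U_a\cup F_b\cup \mxl(X)\cup \mnl(X)$ together with the splitting results \cref{suspensionposetlemma,UaFbmxlmnlsplit,Uamxlmnlsplit}, the interval-poset analysis of \cref{sec-interval}, and the case analysis organized by $m=\card{\mxl(X)}$ and $l=\card{\cB}$ (not by height), which feed an induction on $\card{X}$ through the notion ``splits into smaller spaces'' (\cref{card13andl3,card11,card12}). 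Without an analogue of this machinery your plan is a restatement of the problem for the hard cases. (Also, the numerical facts you invoke --- a connected core of height $h\geq 1$ has at least $2h+2$ points, and equality forces the minimal finite model of $S^h$ --- are true but require proof or a citation to Barmak's book; they are not immediate.)

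Separately, your height-$2$ step rests on a statement that is true but whose justification, as you give it, is not a proof: a finite connected $2$-complex with free fundamental group is indeed homotopy equivalent to a wedge of circles and $2$-spheres, but $\geop{X}$ is not presented as $\bigvee S^1$ with $2$-cells attached along your chosen $\pi_1$-isomorphism, and the cofibration $\bigvee S^1\to \geop{X}\to \bigvee S^2$ does not split by itself; the honest argument computes $\pi_2$ as a module over the group ring of the free group (Schanuel plus freeness of the augmentation ideal gives stable freeness, Bass's theorem on projectives over free group rings gives freeness) and then applies a Whitehead-type argument to a map $\bigvee S^1\vee\bigvee S^2\to\geop{X}$ realizing $\pi_1$ and a $\pi_2$-basis. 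If repaired with a genuine citation this would be a legitimately different route for the height-$\leq 2$ cases --- the paper never uses the Cianci--Ottina freeness of $\pi_1$ --- but it only removes the low-dimensional strata, not the bulk of the work.
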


The proof is given by induction on the cardinality of the space. 

\begin{defn}
 Let $X$ be a finite connected space. 
 We say that $X$ \emph{splits into smaller spaces}
 if there exist finite spaces $A_i$ with $\Card{A_i}< \Card{X}$ and 
 non negative integers $n_i\geq 0$ such that 
 \[
 X\simeq_w \bigvee_i \bS^{n_i}A_i
 \]
 where $\bS$ denotes the Non-Hausdorff suspension (see \cref{nHsuspension}).

 When $X$ is not connected, we say that $X$ splits into smaller spaces 
 if each connected component does.

 More generally, if $\Card{A_i}< \Card{B}$ for some finite space $B$, 
 we say that $X$ 
 \emph{splits into spaces smaller than} $B$. 
 Note that, if $X$ splits into spaces smaller than $B$, then so does $\bS X$ 
 (see \cref{suspdisjoint,invariance-of-susp-and-wedge}).

 We also note that the weak homotopy type of a wedge sum of connected finite $T_0$ spaces 
 is independent of base points and is weak homotopy invariant
 (see \cref{invariance-of-susp-and-wedge}). 
\end{defn}

\begin{ex}
 \label{splitexpl}
 Since $\bS^0A=A$, $\bS^n\emptyset \simeq_w S^{n-1}$, 
 if $X$ is weak homotopy equivalent to a wedge of some spheres 
 and a space $A$ with $\Card{A}<\Card{X}$;
 \[
  X\simeq_w A \vee \left(\bigvee_i S^{n_i} \right),
 \]
 then $X$ splits into smaller spaces. 
\end{ex}

Finite $T_0$ topological spaces and finite posets are essentially the same objects, 
and Cianci-Ottina \cite{CianciOttina} observed that,  
in most cases, a small finite $T_0$ space $X$ can be decomposed into 
the form $U_a \cup F_b \cup \mxl(X) \cup \mnl(X)$ 
where $U_a$ is the set of elements smaller than or equal to $a\in X$, 
$F_b$ is the set of elements greater than or equal to $b\in X$, 
$\mxl(X)$ is the set of maximal elements, and 
$\mnl(X)$ is the set of minimal elements. 
We show that this decomposition splits $X$ into wedge sum of suspensions 
and that, 
if $X$ is a connected finite $T_0$ space with 
$1<\Card{X}\leq 12$, then $X$ splits into smaller spaces, 
and from which, \cref{mainthm} follows by induction.

The paper is organized as follows:
In \cref{sec-preliminaries}, we recall some basic results on simplicial complexes and finite spaces
and fix notations. 
In \cref{sec-posetsplitting}, we reformulate the poset splitting of Cianci-Ottina 
and show our fundamental splitting results \cref{suspensionposetlemma,UaFbmxlmnlsplit,Uamxlmnlsplit}. 
In most cases, \cref{UaFbmxlmnlsplit,Uamxlmnlsplit} give splittings of small finite spaces into smaller spaces. 
To handle exceptional cases, 
we study the weak homotopy types of posets of intervals in \cref{sec-interval}. 
We describe some very small finite spaces in \cref{sec-verysmallfinite}. 
In \cref{sec-mleq3,sec-lleq5,sec-card12}, we show that, if $X$ is a connected finite $T_0$ space with 
$1<\Card{X}\leq 12$, then $X$ splits into smaller spaces, 
and we give a proof of \cref{mainthm} in \cref{sec-proof}.

\section{Preliminaries}
\label{sec-preliminaries}

In this section, we recall some basic results on simplicial complexes and finite spaces 
and fix notations. 
If two topological spaces $X$ and $Y$ are homeomorphic, we write $X\cong Y$, 
if they are homotopy equivalent, we write $X \simeq Y$, 
and if they are weak homotopy equivalent, we write $X \simeq_w Y$. 
We denote the double mapping cylinder of maps 
$\xymatrix@1{X & A \ar[r]^g \ar[l]_f & Y}$ by $M_{f,g}$. 
If $f_0\simeq f_1$ and $g_0\simeq g_1$, then $M_{f_0,g_0}\simeq M_{f_1,g_1}$.

\subsection{Simplicial complexes}

All simplicial complexes are finite in this section.
We will not distinguish between a simplicial complex and its geometric realization. 
The following are very basic facts of homotopy theory of simplicial complexes:
\begin{itemize}
 \item Weak homotopy equivalent simplicial complexes are homotopy equivalent. 
 \item If a simplicial complex $K$ is a union of two subcomplexes 
       $K=K_1\cup K_2$, then 
       $K/K_2$ is homeomorphic to  $K_1/(K_1\cap K_2)$.
 \item If $L$ is a subcomplex of $K$ (more generally, if $(K,L)$ is a CW pair), 
       then $K/L$ is homotopy equivalent to the double mapping cylinder of 
       $\xymatrix@1{{*} & L \ar[r]^-{i} \ar[l] & K}$ where $i\colon L \to K$ is the inclusion. 
       
 \item If $K=K_1\amalg K_2$, $L_i\subset K_i$ and $L=L_1\amalg L_2$, 
       then $K/L \cong K_1/L_1\vee K_2/L_2$ 
       (If $L_i=\emptyset$, then $K_i/L_i = K_{i}^{+}$, which is $K_i$ with disjoint base point added).
\end{itemize}

If $L_1$ and $L_2$ are subcomplexes of $K$ such that $L_1\cap L_2=\emptyset$, we denote the space 
obtained by collapsing $L_1$ to a single point and $L_2$ to a single point 
by $K/(L_1,L_2)$. 
The unreduced suspension of $K$ is denoted by $\susp K$, 
that is, 
\[
 SK = \frac{K\times I}{(K\times \{0\}, K\times \{1\})}.
\]

\begin{ex}
 \begin{enumerate}
  \item  If the inclusion $L\to K$ is null homotopic, then 
	 \[
	 K/L \simeq K \vee \susp L.
	 \]
	 In particular, 	 
	 \begin{enumerate}
	  \item if $K\simeq *$, then $K/L \simeq \susp L$, 
	  \item if $L\simeq *$, then $K/L\simeq K$.
	 \end{enumerate}
  \item  If $K=K_1\cup K_2$, $K_2 \simeq *$, and $K_1\cap K_2 \to K_1$ is null homotopic, then 
	  \begin{align*}
	   K \simeq K/K_2 \cong K_1/(K_1\cap K_2) \simeq K_1\vee \susp(K_1 \cap K_2).
	  \end{align*}
  \item  If $K=K_1\cup K_2$ and $K_1, K_2 \simeq *$, then 
	 \begin{align*}
	  K \simeq K/K_2 \cong K_1/(K_1\cap K_2) \simeq \susp(K_1 \cap K_2).
	 \end{align*}
 \end{enumerate}
\end{ex}

\begin{ex}
  For a vertex $v\in V(K)$, we denote the subcomplex spanned by $V(K)-\{v\}$ by $K\setminus\{v\}$. 
 Since 
 \begin{align*}
  \st(v)&=\Set{\sigma\in K | \sigma\cup \{v\}\in K} \simeq *, \\
  \lk(v)&=\Set{\sigma\in \st(v) | v\not \in \sigma}, \\ 
  K\setminus\{v\}&=\Set{\sigma \in K | v\not\in \sigma}, \\ 
  K&=\left(K\setminus\{v\}\right) \cup \st(v), \\
  \lk(v) &=\left(K\setminus\{v\}\right)\cap \st(v),
 \end{align*}
 if the inclusion $\lk(v) \to K\setminus \{v\}$ is  null homotopic, 
 we have 
 \begin{align*}
  K&
  \simeq \left(K\setminus \{v\}\right) \vee S(\lk(v)).
 \end{align*}
\end{ex}

\begin{ex}
 \label{suspdisjoint}
 If $K$ is a disjoint union of subcomplexes 
 $K=\coprod_{i=1}^{n}K_i$ ($K_i\ne \emptyset$), 
 then 
 $\susp K$ is homotopy equivalent to the wedge sum of $\susp K_i$ and $n-1$ copies of  $S^1$: 
 \[
  \susp K \simeq \left(\bigvee_{i=1}^n \susp K_i\right)\vee \left(\bigvee_{n-1} S^1\right).
 \]
 In particular, $\susp K$ is a wedge of spheres 
 when each connected components of $K$ is a wedge of spheres. 
\end{ex}

One can easily show the following (We give a proof in \cref{a-suspensionlemma}.):
\begin{lem}
 \label{suspensionlemma}
 Let $K$ be a connected simplicial complex, $L$, $M$ be subcomplexes and $K=L\cup M$. 
 If all the connected components of $L$ and $M$ are contractible;
 $L=\coprod_{i=1}^l L_i$,  $M=\coprod_{i=1}^m M_i$, $L_i\simeq *$, and $M_i\simeq *$, 
 then 
\[
  K \simeq 
 \left(\bigvee_{\substack{i,j \\ L_i\cap M_j \ne \emptyset}} \susp \left(L_i\cap M_j\right)  \right)  
 \vee \left(\bigvee_{n} S^1\right) 
\]
 where
 \[
  n=\card{\Set{(i,j) | L_i\cap M_j \ne\emptyset}} - l - m +1.
 \]
\end{lem}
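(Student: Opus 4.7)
The plan is to recognize $K$ as a homotopy pushout, collapse the contractible pieces, and then extract the homotopy type from a bipartite combinatorial structure. Since $(K, L \cap M)$ is a CW pair, first replace $K$ up to homotopy by the double mapping cylinder of $L \hookleftarrow L \cap M \hookrightarrow M$, then collapse each contractible $L_i$ and each $M_j$ separately to a point. Writing $A_{ij} := L_i \cap M_j$ and $E := \{(i,j) : A_{ij} \neq \emptyset\}$, the resulting space $\bar K$ is homotopy equivalent to $K$ and consists of $l+m$ distinguished vertices $v_1, \dots, v_l, w_1, \dots, w_m$ together with, for each $(i,j) \in E$, a copy of $\susp A_{ij}$ attached at its two cone points to $v_i$ and $w_j$ (arising from the cylinder $A_{ij} \times I$ with each end collapsed to the corresponding vertex).

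Consider the bipartite graph $G$ on $\{v_i\} \cup \{w_j\}$ with edge set $E$; since $K$ is connected, so is $G$. Fix a spanning tree $T \subseteq G$ with $l+m-1$ edges, leaving $n = |E| - l - m + 1$ non-tree edges, and let $\bar K_T \subseteq \bar K$ be the subspace containing all vertices together with the attached suspensions for $(i,j) \in T$. Enumerate the edges of $T$ so that each new edge introduces exactly one new vertex: the corresponding step glues $\susp A_{ij}$ to the current space at only one cone point (the other being the fresh vertex), producing a wedge. Induction on the edges of $T$ then gives
\[
 \bar K_T \simeq \bigvee_{(i,j) \in T} \susp A_{ij}.
\]

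It remains to add the non-tree edges one at a time; each such step attaches $\susp A_{ij}$ to the current (path-connected) space by identifying both cone points with pre-existing vertices $v_i, w_j$. The crux, which I expect to be the main obstacle, is the following sub-claim: for any path-connected CW complex $X$, any two distinct points $v, w \in X$, and any non-empty $A$, the attachment $X \cup_{\{v, w\}} \susp A$ along the two cone points is homotopy equivalent to $X \vee \susp A \vee S^1$. Granting this, each of the $n$ non-tree edges contributes a copy of $\susp A_{ij}$ and an extra $S^1$, and summing yields the claimed identification
\[
 \bar K \simeq \bigvee_{(i,j) \in E} \susp A_{ij} \vee \bigvee_n S^1.
\]

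To handle the sub-claim, pick $a \in A$ and let $\alpha := \{a\} \times I / {\sim} \subset \susp A$ be the meridian arc from one cone point to the other. Since $\alpha$ is contractible and is a subcomplex of $X' := X \cup_{\{v,w\}} \susp A$, collapsing $\alpha$ in $X'$ is a homotopy equivalence. The collapse simultaneously converts $\susp A$ to the reduced suspension $\rsusp A \simeq \susp A$ (with the two cone points identified to a single basepoint) and identifies $v$ with $w$ in $X$, so $X' \simeq X'/\alpha \cong (X/(v \sim w)) \vee \susp A$. The standard identification $X/(v \sim w) \simeq X \vee S^1$ for path-connected CW $X$---obtained by realizing $X/(v \sim w)$ as the quotient of $X \cup_{\{v,w\}} I$ by the contractible arc $I$ and noting that $X \cup_{\{v,w\}} I \simeq X \vee S^1$ because the attaching map $S^0 \to X$ is null-homotopic---finishes the proof. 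Everything else is bookkeeping around the spanning tree.
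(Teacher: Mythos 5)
Your proof is correct, but it takes a genuinely different route from the paper's. The paper argues by induction on $l+m$: it sets $L'=\coprod_{i>1}L_i$, decomposes $L'\cup M$ into connected components $K_1,\dots,K_k$, applies the inductive hypothesis to each $K_i$, then collapses the contractible $L_1$, identifies $K/L_1$ with the wedge of the quotients $K_i/(L_1\cap K_i)$, and finishes with a count of the resulting circles. You instead pass to the double mapping cylinder of $L \hookleftarrow L\cap M \hookrightarrow M$, collapse every contractible component $L_i$, $M_j$, and recognize the result as a graph of spaces over the bipartite graph $G$ on $\{v_i\}\cup\{w_j\}$ with one edge-space $\susp(L_i\cap M_j)$ per nonempty intersection; a spanning tree yields the wedge of suspensions, and each of the $n$ non-tree edges contributes an extra $S^1$ through your (correctly proved) sub-claim that attaching $\susp A$ to a path-connected CW complex along its two cone points gives $X\vee \susp A\vee S^1$. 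Your route makes the number $n=\card{\Set{(i,j) | L_i\cap M_j\ne\emptyset}}-l-m+1$ conceptually transparent (it is the number of independent cycles of $G$) and replaces the paper's inductive bookkeeping by one clean local lemma, while the paper's induction stays entirely within the elementary quotient facts recorded in its preliminaries and never needs the two-point attachment argument. Two points you leave implicit are easily supplied and worth stating: connectivity of $K$ forces every $L_i$ and $M_j$ to meet the other family, so $G$ has no isolated vertices and is indeed connected; and the points $v_i$ and $w_j$ remain distinct after the collapses, as your sub-claim requires.
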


Finally, we note that, since any point of a (geometric realization of) simplicial complex 
is a nondegenerate base point, the homotopy type of a wedge sum of connected simplicial complexes 
is independent of base points and is homotopy invariant, that is, 
if $K_1$, $K_2$, $L_1$, and $L_2$ are connected simplicial complexes such that $K_i\simeq L_i$, 
 then $K_1\vee K_2\simeq L_1\vee L_2$.

\subsection{Finite spaces}

In this subsection, we collect some basic facts on finite spaces. 
See \cite{MayDraft} and \cite{Barmak} for details.


\paragraph{Finite spaces}
Alexandroff \cite{Alexandroff} showed that finite topological spaces and finite preordered sets are 
essentially the same objects: 
Given a preordered set $X$, the set of all down sets of $X$ gives a topology on $X$, 
and if $X$ is finite, this correspondence gives a bijection between preorders on $X$ and 
topologies on $X$. 
We call a finite topological space ($=$ finite preordered set) a \emph{finite space}. 
Moreover, a map between finite spaces are continuous if and only if it is monotone.

All the maps between finite spaces are continuous unless otherwise stated.

\begin{prop}
 Let $f,g\colon X \to Y$ be maps between finite spaces. 
 Then $f\simeq g$ if and only if there exists a sequence of maps 
 $f_0, \dots, f_n\colon X \to Y$ satisfying 
 $f=f_0\leq f_1 \geq f_2\leq \dots f_n=g$.

 In particular, if $f\leq g$, then $f\simeq g$.
\end{prop}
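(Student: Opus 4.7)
The plan is to split into the two implications and reduce the ``if'' direction to the ``in particular'' statement, since concatenating homotopies turns any zigzag $f = f_0 \leq f_1 \geq f_2 \leq \cdots = g$ into a homotopy $f \simeq g$. To show $\phi \leq \psi$ implies $\phi \simeq \psi$, I would construct an explicit homotopy $H \colon X \times I \to Y$ defined by $H(x, t) = \phi(x)$ for $t \in [0, 1)$ and $H(x, 1) = \psi(x)$. Continuity is verified directly from the Alexandroff description recalled in the excerpt: open sets of $Y$ are the down sets of its preorder, so for any open $U \subset Y$ with $\psi(x_0) \in U$, the minimal open set $U_{x_0} \subset X$ satisfies $\phi(y) \leq \psi(y) \leq \psi(x_0) \in U$ for every $y \in U_{x_0}$, so both $\phi(y)$ and $\psi(y)$ lie in the down set $U$; hence $H$ maps $U_{x_0} \times I$ into $U$. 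Continuity at points $(x_0, t_0)$ with $t_0 < 1$ is similar, using the open neighborhood $U_{x_0} \times [0, 1)$.

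For the converse, given a homotopy $H \colon X \times I \to Y$, I plan to extract a finite zigzag by slicing $I$. Writing $h_t = H(\cdot, t)$, the key local step is that for each $t_0 \in I$ there exists $\varepsilon > 0$ with $h_s \leq h_{t_0}$ pointwise for all $s \in (t_0 - \varepsilon, t_0 + \varepsilon) \cap I$: continuity of $H$ at each $(x, t_0)$ forces $H(x, s)$ into the minimal open neighborhood $\{z : z \leq H(x, t_0)\}$ for $s$ close enough to $t_0$, and taking the minimum of the finitely many $\varepsilon_x$ over $x \in X$ yields a uniform $\varepsilon$ that works for every $x$ simultaneously. Compactness of $I$ then produces finitely many such intervals $(t_i - \varepsilon_{t_i}, t_i + \varepsilon_{t_i})$ with $t_1 < \cdots < t_n$ covering $I$ and having overlapping consecutive pairs. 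Setting $s_0 = 0$, $s_n = 1$, and choosing $s_i$ in each consecutive overlap gives the zigzag
\[
f = h_{s_0} \leq h_{t_1} \geq h_{s_1} \leq h_{t_2} \geq \cdots \leq h_{t_n} \geq h_{s_n} = g.
\]

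I expect the main obstacle to be largely bookkeeping: reconciling the alternation pattern of my zigzag with the exact one written in the statement (by inserting trivial repetitions $f_i \leq f_i$ if needed), and managing the boundary intervals at $0$ and $1$. The substance of the argument---finiteness of $X$ giving a uniform $\varepsilon$ at each slice $t_0 \in I$, and compactness of $I$ converting this local pointwise comparability into a global zigzag---is a direct compactness argument, and the forward direction reduces entirely to continuity of a single explicit homotopy.
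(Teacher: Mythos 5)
Your argument is correct. Note first that the paper itself gives no proof of this proposition: it is recalled as a standard fact, with the reader referred to May's notes and Barmak's book. The usual proof there goes through the exponential law: a homotopy $X\times I\to Y$ corresponds to a path in the finite function space $Y^X$ with the pointwise preorder, and path components of a finite preordered set coincide with its order components, which yields the zigzag (fence) characterization at once. Your route replaces this function-space machinery by two direct verifications, and both are sound. For $\phi\leq\psi$ your two-stage homotopy is continuous, and you placed the maps at the correct ends of $I$: with the paper's convention that open sets are down sets, the larger map $\psi$ must sit at the closed endpoint, and your check that $U_{x_0}\times I$ lands in any open $U\ni\psi(x_0)$ (using $\phi(y)\leq\psi(y)\leq\psi(x_0)$ for $y\leq x_0$) is exactly the point where the orientation matters. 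For the converse, your slicing argument is a correct elementary substitute for the path-component argument: finiteness of $X$ upgrades the pointwise estimate $H(x,s)\leq H(x,t_0)$, valid for $s$ near $t_0$, to a uniform one $h_s\leq h_{t_0}$, and compactness of $I$ (say via a Lebesgue number, choosing a partition $0=s_0<s_1<\dots<s_n=1$ with each $[s_{i-1},s_i]$ inside one of your intervals) produces the finite zigzag; the remaining alternation bookkeeping is harmless, since repetitions $f_i\leq f_i$ may be inserted freely. So your proof is more elementary and self-contained than the standard one, at the cost of the explicit $\varepsilon$-management, while the exponential-law proof packages the compactness once and for all in the statement about paths in finite spaces.
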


McCord showed that a finite space is $T_0$ if and only if it is a poset. 
If $X$ is a finite preordered set, then the projection to its maximal quotient poset is a homotopy equivalence. 
Therefore, when considering homotopy types of finite spaces, one may consider only finite $T_0$ spaces.

Let $X$ be a finite space and $a\in X$. We denote
\begin{align*}
 U_a^X&=\Set{x\in X | x\leq a}, & \widehat{U}_a^X&=\Set{x\in X | x<a}, \\
 F_a^X&=\Set{x\in X | x\geq a}, & \widehat{F}_a^X&=\Set{x\in X | x>a}, \\
 C_a^X&=U_a\cup F_a, & \widehat{C}_a^X&=C_a-\{a\}, \\
 \mxl(X)&=\Set{x\in X | \text{$x$ is maximal.}}, & 
 \mnl(X)&=\Set{x\in X | \text{$x$ is minimal.}}. 
\end{align*}
We often omit the superscript $X$ and write $U_a$ instead of $U_a^X$ etc.

It is easy to see the following:
\begin{lem}
 \label{elementaryfactsonUaFb}
 Let $X$ be a finite $T_0$ space and $a,b \in X$. The following holds:
 \begin{enumerate}
  \item  $a\leq b \Leftrightarrow a\in U_b \Leftrightarrow b\in F_a  \Leftrightarrow U_a\subset U_b \Leftrightarrow F_a\supset F_b\Leftrightarrow F_a\cap U_b\ne \emptyset$.
  \item 
	$U_a\cap U_b\simeq *$ or $\card{U_a\cap U_b}<\card{X}$. 
	
	In fact, if $a\in U_b$, then $U_a\subset U_b$ and $U_a\cap U_b=U_a\simeq *$. 
	If $a\not\in U_b$, then $a\not \in U_a\cap U_b$ and $U_a\cap U_b \subsetneq U_a\subset X$.
  \item $U_a\cup U_b$ is connected $\Leftrightarrow$ $U_a\cap U_b\ne \emptyset$.
  \item Let $A\subset X$ be a subspace and $a\in A$.   
	\begin{enumerate}
	 \item $U^A_a=U^X_a \cap A$.
	 \item $F^A_a=F^X_a \cap A$.
	 \item $\mxl(X)\cap A\subset \mxl(A)$.
	 \item $\mnl(X)\cap A \subset \mnl(A)$.
	\end{enumerate}
 \end{enumerate}
\end{lem}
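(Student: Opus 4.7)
The plan is to treat each of the four parts as a short routine verification directly from the definitions; no step goes beyond the Alexandroff correspondence between finite $T_0$ spaces and finite posets, together with the preceding proposition that $f\leq g$ implies $f\simeq g$.

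For part 1, I would prove the chain of equivalences by a ring of implications. The equivalences $a\leq b\Leftrightarrow a\in U_b\Leftrightarrow b\in F_a$ are immediate from the definitions of $U_b$ and $F_a$. For $a\leq b\Leftrightarrow U_a\subset U_b$, the forward direction uses transitivity of $\leq$, and the reverse follows from $a\in U_a$. The equivalence with $F_a\supset F_b$ is symmetric. Finally, $F_a\cap U_b\neq\emptyset\Leftrightarrow a\leq b$: any $x\in F_a\cap U_b$ forces $a\leq x\leq b$, while conversely, when $a\leq b$, the point $a$ itself lies in $F_a\cap U_b$.

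For part 2, I would simply verify the two alternatives already spelled out in the statement. If $a\in U_b$, then part 1 gives $U_a\subset U_b$, whence $U_a\cap U_b=U_a$; since $a$ is a maximum of $U_a$, the constant map at $a$ dominates $\mathrm{id}_{U_a}$, so the preceding proposition gives $U_a\simeq *$. If $a\notin U_b$, then $a\in U_a\setminus(U_a\cap U_b)$, so $U_a\cap U_b\subsetneq U_a\subseteq X$, which yields $\card{U_a\cap U_b}<\card{U_a}\leq \card{X}$. For part 3, the direction $\Leftarrow$ is immediate since a common point joins $U_a$ and $U_b$; for $\Rightarrow$, I would use that each $U_x$ is open in the finite-space topology (being a down-set), so that if the intersection were empty, $U_a\cup U_b$ would split into two disjoint nonempty open pieces. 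Part 4 is a one-line unfolding in each case: (a) and (b) follow because the subspace order on $A$ is the restriction of the order on $X$; (c) if $x$ is maximal in $X$ and $x\in A$, then no element of $A\subseteq X$ strictly exceeds $x$; (d) is dual.

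There is essentially no obstacle: the lemma is a bookkeeping statement that translates between the topological and order-theoretic descriptions of a finite $T_0$ space. The only mildly substantive point is the contractibility assertion in part 2, where one must recall that a monotone map comparable to a constant is homotopic to it.
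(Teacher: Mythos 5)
Your verification is correct, and it fills in exactly the routine order-theoretic bookkeeping that the paper omits (the lemma is stated there with ``It is easy to see'' and no proof): part 1 from the definitions, part 2 via the maximum of $U_a$ and the fact that a map comparable to a constant is homotopic to it, part 3 using that down sets are open (with $U_a$, $U_b$ connected since each is contractible), and part 4 by restriction of the order. Nothing further is needed.
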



\paragraph{Homotopy types}
Stong \cite{Stong} studied homotopy types of finite spaces. 

\begin{defn}
 Let $X$ be a finite $T_0$ space and $x\in X$. 
 We call $x$ a \emph{down beat point} if $x$ covers one and only one element of $X$. 
 In other words, $x$ is a down beat point if $\widehat{U}_x$ has a maximum. 
 Dually, $x$ is an \emph{up beat point} if $\widehat{F}_x$ has a minimum. 
\end{defn}

\begin{prop}[Stong \cite{Stong}]
 Let $X$ be a finite $T_0$ space and $x\in X$ a beat point.  
 Then $X\setminus \{x\}$ is a strong deformation retract of $X$. 
\end{prop}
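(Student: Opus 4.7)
My plan is to treat the down beat point case, since the up beat case is dual. Suppose $x$ is a down beat point, and let $y$ be the maximum of $\widehat{U}_x$. Define $r\colon X \to X\setminus\{x\}$ by $r(x)=y$ and $r(z)=z$ for $z\ne x$. First I would check that $r$ is monotone, hence continuous: the only non-trivial comparisons involve $x$. If $z<x$, then $z\in \widehat{U}_x$, so $z\leq y = r(x)$, giving $r(z)=z\leq r(x)$; if $z>x$, then $r(x)=y<x<z=r(z)$. The equality $r\circ i = \mathrm{id}_{X\setminus\{x\}}$, where $i$ is the inclusion, is immediate from the definition.

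Next I would observe that $i\circ r \leq \mathrm{id}_X$ pointwise: indeed $i\circ r$ fixes every $z\ne x$ and sends $x$ to $y<x$. By the previous proposition (which asserts that $f\leq g$ implies $f\simeq g$), this already gives $i\circ r\simeq \mathrm{id}_X$, so $r$ is a homotopy equivalence. To get a \emph{strong} deformation retract, I would unpack the homotopy: the standard witness to $f\leq g$ is a map $H\colon X\times I \to X$ that at time $0$ equals $i\circ r$ and at time $1$ equals $\mathrm{id}_X$ (using the natural monotone interpolation available because $i\circ r(z)\leq z$ in $X$). Since $i\circ r(z)=z$ for every $z\in X\setminus\{x\}$, the path $t\mapsto H(z,t)$ is constant on such $z$, so $H$ is a homotopy relative to $X\setminus\{x\}$. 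The up beat case is symmetric, with $i\circ r\geq \mathrm{id}_X$ replacing $\leq$ and $y=\min \widehat{F}_x$.

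The only subtlety I anticipate is making the relative property of the homotopy rigorous at the level of the continuous Alexandrov topology rather than just the combinatorics. Concretely, one must check that the map $H$ defined by $H(z,0)=i\circ r(z)$ and $H(z,t)=z$ for $t>0$ is continuous as a map $X\times I \to X$, which reduces to verifying that the preimage of each minimal open set $U_w$ is open in $X\times I$; this is a routine case check using $i\circ r(z)\leq z$. Once continuity is in hand, the retraction, the relation $r\circ i=\mathrm{id}$, and the stationarity of $H$ on $X\setminus\{x\}$ together constitute the strong deformation retract.
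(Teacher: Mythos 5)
The paper does not prove this proposition at all: it is quoted from Stong, so there is no internal proof to compare with; your argument should therefore be judged against the classical one, and in outline it coincides with it (define $r(x)=\max\widehat U_x$, check monotonicity, note $r\circ i=\mathrm{id}$ and $i\circ r\leq \mathrm{id}_X$ with equality on $X\setminus\{x\}$).

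There is, however, a concrete error in your final paragraph: the map $H$ you specify, with $H(z,0)=i\circ r(z)$ and $H(z,t)=z$ for $t>0$, is \emph{not} continuous, so the ``routine case check'' you defer would in fact fail. Take the open down set $U_y$ where $y=\max\widehat U_x$. Then $H^{-1}(U_y)=\left(U_y\times I\right)\cup\{(x,0)\}$, and every basic neighbourhood of $(x,0)$ contains a set $V\times[0,\varepsilon)$ with $x\in V$, hence contains points $(x,t)$ with $t>0$ that map to $x\notin U_y$; so $H^{-1}(U_y)$ is not open. The jump in the standard homotopy witnessing $f\leq g$ must occur at the \emph{top} end: set $H(z,t)=i\circ r(z)$ for $t<1$ and $H(z,1)=z$. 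Then for an open down set $U$ one has $H^{-1}(U)=(i\circ r)^{-1}(U)\times[0,1)\,\cup\,U\times\{1\}$, which is open because $z\in U$ and $i\circ r(z)\leq z$ imply $i\circ r(z)\in U$, so that $U\subset (i\circ r)^{-1}(U)$ and each point $(z,1)$ with $z\in U$ has the neighbourhood $U\times(\tfrac12,1]$ inside the preimage. With this corrected $H$, which is stationary on $X\setminus\{x\}$ since $i\circ r$ and $\mathrm{id}_X$ agree there, the rest of your argument (and its dual for up beat points) goes through and gives the strong deformation retraction.
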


Actually, we can remove down beat points at once. 
\begin{lem}
 Let $X$ be a finite $T_0$ space, $x\in X$ a down beat point and $x\ne y\in X$.  
 If $y$ is a down beat point of $X$, then $y$ is a down beat point of $X\setminus \{x\}$.
\end{lem}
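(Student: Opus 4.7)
The plan is to set $y' := \max \widehat{U}_y^X$ and $x' := \max \widehat{U}_x^X$ (which exist since $y$ and $x$ are down beat points), and then produce a maximum of $\widehat{U}_y^{X\setminus\{x\}} = \widehat{U}_y^X \setminus \{x\}$ by a case analysis on whether $x$ lies below $y$ and, if so, whether $x$ equals $y'$.

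First I would dispatch the easy cases. If $x \not< y$, then $x \notin \widehat{U}_y^X$, so $\widehat{U}_y^{X\setminus\{x\}} = \widehat{U}_y^X$ and $y'$ remains a maximum. If $x < y$ but $x \neq y'$, then from $x \in \widehat{U}_y^X$ and the maximality of $y'$ I get $x \leq y'$, hence $x < y'$; thus $y'$ still lies in $\widehat{U}_y^X \setminus \{x\}$ and dominates every element there, so $y'$ is again the required maximum.

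The remaining case is $x < y$ and $x = y'$. Here the candidate for the maximum of $\widehat{U}_y^X \setminus \{x\}$ is $x'$. I need to check two things: that $x' \in \widehat{U}_y^X \setminus \{x\}$, and that $x'$ dominates every element of this set. The first follows because $x' < x = y' < y$ and $x' \neq x$. For the second, let $z \in \widehat{U}_y^X$ with $z \neq x$; maximality of $y' = x$ gives $z \leq x$, and $z \neq x$ upgrades this to $z < x$, i.e.\ $z \in \widehat{U}_x^X$, so maximality of $x'$ yields $z \leq x'$.

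There is no real obstacle here — the content is essentially the observation that deleting $x$ either leaves the old maximum $y'$ in place or, in the one boundary case $x = y'$, exposes the maximum $x'$ of $\widehat{U}_x$ as the new maximum. The only point requiring slight care is verifying that $x'$ is strictly below $y$ (not merely below $x$) in the last case, which is immediate from $x < y$.
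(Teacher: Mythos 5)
Your proof is correct and follows essentially the same route as the paper's: the same three-way case analysis on whether $x<y$ and whether $x=\max\widehat{U}_y^X$, with $\max\widehat{U}_x^X$ supplying the new maximum in the boundary case. No gaps.
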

\begin{proof}
 Since $x$ is a down beat point of $X$, $\widehat{U}^X_x$ has the maximum . 
 We put $\underline{x}=\max\widehat{U}^X_x$. 
 We have $\widehat{U}^X_x=U^X_{\underline{x}}$. 

 Assume that $y$ is a down beat point of $X$ and put
 $\underline{y}=\max \widehat{U}^X_y$. 

 Since $\widehat{U}^{X\setminus \{x\}}_y=\widehat{U}^X_y-\{x\}$, if 
 $x\not\in U^X_y$, then $\underline{y}=\max \widehat{U}^{X\setminus \{x\}}_y$. 
 
 Consider the case when $x\in U^X_y$, that is, $x\leq y$. 
 Since $x\ne y$, we have $x\in \widehat{U}^X_y$.

 If $x\ne \underline{y}=\max \widehat{U}^X_y$, we have   
 $\underline{y}=\max\left(\widehat{U}^X_y-\{x\}\right)=\max \widehat{U}^{X\setminus \{x\}}_y$. 

 If $x=\underline{y}$, we have $\widehat{U}^X_y=U_x^X$ whence 
 \begin{align*}
  \widehat{U}^{X\setminus \{x\}}_y&=\widehat{U}^X_y-\{x\} 
  =U^X_x - \{x\}  =\widehat{U}^X_x = U^X_{\underline{x}}
 \end{align*}
 Therefore $\underline{x}=\max \widehat{U}^{X\setminus \{x\}}_y$.
\end{proof}

\begin{coro}
 Let $X$ be a finite $T_0$ space, $A\subset X$ a subset and assume that 
 all points in $X-A$ are down beat points. 
 Then $A$ is a strong deformation retract of $X$. 
\end{coro}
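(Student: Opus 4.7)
The plan is to prove the corollary by induction on the cardinality of $X-A$.

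For the base case $\card{X-A}=0$ we have $A=X$, so $A$ is trivially a strong deformation retract of itself. For the inductive step, assume $\card{X-A}>0$ and pick any $x\in X-A$. By assumption $x$ is a down beat point of $X$, so by Stong's proposition $X\setminus\{x\}$ is a strong deformation retract of $X$.

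The key observation is that every other point of $X-A$ remains a down beat point after removing $x$: this is exactly the content of the preceding lemma, applied to each $y\in (X-A)\setminus\{x\}$. Hence in the space $X\setminus\{x\}$, every point of the subset $X\setminus\{x\}-A=(X-A)\setminus\{x\}$ is a down beat point, and $\card{(X\setminus\{x\})-A}=\card{X-A}-1$. By the inductive hypothesis, $A$ is a strong deformation retract of $X\setminus\{x\}$.

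Finally, since a composition of strong deformation retractions is a strong deformation retraction, combining the retraction $X\to X\setminus\{x\}$ with the retraction $X\setminus\{x\}\to A$ shows that $A$ is a strong deformation retract of $X$. The only subtlety is to invoke the preceding lemma at each stage to maintain the hypothesis along the induction, but this is immediate; there is no serious obstacle.
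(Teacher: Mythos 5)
Your induction is correct and is exactly the argument the paper intends: the preceding lemma was proved precisely so that the remaining points of $X-A$ stay down beat points after removing one, and then Stong's proposition plus transitivity of strong deformation retracts finishes the proof. No gaps; this matches the paper's (implicit) proof.
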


\begin{rem}
 Removing down beat points may affect up beat points. 
 Therefore, removing up and down beat points at once could change the weak homotopy type.
 See \cref{fig-rmv-up-down}.

 \begin{figure}[h]
  \centering
  \begin{tikzpicture}

   \begin{scope}[xshift=4cm]
   \node (-1) at (-1,0) {$\bullet$};
   \node (1) at (1,0) {$\bullet$}; 
   \node (a) at (0,1) {$a$};
   \node (b) at (0,2) {$b$};
   \node (-13) at (-1,3) {$\bullet$};
   \node (13) at (1,3) {$\bullet$}; 

   \draw (-13) -- (b) -- (13);
   \draw (-1) -- (a) -- (1); 
   \draw (a) -- (b);
   \end{scope}

   \begin{scope}[xshift=8cm, yshift=-4cm]
   \node (-1) at (-1,0) {$\bullet$};
   \node (1) at (1,0) {$\bullet$}; 
   \node (b) at (0,2) {$b$};
   \node (-13) at (-1,3) {$\bullet$};
   \node (13) at (1,3) {$\bullet$}; 

   \draw (-13) -- (b) -- (13);
   \draw (-1) -- (b) -- (1);
   \end{scope}

   \begin{scope}[yshift=-4cm]
   \node (-1) at (-1,0) {$\bullet$};
   \node (1) at (1,0) {$\bullet$}; 
   \node (a) at (0,1) {$a$};
   \node (-13) at (-1,3) {$\bullet$};
   \node (13) at (1,3) {$\bullet$}; 

   \draw (-13) -- (a) -- (13);
   \draw (-1) -- (a) -- (1);
   \end{scope}

   \begin{scope}[xshift=4cm, yshift=-4cm]
   \node (-1) at (-1,0) {$\bullet$};
   \node (1) at (1,0) {$\bullet$}; 
   \node (-13) at (-1,3) {$\bullet$};
   \node (13) at (1,3) {$\bullet$}; 

   \draw (-1) -- (13) -- (1);
   \draw (-1) -- (-13) -- (1);
   \end{scope}

  \end{tikzpicture}
  \caption{Removing up and down beat points.}
  \label{fig-rmv-up-down}
 \end{figure}
\end{rem}

\begin{defn}
 A finite $T_0$ space is called a \emph{minimal finite space} if it has no beat points. 
 A \emph{core} of a finite space is a strong deformation retract of the space 
which is a minimal finite space.
\end{defn}

\begin{thm}[Stong \cite{Stong}]
 Any finite space has a core. 

 Two finite spaces are homotopy equivalent if and only if they have homeomorphic cores.
\end{thm}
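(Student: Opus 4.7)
The plan is to prove the existence of a core by iterated beat-point removal, and uniqueness by showing that any self-map homotopic to the identity of a minimal finite space is the identity.

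For existence, I would proceed by induction on $\Card{X}$. If $X$ has no beat points it is already a core. Otherwise pick a beat point $x$ (say a down beat point; the dual case is symmetric); by Stong's proposition quoted above, $X \setminus \{x\}$ is a strong deformation retract of $X$, and strictly smaller, so by induction it has a core $X_0$, which is then also a core of $X$. (One could alternatively invoke the corollary to remove all down beat points at once and then proceed with up beat points, but induction on cardinality suffices.)

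For the ``only if'' direction of the uniqueness, the key lemma I would prove is: \emph{if $X$ is a minimal finite $T_0$ space and $f\colon X\to X$ satisfies $f\simeq \mathrm{id}_X$, then $f=\mathrm{id}_X$.} Using the proposition that $f\simeq g$ is equivalent to existence of a zig-zag $f=f_0\leq f_1\geq \dots \leq f_n=g$, it suffices to prove that $f\leq \mathrm{id}_X$ implies $f=\mathrm{id}_X$ (and dually for $\geq$). Suppose for contradiction that $f\leq \mathrm{id}_X$ but $f\neq \mathrm{id}_X$, and choose $x\in X$ minimal with respect to the property $f(x)\neq x$. Then $f(x)<x$, and for every $y\in \widehat{U}_x$ we have $f(y)=y$ by minimality of $x$; monotonicity of $f$ yields $y=f(y)\leq f(x)$. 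Thus $f(x)$ is an upper bound of $\widehat{U}_x$ lying in $\widehat{U}_x$, so $f(x)=\max \widehat{U}_x$, making $x$ a down beat point and contradicting minimality of $X$.

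Given this lemma, the uniqueness follows quickly: if $X_0$ and $Y_0$ are minimal finite $T_0$ spaces with mutually inverse homotopy equivalences $f\colon X_0\to Y_0$ and $g\colon Y_0\to X_0$, then $gf\simeq \mathrm{id}_{X_0}$ and $fg\simeq \mathrm{id}_{Y_0}$, so $gf=\mathrm{id}_{X_0}$ and $fg=\mathrm{id}_{Y_0}$, making $f$ a monotone bijection with monotone inverse, hence a homeomorphism. Therefore, if $X\simeq Y$ are arbitrary finite spaces, their cores $X_0$ and $Y_0$ (which exist by the first part) satisfy $X_0\simeq X\simeq Y\simeq Y_0$, so $X_0\cong Y_0$. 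The converse direction is immediate, since homeomorphic cores are a fortiori homotopy equivalent, and each space is homotopy equivalent to its core.

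The main obstacle is the rigidity lemma in the second paragraph: the argument relies on the subtle interaction between monotone self-maps and the combinatorial condition of having no beat points, and in particular on choosing $x$ minimal with $f(x)\neq x$ to force the existence of a maximum of $\widehat{U}_x$. Everything else is either standard induction or a routine application of the homotopy characterization via zig-zags.
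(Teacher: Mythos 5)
The paper does not prove this statement at all: it is quoted as Stong's theorem with a citation, so there is no in-paper argument to compare against. Your proposal is the standard Stong--Barmak proof and is essentially correct: existence by inductively deleting beat points (using the quoted proposition that $X\setminus\{x\}$ is a strong deformation retract, and the fact that strong deformation retracts compose), and uniqueness via the rigidity lemma that a self-map of a minimal finite space homotopic to the identity equals the identity; your beat-point argument for the rigidity lemma (choose $x$ minimal with $f(x)\neq x$, deduce $f(x)=\max \widehat{U}_x$) is exactly the classical one and is sound. Two small points to tighten. First, the theorem speaks of arbitrary finite spaces, while beat points and minimal finite spaces are defined only for $T_0$ spaces; so in the existence step you should first retract a non-$T_0$ space onto a $T_0$ subspace (choose one representative from each equivalence class of the preorder; the map sending each point to its representative is monotone, comparable with the identity, and fixes the subspace, so this is a strong deformation retract), and only then run the beat-point induction. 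Second, in reducing $f\simeq \mathrm{id}_X$ to the comparable case, note that the fence $\mathrm{id}_X=f_0\leq f_1\geq f_2\leq \dots =f$ compares successive maps to each other, not to the identity; the conclusion follows by propagating along the fence from the identity end ($f_1=\mathrm{id}_X$ by the $\geq$ case, hence $f_2=\mathrm{id}_X$ by the $\leq$ case, and so on). Neither point affects the substance of your argument.
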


Observe the following:
\begin{lem}
 \label{Fb-Uanonempty}
 Let $X$ be a minimal finite space. 
 If $b\in X$ is not a maximal element, then for all $a\in X$, $F_b-U_a \ne \emptyset$.
\end{lem}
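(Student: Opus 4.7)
The natural plan is a proof by contradiction: suppose some $a \in X$ satisfies $F_b \subseteq U_a$, and produce an up beat point of $X$ to contradict minimality.

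First I would unpack the structure forced by $F_b \subseteq U_a$. Since $b \in F_b \subseteq U_a$ we have $b \leq a$, so $a \in F_b$; combined with $F_b \subseteq U_a$ this makes $a = \max F_b$. Any $a' > a$ would satisfy $a' > b$ and hence lie in $F_b \subseteq U_a$, forcing $a' \leq a$, so $a$ is in fact maximal in $X$. Thus $F_b$ is the interval $[b,a]$, and since $b$ is not maximal we have $b < a$.

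Next I would pick $y$ to be any maximal element of the nonempty finite poset $[b,a) = F_b \setminus \{a\}$ (nonempty because it contains $b$) and claim $\widehat{F}_y^X = \{a\}$, which forces $y$ to be an up beat point of $X$ and contradicts minimality. For the claim: if $x > y$ then $x > b$ (since $y \geq b$), hence $x \in \widehat{F}_b \subseteq F_b \subseteq U_a$, giving $x \leq a$; and if we also had $x < a$, then $x$ would lie in $[b,a)$ and strictly exceed $y$, contradicting maximality of $y$ in $[b,a)$. So $x = a$, and $\widehat{F}_y^X = \{a\}$ has minimum $a$.

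The only real content is the pair of observations that any upper bound of $F_b$ is automatically its maximum and a maximal element of $X$; once those are in hand, the up beat point drops out of the interval structure of $[b,a]$ with essentially no calculation, and I do not foresee a significant obstacle.
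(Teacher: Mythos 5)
Your proof is correct and follows essentially the same route as the paper: assuming $F_b\subset U_a$, take a maximal element of $F_b-\{a\}$ and show its set of strict upper bounds is exactly $\{a\}$, producing an up beat point that contradicts minimality. The extra observations that $a=\max F_b$ and that $a$ is maximal in $X$ are harmless but not needed.
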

\begin{proof}
 We show that if there exists a point $a\in X$ such that $F_b-U_a=\emptyset$, then $X$ has a beat point. 

 If $F_b-U_a=\emptyset$, then $F_b\subset U_a$. 
 Since $b$ is not maximal, 
 there exists an element $b'\in X$ such that $b<b'$. 
 Since $b'\in F_b\subset U_a$, we have  $b<b'\leq a$, in particular, $b\ne a$. 
 Hence $F_b-\{a\}\ne \emptyset$. 
 Since $F_b-\{a\}$ is a nonempty finite set, there exists a maximal element 
 $x\in \mxl(F_b-\{a\})$. 
 We have $\widehat{F}_x \cap \left(F_b-\{a\}\right)=\emptyset$ and $\widehat{F}_x\subset F_b$, whence 
 $\widehat{F}_x - \{a\}=\emptyset$, that is, $\widehat{F}_x\subset \{a\}$. 
 Since $x\in F_b-\{a\}\subset U_a-\{a\}$, we have $x<a$. 
 Therefore $\widehat{F}_x=\{a\}$ whence 
 $x$ is an up beat point.
\end{proof}

\paragraph{Weak homotopy types} 
Given a finite $T_0$ space $X$, we denote its \emph{order complex}, 
that is, the simplicial complex of nonempty chains of $X$, by $\ordcpx{X}$. 
We denote the \emph{face poset} of a simplicial complex $K$ by $\fpst{K}$. 

\begin{thm}[McCord \cite{McCord}] 
 Let $X$ be a finite $T_0$ space. 
 The map $\mu_X\colon \ordcpx{X} \to X$ given by $\mu_X(\alpha)=\min(\supp(\alpha))$ is a 
 weak homotopy equivalence. 

 The map $\mu_X$ is natural with respect to $X$, that is, 
 if $f\colon X \to Y$ is a map between finite spaces, then the following diagram is commutative:
\[
 \xymatrix{
 \geop{X} \ar[r]^-{\geop{f}} \ar[d]_-{\mu_X}^-{\simeq_w} & \geop{Y} \ar[d]^-{\mu_Y}_-{\simeq_w} \\
 X \ar[r]_-{f} & Y
}
\] 
\end{thm}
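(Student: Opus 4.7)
The plan is to follow McCord's classical fibre-wise argument using the minimal basis $\{U_x\}_{x\in X}$ for the topology on $X$. First I would verify continuity of $\mu_X$: a point $\alpha\in\ordcpx{X}$ lies in $\mu_X^{-1}(U_x)$ iff $\min\supp(\alpha)\le x$, equivalently iff $\supp(\alpha)\cap U_x\ne\emptyset$, so the preimage equals the open set
\[
 \mu_X^{-1}(U_x)=\bigcup_{y\le x}\ost(y).
\]
Naturality is immediate: monotonicity of $f\colon X\to Y$ gives $\supp(\ordcpx{f}(\alpha))=f(\supp(\alpha))$ and $f(\min S)=\min f(S)$ for any chain $S\subseteq X$, so $\mu_Y\circ\ordcpx{f}=f\circ\mu_X$.

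The substance of the proof is to show that each fibre $\mu_X^{-1}(U_x)$ is contractible. I would produce an explicit strong deformation retraction onto the subcomplex $\ordcpx{U_x}\subseteq \ordcpx{X}$ consisting of chains contained in $U_x$. Given $\alpha=\sum_{i=0}^k t_i x_i$ supported on the chain $x_0<\dots<x_k$, the elements of $\supp(\alpha)$ lying in $U_x$ form an initial segment $x_0,\dots,x_j$ (because $x_i\le x$ and $y\le x_i$ force $y\le x$); linearly push the excess mass $t_{j+1}+\dots+t_k$ onto $x_j$. The straight-line homotopy stays inside the simplex spanned by $\supp(\alpha)$, so it is continuous on all of $\mu_X^{-1}(U_x)$ and its image lands in $\ordcpx{U_x}$. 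Since $U_x$ has maximum $x$, every chain in $U_x$ can be augmented by $x$, so $\ordcpx{U_x}$ is a simplicial cone with apex $x$ and therefore contractible.

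With contractibility of both $U_x$ (the identity $\mathrm{id}_{U_x}$ and the constant map $c_x$ at the maximum satisfy $\mathrm{id}_{U_x}\le c_x$ and hence are homotopic by the proposition recalled above) and of each $\mu_X^{-1}(U_x)$ in hand, I would invoke McCord's basis theorem: if $\{U_x\}$ is a basis of open sets for the base and both $U_x$ and $f^{-1}(U_x)$ are weakly contractible for every $x$, then $f$ is a weak homotopy equivalence. Applied to $\mu_X$ this completes the proof.

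The only genuine difficulty is the second paragraph: specifying the retraction, checking that the mass-pushing is continuous across simplicial boundaries, and confirming that it lands in $\ordcpx{U_x}$. Everything else is either a routine continuity check, an observation about minima of chains under monotone maps, or a direct invocation of McCord's basis theorem.
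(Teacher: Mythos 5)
Your outline (continuity of $\mu_X$, naturality via $f(\min S)=\min f(S)$, contractibility of $U_x$ and of each fibre, then McCord's basis theorem applied to the minimal basis $\{U_x\}$) is the classical argument; the paper itself only cites McCord here. However, the step you correctly identify as the substance of the proof is flawed as written: the map that pushes all of the excess mass $t_{j+1}+\dots+t_k$ onto the single vertex $x_j=\max\bigl(\supp(\alpha)\cap U_x\bigr)$ is not continuous, and neither is the straight-line homotopy toward it at any positive time. Concretely, suppose $X$ contains a chain $y<z<w$ with $y,z\le x$ but $w\not\le x$, and let $\alpha_\epsilon$ be the point with barycentric coordinates $\tfrac12$ at $y$, $\epsilon$ at $z$, $\tfrac12-\epsilon$ at $w$. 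Your retraction sends every $\alpha_\epsilon$ ($\epsilon>0$) to the point with coordinates $\tfrac12$ at $y$ and $\tfrac12$ at $z$, but it sends the limit point $\alpha_0$ (coordinates $\tfrac12$ at $y$, $\tfrac12$ at $w$, whose support meets $U_x$ only in $y$) to the vertex $y$. Staying inside the simplex spanned by $\supp(\alpha)$ does not help: continuity fails across the face where the coordinate of $x_j$ vanishes, because the ``cut point'' $x_j$ jumps.

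The standard repair is to distribute the excess proportionally rather than dumping it on one vertex: send $\alpha$ to the normalized restriction $r(\alpha)=\bigl(\sum_{y\in U_x}\alpha_y\bigr)^{-1}\sum_{y\in U_x}\alpha_y\, y$, where $\alpha_y$ denotes the barycentric coordinate at the vertex $y$. Since $\alpha\mapsto\sum_{y\in U_x}\alpha_y$ is continuous on $\ordcpx{X}$ and is strictly positive precisely on $\mu_X^{-1}(U_x)=\bigcup_{y\le x}\ost(y)$, the map $r$ is continuous there; it fixes $\ordcpx{U_x}$ pointwise, and the straight-line homotopy $(1-t)\alpha+t\,r(\alpha)$ stays in the simplex spanned by $\supp(\alpha)$ and keeps the support meeting $U_x$, so it is a strong deformation retraction of $\mu_X^{-1}(U_x)$ onto $\ordcpx{U_x}$ (in the example above it sends $\alpha_\epsilon$ to a point converging to $y$, as it should). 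With this correction the remainder of your argument --- the identification of $\mu_X^{-1}(U_x)$ with the open star of $U_x$, the cone structure of $\ordcpx{U_x}$ with apex the maximum $x$, the contractibility of $U_x$ via $\mathrm{id}\le c_x$, and the invocation of the basis theorem --- goes through and recovers McCord's proof.
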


\begin{coro}
 Let $X$, $Y$ be finite $T_0$ spaces. Then, 
 $X\simeq_w Y$ if and only if $\geop{X}\simeq \geop{Y}$.

 In particular, $X \simeq_w X^o$, where $X^o$ is the opposite of $X$.
\end{coro}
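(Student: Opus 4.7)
The plan is to deduce both parts of the corollary directly from McCord's theorem (just stated) together with the basic fact recorded at the start of the simplicial complex subsection: a weak homotopy equivalence between finite simplicial complexes is already a homotopy equivalence.

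For the forward direction of the equivalence, assume $X \simeq_w Y$. The two McCord weak equivalences $\mu_X\colon\geop{X}\to X$ and $\mu_Y\colon\geop{Y}\to Y$ fit into a zig-zag
\[
\geop{X}\;\longrightarrow\; X \;\simeq_w\; Y \;\longleftarrow\; \geop{Y},
\]
all arrows weak homotopy equivalences, so $\geop{X}\simeq_w\geop{Y}$. Since both are finite simplicial complexes, the cited fact upgrades this to a genuine homotopy equivalence $\geop{X}\simeq\geop{Y}$. The reverse direction is immediate: a homotopy equivalence is in particular a weak equivalence, so gluing McCord's maps on either side yields $X\simeq_w\geop{X}\simeq_w\geop{Y}\simeq_w Y$.

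For the ``in particular'' statement, I would observe that the order complex is insensitive to the direction of the order: a finite nonempty chain in $X$, viewed as a subset of underlying points, is exactly a finite nonempty chain in the opposite poset $X^o$. Hence $\geop{X}$ and $\geop{X^o}$ are literally the same simplicial complex, and in particular they are homotopy equivalent, so the first half of the corollary delivers $X\simeq_w X^o$.

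There is no serious obstacle; the only point to keep an eye on is the distinction between $\simeq$ and $\simeq_w$. The single place where finiteness of the simplicial complex is genuinely used is the upgrade of the weak equivalence $\geop{X}\simeq_w\geop{Y}$ to a homotopy equivalence, and everything else is a formal chain of known equivalences.
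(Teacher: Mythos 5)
Your proposal is correct and is exactly the argument the paper intends: the corollary is stated as an immediate consequence of McCord's theorem, using the zig-zag of weak equivalences through $\mu_X$ and $\mu_Y$, the fact that weak homotopy equivalent simplicial complexes are homotopy equivalent, and the observation that $\geop{X}$ and $\geop{X^o}$ coincide since chains are unchanged by reversing the order. Nothing further is needed.
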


A finite $T_0$ space $X$ is said to be \emph{homotopically trivial} if 
$\geop{X}\simeq *$, equivalently if $X \simeq_w *$.

\begin{prop}
 Let $f,g\colon X \to Y$ be maps between finite $T_0$ spaces. 
 If $f\simeq g$, then $\geop{f}\simeq \geop{g}$.
\end{prop}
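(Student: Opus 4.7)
The plan is to reduce to the case $f\le g$ via the earlier proposition and then realize the homotopy as a single simplicial map out of a prism. Since $f\simeq g$ implies a zigzag $f=f_0\le f_1\ge f_2\le\cdots\le f_n=g$, and $\simeq$ on realizations is transitive, it suffices to treat the case $f\le g$.

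Assuming $f\le g$, I would consider the product poset $P=X\times\{0<1\}$ and the map $H\colon P\to Y$ defined by $H(x,0)=f(x)$ and $H(x,1)=g(x)$. Monotonicity of $H$ in the mixed case uses $f(x)\le f(x')\le g(x')$ whenever $x\le x'$. The order complex $\geop{P}$ carries the standard prism triangulation: for each chain $\alpha=\{x_0<x_1<\cdots<x_n\}$ of $X$, the maximal chains of $P$ lying over $\alpha$ are exactly
\[
\sigma_k=\{(x_0,0),\ldots,(x_k,0),(x_k,1),\ldots,(x_n,1)\},\qquad k=0,\ldots,n,
\]
and these triangulate $\Delta_\alpha\times[0,1]$ in a way that restricts compatibly to face sub-chains. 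This gives a canonical homeomorphism $|\geop{P}|\cong|\geop{X}|\times[0,1]$ identifying $\geop{X\times\{0\}}$ and $\geop{X\times\{1\}}$ with the two ends of the prism. The simplicial map $\geop{H}$ then restricts to $\geop{f}$ at time $0$ and to $\geop{g}$ at time $1$; the key check is that on each $\sigma_k$ the image $\{f(x_0),\ldots,f(x_k),g(x_k),\ldots,g(x_n)\}$ is a chain in $Y$, because $f(x_k)\le g(x_k)$ supplies the middle inequality in $f(x_0)\le\cdots\le f(x_k)\le g(x_k)\le\cdots\le g(x_n)$.

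The main subtlety is precisely this last observation: the full set $\{f(x_0),\ldots,f(x_n),g(x_0),\ldots,g(x_n)\}$ need not be a chain in $Y$, so $\geop{f}$ and $\geop{g}$ are generally \emph{not} contiguous as simplicial maps and a one-step contiguity argument fails. The prism subdivision is what makes the construction work, since each $\sigma_k$ juxtaposes $f$ and $g$ only at the single shared index $k$, where the hypothesis $f\le g$ applies directly. Once this is recognized, the remaining steps—monotonicity of $H$, naturality of $\geop{}$, and gluing of the prism triangulation across faces—are routine.
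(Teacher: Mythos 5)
Your proof is correct. The paper itself gives no argument for this proposition (it is quoted as a standard fact, cf.\ Barmak's book), so there is nothing to match it against; what you give is the standard proof. The reduction to $f\leq g$ via Stong's zigzag characterization is exactly right, and your main construction --- the monotone map $H\colon X\times\{0<1\}\to Y$ together with the staircase (prism) triangulation $\geop{X\times\{0<1\}}\cong \geop{X}\times[0,1]$, whose maximal chains over $\{x_0<\dots<x_n\}$ are your $\sigma_k$ --- is the classical natural-transformation argument going back to Quillen. Your remark that $\geop{f}$ and $\geop{g}$ need not be contiguous (since $f(x_1)$ and $g(x_0)$ need not be comparable) correctly identifies why the naive one-step argument fails; note also that the chain condition on $H(\sigma_k)$ is automatic once $H$ is checked to be order-preserving, since monotone maps send chains to chains. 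For completeness, there is an alternative short derivation from McCord's theorem: naturality gives $\mu_Y\circ\geomr{\geop{f}}=f\circ\mu_X\simeq g\circ\mu_X=\mu_Y\circ\geomr{\geop{g}}$, and since $\mu_Y$ is a weak homotopy equivalence and $\geomr{\geop{X}}$ is a CW complex, composition with $\mu_Y$ is a bijection on homotopy classes of maps out of $\geomr{\geop{X}}$, whence $\geop{f}\simeq\geop{g}$; your argument has the advantage of producing an explicit simplicial homotopy and not invoking Whitehead-type machinery.
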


When one considers the weak homotopy types of finite spaces, 
Quillen's Theorem A is quite a powerful tool. 

\begin{thm}[Quillen \cite{Quillen}]
 Let $X$, $Y$ be finite $T_0$ spaces and $f\colon X \to Y$ a map. 
 
 If $f^{-1}(U_y) \simeq_w *$ for every $y\in Y$, then $f$ is a weak homotopy equivalence. 

 Dually, if $f^{-1}(F_y) \simeq_w *$ for every $y\in Y$, then $f$ is a weak homotopy equivalence. 
\end{thm}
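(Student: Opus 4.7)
\medskip

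The plan is to translate the statement to simplicial topology via McCord's theorem and then proceed by induction on $\Card{Y}$, using a Mayer--Vietoris decomposition obtained by peeling off one maximal element at a time. By the naturality square for $\mu$, the map $f$ is a weak homotopy equivalence if and only if $\ordcpx{f}\colon \ordcpx{X}\to \ordcpx{Y}$ is a homotopy equivalence of simplicial complexes, so it suffices to prove the latter.

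For the base case $\Card{Y}=1$, write $Y=\{y\}$; then $f^{-1}(U_y)=X$, so the hypothesis gives $X\simeq_w *\simeq_w Y$, and $f$ is trivially a weak equivalence. For the inductive step, pick a maximal element $y_0\in Y$, set $Y'=Y\setminus\{y_0\}$ and $X'=f^{-1}(Y')$. Maximality of $y_0$ gives the decompositions
\[
\ordcpx{Y}=\ordcpx{Y'}\cup \ordcpx{U_{y_0}}, \qquad \ordcpx{Y'}\cap \ordcpx{U_{y_0}}=\ordcpx{\widehat{U}_{y_0}},
\]
and analogously
\[
\ordcpx{X}=\ordcpx{X'}\cup \ordcpx{f^{-1}(U_{y_0})}, \qquad \ordcpx{X'}\cap \ordcpx{f^{-1}(U_{y_0})}=\ordcpx{f^{-1}(\widehat{U}_{y_0})}.
\]
Here $\ordcpx{U_{y_0}}$ is a simplicial cone with apex $y_0$, hence contractible, and $\ordcpx{f^{-1}(U_{y_0}))}\simeq *$ by hypothesis.

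I next want to apply the inductive hypothesis to $f|_{X'}\colon X'\to Y'$ and to $f|_{f^{-1}(\widehat{U}_{y_0})}\colon f^{-1}(\widehat{U}_{y_0})\to \widehat{U}_{y_0}$. The point to check is that the downsets in the smaller target agree with those in $Y$: since $y_0$ is maximal and distinct from any $y\in Y'$, we have $y_0\not\leq y$, whence $U_y^{Y'}=U_y^Y$ and similarly $U_y^{\widehat{U}_{y_0}}=U_y^Y$ for $y<y_0$. Consequently the fibre hypothesis for $f$ restricts directly to fibre hypotheses for the two smaller maps, and induction gives that they induce weak equivalences on order complexes.

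Finally, since inclusions of subcomplexes are cofibrations, both unions above are homotopy pushouts. The gluing lemma for pushouts of cofibrations then implies that if three of the four vertical maps in the cube
\[
\xymatrix{
\ordcpx{X'} \ar[d] & \ordcpx{f^{-1}(\widehat{U}_{y_0})}\ar[l]\ar[r]\ar[d] & \ordcpx{f^{-1}(U_{y_0})}\ar[d] \\
\ordcpx{Y'} & \ordcpx{\widehat{U}_{y_0}}\ar[l]\ar[r] & \ordcpx{U_{y_0}}
}
\]
are weak equivalences, then so is the induced map on pushouts, which is exactly $\ordcpx{f}$. The dual statement is obtained by applying the above to $f^{o}\colon X^{o}\to Y^{o}$, noting that $F_y^X=U_y^{X^o}$ and that $X\simeq_w X^o$. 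The main obstacle I expect is the bookkeeping in the inductive step—verifying that $U_y$'s are preserved when passing to $Y'$ and $\widehat{U}_{y_0}$, and correctly identifying the relevant intersections on both the $X$-side and the $Y$-side so that the gluing lemma applies cleanly.
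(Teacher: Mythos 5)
The paper does not prove this statement at all: it is quoted as Quillen's Theorem A with a citation to \cite{Quillen}, and used as a black box. Your argument is therefore a genuinely different route — a self-contained proof of the poset version — and, as far as I can check, it is correct. The decomposition $\ordcpx{Y}=\ordcpx{Y'}\cup\ordcpx{U_{y_0}}$ and $\ordcpx{X}=\ordcpx{X'}\cup\ordcpx{f^{-1}(U_{y_0})}$ is legitimate because $Y'$ and $U_{y_0}$ are down sets (so the union of order complexes is the order complex of the union, as in the paper's own lemma), the verification that $U_y^{Y'}=U_y^Y$ and $f^{-1}(U_y)\subset X'$ makes the induction go through, $\ordcpx{U_{y_0}}$ is a cone on $y_0$, $\ordcpx{f^{-1}(U_{y_0})}$ is weakly contractible (note the hypothesis also forces it to be nonempty, which is what makes ``any map between weakly contractible spaces is a weak equivalence'' applicable), and the gluing lemma for cofibrant pushouts of CW complexes finishes the inductive step. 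Two small polish points: in the dual case, the clean statement is that $\ordcpx{X^o}=\ordcpx{X}$ and $\ordcpx{f^o}=\ordcpx{f}$ as simplicial maps (your identity should read $F_y^Y=U_y^{Y^o}$, about the target), so the first case applied to $f^o$ literally gives that $\ordcpx{f}$ is a homotopy equivalence; merely knowing $X\simeq_w X^o$ would not suffice. Compared with Quillen's original argument (Theorem A for categories, via bisimplicial methods) or McCord's basis-like open cover theorem, your induction-plus-gluing proof is more elementary and tailored to finite posets, at the cost of not generalizing beyond the finite setting; it is essentially the standard combinatorial proof found in the finite-spaces literature, and it would be a valid way to make the paper self-contained.
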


\begin{defn}
 \label{nHsuspension}
 Let $S^0$ be the $0$-dimensional sphere, that is, 
 the discrete space with $2$ points.

 The ordinal sum $X * S^0$ of a finite $T_0$ space $X$ and $S^0$ is 
 called the \emph{non-Hausdorff suspension} of $X$ and 
 is denoted by $\bS X$. 
 We define the $n$-fold non-Hausdorff suspension inductively by 
 $\bS^0X=X$, 
 $\bS^nX=\bS(\bS^{n-1}X)$. 
\end{defn}

\begin{prop}[McCord \cite{McCord}, Barmak \cite{Barmak}]
 \label{suspension-wedge-sum}
Let $X$, $Y$ be finite $T_0$ spaces. The following holds:
\begin{align*}
 \bS X&\simeq_w \susp\ordcpx{X}, \\
 X\vee Y&\simeq_w \ordcpx{X}\vee \ordcpx{Y}.
\end{align*}
\end{prop}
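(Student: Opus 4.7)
The plan is to reduce both weak homotopy equivalences to combinatorial identifications of order complexes and then invoke McCord's theorem.

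For the first assertion, I would observe that the order complex of an ordinal sum is a simplicial join: a nonempty chain in $\bS X = X * S^0$ decomposes uniquely as the concatenation of a (possibly empty) chain in $X$ with a (possibly empty) chain in $S^0$, at least one of the two being nonempty. Hence $\ordcpx{\bS X}$ coincides with the simplicial join $\ordcpx{X}*S^0$, and joining with the two-point discrete complex produces the unreduced suspension. McCord's theorem then gives
\[
 \bS X \simeq_w \ordcpx{\bS X}\cong \susp\ordcpx{X}.
\]

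For the second assertion, I would exploit the stated base-point invariance of wedges of connected finite $T_0$ spaces (\cref{invariance-of-susp-and-wedge}) to choose $x_0\in X$ and $y_0\in Y$ both maximal, and then form $X\vee Y$ by identifying $x_0$ with $y_0$. With this choice the subsets $X\setminus\{x_0\}$ and $Y\setminus\{y_0\}$ are mutually incomparable in the pushout $X\vee Y$, so every nonempty chain in $X\vee Y$ is either entirely contained in $X$ or entirely in $Y$. Consequently $\ordcpx{X\vee Y}$ is obtained from $\ordcpx{X}\sqcup\ordcpx{Y}$ by identifying the two vertices coming from the base points, which is precisely $\ordcpx{X}\vee\ordcpx{Y}$. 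A second application of McCord's theorem yields $X\vee Y \simeq_w \ordcpx{X}\vee\ordcpx{Y}$.

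The only delicate point is the handling of base points in the wedge. For an interior base point, chains in $X\vee Y$ can cross the identification and produce simplices that are not present in either factor's order complex, so the order complex is not literally a wedge of simplicial complexes. This is circumvented by first using base-point invariance to pass to maximal (or, dually, minimal) base points, where the combinatorics of chains is unambiguous and the identification $\ordcpx{X\vee Y}=\ordcpx{X}\vee\ordcpx{Y}$ is on the nose.
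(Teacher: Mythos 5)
The first half of your argument is correct and is the standard one: every nonempty chain of the ordinal sum $\bS X=X*S^0$ is the union of a chain of $X$ and a chain of $S^0$, so $\ordcpx{\bS X}=\ordcpx{X}*S^0\cong\susp\ordcpx{X}$, and McCord's theorem gives $\bS X\simeq_w\susp\ordcpx{X}$. Your wedge computation is also correct in the special case you treat: if both base points are maximal, then $X\setminus\{x_0\}$ and $Y\setminus\{y_0\}$ are incomparable in $X\vee Y$, every chain lies in one factor, and $\ordcpx{X\vee Y}=\ordcpx{X}\vee\ordcpx{Y}$ on the nose. (The paper gives no proof of this proposition; it cites McCord and Barmak, so the comparison is with the standard arguments.)

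The gap is in your reduction to that special case. You invoke the base-point independence in \cref{invariance-of-susp-and-wedge} to move the base points to maximal elements, but in this paper that corollary is deduced from \cref{suspension-wedge-sum} itself: its proof is precisely ``$X\vee Y\simeq_w\ordcpx{X}\vee\ordcpx{Y}$ and the homotopy type of the right-hand side is independent of base points.'' So your argument is circular; as written you have only established the equivalence for one (maximal) choice of base points, whereas the proposition is needed for every choice, which is exactly what makes the corollary's deduction work (and the corollary you invoke is moreover stated only for connected spaces, while the proposition is not so restricted). A non-circular repair keeps the given base point $w$: any chain of $X\vee Y$ meeting both $X\setminus\{w\}$ and $Y\setminus\{w\}$ consists of elements comparable with $w$, hence lies in $C_w=U_w\cup F_w$, so $\ordcpx{X\vee Y}=\ordcpx{X}\cup\ordcpx{Y}\cup\ordcpx{C_w}$. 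The subcomplexes $\ordcpx{C_w}$ and $\left(\ordcpx{X}\cup\ordcpx{Y}\right)\cap\ordcpx{C_w}=\ordcpx{C^X_{x_0}}\cup\ordcpx{C^Y_{y_0}}$ are contractible ($C_w$ contracts via $\mathrm{id}\geq f\leq\mathrm{const}_w$, where $f(z)=z$ for $z\leq w$ and $f(z)=w$ otherwise, and the intersection is two such contractible complexes glued at the vertex $w$), so collapsing $\ordcpx{C_w}$ and using the union/quotient facts from the preliminaries gives $\ordcpx{X\vee Y}\simeq\left(\ordcpx{X}\cup\ordcpx{Y}\right)/\left(\ordcpx{C^X_{x_0}}\cup\ordcpx{C^Y_{y_0}}\right)\simeq\ordcpx{X}\cup\ordcpx{Y}=\ordcpx{X}\vee\ordcpx{Y}$ for an arbitrary choice of base points.
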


\begin{coro}
 \label{invariance-of-susp-and-wedge}
 Let $X$, $Y$, $X_1$, $X_2$, $Y_1$, and $Y_2$ be finite $T_0$ spaces 
 and $K$, $K_1$ and $K_2$ be simplicial complexes. The following holds:
 \begin{enumerate}
  \item If $X\simeq_w K$, then $\bS X \simeq_w SK$. 
  \item If $X\simeq_w Y$, then $\bS X \simeq_w \bS Y$. 
  \item The weak homotopy type of a wedge sum of connected finite $T_0$ spaces is independent of basepoints.
  \item If $X_1$ and $X_2$ are connected and $X_i\simeq_w K_i$, then $X_1\vee X_2 \simeq_w K_1\vee K_2$.
  \item If $X_1$ and $X_2$ are connected and $X_i\simeq_w Y_i$, then $X_1\vee X_2 \simeq_w Y_1\vee Y_2$.
  \item $\bS(X\vee Y)\simeq_w \bS X \vee \bS Y$.
 \end{enumerate}
\end{coro}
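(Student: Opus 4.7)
The unifying idea is to transport every claim to the world of simplicial complexes via the two identities of Proposition~\ref{suspension-wedge-sum}, namely $\bS X \simeq_w \susp\ordcpx{X}$ and $X \vee Y \simeq_w \ordcpx{X} \vee \ordcpx{Y}$, together with the equivalence $X \simeq_w Y$ iff $\ordcpx{X} \simeq \ordcpx{Y}$ stated just above. On the simplicial side I will freely use three facts recorded in \cref{sec-preliminaries}: weak homotopy equivalent simplicial complexes are homotopy equivalent, the suspension $\susp$ preserves homotopy equivalence, and the wedge of connected simplicial complexes is a basepoint-independent homotopy functor.

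For (1), I would chain $\bS X \simeq_w \susp\ordcpx{X}$ with $\ordcpx{X} \simeq_w X \simeq_w K$, which upgrades to $\ordcpx{X} \simeq K$ on the simplicial side, so $\susp\ordcpx{X} \simeq \susp K$. Claim (2) is then the special case $K = \ordcpx{Y}$ combined with $\bS Y \simeq_w \susp \ordcpx{Y}$. For (3)--(5), Proposition~\ref{suspension-wedge-sum} identifies each wedge $X_1 \vee X_2$ (for any choice of basepoints) with $\ordcpx{X_1} \vee \ordcpx{X_2}$ up to weak homotopy equivalence, and since the $X_i$ are connected so are the $\ordcpx{X_i}$, so the simplicial basepoint-independence yields (3). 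Replacing each $\ordcpx{X_i}$ in the simplicial wedge by the homotopy equivalent $K_i$, via the argument used for (1), then yields (4), and applying (4) twice with $K_i = \ordcpx{Y_i}$ gives (5).

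For (6), apply (1) to the pair $(X \vee Y, \ordcpx{X} \vee \ordcpx{Y})$, which are weak homotopy equivalent by Proposition~\ref{suspension-wedge-sum}, to obtain $\bS(X \vee Y) \simeq_w \susp(\ordcpx{X} \vee \ordcpx{Y})$; the classical homeomorphism $\susp(A \vee B) \cong \susp A \vee \susp B$ together with $\susp \ordcpx{X_i} \simeq_w \bS X_i$ then closes the chain. The main delicate point throughout (3)--(6) is the bookkeeping of basepoints when passing between the finite-space and simplicial sides; the connectedness hypothesis is exactly what lets one invoke simplicial basepoint-independence and thereby suppress this issue without further work.
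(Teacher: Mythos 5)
Your proposal is correct and follows essentially the same route as the paper's proof: both transport each statement to the simplicial side via \cref{suspension-wedge-sum} and McCord's theorem, and then invoke basepoint-independence and homotopy invariance of wedges and suspensions of connected simplicial complexes. One small inaccuracy in part 6: for the unreduced suspension, $\susp(A\vee B)$ is only homotopy equivalent, not homeomorphic, to $\susp A\vee \susp B$ (the wedge point suspends to a contractible arc that must be collapsed), which is all your argument needs and is exactly what the paper uses.
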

\begin{proof}
 Recall that the homotopy type of a wedge sum of connected simplicial complexes 
 is independent of base points and is homotopy invariant.
 \begin{enumerate}
  \item If $X\simeq_w K$, then $\ordcpx{X}\simeq_w X \simeq_w K$ whence $\ordcpx{X}\simeq K$. 
	Therefore $\bS X \simeq_w \susp\ordcpx{X} \simeq \susp K$. 
  \item If $X\simeq_w Y$, then $\susp\ordcpx{X}\simeq \susp\ordcpx{Y}$ whence $\bS X \simeq_w \bS Y$.
  \item This is because $X\vee Y\simeq_w \ordcpx{X}\vee \ordcpx{Y}$ and the homotopy type of the right hand side is 
	independent of base points.
  \item If $X_i\simeq_w K_i$, then $\ordcpx{X_i}\simeq K_i$. Therefore 
	$X_1\vee X_2 \simeq_w \ordcpx{X_1}\vee \ordcpx{X_2}  \simeq K_1\vee K_2$.
  \item If $X_i\simeq_w Y_i$, then $\ordcpx{X_i}\simeq \ordcpx{Y_i} \simeq_w Y_i$. Therefore 
	$X_1\vee X_2 \simeq_w \ordcpx{X_1}\vee \ordcpx{X_2}  \simeq_w Y_1\vee Y_2$.
  \item  \begin{align*}
	  \bS(X\vee Y) \simeq_w \susp\left(\ordcpx{X}\vee \ordcpx{Y}\right)
	  &\simeq \susp\ordcpx{X}\vee \susp\ordcpx{Y} \\
	 &\simeq \ordcpx{\bS X} \vee \ordcpx{\bS Y}
	  \simeq_w \bS X \vee \bS Y.
	 \end{align*}
 \end{enumerate}
\end{proof}

The following generalizations of beat points are given by Barmak and Minian \cite{MR2448871}.

\begin{defn}
 Let $X$ be a finite $T_0$ space and $x\in X$. 
 We call $x$ a \emph{down weak beat point} if $\widehat{U}_x$ is contractible. 
 Dually, $x$ is an \emph{up weak beat point} if $\widehat{F}_x$ is contractible. 
 We call $x$ a \emph{weak point} if it is either a down or up weak beat point. 

 As remarked in \cite[4.2.3]{Barmak}, 
 a point $x$ is a weak point if and only if $\widehat{C}_x$ is contractible. 

 We call $x$ a \emph{$\gamma$-point} if $\widehat{C}_x$ is homotopically trivial.
\end{defn}

\begin{prop}[Barmak and Minian \cite{MR2448871}]
 Let $X$ be a finite $T_0$ space and $x\in X$ a $\gamma$-point. 
 Then, the inclusion $i\colon X\setminus\{x\} \to X$ is a weak homotopy equivalence.
\end{prop}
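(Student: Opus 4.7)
The plan is to argue at the level of the order complex via McCord's theorem and then transfer the result back to the posets. Write $K = \ordcpx{X}$ and decompose $K = K_1 \cup K_2$, where $K_1 = \ordcpx{X\setminus\{x\}}$ and $K_2 = \st_K(x)$. Every nonempty chain in $X$ either avoids $x$, in which case it lies in $K_1$, or contains $x$, in which case it lies in $K_2$, so this really is a cover. The intersection $K_1 \cap K_2$ consists of chains not containing $x$ that become chains of $X$ upon adjoining $x$, i.e. chains whose every element is comparable to $x$; this is precisely $\lk_K(x)$ and is canonically identified with $\ordcpx{\widehat{C}_x}$, since $\widehat{C}_x = \widehat{U}_x \cup \widehat{F}_x$ is the subposet of elements strictly comparable to $x$.

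By hypothesis $\widehat{C}_x \simeq_w *$, and since weakly homotopy equivalent simplicial complexes are homotopy equivalent, $\lk_K(x) = \ordcpx{\widehat{C}_x} \simeq *$. The star $\st_K(x)$ is always contractible, so the inclusion $\lk_K(x) \hookrightarrow \st_K(x)$ is a cofibration between contractible spaces and hence a homotopy equivalence. The inclusion $K_1 \hookrightarrow K$ is obtained as the pushout of this inclusion along $\lk_K(x) \hookrightarrow K_1$, so by the gluing lemma for cofibrations it is itself a homotopy equivalence.

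Finally, McCord's naturality square
\[
\xymatrix{
\ordcpx{X\setminus\{x\}} \ar[r]^-{\ordcpx{i}} \ar[d]_-{\mu_{X\setminus\{x\}}}^-{\simeq_w} & \ordcpx{X} \ar[d]^-{\mu_X}_-{\simeq_w} \\
X\setminus\{x\} \ar[r]_-{i} & X
}
\]
has both vertical maps weak homotopy equivalences, and its top horizontal map is the inclusion $K_1 \hookrightarrow K$, which we have just shown to be a homotopy equivalence. Two-out-of-three then yields that $i$ is a weak homotopy equivalence.

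The only nontrivial step is the identification $\lk_K(x) = \ordcpx{\widehat{C}_x}$, which I expect to be the main conceptual obstacle; everything else is a routine application of standard pushout/gluing machinery and of McCord's theorem already recalled in the paper.
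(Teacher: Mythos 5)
Your proof is correct. The paper itself only cites Barmak--Minian for this statement, but your argument follows essentially the route the paper takes for the neighbouring, more general \cref{weakgamma}: decompose $\ordcpx{X}$ as $\ordcpx{X\setminus\{x\}}\cup \st(x)$, identify the intersection $\lk(x)$ with $\ordcpx{\widehat{C}_x}$, and transfer along McCord's natural weak equivalences. The only genuine difference is the final step: where \cref{weakgamma} uses the null-homotopy of $\lk(x)\to \ordcpx{X\setminus\{x\}}$ to split off a wedge summand (which for a $\gamma$-point would only give $X\simeq_w X\setminus\{x\}$ as spaces), you use contractibility of $\lk(x)$ together with the gluing lemma to conclude that the inclusion $\ordcpx{X\setminus\{x\}}\hookrightarrow \ordcpx{X}$ is itself a homotopy equivalence, which is exactly what is needed to conclude that the inclusion $X\setminus\{x\}\to X$ is a weak homotopy equivalence rather than merely that the two spaces are weakly equivalent. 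All the individual steps are sound: the cover $K=K_1\cup K_2$ with $K_2=\st_K(x)$, the identification $K_1\cap K_2=\lk_K(x)=\ordcpx{\widehat{C}_x}$, the use of Whitehead's theorem to upgrade $\ordcpx{\widehat{C}_x}\simeq_w *$ to contractibility (note that homotopical triviality of $\widehat{C}_x$, not contractibility, is the correct hypothesis for a $\gamma$-point, and you used it correctly), the cobase change of the acyclic cofibration $\lk_K(x)\hookrightarrow \st_K(x)$, and two-out-of-three in McCord's naturality square.
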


A little bit more generally, the following holds:

\begin{prop}
 \label{weakgamma}
 Let $X$ be a finite $T_0$ space. 
 If the inclusion 
 $\geop{\hat{C}_x} \to \geop{X-\{x\}}$ is null homotopic, 
 then 
 \[
  X \simeq_w (X-\{x\}) \vee \bS\hat{C}_x
 \]
 In particular, $X$ splits into smaller spaces.
\end{prop}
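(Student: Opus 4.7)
My plan is to pass to the order complex $K=\ordcpx{X}$, apply the vertex deletion example from Subsection 2.1, and then transport the resulting wedge decomposition back to finite spaces using McCord's theorem, \cref{suspension-wedge-sum}, and \cref{invariance-of-susp-and-wedge}.

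First I identify the relevant subcomplexes of $K$ attached to the vertex $x$. A chain in $X$ that omits $x$ is exactly a chain in $X\setminus\{x\}$, so the deletion subcomplex $K\setminus\{x\}$ coincides with $\ordcpx{X\setminus\{x\}}$. A chain $\sigma$ with $\sigma\cup\{x\}$ still a chain must consist of elements comparable to $x$, hence $\st(x)=\ordcpx{C_x}$; this is the simplicial cone with apex $x$ over its link, so $\st(x)\simeq *$. Finally $\lk(x)=\ordcpx{\widehat{C}_x}$.

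Now $K=(K\setminus\{x\})\cup\st(x)$ and $\lk(x)=(K\setminus\{x\})\cap\st(x)$. Since $\st(x)\simeq *$ and the inclusion $\lk(x)\hookrightarrow K\setminus\{x\}$ is null homotopic by hypothesis, the vertex-link example in Subsection 2.1 (equivalently, the chain $K\simeq K/\st(x)\cong (K\setminus\{x\})/\lk(x)\simeq (K\setminus\{x\})\vee\susp\lk(x)$) gives the homotopy equivalence
\[
 \ordcpx{X}\simeq \ordcpx{X\setminus\{x\}}\vee \susp\ordcpx{\widehat{C}_x}.
\]
Transporting this back to finite $T_0$ spaces, McCord yields $X\simeq_w\ordcpx{X}$ and $X\setminus\{x\}\simeq_w\ordcpx{X\setminus\{x\}}$, \cref{suspension-wedge-sum} yields $\bS\widehat{C}_x\simeq_w\susp\ordcpx{\widehat{C}_x}$, and \cref{invariance-of-susp-and-wedge} combines these into
\[
 X\simeq_w (X\setminus\{x\})\vee \bS\widehat{C}_x.
\]
Since $\Card{X\setminus\{x\}}=\Card{X}-1$ and $\Card{\widehat{C}_x}\leq\Card{X}-1$, this exhibits $X$ as a splitting into smaller spaces.

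The only real subtlety is a bookkeeping issue with basepoints: \cref{invariance-of-susp-and-wedge}(4)(5) is stated for connected factors, whereas $X\setminus\{x\}$ may be disconnected (and in the degenerate case $\widehat{C}_x=\emptyset$ we even have $\bS\widehat{C}_x=S^0$). One handles this by choosing a basepoint in a fixed component and applying the corollary only to that component, leaving the other components of $X\setminus\{x\}$ as disjoint summands on both sides of the equivalence. Once the simplicial identifications of $\st(x)$, $\lk(x)$, and $K\setminus\{x\}$ are in place, everything else is a direct appeal to the preliminaries.
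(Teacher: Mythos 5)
Your argument is correct and follows essentially the same route as the paper: identify $\ordcpx{X}\setminus\{x\}=\ordcpx{X-\{x\}}$ and $\lk(x)=\ordcpx{\widehat{C}_x}$, apply the star/link example from Subsection 2.1 using the null-homotopy hypothesis, and transport back via McCord together with \cref{suspension-wedge-sum,invariance-of-susp-and-wedge}. Your extra remark on the basepoint/connectedness bookkeeping is a point the paper leaves implicit, but it does not change the substance of the proof.
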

\begin{proof}
 Since 
 \begin{align*}
  \ordcpx{X}\setminus \{x\}&=\ordcpx{X-\{x\}},  \\
  \lk\left(x\right)&=\Set{\sigma \subset X-\{x\} | \sigma\ne\emptyset, \;\sigma\cup \{x\} \text{ is a chain}} 
  =\ordcpx{\hat{C}_x}, 
 \end{align*}
 the inclusion $\lk(x) \to    \ordcpx{X}\setminus \{x\}$ is null homotopic by the assumption. 
 Therefore
 \begin{align*}
  \ordcpx{X} &\simeq \left(\ordcpx{X}\setminus \{x\}\right) \vee \susp\left(\lk(x)\right)  \\
  &= \ordcpx{X-\{x\}} \vee \susp \ordcpx{\hat{C}_x}
 \end{align*}
\end{proof}

\begin{coro}
 \label{weakgammamaximal}
 Let $X$ be a finite $T_0$ space. 
 If there exists a maximal element $a\in \mxl(X)$ 
 such that  $X-\{a\}$ is connected and $\hat{U}_a\simeq_w \bigvee_n S^0$, 
 then  
 \[
  X \simeq_w (X-\{a\}) \vee \bigvee_n S^1.
 \]
 In particular, $X$ splits into smaller spaces.
\end{coro}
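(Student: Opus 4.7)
The plan is to apply \cref{weakgamma} with $x=a$. Because $a$ is maximal we have $\widehat{F}_a=\emptyset$, hence $\widehat{C}_a=\widehat{U}_a$. The proof then reduces to two tasks: verify the null-homotopy hypothesis of \cref{weakgamma}, and identify $\bS\widehat{U}_a$ as $\bigvee_n S^1$.

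For the null homotopy, the assumption $\widehat{U}_a\simeq_w\bigvee_n S^0$ together with McCord's theorem yields a homotopy equivalence $\ordcpx{\widehat{U}_a}\simeq\bigvee_n S^0$, so $\ordcpx{\widehat{U}_a}$ decomposes as a disjoint union of $n+1$ contractible subcomplexes. The hypothesis that $X-\{a\}$ is connected makes $\ordcpx{X-\{a\}}$ path connected. Any map from a finite disjoint union of contractible spaces into a path connected space is null homotopic: contract each component within itself to a point, then slide each of the $n+1$ resulting image points along a path to a chosen basepoint. I expect this null-homotopy verification to be the main (though still modest) obstacle, since it is the only place where the connectedness of $X-\{a\}$ is used.

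For the suspension, \cref{suspension-wedge-sum} gives $\bS\widehat{U}_a\simeq_w\susp\ordcpx{\widehat{U}_a}$, and \cref{suspdisjoint} applied to the $n+1$ contractible components of $\ordcpx{\widehat{U}_a}$ yields $\susp\ordcpx{\widehat{U}_a}\simeq\bigvee_n S^1$. Combining the splitting furnished by \cref{weakgamma} with the wedge invariance from \cref{invariance-of-susp-and-wedge} (the summand $X-\{a\}$ is connected by hypothesis), we obtain
\[
 X\simeq_w(X-\{a\})\vee\bigvee_n S^1.
\]
Since $\Card{X-\{a\}}<\Card{X}$, this exhibits $X$ as split into smaller spaces in the sense of \cref{splitexpl}.
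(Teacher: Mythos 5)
Your proposal is correct and follows essentially the same route as the paper: use maximality to identify $\widehat{C}_a=\widehat{U}_a$, deduce the null homotopy of $\geop{\widehat{C}_a}\to\geop{X-\{a\}}$ from the connectedness of $X-\{a\}$, and then apply \cref{weakgamma} together with the identification $\bS\widehat{U}_a\simeq_w\bigvee_n S^1$. You merely spell out details (the null-homotopy argument and the use of \cref{suspdisjoint,invariance-of-susp-and-wedge}) that the paper leaves implicit.
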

\begin{proof}
 Since $a$ is maximal, $\widehat{C}_a=\widehat{U}_a$, whence
 $\geop{\widehat{C}_a}=\geop{\widehat{U}_a}\simeq \bigvee_n S^0$. 
 Since $\geop{X-\{a\}}$ is connected, the inclusion 
 $\geop{\widehat{C}_a} \to \geop{X-\{a\}}$ is null homotopic.
\end{proof}

\section{Poset splitting} 
\label{sec-posetsplitting}


In this section, we reformulate the poset splitting of Cianci-Ottina 
and show our fundamental splitting results \cref{suspensionposetlemma,UaFbmxlmnlsplit,Uamxlmnlsplit}. 

\begin{defn}
 Given a poset $X$, the poset
 \[
  \Sd{X}\coloneqq  \fpst{\ordcpx{X}}
 \]
 is called the \emph{barycentric subdivision} of $X$. 
 $\Sd{X}$ is the set of nonempty finite chains of $X$ ordered by the inclusion order.

 $\Sd{X}$ is weak homotopy equivalent to $X$.
\end{defn}

Clearly the following holds:
\begin{lem}
 Let $X$ be a poset and $A_i\subset X$.
 \begin{enumerate}
  \item \begin{enumerate}
	 \item $\bigcap_i \ordcpx{A_i}=\ordcpx{\bigcap_i A_i}$.
	 \item $\bigcup_i \ordcpx{A_i}\subset \ordcpx{\bigcup_i A_i}$.

	       If every $A_i$ is a down set or every $A_i$ is an up set, then 
	       $\bigcup_i \ordcpx{A_i} = \ordcpx{\bigcup_i A_i}$.
	\end{enumerate}
  \item \begin{enumerate}
	 \item $\Sd{A_i}$ is a down set of $\Sd{X}$. 
	 \item $\bigcap_i \Sd{A_i} = \Sd{\left(\bigcap_i A_i\right)} $.
	 \item $\bigcup_i \Sd{A_i} \subset \Sd{\left(\bigcup_i A_i\right)}$.
	       
	       If every $A_i$ is a down set or every $A_i$ is an up set, then 
	       $\bigcup_i \Sd{A_i} = \Sd{\left(\bigcup_i A_i\right)}$.
	\end{enumerate}
 \end{enumerate}
\end{lem}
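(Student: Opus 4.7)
\medskip

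The statement is purely combinatorial, so the plan is to unwind the definitions and track chains through each identity. Recall that $\ordcpx{A}$ is the set of nonempty finite chains of $A$, and that $\Sd{A}=\fpst{\ordcpx{A}}$ has the same underlying set, reinterpreted as a poset ordered by inclusion. Thus parts (2b) and (2c) are immediate translations of (1a) and (1b), and (2a) is separate.

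For (1a) I would argue directly: a nonempty finite chain $\sigma$ lies in $\bigcap_i \ordcpx{A_i}$ iff $\sigma\subset A_i$ for every $i$ iff $\sigma\subset \bigcap_i A_i$ iff $\sigma\in \ordcpx{\bigcap_i A_i}$. The inclusion in (1b) is equally trivial since any chain in some $A_i$ is a chain in $\bigcup_i A_i$. For the reverse inclusion under the down-set hypothesis, take $\sigma=\{x_0<x_1<\dots<x_k\}\in \ordcpx{\bigcup_i A_i}$ and pick $i$ with $x_k\in A_i$; since $A_i$ is a down set, $x_0,\dots,x_{k-1}\leq x_k$ are all in $A_i$, so $\sigma\in \ordcpx{A_i}$. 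The up-set case is dual, applied to the minimum of $\sigma$.

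For (2a), I need $\Sd{A}$ to be closed downward in $\Sd{X}$: if $\tau\leq\sigma$ in $\Sd{X}$, i.e.\ $\emptyset\ne\tau\subset\sigma$, and $\sigma\subset A$, then $\tau\subset A$, so $\tau\in\Sd{A}$. Then (2b) follows from (1a) because $\sigma\in \bigcap_i \Sd{A_i}$ iff $\sigma$ is a nonempty chain lying in each $A_i$ iff $\sigma\in \Sd{\bigcap_i A_i}$, and (2c) follows from (1b) by the same translation, with the down-set/up-set improvement argued exactly as above by picking the maximum (resp.\ minimum) of $\sigma$ and using that some $A_i$ containing it swallows the whole chain.

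There is no real obstacle here; the only point requiring a moment of care is the equality clause in (1b)/(2c), where one must observe that the down-set or up-set hypothesis is precisely what lets a single $A_i$ absorb the entire chain via its extreme element. I would present the proof as a single short paragraph checking (1a), (1b), (2a), (2b), (2c) in that order, with the down-set argument written once and invoked for both (1b) and (2c).
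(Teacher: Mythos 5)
Your proof is correct and is exactly the routine verification the paper has in mind — the paper states this lemma with "Clearly the following holds" and omits the proof entirely. Your unwinding of definitions, with the down-set/up-set case handled by the maximum/minimum of the chain, is the intended argument.
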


In general, the inclusion 
\[
 \bigcup_i \Sd{A_i} \subset \Sd{\left(\bigcup_i A_i\right)}
\]
is not a weak homotopy equivalence. 
To remedy this, Cianci-Ottina \cite{CianciOttina} used open stars
and the second barycentric subdivisions.

\begin{defn}
 \begin{enumerate}
  \item  Let $K$ be a simplicial complex and $V$ its set of vertices.
	 \begin{enumerate}
	  \item For a vertex $v\in V$, the subset 
		\[
		\ost_K(v)\coloneqq \Set{\sigma\in K | v\in \sigma}
		\]
		of $K$ is called the \emph{open star} of $v$.
	  \item For a subset $A\subset V$, 
	  \begin{align*}
	 \ost_K(A)&\coloneqq \bigcup_{v\in A}\ost_K(v) 
	  \end{align*}
		is called the \emph{open star} of $A$.
		
		Clearly, we have
	 \begin{align*}
	  \ost_K\left(\bigcup_iA_i\right)&=\bigcup_i\ost_K(A_i), \\
	  \ost_K(A)&=\Set{\sigma\in K | \sigma\cap A \ne \emptyset} .
	 \end{align*}
	 \end{enumerate}
  \item  Let $X$ be a poset and $A\subset X$ a subset. 
	 We define a subposet $\exch{A}\subset \Sd{X}$ by 
	 \[
	  \exch{A}\coloneqq \fpst{\ost_{\ordcpx{X}}(A)}
	  =\Set{ \sigma\in \Sd{X} | \sigma \cap A \ne \emptyset} \subset \Sd{X}.
	 \]
 \end{enumerate}
\end{defn}

\begin{rem}
We have $\exch{A}=\Sd{X}-\Sd{(X-A)}$.
\end{rem}

\begin{lem}
 \label{whtpofexch}
  \begin{enumerate}
  \item $\exch{A}$ is an up set of $\Sd{X}$ and $\Sd{A} \subset \exch{A}$.
  \item The inclusion $i\colon \Sd{A}\to \exch{A}$ 
	and the map $r\colon \exch{A} \to \Sd{A}$ given by 
	$r(\sigma)=\sigma\cap A$ are mutually inverse homotopy equivalences.
  \item The map $\max\colon \Sd{A} \to A$ is a weak homotopy equivalence.
  \item If $A$ is an up set, then $\max \sigma = \max\left(\sigma\cap A\right)\in A$ 
	for any $\sigma\in \exch{A}$,  
	and $\max\colon \exch{A} \to A$ is a weak homotopy equivalence. 
	\label{whtpofexch-up}
  \item If $A$ is a down set, then $\min \sigma =\min\left(\sigma\cap A\right) \in A$ 
	for any $\sigma\in \exch{A}$,  
	and $\min\colon \exch{A} \to A^o$ is a weak homotopy equivalence.
	\label{whtpofexch-down}
 \end{enumerate}
\end{lem}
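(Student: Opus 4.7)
The plan is to handle the five parts in order, with parts 4 and 5 emerging as short corollaries of parts 2 and 3. First, parts 1 and 2 are unpackings of the definition: if $\sigma \subset \tau$ are chains in $X$ and $\sigma \cap A \neq \emptyset$, then $\tau \cap A \supset \sigma \cap A$ is nonempty, so $\exch{A}$ is upward closed in $\Sd{X}$; and any nonempty chain in $A$ trivially meets $A$, giving $\Sd{A} \subset \exch{A}$. For part 2, I would observe that $r(\sigma) = \sigma \cap A$ is a well-defined, monotone map $\exch{A} \to \Sd{A}$ (it produces a nonempty subchain of $\sigma$ lying in $A$), that $r \circ i = \mathrm{id}_{\Sd{A}}$ is automatic, and that $i \circ r(\sigma) = \sigma \cap A \subset \sigma$ makes $i \circ r \leq \mathrm{id}_{\exch{A}}$ pointwise, hence homotopic to the identity by the order-homotopy principle recalled in the preliminaries.

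For part 3, I would apply Quillen's Theorem A to the monotone map $\max\colon \Sd{A} \to A$. The computation is that, for any $a \in A$,
\[
\max{}^{-1}(U_a^A) = \{\sigma \in \Sd{A} : \sigma \subset U_a^A\} = \Sd{U_a^A},
\]
and this fiber is contractible: since $a = \max U_a^A$, the endomap $\sigma \mapsto \sigma \cup \{a\}$ of $\Sd{U_a^A}$ is well-defined and sits pointwise between $\mathrm{id}$ and the constant map $c_{\{a\}}$, producing a zigzag of inequalities and hence a homotopy to a constant. Quillen A then finishes the job.

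Finally, I would deduce parts 4 and 5 via $r$ without a fresh appeal to Quillen A. For part 4, if $A$ is an up set and $\sigma \in \exch{A}$, picking any $x \in \sigma \cap A$ gives $\max \sigma \geq x$, so $\max \sigma \in A$ (since $A$ is upward closed), hence $\max \sigma \in \sigma \cap A$ and $\max \sigma = \max(\sigma \cap A)$. This shows $\max\colon \exch{A} \to A$ factors literally as $\max_{\Sd{A}} \circ r$, a composition of a homotopy equivalence (part 2) with a weak homotopy equivalence (part 3), hence a weak homotopy equivalence. For part 5, I would invoke part 4 for $X^o$: the constructions $\Sd{X}$ and $\exch{A}$ depend only on the chain structure, which is unchanged under order reversal; a down set in $X$ is an up set in $X^o$; $\max$ in $X^o$ equals $\min$ in $X$; and $A$ with the order inherited from $X^o$ is $A^o$. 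The only mildly subtle step is the $\sigma \mapsto \sigma \cup \{a\}$ contraction trick underlying part 3; everything else is bookkeeping.
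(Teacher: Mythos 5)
Your proposal is correct, and for parts 1, 2, 4, and 5 it is essentially the paper's argument: the paper proves part 2 by the same observation that $ri=\mathrm{id}$ and $ir(\sigma)=\sigma\cap A\leq \sigma$, proves part 4 by the same chain computation showing $\max\sigma=\max(\sigma\cap A)\in A$ and then combining parts 2 and 3 (the paper uses the identity $\max\circ\, i=\max\colon \Sd{A}\to A$ and two-out-of-three, while you factor $\max=\max\circ\, r$ through the retraction; these are interchangeable since $i$ and $r$ are mutually inverse homotopy equivalences), and disposes of part 5 by duality exactly as you do. The only genuine divergence is part 3: the paper simply cites it as well known (referring to Hardie--Vermeulen), whereas you supply a self-contained proof via Quillen's Theorem A, identifying $\max^{-1}(U_a^A)=\Sd{(U_a^A)}$ and contracting it through the zigzag $\sigma\leq\sigma\cup\{a\}\geq\{a\}$. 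That argument is valid (the map $\sigma\mapsto\sigma\cup\{a\}$ is monotone on $\Sd{(U_a^A)}$, so the fiber is contractible and Theorem A applies), and it buys a proof entirely within the toolkit already recorded in the paper's preliminaries, at the cost of a short extra computation the paper chose to outsource to the literature.
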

\begin{proof}
 \begin{enumerate}
  \item If $\sigma\cap A\ne\emptyset$ and $\sigma\subset \tau$, then $\tau\cap A\ne\emptyset$. 

	If $\emptyset\ne \sigma\subset A$, then $\sigma\cap A\ne\emptyset$.
  \item This is essentially shown in \cite[II.9]{MR0050886}. 
	If $\sigma\in \exch{A}$, then $\sigma\cap A \ne \emptyset$ hence $\sigma\cap A\in \Sd{A}$. 
	Clearly, the map $r$ is monotone and $ri(\tau)=\tau$. 
	We have $ir(\sigma)\leq \sigma$ for $\sigma\in \exch{A}$. Therefore $ir\simeq 1_{\exch{A}}$. 
  \item Well known. See \cite{MR1240364} for example. 

	

  \item Suppose $A$ is an up set and $\sigma\in \exch{A}$.
	Since $\sigma$ is a finite chain and $\emptyset\ne\sigma\cap A \subset \sigma$, 
	we have $\max\sigma\geq \max (\sigma\cap A) \in A$, 
	and since $A$ is an up set, we have $\max \sigma \in A$. 
	Therefore $\max \sigma \in \sigma \cap A$ and 
	$\max \sigma \leq \max(\sigma\cap A)$. 

	Since $\max \circ i = \max \colon \Sd{A} \xrightarrow{i} \exch{A} \xrightarrow{\max} A$, 
	we see that $\max\colon \exch{A} \to A$ is a weak homotopy equivalence.
  \item This is the dual of part 4.
 \end{enumerate}
\end{proof}

\begin{coro}
 Let $X$ be a poset, $A_i\subset X$ and $A=\bigcup_i A_i$.

 We have 
\begin{align*}
 \exch{A}&=\bigcup_i \exch{A_i},  &
 A &\simeq_w  
 \bigcup_i \geop{\exch{A_i}}. 
 \intertext{In particular, if $X=\bigcup_i A_i$, we have}
 \Sd{X}&=\bigcup_i \exch{A_i}, &
 X &\simeq_w 
 \bigcup_i \geop{\exch{A_i}}. 
\end{align*}
\end{coro}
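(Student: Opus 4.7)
The plan is to reduce the weak equivalence statement to the set-theoretic identity plus the earlier observation that $\exch{A}\simeq_w A$. First I would verify the identity $\exch{A}=\bigcup_i\exch{A_i}$ directly from the definition of $\exch{\cdot}$: a chain $\sigma\in\Sd{X}$ satisfies $\sigma\cap A\ne\emptyset$ if and only if $\sigma\cap A_i\ne\emptyset$ for some $i$, since $A=\bigcup_iA_i$. The special case $X=\bigcup_iA_i$ follows immediately because $\exch{X}=\Sd{X}$ (every nonempty chain meets $X$).

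For the weak homotopy equivalence $A\simeq_w\bigcup_i\geop{\exch{A_i}}$, I would invoke \cref{whtpofexch}: parts 2 and 3 give $\exch{A}\simeq\Sd{A}\simeq_w A$, and McCord's theorem yields $\geop{\exch{A}}\simeq_w\exch{A}\simeq_w A$. So it remains to identify $\geop{\exch{A}}$ with $\bigcup_i\geop{\exch{A_i}}$.

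For this last identification, the key observation is that each $\exch{A_i}$ is an up set of $\Sd{X}$ by part 1 of \cref{whtpofexch}. By the preceding lemma that order complexes distribute over unions of up sets (or down sets), $\geop{\bigcup_i\exch{A_i}}=\bigcup_i\geop{\exch{A_i}}$. Combining with $\exch{A}=\bigcup_i\exch{A_i}$ gives $A\simeq_w\geop{\exch{A}}=\bigcup_i\geop{\exch{A_i}}$, and the ``in particular'' case is then immediate.

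There is no real obstacle beyond recognizing that, although $A$ itself need not be an up set or a down set of $X$, each $\exch{A_i}$ genuinely is an up set of $\Sd{X}$, so the distributivity of the order complex over the union applies inside $\Sd{X}$; everything else is bookkeeping around \cref{whtpofexch}.
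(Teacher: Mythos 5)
Your proposal is correct and follows essentially the same route as the paper: the set identity $\exch{A}=\bigcup_i \exch{A_i}$ (the paper phrases it via open stars distributing over unions, which is the same computation as your chain-intersection argument), then $A\simeq_w \exch{A}\simeq_w \geop{\exch{A}}$ via \cref{whtpofexch} and McCord, and finally the identification $\geop{\bigcup_i\exch{A_i}}=\bigcup_i\geop{\exch{A_i}}$ using that each $\exch{A_i}$ is an up set of $\Sd{X}$. Nothing is missing.
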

\begin{proof}
 \begin{align*}
  \bigcup_i \exch{A_i} 
  &=\bigcup_i \fpst{\ost_{\ordcpx{X}}(A_i) } \\
  &=\fpst{\bigcup_i \ost_{\ordcpx{X}}(A_i) } \\
  &=\fpst{\ost_{\ordcpx{X}}\left(\bigcup_iA_i\right)}  
  =\fpst{\ost_{\ordcpx{X}}\left(A\right) } 
  =\exch{A}. 
 \end{align*}
 Since $\exch{A_i}$ is an up set, we have
 \[
 A
 \simeq_w \exch{A} 
 \simeq_w  \geop{\exch{A}} 
 = \geop{\bigcup_i \exch{A_i}} 
 = \bigcup_i \geop{\exch{A_i}}.
 \]
\end{proof}

Cianci-Ottina called the following the poset splitting technique:
Let $X$ be a finite $T_0$ space, $A_1$ and $A_2$ be subspaces of $X$. 
Then, $A_1\cup A_2\simeq_w \ordcpx{\exch{A_1}} \cup \ordcpx{\exch{A_2}}$ and 
$\ordcpx{\exch{A_i}}\simeq_w A_i$, 
whence one may obtain the information of the weak homotopy type of $A_1\cup A_2$ 
from those of $A_1$ and $A_2$.

Moreover, since 
\[
 \ordcpx{\exch{A_1}} \cap  \ordcpx{\exch{A_2}} 
 =  \ordcpx{\exch{A_1}\cap \exch{A_2}} \simeq_w \exch{A_1}\cap \exch{A_2}, 
\]
the information on the weak homotopy type of $\exch{A_1}\cap \exch{A_2}$ 
and the inclusion $\exch{A_1}\cap \exch{A_2} \to \exch{A_i}$ would give us more information.

\begin{ex}
 \label{unionhtptrivsusp}
 Let $X=A\cup B$ be a finite $T_0$ space. 
 If $A$ and $B$ are homotopically trivial, then we have
 \[
 X \simeq_w \bS(\exch{A}\cap \exch{B})
 \]
\end{ex}
\begin{proof}
 Since $A$ and $B$ are homotopically trivial, 
 $\ordcpx{\exch{A}}\simeq  \ordcpx{A}\simeq *$, $\ordcpx{\exch{B}}\simeq *$. 
 Therefore
 \begin{align*}
  X&\simeq_w \ordcpx{\exch{A}} \cup \ordcpx{\exch{B}} \\
  &\simeq \susp\left(\ordcpx{\exch{A}} \cap  \ordcpx{\exch{B}}\right) \\
  &= \susp\left(\ordcpx{\exch{A} \cap \exch{B}}\right) \\
   &\simeq_w \bS(\exch{A}\cap \exch{B}).
 \end{align*}
\end{proof}

\begin{lem}
 \label{intersectioncomparable}
 Let $X$ be a poset and $A$ and $B$ nonempty subsets. 
 Then, 
 $\exch{A}\cap \exch{B}\ne\emptyset$ if and only if 
 there exist an element in $A$ and an element in $B$ which are comparable.
\end{lem}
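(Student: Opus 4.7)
The plan is to unpack the definition $\exch{A}=\Set{\sigma\in \Sd{X} | \sigma\cap A\ne\emptyset}$ and verify both directions directly; this should be a short argument with no real obstacle.

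For the forward direction, I would assume $\exch{A}\cap \exch{B}\ne\emptyset$ and pick some $\sigma$ in the intersection. By definition, $\sigma$ is a nonempty finite chain of $X$ meeting both $A$ and $B$, so I can choose $a\in \sigma\cap A$ and $b\in \sigma\cap B$. Since $a,b$ both lie in the chain $\sigma$, they are comparable, giving the desired pair.

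For the reverse direction, I would take comparable elements $a\in A$ and $b\in B$. Then $\{a,b\}$ (which reduces to $\{a\}=\{b\}$ in the degenerate case $a=b\in A\cap B$) is a nonempty chain in $X$, hence an element of $\Sd{X}$. It meets $A$ (via $a$) and $B$ (via $b$), so it lies in $\exch{A}\cap \exch{B}$.

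Both implications are immediate from the definition, so I do not anticipate any obstacle; the only thing to be mildly careful about is the case $a=b$, where $\{a,b\}$ collapses to a singleton but the argument still goes through.
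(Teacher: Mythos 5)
Your proof is correct and follows essentially the same argument as the paper: pick $\sigma$ in the intersection and use that a chain meeting both $A$ and $B$ yields comparable elements, and conversely use the chain $\{a,b\}$. The extra remark about the degenerate case $a=b$ is fine but unnecessary, since $\{a,b\}$ is a nonempty chain in either case.
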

\begin{proof}
 If $\exch{A}\cap \exch{B}\ne\emptyset$, pick an element $\sigma\in \exch{A}\cap \exch{B}$. 
 Since $\sigma \in \exch{A}$, 
 we have $\sigma\cap A \ne \emptyset$, that is, there exists an element $a\in A$ such that $a\in \sigma$. 
 Similarly, there exists an element $b\in B$ such that $b\in \sigma$. 
 Since $\sigma$ is a chain, $a$ and $b$ are comparable. 

 If $a\in A$ and $b\in B$ are comparable, then 
 $\{a,b\}$ is a chain, $\{a,b\}\cap A\ne\emptyset$ and $\{a,b\}\cap B\ne \emptyset$. 
 Hence  $\{a,b\}\in \exch{A} \cap \exch{B}$.
\end{proof}

\begin{defn}
 Let $X$ be a poset and $A,B\subset X$. 

 We say that $A$ and $B$ are \emph{comparable} if 
 there exist an element in $A$ and an element in $B$ which are comparable.

 Otherwise, that is, if any element of $A$ and any element of $B$ are incomparable, 
 we say that $A$ and $B$ are \emph{incomparable}.
\end{defn}

\begin{lem}
 \label{comparedownsetandb}
 Let $A\subset X$ be a  down set and $b\in X$. Then 
 $A$ and $\{b\}$ are comparable if and only if $A \cap U_b\ne \emptyset$.
\end{lem}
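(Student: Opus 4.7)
The plan is to unpack both sides of the equivalence directly from the definitions. Recall that $A$ and $\{b\}$ are comparable means there exists $a\in A$ with either $a\leq b$ or $a\geq b$, while $A\cap U_b\ne\emptyset$ means there exists $a\in A$ with $a\leq b$. So the nontrivial content is to rule out the case where the only witness of comparability is some $a\in A$ with $b\leq a$, and this is exactly where the down set hypothesis is used.

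For the backward direction, I would just pick $a\in A\cap U_b$; then $a\in A$ and $a\leq b$, so $a$ is comparable to $b$, giving the comparability of $A$ and $\{b\}$.

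For the forward direction, suppose $a\in A$ is comparable to $b$. If $a\leq b$ then $a\in U_b$ and we are done. If instead $b\leq a$, then since $A$ is a down set and $a\in A$, we have $b\in A$; combined with $b\leq b$ (so $b\in U_b$), this yields $b\in A\cap U_b$. Either way $A\cap U_b\ne\emptyset$.

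There is no real obstacle here; the only subtlety is remembering to invoke the down set property in the second subcase of the forward direction. The argument is a two-line unpacking and should slot in immediately after the definition of comparability and \cref{elementaryfactsonUaFb}.
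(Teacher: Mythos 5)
Your proposal is correct and is essentially identical to the paper's own proof: the reverse direction is immediate, and the forward direction splits into the cases $a\leq b$ and $b\leq a$, using the down set hypothesis in the latter to conclude $b\in A\cap U_b$. Nothing further is needed.
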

\begin{proof}

 If $A\cap U_b\ne\emptyset$, then clearly $A$ and $\{b\}$ are comparable. 

 If $A$ and $\{b\}$ are comparable,  there exists $a\in A$ such that $a$ and $b$ are comparable. 
 If $a\leq b$, then $a\in A \cap U_b$. 
 If $b\leq a$, since $A$ is a down set, $b\in A$ and so $b\in A \cap U_b$. 
 In both cases, $A\cap U_b\ne \emptyset$.  
\end{proof}

\begin{lem}
 \label{intersectiononepoint}
 Let $X$ be a poset.
 \begin{enumerate}
  \item Let $A\subset X$ be a down set and $b\in X$. 
	
	There exists a homotopy equivalence 
	$\xymatrix@1{q\colon\exch{A}\cap \exch{\{b\}} \ar[r]^-{\simeq} & \Sd{(A\cap U_b)}}$
	which makes the following diagram homotopy commutative:
	\[
	 \xymatrix{
	\exch{A}\cap \exch{\{b\}} \ar[r]^-{q}_-{\simeq} \ar@{}[d]|-*{\bigcap} 
	& \Sd{(A\cap U_b)} \ar@{}[d]|-*{\bigcap} \ar[r]^-{\max}_-{\simeq_w}
	& A\cap U_b \ar@{}[d]|-*{\bigcap} \\
	\exch{A} \ar[r]_-{r}^-{\simeq} & \Sd{A} \ar[r]_-{\max}^-{\simeq_w} & A
	}
	\]
  \item Let $a\in X$ and $B\subset X$ an up set. 
	
	There exists a homotopy equivalence 
	$\xymatrix@1{q\colon\exch{\{a\}}\cap \exch{B} \ar[r]^-{\simeq} & \Sd{(F_a\cap B)}}$
	which makes the following diagram homotopy commutative:
	\[
	 \xymatrix{
	\exch{\{a\}}\cap \exch{B} \ar[r]^-{q}_-{\simeq} \ar@{}[d]|-*{\bigcap} 
	& \Sd{(F_a\cap B)} \ar@{}[d]|-*{\bigcap} \ar[r]^-{\max}_-{\simeq_w}
	& F_a\cap B \ar@{}[d]|-*{\bigcap} \\
	\exch{B} \ar[r]_-{r}^-{\simeq} & \Sd{B} \ar[r]_-{\max}^-{\simeq_w} & B
	}
	\]
\end{enumerate}
\end{lem}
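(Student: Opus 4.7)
The plan is to define, for part~1, the map $q(\sigma) = \sigma \cap A \cap U_b$ on $\sigma \in \exch{A}\cap \exch{\{b\}}$, and to exhibit an explicit monotone homotopy inverse $j(\tau)=\tau\cup\{b\}$. Part~2 is strictly dual to part~1 (apply part~1 to the opposite poset $X^o$ with the up set $B$ playing the role of the down set and $a$ playing the role of $b$), so I would only treat part~1 in detail.

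First I would check that $q$ takes values in $\Sd{(A\cap U_b)}$. Since $\sigma$ is a chain, $\sigma\cap A\cap U_b$ is automatically a chain in $A\cap U_b$, and the only nontrivial issue is nonemptiness. Pick $c\in \sigma\cap A$, which exists because $\sigma\in\exch{A}$. Then $c$ and $b$ both lie in the chain $\sigma$, so they are comparable. If $c\leq b$, then $c\in A\cap U_b$; if $b\leq c$, then the down-set hypothesis forces $b\in A$, hence $b\in A\cap U_b$. Either way $q(\sigma)\neq \emptyset$. This is the one step where the down-set hypothesis is essential.

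Next I would verify that $j\colon \Sd{(A\cap U_b)}\to \exch{A}\cap \exch{\{b\}}$, $j(\tau)=\tau\cup\{b\}$, is well defined: the inclusion $\tau\subset U_b$ guarantees that $\tau\cup\{b\}$ is still a chain, it contains $b$ by construction, and since $\emptyset\neq\tau\subset A$ it meets $A$. Both $q$ and $j$ are visibly monotone. Computing,
\[
 jq(\sigma) = (\sigma\cap A\cap U_b)\cup\{b\} \subset \sigma,
\]
using $b\in\sigma$, so $jq\leq 1$ and hence $jq\simeq 1$; and
\[
 qj(\tau) = \tau \cup (\{b\}\cap A) \supset \tau,
\]
so $qj\geq 1$ and hence $qj\simeq 1$. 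Thus $q$ is a homotopy equivalence with inverse $j$.

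It remains to check that the stated diagram commutes up to homotopy. The right square commutes strictly, because the maximum of a finite chain is intrinsic and does not depend on the ambient poset into which the chain is viewed. For the left square the two composites $\exch{A}\cap \exch{\{b\}}\to \Sd{A}$ are $\sigma\mapsto \sigma\cap A\cap U_b$ (via $q$ then the inclusion) and $\sigma\mapsto \sigma\cap A=r(\sigma)$ (via the inclusion then $r$); these are pointwise comparable by inclusion inside $\Sd{A}$, hence homotopic as maps into $\Sd{A}$. I do not foresee a serious obstacle here: the only delicate point is the nonemptiness of $q(\sigma)$ when $b\notin A$, and that is exactly what the down-set hypothesis is for.
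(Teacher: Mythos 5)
Your proof is correct and follows essentially the same route as the paper: the same map $q(\sigma)=\sigma\cap A\cap U_b$ with the same explicit inverse $\tau\mapsto\tau\cup\{b\}$, the comparabilities $jq\leq 1$, $qj\geq 1$, and the pointwise inequality $q(\sigma)\leq r(\sigma)$ for the left square, with part 2 handled by duality exactly as in the paper. The only cosmetic difference is your nonemptiness argument (comparing an arbitrary $c\in\sigma\cap A$ with $b$) where the paper uses $\min\sigma\in A$; both are fine.
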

\begin{proof}
 We show part 1. Part 2 is the dual.

 Let $A$ be a down set and 
 $\sigma\in \exch{A}\cap \exch{\{b\}}$. 
 As noted in \cref{whtpofexch}, $\min\sigma\in A$ and since 
 $b\in \sigma$, $\min\sigma \leq b$.
 Hence $\min \sigma \in A\cap U_b$ and so $\sigma\cap A\cap U_b\ne\emptyset$. 
 We define a map $q\colon \exch{A}\cap \exch{\{b\}}\to \Sd{(A\cap U_b)}$ 
 by $q(\sigma)=\sigma\cap A\cap U_b$. 

 For $\tau\in \Sd{(A\cap U_b)}$, clearly we have $\tau\cup \{b\}\in \exch{A}\cap \exch{\{b\}}$. 
 We define a map $i_b\colon \Sd{(A\cap U_b)} \to \exch{A}\cap \exch{\{b\}}$ 
 by $i_b(\tau)=\tau \cup \{b\}$. 

 Clearly, $q$ and $i_b$ are order preserving.

 For $\sigma\in \exch{A}\cap \exch{\{b\}}$, 
 because we have $b\in \sigma$ and $\sigma\cap A\cap U_b\subset \sigma$, 
 \[
 i_bq(\sigma)=\left(\sigma\cap A \cap U_b\right)\cup \{b\}\leq \sigma.
 \]
 On the other hand, 
 for $\tau\in \Sd{(A\cap U_b)}$,  since $\tau\subset A\cap U_b$, we see that 
 \[
 qi_b(\tau)=\left(\tau \cup \{b\}\right) \cap (A\cap U_b )
 \geq \tau.
 \]
 Therefore, $q$ and $i_b$ are mutually inverse homotopy equivalences.

 For $\sigma\in \exch{A}\cap \exch{\{b\}}$, we have 
 \[
 q(\sigma)=\sigma\cap A \cap U_b \leq \sigma \cap A = r(\sigma)
 \]
 whence the diagram is homotopy commutative.
\end{proof}

The next proposition and \cref{UaFbmxlmnlsplit,Uamxlmnlsplit} are our poset splitting results.
\begin{prop}
 \label{suspensionposetlemma}
 Let $X=A\cup B$ be a connected finite $T_0$ space. 
 Assume that  $A=\coprod_{i=1}^l A_i$, $B= \coprod_{i=1}^m B_i$, 
 $A_i\simeq_w *$, $B_i \simeq_w *$,  
 and if $i\ne j$, $A_i$ and $A_j$, $B_i$ and $B_j$ are incomparable.
 Then
 \[
  X\simeq_w \left(\bigvee_{\substack{i,j \\ \text{$A_i$ and $B_j$ are comparable}}} 
 \bS\left(\exch{A_i}\cap \exch{B_j}\right) \right)
\vee \left(\bigvee S^1\right)
 \]
 and $\bS\left(\exch{A_i}\cap \exch{B_j}\right) \simeq_w  A_i\cup B_j $.
\end{prop}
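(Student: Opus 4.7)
The plan is to translate everything to the order complex and apply \cref{suspensionlemma}. First, I would work with $K = \ordcpx{\Sd{X}}$, which has the same weak homotopy type as $X$, and decompose $K = L \cup M$ where $L = \bigcup_{i=1}^l \ordcpx{\exch{A_i}}$ and $M = \bigcup_{j=1}^m \ordcpx{\exch{B_j}}$. The identity $\Sd{X} = \bigcup_i \exch{A_i} \cup \bigcup_j \exch{B_j}$ follows from $X = A \cup B$, and since each $\exch{A_i}$ and each $\exch{B_j}$ is an up set of $\Sd{X}$, taking the order complex commutes with these unions.

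Next, I would check that $L$ is the genuine disjoint union of the contractible subcomplexes $\ordcpx{\exch{A_i}}$. The pairwise incomparability of the $A_i$'s yields $\exch{A_i} \cap \exch{A_{i'}} = \emptyset$ for $i \ne i'$ via \cref{intersectioncomparable}, and each piece is contractible because $\ordcpx{\exch{A_i}} \simeq_w \exch{A_i} \simeq \Sd{A_i} \simeq_w A_i \simeq_w *$ by \cref{whtpofexch}, combined with the fact that weakly equivalent simplicial complexes are homotopy equivalent. The same argument handles $M$. Since $\ordcpx{\exch{A_i}} \cap \ordcpx{\exch{B_j}} = \ordcpx{\exch{A_i} \cap \exch{B_j}}$, and this intersection is nonempty exactly when $A_i$ and $B_j$ are comparable (again \cref{intersectioncomparable}), \cref{suspensionlemma} delivers
\[
K \simeq \left(\bigvee_{A_i,\, B_j \text{ comparable}} \susp \ordcpx{\exch{A_i} \cap \exch{B_j}}\right) \vee \bigvee S^1.
\]
Identifying each simplicial suspension with the non-Hausdorff suspension $\bS(\exch{A_i} \cap \exch{B_j})$ via \cref{suspension-wedge-sum} produces the main splitting formula.

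For the remaining identification $\bS(\exch{A_i} \cap \exch{B_j}) \simeq_w A_i \cup B_j$, I would apply \cref{unionhtptrivsusp} to the subspace $A_i \cup B_j$, which is the union of two homotopically trivial subspaces. The technical point to verify is that the intersection $\exch{A_i} \cap \exch{B_j}$ computed inside $\Sd{(A_i \cup B_j)}$ coincides with the one computed inside $\Sd{X}$. This uses the incomparability hypothesis once more: any chain in $X$ meeting both $A_i$ and $B_j$ must have all of its $A$-elements lying in $A_i$ and all of its $B$-elements lying in $B_j$, since otherwise two incomparable components would be linked by the chain order; hence every such chain is already contained in $A_i \cup B_j$.

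The main obstacle is bookkeeping rather than a genuine difficulty: one must track that $\exch{(\cdot)}$ depends on the ambient poset, and that several commutation-with-union identities require the up-set or down-set hypotheses supplied by \cref{whtpofexch}. Once these are carefully arranged, the result is a direct application of the simplicial \cref{suspensionlemma} and the dictionary between finite $T_0$ spaces and their order complexes.
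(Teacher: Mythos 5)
Your proposal is correct and follows essentially the same route as the paper: pass to $\ordcpx{\exch{A}}\cup\ordcpx{\exch{B}}$, use \cref{intersectioncomparable} and the incomparability hypothesis to see that the pieces $\ordcpx{\exch{A_i}}$, $\ordcpx{\exch{B_j}}$ are disjoint and contractible, apply \cref{suspensionlemma}, and identify $\susp\ordcpx{\exch{A_i}\cap\exch{B_j}}$ with $\bS(\exch{A_i}\cap\exch{B_j})\simeq_w A_i\cup B_j$ via \cref{suspension-wedge-sum} and \cref{unionhtptrivsusp}. Your explicit check that a chain of $X$ meeting both $A_i$ and $B_j$ lies entirely in $A_i\cup B_j$ (so that $\exch{A_i}\cap\exch{B_j}$ is the same whether computed in $\Sd{X}$ or in $\Sd{(A_i\cup B_j)}$) is a point the paper leaves implicit, and it is a welcome addition.
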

\begin{proof}
 Since $X=A\cup B$, we have 
 $X\simeq_w \ordcpx{\exch{A}} \cup \ordcpx{\exch{B}}$. 

 We set 
 \begin{align*}
  K&=\ordcpx{\exch{A}} \cup \ordcpx{\exch{B}},  \\
  L&=\ordcpx{\exch{A}}, & M &= \ordcpx{\exch{B}}, \\ 
  L_i&=\ordcpx{\exch{A_i}}, & M_i &= \ordcpx{\exch{B_i}}. 
 \end{align*}

 If $i\ne j$, $A_i$ and $A_j$ are incomparable whence 
 $\exch{A_i}\cap \exch{A_j}=\emptyset$ by \cref{intersectioncomparable}. 
 Therefore $\ordcpx{\exch{A_i}}\cap \ordcpx{\exch{A_j}}=\emptyset$ and 
 \[
  L=\ordcpx{\exch{A}}=\bigcup_i\ordcpx{\exch{A_i}}=\coprod_i\ordcpx{\exch{A_i}} =\coprod_i L_i .
 \]
 Similarly, we see that $M=\coprod_i M_i$. 
 By the assumption, we have $L_i\simeq *$, $M_i\simeq *$. 

 Therefore by \cref{suspensionlemma}, 
 $K$ is homotopy equivalent to the wedge sum of 
 $\susp{(L_i\cap M_j)}$ for those $L_i\cap M_j\ne\emptyset$ and 
 some copies of $S^1$.
 We see that 
 \begin{align*}
  L_i\cap M_j \ne \emptyset 
  &\Leftrightarrow \ordcpx{\exch{A_i}} \cap \ordcpx{\exch{B_j}} \ne\emptyset \\
  &\Leftrightarrow \exch{A_i}\cap \exch{B_j} \ne\emptyset \\ 
  &\Leftrightarrow \text{$A_i$ and $B_j$ are comparable}.
 \end{align*}
 Finally,  by \cref{unionhtptrivsusp}, 
 $\susp{(L_i\cap M_j)} \simeq_w \bS\left(\exch{A_i}\cap \exch{B_j}\right) \simeq_w  A_i\cup B_j $.
\end{proof}


A crucial observation of Cianci-Ottina \cite{CianciOttina} is that 
many small finite spaces can be decomposed into 
the form $U_a \cup F_b \cup \mxl(X) \cup \mnl(X)$. 
We show that this decomposition splits $X$ into wedge sum of suspensions.

\begin{coro}
 \label{UaFbmxlmnlsplit}
 Let $X$ be a connected finite $T_0$ space 
 and assume that there exist $a,b\in X$ such that 
 \[
  X=U_a \cup F_b \cup \mxl(X) \cup \mnl(X).
 \]
 We put 
 \begin{align*}
  A&=\mnl(X)-U_a - \{b\}, &
  A_{\sim b}&=\Set{x\in A | F_b\cap F_x \ne \emptyset}, \\
  B&=\mxl(X)-F_b - \{a\}, &
  B_{\sim a}&=\Set{x\in B | U_a \cap U_x \ne \emptyset} .
 \end{align*}

 If $U_a$ and $F_b$ are comparable, 
 then 
 \[
  X \simeq_w \bS(\exch{U_a}\cap \exch{F_b}) \vee \left(\bigvee_{x\in B_{\sim a}} \bS(U_a\cap U_{x})\right) 
 \vee \left(\bigvee_{x\in A_{\sim b}} \bS(F_b\cap F_{x})\right) \vee \left(\bigvee_k S^1\right) 
 \]
 for some $k\geq 0$, and $\bS(\exch{U_a}\cap \exch{F_b})\simeq_w U_a \cup F_b$.

 If $U_a$ and $F_b$ are incomparable, 
 \[
  X\simeq_w \left(\bigvee_{x\in B_{\sim a}} \bS(U_a\cap U_{x})\right) 
 \vee \left(\bigvee_{x\in A_{\sim b}} \bS(F_b\cap F_{x})\right) \vee \left(\bigvee_k S^1\right)
 \]
 for some $k\geq 0$.


 In particular, if $U_a$ and $F_b$ are incomparable or 
 $U_a\cup F_b\subsetneq X$ or $U_a\cup F_b$ splits into smaller spaces, 
 then $X$ splits into smaller spaces.
\end{coro}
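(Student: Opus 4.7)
The plan is to apply \cref{suspensionposetlemma} to the covering $X = L \cup M$ with $L = U_a \cup A$ and $M = F_b \cup B$, and then identify the resulting wedge summands using \cref{intersectiononepoint,unionhtptrivsusp}.

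First I would verify the hypotheses of \cref{suspensionposetlemma}. The equality $X = L \cup M$ follows from the assumption $X = U_a \cup F_b \cup \mxl(X) \cup \mnl(X)$ together with the inclusions $\mxl(X) \subset U_a \cup F_b \cup B$ (any maximal element is either $a \in U_a$, lies in $F_b$, or belongs to $B$ by definition of $B$) and, dually, $\mnl(X) \subset U_a \cup F_b \cup A$. Next, $L$ decomposes as $U_a \sqcup \coprod_{x\in A}\{x\}$ into pairwise incomparable contractible pieces: distinct minimal elements are incomparable, and any $x \in A$ is incomparable to each $y \in U_a$, since $x \leq y \leq a$ would place $x$ in $U_a$ while $y \leq x$ would force $y = x$ by minimality of $x$. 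The argument for $M$ is dual.

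Next, I would enumerate and simplify the resulting wedge summands. Writing $L_0 = U_a$, $L_x = \{x\}$ for $x \in A$, and analogously $M_0, M_y$, \cref{suspensionposetlemma} yields $X \simeq_w \bigvee_{(L_i,M_j)\text{ comparable}} \bS(\exch{L_i} \cap \exch{M_j}) \vee \bigvee_k S^1$. By \cref{comparedownsetandb} and its dual, $(L_0, M_y)$ is comparable precisely when $y \in B_{\sim a}$, and $(L_x, M_0)$ precisely when $x \in A_{\sim b}$; \cref{intersectiononepoint} then identifies the corresponding suspension terms as $\bS(U_a \cap U_y)$ and $\bS(F_b \cap F_x)$. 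For a singleton-singleton pair $(\{x\},\{y\})$ with $x \in A$, $y \in B$, comparability is equivalent to $x \leq y$, and then $\exch{\{x\}} \cap \exch{\{y\}}$ has $\{x,y\}$ as its minimum element, hence is contractible; by \cref{invariance-of-susp-and-wedge}, $\bS$ of it is weakly contractible and drops out of the wedge. The leading summand $\bS(\exch{U_a} \cap \exch{F_b})$, present exactly when $U_a$ and $F_b$ are comparable, is identified with $U_a \cup F_b$ by \cref{unionhtptrivsusp}.

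Finally, for the ``in particular'' statement, it remains to exhibit a splitting into smaller spaces in each of the three alternatives. If $U_a$ and $F_b$ are incomparable, the leading term is absent, and each remaining summand has the form $\bS^{n_i} C_i$ with $\card{C_i} < \card{X}$: each $U_a \cap U_y$ (with $y \notin U_a$) and each $F_b \cap F_x$ (with $x \notin F_b$) is a proper subspace of $X$, and $S^1 \simeq_w \bS^2 \emptyset$. If $U_a \cup F_b \subsetneq X$, the identification $\bS(\exch{U_a} \cap \exch{F_b}) \simeq_w U_a \cup F_b$ rewrites the leading term in the required form; if $U_a \cup F_b$ itself splits into smaller spaces, one substitutes that splitting into the leading term. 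The main obstacle in the argument is the singleton-singleton bookkeeping: one must verify carefully that those cross intersections have a minimum and hence contribute only weakly contractible suspensions, which is what justifies the absence of any corresponding term in the stated formula.
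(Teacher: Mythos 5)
Your proposal is correct and follows essentially the same route as the paper: it applies \cref{suspensionposetlemma} to the decomposition $U_a\amalg\coprod_{x\in A}\{x\}$ and $F_b\amalg\coprod_{y\in B}\{y\}$, identifies the mixed terms via \cref{comparedownsetandb,intersectiononepoint}, discards the singleton--singleton terms because $\{x,y\}$ is the minimum of $\exch{\{x\}}\cap\exch{\{y\}}$, and handles the leading term $\bS(\exch{U_a}\cap\exch{F_b})\simeq_w U_a\cup F_b$ exactly as in the paper. The cardinality checks you give for the ``in particular'' clause match the paper's appeal to \cref{elementaryfactsonUaFb}.
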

\begin{proof}
 Apply \cref{suspensionposetlemma} to 
 $U_a \cup A=U_a\amalg \coprod_{x\in A} \{x\}$ and $F_b\cup B=F_b\amalg \coprod_{x\in B}\{x\}$.
 
 For the reader's convenience, we record the details.

 Clearly, $X=U_a\cup A \cup F_b \cup B$. 

 Since $A\subset \mnl(X)$, different elements in $A$ are incomparable. 
 Let $x\in A$. Since $x$ is a minimal element and $x\not\in U_a$,  
 $U_a\cap U_x=U_a\cap \{x\}=\emptyset$. Hence by \cref{comparedownsetandb}, $\{x\}$ and 
 $U_a$ are incomparable.
 Moreover, $U_a$ is contractible. 

 Similarly, different elements of $B$ are incomparable, 
 $F_b$ and elements of $B$ are incomparable and $F_b\simeq *$. 

 Therefore, we can apply \cref{suspensionposetlemma}.

  By \cref{comparedownsetandb}, 
 $U_a$ and $x\in B$ is comparable if and only if $U_a\cap U_x\ne \emptyset$, namely, $x\in B_{\sim a}$. 
 In this case, we have 
 $\exch{U_a}\cap \exch{\{x\}} \simeq_w U_a\cap U_x$ by \cref{intersectiononepoint} 
 and either $U_a\cap U_x\simeq *$ or $\card{U_a\cap U_x}<\card{X}$ by \cref{elementaryfactsonUaFb}.

 Similarly, 
 $x\in A$ and $F_b$ are comparable if and only if $x\in A_{\sim b}$ and in this case, 
 $\exch{F_b}\cap \exch{\{x\}}\simeq_w F_b \cap F_x$ 
 and either $F_b\cap F_x\simeq *$ or $\card{F_b\cap F_x}<\card{X}$.

 If $x\in A$ and $y\in B$ are comparable, then $x\leq y$ and  
 $\{x,y\}= \min\left(\exch{\{x\}}\cap \exch{\{y\}}\right)$, hence  
 $\exch{\{x\}}\cap \exch{\{y\}} \simeq *$.
\end{proof}

\begin{coro}
 \label{Uamxlmnlsplit}
 Let $X$ be a connected finite $T_0$ space. 
 If there exists $a\in X$ such that 
 \[
    X=U_a\cup \mxl(X)\cup \mnl(X)
 \]
 then 
  \[
  X\simeq_w \left(\bigvee_{\substack{b\in \mxl(X) \\ U_a\cap U_b\ne\emptyset} } \bS(U_a\cap U_b) \right) 
 \vee \left(\bigvee_k S^1 \right)
 \]
 for some $k\geq 0$. 

  In particular, $X$ splits into smaller spaces.
\end{coro}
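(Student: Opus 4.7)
The plan is to apply \cref{suspensionposetlemma} with $A \coloneqq U_a \cup (\mnl(X) \setminus U_a)$ and $B \coloneqq \mxl(X)$. The hypothesis $X = U_a \cup \mxl(X) \cup \mnl(X)$ immediately gives $X = A \cup B$, so it remains to verify the component decomposition required by the proposition and then to identify each wedge summand it produces.

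First I would decompose $A$ as $U_a \sqcup \coprod_{x \in \mnl(X) \setminus U_a} \{x\}$ into contractible, pairwise incomparable components. Contractibility of $U_a$ is immediate since $a$ is its maximum, and the singletons are trivially contractible. For the incomparability of $U_a$ with such a singleton $\{x\}$, note that $U_a$ is a down set and $x$ is minimal with $x \notin U_a$, so $U_a \cap U_x = U_a \cap \{x\} = \emptyset$, and \cref{comparedownsetandb} applies. Two singletons of distinct minimal elements are incomparable as well. Similarly, $B = \coprod_{b \in \mxl(X)} \{b\}$ decomposes into contractible, pairwise incomparable singletons.

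Then I would apply \cref{suspensionposetlemma} and analyze the resulting wedge summands case by case. A pair $(U_a, \{b\})$ is comparable iff $U_a \cap U_b \ne \emptyset$ by \cref{comparedownsetandb}, and in that case \cref{intersectiononepoint} combined with \cref{invariance-of-susp-and-wedge} yields $\bS(\exch{U_a} \cap \exch{\{b\}}) \simeq_w \bS(U_a \cap U_b)$. A pair $(\{x\}, \{b\})$ with $x \in \mnl(X) \setminus U_a$ and $b \in \mxl(X)$ is comparable only when $x \leq b$, and then $\exch{\{x\}} \cap \exch{\{b\}}$ is the poset of chains of $X$ containing both $x$ and $b$, which has minimum element $\{x, b\}$; being a poset with a minimum it is contractible, so $\bS(\exch{\{x\}} \cap \exch{\{b\}})$ is weakly contractible and is absorbed into the basepoint of the wedge. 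Collecting the surviving summands together with the $S^1$ factors coming from the $k \geq 0$ in \cref{suspensionposetlemma} yields the stated formula.

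For the ``splits into smaller spaces'' conclusion, \cref{elementaryfactsonUaFb} shows that each $U_a \cap U_b$ is either contractible (in which case $\bS(U_a \cap U_b) \simeq_w *$ and can be dropped) or satisfies $\Card{U_a \cap U_b} < \Card{X}$; the remaining $S^1$ summands can be written as $\bS^2 \emptyset$ with $\Card{\emptyset} = 0 < \Card{X}$. No step is conceptually difficult; the only subtlety is observing that the singleton-singleton comparable pairs contribute only contractible wedge summands, so the nontrivial pieces are indexed precisely by those $b \in \mxl(X)$ with $U_a \cap U_b \ne \emptyset$, exactly as stated.
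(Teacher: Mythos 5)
Your proposal is correct and follows essentially the same route as the paper: the corollary is obtained by applying \cref{suspensionposetlemma} to the decomposition into $U_a$ together with the leftover minimal elements as singletons on one side and the maximal elements as singletons on the other, using \cref{comparedownsetandb} for comparability, \cref{intersectioninterval}/\cref{intersectiononepoint} to identify $\exch{U_a}\cap\exch{\{b\}}\simeq_w U_a\cap U_b$, and the observation that comparable singleton--singleton pairs give chains with minimum $\{x,b\}$, hence contractible summands. This is exactly the argument the paper records for \cref{UaFbmxlmnlsplit}, of which this corollary is the analogous special case.
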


\section{Weak homotopy types of posets of intervals}
\label{sec-interval}

By \cref{UaFbmxlmnlsplit}, 
if $X=U_a \cup F_b \cup \mxl(X) \cup \mnl(X)$ 
and $U_a\cup F_b\subsetneq X$, then $X$ splits into smaller spaces. 
However, in the case where $X=U_a\cup F_b$, we need to study 
the weak homotopy type of $U_a\cup F_b\simeq_w\bS(\exch{U_a}\cap \exch{F_b})$. 
In this section, we analyse the weak homotopy type of $\exch{U_a}\cap \exch{F_b}$ 
using the poset of intervals.

\begin{defn}
Let $X$ be a poset and $A,B\subset X$. 
Define a subposet $I(A,B)$ of $X^o\times X$ by 
\[
 I^X(A,B) \coloneqq \Set{(a,b) \in A\times B | a\leq b} \subset X^o\times X.
\]
 We often omit the superscript $X$ and write $I(A,B)$ instead of $I^X(A,B)$.

The poset $I(A,B)$ is the set of closed intervals in $X$ 
whose end points are in $A$ and $B$, ordered by inclusion. 

Note that $I^X(A,B) \cong I^{X^o}(B,A)$ as posets.
\end{defn}

\begin{lem}
 \label{intersectioninterval}
Let $X$ be a poset and $A,B\subset X$. 
 \begin{enumerate}
  \item If $A$ is a down set or $B$ is an up set, then 
	there exists a weak homotopy equivalence 
	$\xymatrix@1{e\colon \exch{A}\cap \exch{B} \ar[r]^-{\simeq_w} & I(A,B)}$. 

	If $A$ is a down set, then the following left diagram is commutative, 
	and if $B$ is an up set, then the following right diagram is commutative,  
	where the maps $p_A$ and $p_B$ are projections.
	\[
	 \xymatrix{
	\exch{A} \ar[d]_-{\min}^-{\simeq_w} 
	& \exch{A}\cap \exch{B} \ar@{}[l]|-*{\supset} 
	\ar[d]^-{e}_-{\simeq_w}
	& \exch{A}\cap \exch{B} \ar@{}[r]|-*{\subset} 
	\ar[d]^-{e}_-{\simeq_w}
	& \exch{B} \ar[d]^-{\max}_-{\simeq_w} \\
	A^o & 	 I(A,B)  \ar[l]^-{p_A}  & 	 I(A,B)   \ar[r]_-{p_B}  & B
	}
	\]
  \item  If both $A$ and $B$ are up sets, 
	 the projection $p_B$ gives a weak homotopy equivalence $p_B\colon I(A,B) \to A\cap B$ 
	 and the following diagram is commutative.
	 \begin{align*}
	  \exch{A}\cap \exch{B}&\simeq_w I(A,B) \simeq_w A\cap B 
	 \end{align*}
	 \[
	 \xymatrix{
	 \exch{A}\cap \exch{B} \ar[rd]_-{\max } \ar[rr]^-{e} & 
	 & I(A,B)  \ar[ld]^-{p_B} \\
	 & A\cap B &
	 }
	 \]
  \item  If both $A$ and $B$ are down sets, 
	 the projection $p_A$ gives a weak homotopy equivalence 
	 $p_A\colon I(A,B) \to \left(A\cap B\right)^o$
	 and the following diagram is commutative.
	 \begin{align*}
	  \exch{A}\cap \exch{B}&\simeq_w I(A,B) \simeq_w A\cap B 
	 \end{align*}
	 \[
	 \xymatrix{
	 \exch{A}\cap \exch{B} \ar[rd]_-{\min } \ar[rr]^-{e} & 
	 & I(A,B)  \ar[ld]^-{p_A} \\
	 & \left(A\cap B\right)^o &
	 }
	 \]
 \end{enumerate}
\end{lem}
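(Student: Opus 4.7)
The plan is to define $e$ by $e(\sigma) = (\min(\sigma \cap A), \max(\sigma \cap B))$, verify it is a well-defined monotone map, and then prove each weak-equivalence claim by applying Quillen's Theorem~A to the appropriate projection. That both coordinates lie in $A$ and $B$ is immediate, and the inequality $\min(\sigma \cap A) \leq \max(\sigma \cap B)$ required to land in $I(A,B)$ uses \cref{whtpofexch}: when $A$ is a down set, $\min \sigma = \min(\sigma \cap A)$ is a lower bound for all of $\sigma$, and dually when $B$ is an up set. Monotonicity is clear since enlarging $\sigma$ can only decrease the first coordinate (in $X$) and increase the second, matching the $X^o \times X$ order on $I(A,B)$.

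For part~1, I plan to show $e$ is a weak homotopy equivalence by verifying via Quillen's Theorem~A that $e^{-1}(U_{(a,b)})$ is contractible for every $(a,b) \in I(A,B)$. This preimage consists of chains $\sigma$ meeting $A$ and $B$ with $\min(\sigma \cap A) \geq a$ and $\max(\sigma \cap B) \leq b$. The key candidate retraction is
\[
 r(\sigma) \coloneqq (\sigma \cap F_a \cap U_b) \cup \{a,b\},
\]
a chain sitting inside the interval $F_a \cap U_b$ and containing both $a$ and $b$. Its image is the poset of chains in $F_a \cap U_b$ containing $\{a,b\}$, which has minimum $\{a,b\}$ (or $\{a\}$ if $a=b$) and is therefore contractible. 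To see that $r$ is homotopic to the identity on $e^{-1}(U_{(a,b)})$, I factor it through a zigzag of monotone maps in $\Sd{X}$. When $A$ is a down set, one has $\sigma \subset F_a$, so I may adjoin $a$, then intersect with $U_b$, then adjoin $b$; each step is comparable (under $\subset$) to the previous one, yielding the required homotopy. When $B$ is an up set the dual sequence (adjoin $b$, intersect with $F_a$, adjoin $a$) works instead. The commutativity of the left diagram is then the identity $p_A \circ e(\sigma) = \min(\sigma \cap A) = \min \sigma$ when $A$ is a down set, and of the right diagram $p_B \circ e(\sigma) = \max(\sigma \cap B) = \max \sigma$ when $B$ is an up set.

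For part~2, when both $A$ and $B$ are up sets and $(a,b) \in I(A,B)$, one has $a \leq b$ with $a \in A$, forcing $b \in A$; hence $p_B$ indeed lands in $A \cap B$. To apply Quillen's Theorem~A to $p_B$, fix $y \in A \cap B$ and examine $p_B^{-1}(U_y) = \{(a',b') \in I(A,B) : b' \leq y\}$. The map $h(a',b') = (a',y)$ is monotone, takes values in this preimage (since $y \in A \cap B$ and $a' \leq b' \leq y$), and satisfies $h \geq \mathrm{id}$ in $I(A,B)$; its image is $\{(a',y) : a' \in A \cap U_y\} \cong (A \cap U_y)^o$, which has minimum $(y,y)$ because $y$ is the maximum of $A \cap U_y$. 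Hence the preimage is contractible, and the triangle commutes via $p_B \circ e(\sigma) = \max(\sigma \cap B) = \max \sigma$. Part~3 is formally dual via the tautological isomorphism $I^X(A,B) \cong I^{X^o}(B,A)$, which exchanges down sets in $X$ with up sets in $X^o$ and matches $p_A^X$ with the second-coordinate projection treated in part~2.

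The main obstacle will be verifying that the retraction $r$ in part~1 really is homotopic to the identity: the naive idea of adjoining $\{a,b\}$ all at once fails, because without one of the two hypotheses, elements of $\sigma$ need not be comparable to both $a$ and $b$. Running the zigzag in opposite orders depending on which hypothesis holds is exactly what bypasses this obstruction.
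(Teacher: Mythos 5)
Your proposal is correct and takes essentially the same route as the paper: the same comparison map $e$ (your uniform formula $e(\sigma)=(\min(\sigma\cap A),\max(\sigma\cap B))$ specializes, under the respective hypotheses, to the paper's $(\min\sigma,\max(\sigma\cap B))$ and $(\min(\sigma\cap A),\max\sigma)$), Quillen's Theorem~A, and the same adjoin--intersect--adjoin zigzag contracting each fiber $e^{-1}\left(U_{(a,b)}\right)$ onto $\{a,b\}$. Your treatment of part~2 via Theorem~A applied to $p_B$ (with the homotopy $\mathrm{id}\leq h\geq \mathrm{const}_{(y,y)}$) and of part~3 by duality likewise matches the paper's argument.
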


\begin{proof}
\begin{enumerate}
 \item We consider the case where $A$ is a down set. 

       Let $\sigma\in \exch{A}\cap \exch{B}$. 
       As we noted in \cref{whtpofexch}, 
       $\min\sigma\in A$. 
       Since $\sigma\cap B\ne\emptyset$ and $\sigma\cap B$ is a finite chain, 
       there exists the maximum element $\max(\sigma\cap B)\in B$. 
       We define a map 
       $e\colon \exch{A}\cap \exch{B} \to I(A,B)$ 
       by $e(\sigma)=(\min\sigma, \max(\sigma\cap B))$. 
       We have $p_Ae(\sigma)=\min \sigma$, that is, the left diagram is commutative.
       
       If $\sigma\subset \tau$, then 
       \[
	\min\tau \leq \min\sigma \leq \max(\sigma \cap B) \leq \max(\tau\cap B),
       \]
       hence $e$ is monotone.

       Suppose $(a,b)\in I(A,B)$, namely, $a\in A$, $b\in B$, and $a\leq b$. 

       Clearly, $\{a,b\}\in \exch{A}\cap \exch{B}$ and $e(\{a,b\})=(a,b)$, 
       hence $e^{-1}\left(\los[I(A,B)]{(a,b)}\right)\ne \emptyset$. 

       If $\sigma\in \exch{A}\cap \exch{B}$ and $e(\sigma)\leq (a,b)$, 
       that is, 
       \[
	a\leq \min \sigma \leq \max(\sigma\cap B) \leq b, 
       \]
       then we easily see that
       \[
	\sigma\cup \{a\},\; \left(\sigma\cap B\right)\cup \{a\}, \;
       \left(\sigma\cap B\right)\cup \{a,b\},\; \{a,b\} \in 
       e^{-1}\left(\los[I(A,B)]{(a,b)}\right), 
       \]
       and
       \[
	\sigma\leq \sigma\cup \{a\} \geq \left(\sigma\cap B\right)\cup \{a\} 
       \leq \left(\sigma\cap B\right)\cup \{a,b\} \geq \{a,b\},
       \]
       and hence $e^{-1}\left(\los[I(A,B)]{(a,b)}\right)\simeq *$. 
       
       Therefore, by Quillen's Theorem A, $e$ is a weak homotopy equivalence.
       %
	      

	      
       %
 \item   Since both $A$ and $B$ are up sets, 
	 $\max\sigma \in A\cap B$ for any element $\sigma\in \exch{A}\cap \exch{B}$. 
	 Hence we obtain a map $\max\colon \exch{A}\cap \exch{B} \to A\cap B$

	 If $(a,b)\in I(A,B)$, then $a\leq b$, and since $A$ is an up set, 
	 $b\in A\cap B$, namely, $p_B((a,b))\in A\cap B$.
	 Hence the projection gives a map $p_B\colon I(A,B) \to A\cap B$.

	 In this ($B$ being an up set) case, the map $e\colon \exch{A}\cap \exch{B} \to I(A,B)$ 
	 defined by $e(\sigma)=(\min(\sigma \cap A), \max\sigma)$ gives a weak homotopy equivalence, 
	 and we have $p_Be = \max$.

	 Suppose $b\in A\cap B$.  

	 Since $(b,b)\in I(A,B)$ and $p_B((b,b))=b$,  
	 $p_B^{-1}\left(\los[(A\cap B)]{b}\right)\ne \emptyset$.
	 
	 If $(a,b')\in I(A,B)$ and $p_B((a,b'))\leq b$, 
	 then $a\leq b'\leq b$ and 
	 $(a,b')\leq (a,b)\geq (b,b)$. Hence $p_B^{-1}\left(\los[(A\cap B)]{b}\right)\simeq *$. 
	 
	 Therefore $p_B$ is a weak homotopy equivalence.
	 %
 \item This is the dual of part 2.
\end{enumerate}
\end{proof}

\begin{coro}
 \label{unionhtptriv}
 Let $X=A\cup B$ be a finite $T_0$ space. If $A$ and $B$ are homotopically trivial down sets, 
 then we have 
 \[
  X \simeq_w \bS (A\cap B)
 \]
\end{coro}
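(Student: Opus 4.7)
The plan is to simply combine three previously established results, so there is no real obstacle here. The corollary is essentially \cref{unionhtptrivsusp} specialized to the case when $A$ and $B$ are down sets, with the intersection $\exch{A}\cap \exch{B}$ replaced by the simpler object $A\cap B$.

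First I would invoke \cref{unionhtptrivsusp}: since $A$ and $B$ are homotopically trivial (the hypothesis that they are down sets is not needed here), we immediately get
\[
X \simeq_w \bS\bigl(\exch{A}\cap \exch{B}\bigr).
\]
The remaining task is to identify the weak homotopy type of $\exch{A}\cap \exch{B}$.

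Next, since both $A$ and $B$ are down sets, part 3 of \cref{intersectioninterval} applies and yields weak homotopy equivalences
\[
\exch{A}\cap \exch{B} \simeq_w I(A,B) \simeq_w (A\cap B)^o.
\]
Using the fact (recorded as a corollary of McCord's theorem earlier in the excerpt) that any finite $T_0$ space is weak homotopy equivalent to its opposite, this gives $\exch{A}\cap \exch{B}\simeq_w A\cap B$.

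Finally, I would apply part 2 of \cref{invariance-of-susp-and-wedge} to pass the weak homotopy equivalence through the non-Hausdorff suspension, obtaining $\bS(\exch{A}\cap \exch{B}) \simeq_w \bS(A\cap B)$. Chaining the three weak equivalences produces $X\simeq_w \bS(A\cap B)$, as desired.
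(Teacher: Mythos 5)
Your proposal is correct and follows the same route as the paper: the paper's proof is exactly the chain $X \simeq_w \bS(\exch{A}\cap \exch{B}) \simeq_w \bS(A\cap B)$, which implicitly invokes \cref{unionhtptrivsusp}, \cref{intersectioninterval} (part 3, using $A\cap B \simeq_w (A\cap B)^o$), and the suspension invariance of \cref{invariance-of-susp-and-wedge}, just as you spell out. The only difference is that the paper also remarks the result can be shown directly via Quillen's Theorem A, but it does not carry that alternative out either.
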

\begin{proof}
 \[
  X \simeq_w \bS(\exch{A}\cap \exch{B}) \simeq_w \bS(A\cap B)
 \]
 Of course, one can directly show this using Quillen's Theorem A.
\end{proof}

\begin{rem}
 It is well known that 
 $U_a\cup U_b$ is \emph{homotopy equivalent} to $\bS(U_a\cap U_b)$. 
 However, in general, even if both $A$ and $B$ are 
 \emph{contractible} down sets, 
 $A\cup B$ and $\bS(A\cap B)$ may not be homotopy equivalent. 
 For example, consider the space $S^1_3$ in \cref{S12andS13}. 
 $S^1_3=\left(U_{a_0}\cup U_{a_1}\right) \cup U_{a_2}$,  
 and $U_{a_0}\cup U_{a_1}$ and $U_{a_2}$ are contractible. 
 Since $\left(U_{a_0}\cup U_{a_1}\right) \cap U_{a_2}=\{b_0,b_1\}$, 
 $\bS\left(\left(U_{a_0}\cup U_{a_1}\right) \cap U_{a_2}\right)$ is 
 homeomorphic to $S^1_2$. 
 Both $S^1_3$ and $S^1_2$ are minimal and they are not homeomorphic, 
 so they are not homotopy equivalent.
\end{rem}

\begin{coro}
 Let $X$ be a finite $T_0$ space and 
 $a,b\in X$. We have
 \[
  U_a\cup F_b \simeq_w \bS(I(U_a,F_b)).
 \]
\end{coro}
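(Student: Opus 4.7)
The plan is to combine Example \ref{unionhtptrivsusp} with Lemma \ref{intersectioninterval}. The key observation is that the subspace $U_a$ has $a$ as a maximum and hence is contractible (the constant map to $a$ is homotopic to the identity via $x \mapsto x \leq a$), and dually $F_b$ has $b$ as a minimum and so is contractible. In particular both are homotopically trivial, so Example \ref{unionhtptrivsusp}, applied to the finite $T_0$ space $U_a \cup F_b$ (as a subspace of the ambient poset) with $A = U_a$ and $B = F_b$, yields
\[
U_a \cup F_b \simeq_w \bS\bigl(\exch{U_a} \cap \exch{F_b}\bigr).
\]

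Next, $U_a$ is a down set of $U_a \cup F_b$ (and $F_b$ is an up set), so either half of the hypothesis of Lemma \ref{intersectioninterval}(1) is satisfied. That lemma supplies a weak homotopy equivalence
\[
\exch{U_a} \cap \exch{F_b} \simeq_w I(U_a, F_b).
\]
Applying the non-Hausdorff suspension to both sides and invoking the weak homotopy invariance from Corollary \ref{invariance-of-susp-and-wedge}(2) gives
\[
\bS\bigl(\exch{U_a} \cap \exch{F_b}\bigr) \simeq_w \bS\bigl(I(U_a, F_b)\bigr),
\]
and stringing the three equivalences together produces the stated conclusion.

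There is essentially no obstacle: the corollary is a straight packaging of the two previous results plus the fact that $U_a$ and $F_b$ are contractible. The only point that might warrant a sanity check is the degenerate case where $U_a$ and $F_b$ are incomparable, since then $U_a \cup F_b$ is disconnected; but in that case $I(U_a, F_b) = \emptyset$, $\bS \emptyset \simeq_w S^0$, and $U_a \cup F_b$ is a disjoint union of two contractible spaces, which is also weak equivalent to $S^0$, so the formula remains consistent. The chain of equivalences above does not require connectedness of $U_a \cup F_b$, since Example \ref{unionhtptrivsusp} and Lemma \ref{intersectioninterval}(1) do not either.
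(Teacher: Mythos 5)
Your proposal is correct and follows essentially the same route as the paper, whose proof is precisely the chain $U_a\cup F_b \simeq_w \bS\left(\exch{U_a}\cap \exch{F_b}\right) \simeq_w \bS\left(I(U_a,F_b)\right)$ obtained from \cref{unionhtptrivsusp} (using that $U_a$ and $F_b$ are contractible) and \cref{intersectioninterval}. Your extra remarks on the disconnected case and on $\bS$ preserving weak equivalences are fine but not needed beyond what the paper already records.
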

\begin{proof}
 \begin{align*}
  U_a\cup F_b \simeq_w \bS(\exch{U_a}\cap \exch{F_b}) \simeq_w \bS(I(U_a,F_b)).
 \end{align*}
\end{proof}

We consider the height of the interval.

\begin{defn}
 Let $X$ be a finite $T_0$ space.
 
 The \emph{length} $\len{c}$ of a chain $c$ of $X$ is 
 one less than the cardinality of $c$: $\len{c}\coloneqq \card{c}-1$.
 The number
 \[
  \hgt(X)\coloneqq \max\Set{\len{c} | c \text{ is a chain of $X$}}
 \]
 is called the \emph{height} of $X$.
\end{defn}

If $\hgt(X)=1$, then $\geop{X}$ is one dimensional simplicial complex, 
whence each connected component of $X$ is weak homotopy equivalent to 
a wedge of circles and $\bS X$ is weak homotopy equivalent to a wedge of spheres of dimension at most $2$.


\begin{lem}
 Let $X$ be a finite $T_0$ space and $A,B\subset X$. 
 \begin{enumerate}
  \item  $\hgt(I(A,B))\leq \hgt(A\cup B)$. If $A\cap B=\emptyset$, then $\hgt(I(A,B))< \hgt(A\cup B)$. 
  \item If both $A$ and $B$ are up sets or down sets, then $\hgt(A\cup B)=\max\{\hgt(A), \hgt(B)\}$. 
 \end{enumerate}
\end{lem}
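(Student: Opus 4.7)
For part 1, the plan is to turn each chain $(a_0,b_0) < (a_1,b_1) < \cdots < (a_n,b_n)$ in $I(A,B) \subset X^o \times X$ into an explicit chain inside $A\cup B$ by straightening. The order on $X^o\times X$ gives $a_0 \geq a_1 \geq \cdots \geq a_n$ and $b_0 \leq b_1 \leq \cdots \leq b_n$ in $X$, with at least one of the two inequalities strict at each step, and $a_0\leq b_0$ holds because $(a_0,b_0)\in I(A,B)$. I would therefore concatenate to form
\[
 a_n \leq a_{n-1} \leq \cdots \leq a_0 \leq b_0 \leq b_1 \leq \cdots \leq b_n
\]
inside $A\cup B$ and count strict inequalities: on the $a$-side and $b$-side they total at least $n$ by the ``at least one strict'' condition, and a brief monotonicity argument shows that the only way an $a_i$ can coincide with a $b_j$ is $a_i = a_0 = b_0 = b_j$, since $a_i\leq a_0\leq b_0\leq b_j$. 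Hence the sequence has length at least $n$, giving $\hgt(I(A,B))\leq \hgt(A\cup B)$; and when $A\cap B=\emptyset$ the coincidence $a_0 = b_0$ is ruled out, yielding the extra $+1$.

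For part 2, I would reduce to the statement that every chain in $A\cup B$ lies entirely in $A$ or entirely in $B$. Suppose $A$ and $B$ are both up sets and $x_0<x_1<\cdots<x_n$ is a chain in $A\cup B$. Let $i$ be the smallest index with $x_i\in A$; the up-set property of $A$ forces $x_i,\ldots,x_n\in A$, so if $i=0$ the chain lies in $A$. Otherwise $x_{i-1}\in B$ by minimality of $i$, and the up-set property of $B$ propagates upward to give $x_i,\ldots,x_n\in B$, so together with $x_0,\ldots,x_{i-1}\in B$ the whole chain lies in $B$. The down-set case is dual. Combined with the trivial inequality $\hgt(A\cup B)\geq \max\{\hgt(A),\hgt(B)\}$ coming from $A,B\subset A\cup B$, this gives the claimed equality.

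The main care point is the bookkeeping in part 1: I need to combine the bound ``number of strict $a$-steps $+$ number of strict $b$-steps $\geq n$'' with the extra middle step $a_0\leq b_0$ without double-counting, and to be careful that no hidden coincidences among $\{a_i\}$ and $\{b_j\}$ shorten the straightened chain. The observation $a_i\leq a_0\leq b_0\leq b_j$ confines all such coincidences to a single point $a_0=b_0$, which is exactly the case excluded by $A\cap B=\emptyset$; beyond this the argument is essentially a direct unwinding of the definitions.
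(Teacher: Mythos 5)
Your proposal is correct and follows essentially the same route as the paper: for part 1 you straighten a chain in $I(A,B)$ into the monotone sequence $a_n\leq\dots\leq a_0\leq b_0\leq\dots\leq b_n$ and count strict steps (with $a_0<b_0$ providing the extra step when $A\cap B=\emptyset$), and for part 2 you show every chain of $A\cup B$ lies entirely in $A$ or in $B$ when both are up sets, which is exactly the paper's argument via the minimum of the chain.
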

\begin{proof}
\begin{enumerate}
 \item  Suppose $(a_0,b_0)< (a_1,b_1)< \dots < (a_k,b_k)$ is a chain of $I(A,B)$ of length $k$. 
	We have 
	\[
	a_k\leq \dots \leq a_1\leq a_0\leq b_0\leq \dots \leq b_k
	\]
	in $A\cup B$ and 
	for each $1\leq i\leq k$, $a_i < a_{i-1}$ or $b_{i-1}<b_i$.
	Therefore the length of the chain 
	$\{a_k,\dots, a_0,\dots, b_k\}$ of $A\cup B$ 
	is greater than or equal to $k$. If $a_0<b_0$, then it is greater than or equal to $k+1$.
	Hence, $\hgt(I(A,B))\leq \hgt(A\cup B)$ and if $A\cap B=\emptyset$, 
	then $\hgt(I(A,B))< \hgt(A\cup B)$. 
 \item Clearly we have $\max\{\hgt(A), \hgt(B)\}\leq \hgt(A\cup B)$.

       Assume that both $A$ and $B$ are up sets. 
       Let $c$ be a nonempty chain of $A\cup B$. 
       If $\min(c)\in A$, then $c\subset A$ for $A$ is an up set, 
       so $\len{c}\leq \hgt(A)$. Similarly, if $\min(c)\in B$, then $\len{c}\leq \hgt(B)$. 
       Therefore, $\hgt(A\cup B)\leq \max\{\hgt(A), \hgt(B)\}$. 
\end{enumerate}
\end{proof}

\begin{defn}
 Let $X$ be a finite $T_0$ space. We set
 \[
  \cB=X-\mxl(X)-\mnl(X).
 \]
\end{defn}

\begin{coro}
 If $\cB$ is an antichain, then 
 $\hgt\left(I(A,B)\right)\leq 2$ for $A,B\subset X$. 
 If $A\cap B=\emptyset$, then  $\hgt\left(I(A,B)\right)\leq 1$.
\end{coro}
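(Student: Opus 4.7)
My plan is to bound $\hgt(X)$ itself using the antichain hypothesis and then invoke the preceding lemma to deduce the bounds on $\hgt(I(A,B))$. This reduces the corollary to an elementary observation about chains in $X$.

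First I would establish that if $\cB$ is an antichain, then $\hgt(X)\leq 2$. Given any chain $x_0<x_1<\dots<x_k$ in $X$, every interior element $x_i$ (for $0<i<k$) satisfies $x_{i-1}<x_i<x_{i+1}$, so $x_i$ is neither minimal nor maximal in $X$, whence $x_i\in\cB$. If $k\geq 3$, then $x_1$ and $x_2$ both lie in $\cB$ with $x_1<x_2$, contradicting the antichain hypothesis. Hence $k\leq 2$, and so $\hgt(X)\leq 2$.

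Since $A\cup B\subset X$, we have $\hgt(A\cup B)\leq\hgt(X)\leq 2$. By the preceding lemma, $\hgt(I(A,B))\leq\hgt(A\cup B)\leq 2$. If in addition $A\cap B=\emptyset$, the strict version of the same lemma yields $\hgt(I(A,B))<\hgt(A\cup B)\leq 2$, hence $\hgt(I(A,B))\leq 1$.

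There is no significant obstacle: the corollary is a direct consequence of the preceding lemma, the only substantive input being the observation that the antichain assumption on $\cB$ forces every chain in $X$ to have at most one element outside $\mnl(X)\cup\mxl(X)$.
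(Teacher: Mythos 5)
Your argument is correct and is essentially the one the paper intends: the corollary is stated without a written proof as an immediate consequence of the preceding lemma, the only extra input being exactly your observation that an antichain $\cB$ forces every chain of $X$ to contain at most one point outside $\mxl(X)\cup\mnl(X)$, so $\hgt(A\cup B)\leq\hgt(X)\leq 2$, and the (strict) inequality of the lemma gives the two bounds.
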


\begin{prop}
 \label{unchainsplit}
 Let $X$ be a finite $T_0$ space such that 
 $\cB$ is an antichain and
 $a_0,b_0\in X$.

 Then, each connected component of $I(U_{a_0},F_{b_0})$
 has the weak homotopy type of a wedge of spheres of dimension at most $2$. 
 Hence, $U_{a_0}\cup F_{b_0}$ has the weak homotopy type of 
 a wedge of spheres of dimension at most $3$. 

 In particular, if $X$ is of the form 
 $X=U_{a_0}\cup F_{b_0}\cup \mxl(X) \cup \mnl(X)$ and $\cB$ is an antichain, 
 then $X$ splits into smaller spaces.
\end{prop}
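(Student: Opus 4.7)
Plan: By the preceding corollary (applied with $A = U_{a_0}$ and $B = F_{b_0}$), the antichain hypothesis on $\cB$ gives $\hgt(I(U_{a_0}, F_{b_0})) \leq 2$, so $\ordcpx{I(U_{a_0}, F_{b_0})}$ is at most $2$-dimensional. The substantive task is to upgrade this dimensional bound to a wedge-of-spheres statement on each connected component.

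My approach is structural. A length-$2$ chain $p_0 < p_1 < p_2$ in $I := I(U_{a_0}, F_{b_0})$, writing $p_i = (x_i, y_i)$, must (by the height bound) have $p_2$ maximal in $I$ -- hence of the form $(m, M)$ with $m \in \mnl(X) \cap U_{a_0}$, $M \in \mxl(X) \cap F_{b_0}$, $m \leq M$ -- and $p_0$ minimal. The induced $X$-chain $x_2 \leq x_1 \leq x_0 \leq y_0 \leq y_1 \leq y_2$ contains at most $3$ distinct values by the antichain condition on $\cB$, forcing the middle term $p_1$ into only a handful of explicit classes. I would then apply the splitting framework of \cref{suspensionposetlemma} to $I = A \cup B$ with $A = \{(a, b) \in I : a \in \mnl(X)\}$ and $B = \{(a, b) \in I : b \in \mxl(X)\}$; the antichain condition rules out $(a,b) \in I$ with $a < b$ lying outside $A \cup B$, so the only leftover points are diagonal pairs $(c, c)$ with $c \in \cB \cap U_{a_0} \cap F_{b_0}$, whose contribution can be analyzed via $\widehat F^I_{(c,c)} \cong U_c^o \times F_c \setminus \{(c,c)\}$ -- a poset whose order complex has controlled homotopy type because $U_c$ has maximum $c$ and $F_c$ has minimum $c$. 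After decomposing the slices of $A$ over $\mnl(X) \cap U_{a_0}$ (and dually $B$ over $\mxl(X) \cap F_{b_0}$) into contractible pieces, \cref{suspensionposetlemma} should present each connected component of $\ordcpx{I}$ as a wedge of suspensions of $0$-dimensional intersections plus some circles -- producing only wedges of $S^0$, $S^1$, and $S^2$.

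The second claim, that $U_{a_0} \cup F_{b_0}$ is a wedge of spheres of dimension at most $3$, then follows from the preceding corollary $U_{a_0} \cup F_{b_0} \simeq_w \bS(I(U_{a_0}, F_{b_0}))$ and the fact that suspension sends a wedge of spheres of dimensions $\leq 2$ to one of dimensions $\leq 3$ (via \cref{suspension-wedge-sum} and \cref{invariance-of-susp-and-wedge}). For the ``In particular'' clause, \cref{UaFbmxlmnlsplit} writes $X$ (under the stated hypothesis) as a wedge of $\bS(I(U_{a_0}, F_{b_0}))$, various $\bS(U_{a_0} \cap U_x)$ and $\bS(F_{b_0} \cap F_x)$, and copies of $S^1$; each summand is a wedge of spheres, so $X$ is a wedge of spheres and splits into smaller spaces by \cref{splitexpl}.

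The chief obstacle is combining the slice decompositions of $A$ and $B$ cleanly with the diagonal $(c, c)$ contributions so that \cref{suspensionposetlemma} delivers only wedges of spheres of dimension $\leq 2$, rather than more intricate $2$-dimensional pieces such as projective planes.
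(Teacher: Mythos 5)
The reductions you give for the second and third claims are fine and coincide with the paper's (suspend $I(U_{a_0},F_{b_0})$ via the corollary $U_{a_0}\cup F_{b_0}\simeq_w\bS(I(U_{a_0},F_{b_0}))$, then feed the result into \cref{UaFbmxlmnlsplit}), but the heart of the proposition --- that each component of $I:=I(U_{a_0},F_{b_0})$ is a wedge of spheres of dimension at most $2$ --- is not established by your plan, and the two places you leave open are genuine gaps. First, \cref{suspensionposetlemma} requires each piece on one side to be weakly contractible and distinct pieces on the same side to be incomparable; the slices of your $A$ over $m\in\mnl(X)\cap U_{a_0}$ are copies of $F_m\cap F_{b_0}$, which need not be weakly contractible even when $\cB$ is an antichain (take $\mnl(X)=\{m,b_0\}$, $\cB=\{c_1,c_2\}$, $\mxl(X)=\{a_0,M\}$ with $m,b_0<c_i<a_0,M$: then $F_m\cap F_{b_0}=\{c_1,c_2,a_0,M\}\simeq_w S^1$), and decomposing a slice further destroys the incomparability hypothesis, so the lemma does not apply as proposed. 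Second, the diagonal points $(c,c)$, $c\in\cB\cap U_{a_0}\cap F_{b_0}$, lie outside $A\cup B$ and are not weak or $\gamma$-points in general: $\widehat{F}^I_{(c,c)}=(U_c^o\times F_c)-\{(c,c)\}$ is a height-one poset whose order complex is a subdivided complete bipartite graph on $\widehat{U}_c\sqcup\widehat{F}_c$, hence a wedge of circles once $\card{\widehat{U}_c},\card{\widehat{F}_c}\geq 2$; deleting such a point would need the null-homotopy hypothesis of \cref{weakgamma} into $\ordcpx{I-\{(c,c)\}}$, which you do not verify. Note also that the height bound alone can never suffice (the face poset of a triangulation of $\mathbb{RP}^2$ has height $2$), so the ``chief obstacle'' you flag at the end is exactly the content of the proposition, not a technicality.

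The paper closes precisely this gap by a different structural step. It splits into cases: if $a_0\not\geq b_0$ then $U_{a_0}\cap F_{b_0}=\emptyset$, $\hgt(I)\leq 1$, and components are wedges of circles; if $a_0=b_0$ then $I=U_{a_0}^o\times F_{b_0}\simeq *$; and if $a_0>b_0$ it shows that every $(a,b)\in I-(\mxl(I)\cup\mnl(I))$ is either a down beat point of $I$ or lies in $U_{(b_0,a_0)}$ (in particular your diagonal points cause no trouble there, since $b_0\leq c\leq a_0$ gives $(c,c)\leq(b_0,a_0)$). Removing the down beat points yields $I\simeq U_{(b_0,a_0)}\cup\mxl(I)\cup\mnl(I)$, and then \cref{Uamxlmnlsplit} together with the estimate $\hgt\bigl(U_{(b_0,a_0)}\cap U_{(a,b)}\bigr)\leq 1$ for proper intersections produces only spheres of dimension at most $2$. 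If you want to salvage your $A\cup B$ decomposition you would need an argument of comparable strength for the non-extremal and diagonal points; as written, the proposal does not contain one.
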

\begin{proof}
 If $a_0\not\geq b_0$, namely, if 
 $U_{a_0}\cap F_{b_0}=\emptyset$, then 
 $\hgt(I(U_{a_0},F_{b_0}))\leq 1$ and each connected component 
 is weak homotopy equivalent to a  wedge of one dimensional spheres.

 If $a_0=b_0$, then 
 $I(U_{a_0},F_{b_0})=U_{a_0}^o \times F_{b_0} \simeq *$.

 Assume that $a_0> b_0$. 
 
 We see that  $(a_0,a_0),\; (b_0,b_0),\; (b_0,a_0)\in I(U_{a_0},F_{b_0})$ 
 because  $a_0,\; b_0\in U_{a_0}\cap F_{b_0}$. 

 We denote $\mxl(I(U_{a_0},F_{b_0}))$ by $\mxl$ and 
 $\mnl(I(U_{a_0},F_{b_0}))$ by $\mnl$.

 We show that if  $ (a,b) \in I(U_{a_0},F_{b_0}) - \left(\mxl \cup \mnl \right)$, 
 then either $(a,b)\in U_{(b_0,a_0)}$ or $(a,b)$ is a down beat point of $I(U_{a_0},F_{b_0})$.

 Since $(a,b)\not\in \mnl$, $a\ne b$, whence $a<b$. 
 Since $(a,b)\not\in \mxl$, $a\in \cB$ or $b\in \cB$. 
 Because $\cB$ is an antichain, 
 we see that either $a\in \cB$ and $b\in \mxl$, or $a\in \mnl$ and $b\in \cB$.

 Suppose $a\in \cB$ and $b\in \mxl$. 
 Since $\cB$ is an antichain, $a\prec b$. 
 Therefore, elements in $I(X, X)$ smaller than $(a,b)$ are only $(a,a)$ and $(b,b)$.
 Since $(a,b)\not\in \mnl$, at least one of them belongs to $I(U_{a_0},F_{b_0})$. 
 If both $(a,a),\; (b,b)\in I(U_{a_0},F_{b_0})$, 
 then $a\in F_{b_0}$ and $b\in U_{a_0}$, that is, $b_0\leq a$ and $b\leq a_0$, 
 hence $(a,b)\leq (b_0,a_0)$. 
 Otherwise, $(a,b)$ is a down beat point.

 The case where $a\in \mnl$ and $b\in \cB$ is similar. 

 Therefore, by removing these down beat points, we have
 \[
  I(U_{a_0},F_{b_0})\simeq U_{(b_0,a_0)}\cup \mxl \cup \mnl.
 \]

 By \cref{Uamxlmnlsplit}, the connected component of the right hand side 
 containing $U_{(b_0,a_0)}$ is weak homotopy equivalent to 
 a wedge of some copies of $S^1$ and 
 $\bS\left(U_{(b_0,a_0)} \cap U_{(a,b)}\right)$ for some $(a,b)$.
 If $U_{(b_0,a_0)} \cap U_{(a,b)}= U_{(b_0,a_0)}$, then  
 $\bS\left(U_{(b_0,a_0)} \cap U_{(a,b)}\right)$ is contractible. 
 If $U_{(b_0,a_0)} \cap U_{(a,b)}\subsetneq U_{(b_0,a_0)}$, then 
 $U_{(b_0,a_0)} \cap U_{(a,b)}\subset \widehat{U}_{(b_0,a_0)}$ and 
 \[
  \hgt\left(U_{(b_0,a_0)} \cap U_{(a,b)}\right)
 \leq \hgt(\widehat{U}_{(b_0,a_0)}) < \hgt(U_{(b_0,a_0)})\leq \hgt(I(U_{a_0},F_{b_0}))\leq 2,
 \]
 therefore $\geop{U_{(b_0,a_0)} \cap U_{(a,b)}}$ is at most $1$ dimensional 
 and hence $\bS\left(U_{(b_0,a_0)} \cap U_{(a,b)}\right)$
 is weak homotopy equivalent to a wedge of some copies of $S^2$ ad $S^1$. 

 The other connected components has height at most $1$.
\end{proof}

We need more general results.

\begin{lem}
 \label{beatptsinterval}
 Let $X$ be a poset and $A,B\subset X$.
 \begin{enumerate}
  \item \begin{enumerate}
	 \item  If $a_0\in \mxl(A)$ and  $b_0\in \mnl(B\cap F_{a_0})$,  
		then $(a_0,b_0)\in I(A,B)$ is a minimal element of $I(A,B)$.
	 \item  If $a_0\in A$ is an up beat point of $A$ and $b_0\in \mnl(B\cap F_{a_0})$, 
		then $(a_0,b_0)\in I(A,B)$ is a minimal element or a down beat point of $I(A,B)$. 
	\end{enumerate}
  \item \begin{enumerate}
	 \item  If $b_0\in \mnl(B)$ and  $a\in \mxl(A\cap U_{b_0})$, 
		then $(a_0,b_0)\in I(A,B)$ is a minimal element of $I(A,B)$.
	 \item  If $b_0\in B$ is a down beat point of $B$ and $a\in \mxl(A\cap U_{b_0})$, 
		then $(a_0,b_0)\in I(A,B)$ is a minimal element or a down beat point of $I(A,B)$.
	\end{enumerate}
 \end{enumerate}
\end{lem}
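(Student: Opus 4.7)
The plan is to unwind the order on $I(A,B)\subset X^o\times X$, namely $(a,b)\le (a',b')$ iff $a\ge a'$ and $b\le b'$ in $X$, and then to perform a direct analysis of the possible strict predecessors of $(a_0,b_0)$.

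For part~1(a), I would take a hypothetical $(a,b)\le (a_0,b_0)$ in $I(A,B)$. The relation $a\ge a_0$ together with $a_0\in\mxl(A)$ forces $a=a_0$. Then $b\in B$ and $a_0\le b\le b_0$ place $b$ in $B\cap F_{a_0}$, and the minimality of $b_0$ there forces $b=b_0$. Hence $\widehat U_{(a_0,b_0)}=\emptyset$ and $(a_0,b_0)$ is minimal.

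For part~1(b), I set $\overline{a_0}=\min\widehat F^A_{a_0}$, which exists because $a_0$ is an up beat point of $A$. I then claim that whenever $\widehat U_{(a_0,b_0)}$ is nonempty, $(\overline{a_0},b_0)$ is its maximum. The argument has three short ingredients: (i) any $(a,b)<(a_0,b_0)$ must satisfy $a>a_0$, since $a=a_0$ would lead to exactly the contradiction with $b_0\in\mnl(B\cap F_{a_0})$ encountered in part~1(a); (ii) then $a\in\widehat F^A_{a_0}$ forces $a\ge\overline{a_0}$, and the chain $\overline{a_0}\le a\le b\le b_0$ shows simultaneously that $(\overline{a_0},b_0)\in I(A,B)$ and that $(\overline{a_0},b_0)<(a_0,b_0)$; (iii) the inequalities $a\ge\overline{a_0}$ and $b\le b_0$ say exactly that $(a,b)\le(\overline{a_0},b_0)$ in $I(A,B)$, so every element of $\widehat U_{(a_0,b_0)}$ is dominated by $(\overline{a_0},b_0)$. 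Hence $(a_0,b_0)$ is either minimal or a down beat point.

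I would obtain part~2 by duality from the isomorphism $I^X(A,B)\cong I^{X^o}(B,A)$ recorded just after the definition of $I(A,B)$; passing to $X^o$ swaps up with down, minimal with maximal, and $A$ with $B$, so part~1 transports verbatim into part~2. The only real pitfall I anticipate is keeping track of the opposite order on the first coordinate of $X^o\times X$, so that ``smaller in $I(A,B)$'' corresponds to a larger value of the first entry in $X$; once that is fixed, every step is forced by the definitions and the arithmetic of the single chain $\overline{a_0}\le a\le b\le b_0$.
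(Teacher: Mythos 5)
Your proposal is correct and follows essentially the same route as the paper: in 1(a) a direct analysis of any $(a,b)\le(a_0,b_0)$ using maximality of $a_0$ and minimality of $b_0$; in 1(b) exhibiting $(\min\widehat{F}^A_{a_0},\,b_0)$ as the maximum of $\widehat{U}_{(a_0,b_0)}$ via the chain $a_0<\min\widehat{F}^A_{a_0}\le a\le b= b_0$; and part 2 by the poset isomorphism $I^X(A,B)\cong I^{X^o}(B,A)$. The handling of the opposite order on the first coordinate is correct, so nothing further is needed.
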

\begin{proof}
 We show part 1. Part 2 is the dual.
 \begin{enumerate}[label={\alph*})]
  \item  Suppose $a_0\in \mxl(A)$ and $b_0\in \mnl(B\cap F_{a_0})$. 
	 If $(a,b)\in I(A,B)$ and $(a,b)\leq (a_0,b_0)$, 
	 then $a\in A$, $b\in B$ and $a_0\leq a\leq b\leq b_0$. 
	 Since $a_0\in \mxl(A)$, we have $a_0=a$. 
	 Since $b_0\in \mnl(B\cap F_{a_0})$, we have $b=b_0$.  
	 Therefore, $(a,b)=(a_0,b_0)$ and $(a_0,b_0)$ is minimal.
  \item Suppose $a_0\in A$ is an up beat point of $A$ and $b_0\in \mnl(B\cap F_{a_0})$. 
	We put $\hat{a}_0=\min\widehat{F}_{a_0}^A=\min\left(A\cap \widehat{F}_{a_0}\right) $. 

	Assume that $(a_0,b_0)$ is not minimal in $I(A,B)$. 
	We show that $(\hat{a}_0,b_0)=\max \widehat{U}_{(a_0,b_0)}$.
	If $(a,b)\in \widehat{U}_{(a_0,b_0)}$, namely, 
	if $(a,b)\in I(A,B)$ and $(a,b)<(a_0,b_0)$, then
	$a_0\leq a \leq b\leq b_0$ and $a_0<a$ or $b<b_0$. 
	Since $b\in B \cap F_{a_0}$ and $b_0\in \mnl(B\cap F_{a_0})$, we have $b=b_0$. 
	Therefore $a_0<a$ and so $\hat{a}_0\leq a$. 
	Hence $a_0< \hat{a}_0\leq a \leq b=b_0$ and we have $(a,b)\leq (\hat{a}_0,b_0)<(a_0,b_0)$. 
	Therefore, $(\hat{a}_0,b_0)=\max \widehat{U}_{(a_0,b_0)}$ and 
	$(a_0,b_0)$ is a down beat point.
 \end{enumerate}
\end{proof}

\begin{lem}
 \label{specialinterval}
 Let $X$ be a finite $T_0$ space, 
$a_0\in X - \mnl(X)$, $b_0\in X - \mxl(X)$, 
 and $a_0\not\leq b_0$. 
 We put 
 \begin{align*}
  A_0&=\left(U_{a_0} - \mnl(X)\right)- U_{b_0}, &
  A_1&=\left(U_{a_0} - \mnl(X)\right)\cap U_{b_0}, \\
  B_0&=\left(F_{b_0} - \mxl(X)\right)- F_{a_0}, &
  B_1&=\left(F_{b_0} - \mxl(X)\right)\cap F_{a_0}. 
 \end{align*}
 Suppose the following holds:
 \begin{enumerate}
  \item \begin{enumerate}
	 \item All the elements of $A_0\setminus \{a_0\}$ are up beat points of $U_{a_0}$.
	 \item All the elements of $B_0\setminus \{b_0\}$ are down beat points of $F_{b_0}$. 
	\end{enumerate}
  \item $I(A_0,B_0)=\emptyset$.
 \end{enumerate}
 Moreover, when $A_1\ne\emptyset$ or $B_1\ne \emptyset$, we also assume the following:
 \begin{enumerate}[resume]
  \item \begin{enumerate}
	 \item When $A_1\ne \emptyset$, there exists $\max A_1$. We put $a_1= \max A_1$.
	 \item When $B_1\ne \emptyset$, there exists $\min B_1$. We put $b_1=\min B_1$.
	\end{enumerate}
 \end{enumerate}
 Then we have 
 \[
  I(U_{a_0},F_{b_0})
 \simeq F_{(a_1,b_0)} \cup F_{(a_0,b_1)} \cup \mxl(I(U_{a_0},F_{b_0})) \cup \mnl(I(U_{a_0},F_{b_0})) 
 \]
 where 
 we consider $F_{(a_1,b_0)}=\emptyset$ when $A_1=\emptyset$ 
 and $F_{(a_0,b_1)}=\emptyset$ when  $B_1=\emptyset$. 

 Moreover, the connected component 
 containing 
 $F_{(a_1,b_0)} \cup F_{(a_0,b_1)}$ is weak homotopy equivalent to 
 \[
 \left(\bigvee_{\substack{(a,b)\in \mnl(I(U_{a_0},F_{b_0})) \\ \left(F_{(a_1,b_0)} \cup F_{(a_0,b_1)}\right)\cap F_{(a,b)}\ne \emptyset }} \bS \left(\left(F_{(a_1,b_0)} \cup F_{(a_0,b_1)}\right)\cap F_{(a,b)}\right) \right) 
 \vee \left(\bigvee S^1\right).
 \]
\end{lem}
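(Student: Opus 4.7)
The plan is to first identify via case analysis the down beat points of $I\coloneqq I(U_{a_0},F_{b_0})$ whose removal yields the claimed subposet, and then apply \cref{suspensionposetlemma} to extract the wedge decomposition of the relevant component.

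For the first step, I take $(a,b)\in I$ lying outside $F_{(a_1,b_0)}\cup F_{(a_0,b_1)}\cup\mxl(I)\cup\mnl(I)$ and show it is a down beat point. Using the disjoint decompositions $U_{a_0}=(\mnl(X)\cap U_{a_0})\sqcup A_0\sqcup A_1$ and $F_{b_0}=(\mxl(X)\cap F_{b_0})\sqcup B_0\sqcup B_1$, exclusion from $F_{(a_1,b_0)}$ forbids $a\in A_1$ (since $A_1\subset U_{a_1}$), exclusion from $F_{(a_0,b_1)}$ forbids $b\in B_1$, exclusion from $\mxl(I)$ forbids $a\in\mnl(X)$ simultaneously with $b\in\mxl(X)$, and hypothesis~2 forbids $a\in A_0$ simultaneously with $b\in B_0$. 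This leaves two symmetric cases; I treat case~(i): $a\in A_0$ and $b\in\mxl(X)\cap F_{b_0}$. If $a=a_0$, a direct computation gives $\widehat{U}^I_{(a_0,b)}=\{(a_0,b')\mid b'\in B_1,\ b'<b\}$, which must be empty because every $b'\in B_1$ satisfies $b'\ge b_1\not\le b$, contradicting $(a,b)\notin\mnl(I)$. Hence $a\in A_0\setminus\{a_0\}$, an up beat point of $U_{a_0}$ by hypothesis~1. I next verify $b\in\mnl(F_{b_0}\cap F_a)$: any $b'<b$ in $F_{b_0}\cap F_a$ lies in $B_0\cup B_1$, but $b'\in B_0$ would yield $(a,b')\in I(A_0,B_0)$, contradicting hypothesis~2, and $b'\in B_1$ would force $b\ge b_1$, contradicting $(a,b)\notin F_{(a_0,b_1)}$. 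Then \cref{beatptsinterval}(1)(b) makes $(a,b)$ a down beat point, and case~(ii) is handled symmetrically via \cref{beatptsinterval}(2)(b).

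Applying the corollary of Stong's theorem on removing down beat points then yields the claimed homotopy equivalence $I\simeq F_{(a_1,b_0)}\cup F_{(a_0,b_1)}\cup\mxl(I)\cup\mnl(I)$. For the wedge decomposition of the connected component $Y_0$ containing $C\coloneqq F_{(a_1,b_0)}\cup F_{(a_0,b_1)}$, I would first observe that $C$ is contractible: its two contractible pieces meet in $F_{(a_1,b_1)}$, which has minimum $(a_1,b_1)$. Then I would apply \cref{suspensionposetlemma} to $Y_0=A\cup B$ with $A\coloneqq(C\cup\mxl(I))\cap Y_0$ and $B\coloneqq\mnl(I)\cap Y_0$. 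Since $C$ is an up set, any maximal $M\notin C$ is incomparable to $C$ (else some $c\in C$ with $c\le M$ would force $M\in C$), so $A$ splits as $C$ together with isolated singletons $\{M\}$ for $M\in(\mxl(I)\cap Y_0)\setminus C$, while $B$ is a disjoint union of singletons. \cref{intersectiononepoint}(2), applied with the up set $C$ and singleton $\{(a,b)\}$, shows the pair $(C,(a,b))$ contributes $\bS(C\cap F_{(a,b)})$, the comparability condition being exactly $C\cap F_{(a,b)}\neq\emptyset$; singleton-singleton pairs yield $\bS(\text{point})\simeq *$ and drop out of the wedge. The main obstacle I anticipate is the step~1 case analysis, in particular correctly linking hypothesis~2 with the minimality condition required by \cref{beatptsinterval} in each subcase.
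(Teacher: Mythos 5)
Your proposal is correct and follows essentially the same route as the paper: you identify, via \cref{beatptsinterval}, that every point of $I(U_{a_0},F_{b_0})$ outside $F_{(a_1,b_0)}\cup F_{(a_0,b_1)}\cup\mxl\cup\mnl$ is a down beat point (the paper organizes this through the decomposition $I(A_1,B)\cup I(A,B_1)\cup I(A_0,B_m)\cup I(A_m,B_0)\cup I(A_m,B_m)$ and treats the extreme cases $a=a_0$, $b=b_0$ via minimality rather than by your contradiction argument, but the substance is identical), remove them at once, and then apply \cref{suspensionposetlemma} together with \cref{intersectiononepoint} to the contractible up set $F_{(a_1,b_0)}\cup F_{(a_0,b_1)}$, the remaining maximal points, and the minimal points. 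Your explicit restriction to the component $Y_0$ and the incomparability checks are exactly what the paper leaves implicit, so no gap.
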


\begin{proof}
 We put 
 \begin{align*}
  A&=U_{a_0}, & B&=F_{b_0}, \\
  A_m&=A \cap \mnl(X), &  B_m&=B \cap \mxl(X).
 \end{align*}
 Since
 \begin{align*}
  A&=\left(A-\mnl(X)\right) \cup \left(A \cap \mnl(X) \right)   \\
  &=A_0\cup A_1 \cup A_m,  \\ 
  B&= \left(B-\mxl(X)\right) \cup \left( B \cap \mxl(X)\right)  \\ 
  &=  B_0 \cup B_1 \cup B_m 
  \intertext{and, by the assumption, $I(A_0,B_0)=\emptyset$, we have}
  I(A,B)&=I(A_1,B) \cup I(A,B_1) \\
  &\phantom{=}\cup I(A_0,B_m) \cup I(A_m, B_0) \\
  &\phantom{=}\cup I(A_m,B_m).
 \end{align*}

 We show that $I(A_1,B)\subset F_{(a_1,b_0)}$. 

 We suppose $A_1\ne \emptyset$. 
 Since 
 \[
  A_1=\cB\cap U_{a_0}\cap U_{b_0}, \quad a_1=\max A_1, \quad B=F_{b_0},
 \]
 if $a\in A_1$ and $b\in B$, then we have $a\leq a_1 \leq b_0\leq b$, 
 henceforth $I(A_1,B)=A_1^o\times B$ and $(a_1,b_0)=\min I(A_1,B)$. 
 Therefore $I(A_1,B)\subset F_{(a_1,b_0)}$. 

 Similarly or dually, we see that $I(A,B_1)\subset F_{(a_0,b_1)}$.

 We show that 
 \[
 I(A_0,B_m)\subset \mnl(I(A,B))\cup \widehat{F}_{(a_0,b_1)} \cup \{\text{ down beat points of $I(A,B)$ }\}.
 \]
 Suppose $(a,b)\in I(A_0,B_m)$, that is, $a\in A_0$, $b\in B_m$ and $a\leq b$. 
 By the assumption 1 (a), $a$ is either the maximum element, namely, $a_0$ or an up beat point of $A=U_{a_0}$. 
 Hence, by \cref{beatptsinterval}, 
 if $b\in \mnl(B\cap F_a)$, then $(a,b)$ is minimal or a down beat point of $I(A,B)$. 

 If $b\not\in \mnl(B\cap F_a)$, then there exists an element $b'\in B$ such that $a\leq b' < b$.
 Since $b'\not\in \mxl(X)$, we have $b'\in B-\mxl(X)=B_0\cup B_1$, 
 but since $a\in A_0$, $a\leq b'$, and $I(A_0,B_0)=\emptyset$, 
 we see that $b'\not\in B_0$ and so $b'\in B_1$, whence $b_1=\min B_1\leq b'$. 
 Therefore, $a\leq a_0\leq b_1\leq b' <b$ and 
 $(a,b)>(a,b')\geq (a_0,b_1)$.

 Similarly, we see that 
 $I(A_m,B_0)\subset \mnl(I(A,B))\cup \widehat{F}_{(a_1,b_0)} \cup \{\text{ down beat points }\}$ 
 and clearly we have $I(A_m,B_m)=\mxl(I(A,B))$.

 Therefore, by removing down beat points,  we have 
 \[
  I(U_{a_0},F_{b_0})
 \simeq F_{(a_1,b_0)} \cup F_{(a_0,b_1)} \cup \mxl(I(U_{a_0},F_{b_0})) \cup \mnl(I(U_{a_0},F_{b_0})). 
 \]
 Since $F_{(a_1,b_0)} \cap F_{(a_0,b_1)}=F_{(a_1,b_1)}$, 
 we have $F_{(a_1,b_0)} \cup F_{(a_0,b_1)} \simeq \bS(F_{(a_1,b_1)}) \simeq *$. 
 Note that $F_{(a_1,b_0)} \cup F_{(a_0,b_1)}$ is an up set. 
 By applying \cref{suspensionposetlemma,intersectiononepoint} to 
 $F_{(a_1,b_0)} \cup F_{(a_0,b_1)} \cup \mxl(I(U_{a_0},F_{b_0}))$ and $\mnl(I(U_{a_0},F_{b_0}))$, 
 we see that 
 the connected component of the right hand side containing 
 $F_{(a_1,b_0)} \cup F_{(a_0,b_1)}$ is weak homotopy equivalent to
 \[
 \left(\bigvee_{\substack{(a,b)\in \mnl(I(U_{a_0},F_{b_0})) \\ \left(F_{(a_1,b_0)} \cup F_{(a_0,b_1)}\right)\cap F_{(a,b)}\ne \emptyset }} \bS \left(\left(F_{(a_1,b_0)} \cup F_{(a_0,b_1)}\right)\cap F_{(a,b)}\right) \right) 
 \vee \left(\bigvee S^1\right).
 \]
\end{proof}

\begin{rem}
 \begin{enumerate}
  \item If $a_0\in \mnl(X)$, then $U_{a_0}=\{a_0\}$ and we have  
	\begin{align*}
	 I(U_{a_0}, F_{b_0})&=I(\{a_0\}, F_{b_0})\cong F_{a_0}\cap F_{b_0}.
	 \intertext{Similarly, if $b_0\in \mxl(X)$, then we have} 
	 I(U_{a_0}, F_{b_0})&=I(U_{a_0}, \{b_0\})\cong \left(U_{a_0}\cap U_{b_0}\right)^o.
	\end{align*}
  \item If $a_0\leq b_0$, then we have 
	\begin{align*}
	 I(U_{a_0},F_{b_0})&=U_{a_0}^o\times F_{b_0} \simeq *.
	\end{align*}
  \item If $a_0\not\leq b_0$, then $a_0\not\in U_{b_0}$ and hence $a_0\not\in A_1$. Therefore $a_1<a_0$. 
	Similarly, $b_0< b_1$.
  \item If $I(A_0,B_0)=\emptyset$, then we see that 
	$U_{a_0}\cap F_{b_0}$ is $\{a_0,b_0\}$ or empty.
 \end{enumerate}
\end{rem}

\begin{lem}
 We consider the same situation as in \cref{specialinterval}. 

 For all $(a,b)\in I(U_{a_0},F_{b_0})$, we have 
 \begin{align*}
  F_{(a_1,b_0)} \cap F_{(a,b)}&= (U_a \cap U_{a_1})^o \times F_b,  \\
  F_{(a_0,b_1)} \cap F_{(a,b)}&= U_a^o \times (F_b\cap F_{b_1}).
 \end{align*}

 If $(a,b) \leq (a_1,b_1)$, then  
 $\left( F_{(a_1,b_0)} \cup  F_{(a_0,b_1)}\right) \cap F_{(a,b)}$ is contractible.
\end{lem}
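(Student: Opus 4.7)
The plan is to verify the two identities by unwinding the definitions of $F_{(\cdot,\cdot)}$ inside $I(U_{a_0},F_{b_0})\subset X^o\times X$, and then to deduce the contractibility assertion by writing the union as a gluing of two contractible up-sets along a contractible intersection.

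First I would unpack $(x,y)\in F_{(a_1,b_0)}\cap F_{(a,b)}$ as the conjunction $x\leq a_1$, $x\leq a$, $y\geq b_0$, $y\geq b$, together with the ambient conditions $x\in U_{a_0}$, $y\in F_{b_0}$, $x\leq y$. The inequality $a_1\leq a_0$ (from $a_1\in A_1\subset U_{a_0}$) absorbs $x\leq a_0$ into $x\leq a_1$, and $b\geq b_0$ absorbs $y\geq b_0$ into $y\geq b$. Crucially, $x\leq y$ becomes automatic via $x\leq a_1\leq b_0\leq b\leq y$, using $a_1\in U_{b_0}$ from the definition of $A_1$. This yields the first identity $(U_a\cap U_{a_1})^o\times F_b$. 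The second identity is dual: $x\leq a_0$ is absorbed by $x\leq a$ (since $a\leq a_0$), and $x\leq y$ becomes automatic via $x\leq a_0\leq b_1\leq y$ using $b_1\in F_{a_0}$ from the definition of $B_1$.

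For the contractibility claim, the hypothesis $(a,b)\leq (a_1,b_1)$ unwinds to $a_1\leq a$ and $b\leq b_1$, so $U_a\cap U_{a_1}=U_{a_1}$ and $F_b\cap F_{b_1}=F_{b_1}$, and the two intersections simplify to $P:=U_{a_1}^o\times F_b$ and $Q:=U_a^o\times F_{b_1}$. Each factor has a unique minimum ($a_1$ in $U_{a_1}^o$, $b$ in $F_b$, $a$ in $U_a^o$, $b_1$ in $F_{b_1}$), so $P$, $Q$, and $P\cap Q=U_{a_1}^o\times F_{b_1}$ each admit a minimum element and are therefore contractible.

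To finish, I would apply \cref{unionhtptrivsusp} to the cover of $P\cup Q$ by the homotopically trivial up sets $P$ and $Q$ to get $P\cup Q\simeq_w \bS(\exch{P}\cap \exch{Q})$; then \cref{intersectioninterval} (both $P$ and $Q$ being up sets) identifies $\exch{P}\cap \exch{Q}$ weakly with $P\cap Q$, giving $P\cup Q\simeq_w \bS(P\cap Q)$. Since $P\cap Q$ is contractible and $\bS$ preserves contractibility (via $\bS Z\simeq_w \susp\ordcpx{Z}$ and the fact that the suspension of a contractible complex is contractible), $P\cup Q$ is contractible. The only real subtlety is the bookkeeping in $X^o\times X$ and noticing that $x\leq y$ is automatic in both identities — precisely the role of the defining conditions $a_1\in U_{b_0}$ and $b_1\in F_{a_0}$ of $A_1$ and $B_1$.
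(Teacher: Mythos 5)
Your treatment of the two identities is fine and is essentially the computation in the paper: the containments $a_1\leq b_0$ and $a_0\leq b_1$ (coming from $A_1\subset U_{b_0}$ and $B_1\subset F_{a_0}$) make the condition $x\leq y$ automatic, so the intersections are the stated products.

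The contractibility step, however, has a genuine gap. Your gluing argument via \cref{unionhtptrivsusp} and \cref{intersectioninterval} only produces weak homotopy equivalences: it gives $P\cup Q\simeq_w \bS(P\cap Q)\simeq_w *$, i.e.\ that $P\cup Q$ is \emph{homotopically trivial}, not that it is \emph{contractible}. For finite spaces these are genuinely different notions (this is the whole point of weak points versus beat points, and the remark following \cref{unionhtptriv} warns precisely that a union of contractible pieces need not be homotopy equivalent to the non-Hausdorff suspension of the intersection, cf.\ $S^1_3$ in \cref{S12andS13}). So your argument proves a strictly weaker conclusion than the statement. The missing ingredient is the dichotomy forced by the standing hypotheses of \cref{specialinterval}, which your final step never uses: if $(a,b)\leq (a_1,b_1)$, i.e.\ $a_1\leq a\leq a_0$ and $b_0\leq b\leq b_1$, then $a=a_1$ or $b=b_1$ --- indeed, if $a\neq a_1$ then $a>a_1=\max A_1$ forces $a\in A_0$, and since $a\leq b$ and $I(A_0,B_0)=\emptyset$ we get $b\notin B_0$, hence $b=b_1$. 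Consequently one of your two products contains the other, and the union is literally equal to $U_{a_1}^o\times F_b$ or to $U_a^o\times F_{b_1}$, each of which has a minimum and is therefore honestly contractible. That is the paper's proof; with this observation your argument closes, while without it one only gets homotopical triviality.
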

\begin{proof}
 Note that we have $a_1<a_0\leq b_1$ and $a_1\leq b_0<b_1$.
 Suppose $(c,d)\in I(U_{a_0},F_{b_0})$. Then   
 \begin{align*}
  (c,d) \in F_{(a_1,b_0)} \cap F_{(a,b)} 
  &\Leftrightarrow 
  (a_1,b_0)\leq (c,d) \text{ and } (a,b)\leq (c,d) \\
  &\Leftrightarrow 
  c\leq a_1 \text{ and }  b_0\leq d \text{ and }  c\leq a \text{ and }  b\leq d \\ 
  &\Leftrightarrow c\in U_{a_1}\cap  U_a \text{ and } d\in F_{b}. 
 \end{align*}
 On the other hand, if $(c,d)\in (U_a \cap U_{a_1})^o \times F_b$, then 
 $c\in U_{a_0}$, $d\in F_{b_0}$, and  $c\leq a_1\leq b_0\leq d$, 
 and hence $(c,d)\in I(U_{a_0},F_{b_0})$. 
 Therefore, as we saw, $(c,d)\in F_{(a_1,b_0)} \cap F_{(a,b)}$.

 If $(a,b) \leq (a_1,b_1)$, namely, if $a\geq a_1$ and $b\leq b_1$, then  
 $a=a_1$ or $b=b_1$ 
 because,  
 if $a\ne a_1$, then $a>a_1=\max A_1$ hence $a \in A_0$,  
 and since $I(A_0,B_0)=\emptyset$, we have $b\not\in B_0$ and $b=b_1$.
 
 Since $a_1\leq a\leq a_0$ and $b_0\leq b\leq b_1$, 
 we have $U_{a_1}\subset U_a \subset  U_{a_0}$ and 
 $F_{b_0}\supset F_b \supset F_{b_1}$. 
 Therefore, we have 
 \begin{align*}
 \left( F_{(a_1,b_0)} \cup  F_{(a_0,b_1)}\right) \cap F_{(a,b)} 
  &=
  \left((U_a \cap U_{a_1})^o \times F_b \right) \cup 
  \left(U_a^o \times (F_b\cap F_{b_1}) \right) \\
  &= \left(U_{a_1}^o \times F_{b} \right) \cup 
  \left(U_{a}^o \times F_{b_1} \right) \\
  &=\begin{cases}
     U_{a_1}^o \times F_{b} \simeq *,  & a=a_1 \\
     U_{a}^o \times F_{b_1} \simeq *,  & b=b_1. 
    \end{cases}
 \end{align*}
\end{proof}

\begin{coro}
 \label{specialinterval-two-empty}
 We consider the same situation as in \cref{specialinterval}. 
 
 If $A_1=B_1=\emptyset$, then 
 $U_{a_0} \cup F_{b_0}$ is weak homotopy equivalent to a wedge of spheres of dimension at most $2$.

 In particular, if $F_{b_0}\cap F_{a_0}\subset \mxl(X)$ and $U_{a_0}\cap U_{b_0}\subset \mnl(X)$, 
 then this holds.
\end{coro}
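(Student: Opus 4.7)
The plan is to specialize the conclusion of \cref{specialinterval} to the case $A_1 = B_1 = \emptyset$ and exploit the height bound on $I(U_{a_0}, F_{b_0})$ that this forces.

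First, I would observe that when $A_1 = B_1 = \emptyset$, hypothesis 3 of \cref{specialinterval} becomes vacuous, and by the convention recorded there the pieces $F_{(a_1,b_0)}$ and $F_{(a_0,b_1)}$ are both empty. The conclusion of \cref{specialinterval} therefore reduces to
\[
 I(U_{a_0}, F_{b_0}) \simeq \mxl\bigl(I(U_{a_0}, F_{b_0})\bigr) \cup \mnl\bigl(I(U_{a_0}, F_{b_0})\bigr),
\]
a poset of height at most $1$. Its order complex is consequently a $1$-dimensional simplicial complex, that is, a finite graph, each of whose connected components is homotopy equivalent to either a point or a wedge of circles.

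Next, I would invoke the earlier identification $U_{a_0}\cup F_{b_0} \simeq_w \bS\bigl(I(U_{a_0}, F_{b_0})\bigr)$ and apply \cref{suspdisjoint} to the order complex of $I(U_{a_0}, F_{b_0})$: the (non-Hausdorff) suspension of a disjoint union of wedges of circles is weak homotopy equivalent to a wedge of copies of $S^2$ (from suspending each circle) together with some extra copies of $S^1$ (one for each connected component beyond the first). This delivers the desired wedge of spheres of dimension at most $2$.

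The "in particular" clause is immediate from unwinding the definitions: $A_1 = (U_{a_0}\cap U_{b_0})\setminus \mnl(X)$ vanishes precisely when $U_{a_0}\cap U_{b_0}\subset \mnl(X)$, and dually $B_1 = \emptyset$ whenever $F_{b_0}\cap F_{a_0}\subset \mxl(X)$. I do not anticipate any real obstacle; the substantive work has already been done in \cref{specialinterval}, and this corollary is essentially a matter of reading off what that lemma says in a degenerate case where the two "$F$-pieces" disappear.
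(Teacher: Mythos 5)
Your proposal is correct and follows essentially the same route as the paper: specialize \cref{specialinterval} so that $I(U_{a_0},F_{b_0})\simeq \mxl\cup\mnl$ has height at most $1$, then use $U_{a_0}\cup F_{b_0}\simeq_w \bS(I(U_{a_0},F_{b_0}))$ to obtain a wedge of spheres of dimension at most $2$, with the ``in particular'' clause read off from the definitions of $A_1$ and $B_1$. The only difference is that you spell out the suspension step via \cref{suspdisjoint}, which the paper leaves to its remark on height-one posets.
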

\begin{proof}
 If $A_1=B_1=\emptyset$, 
 then $I(U_{a_0},F_{b_0})$ is homotopy equivalent to 
 $\mxl(I(U_{a_0},F_{b_0}))\cup \mnl(I(U_{a_0},F_{b_0}))$, whose height is at most $1$. 
 Therefore, $U_{a_0} \cup F_{b_0} \simeq_w \bS(I(U_{a_0},F_{b_0}))$ 
 is weak homotopy equivalent to a wedge of spheres of dimension at most $2$.

 If $F_{b_0}\cap F_{a_0}\subset \mxl(X)$, then $B_1=\emptyset$. 
 If $U_{a_0}\cap U_{b_0}\subset \mnl(X)$, then $A_1=\emptyset$. 
\end{proof}

\begin{coro}
 \label{specialinterval-one-empty}
 We consider the same situation as in \cref{specialinterval}. 

 If $A_1=\emptyset$, then  
 $I(U_{a_0},F_{b_0})$ splits into spaces smaller than $F_{b_1}$, 
 and hence,  so does $U_{a_0}\cup F_{b_0}$.
 If $B_1=\emptyset$,  then 
 $I(U_{a_0},F_{b_0})$ and $U_{a_0}\cup F_{b_0}$ split into spaces smaller than  $U_{a_1}$.
\end{coro}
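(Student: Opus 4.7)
The plan is to apply \cref{specialinterval} with $A_1=\emptyset$, so that $F_{(a_1,b_0)}=\emptyset$, and then identify each wedge summand with a space manifestly smaller than $F_{b_1}$. \cref{specialinterval} gives that the connected component of $I(U_{a_0},F_{b_0})$ containing $F_{(a_0,b_1)}$ is weakly equivalent to
\[
\bigvee_{\substack{(a,b)\in \mnl(I(U_{a_0},F_{b_0}))\\ F_{(a_0,b_1)}\cap F_{(a,b)}\ne\emptyset}} \bS\left(F_{(a_0,b_1)}\cap F_{(a,b)}\right) \vee \bigvee S^1,
\]
while any other connected components lie inside $\mxl(I(U_{a_0},F_{b_0}))\cup \mnl(I(U_{a_0},F_{b_0}))$, which has height at most one and so is itself weakly equivalent to a wedge of circles. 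Every copy of $S^1$ equals $\bS^2\emptyset$, which fits the splitting definition with $A_i=\emptyset$, so the task reduces to controlling the remaining summands $\bS(F_{(a_0,b_1)}\cap F_{(a,b)})$.

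By the preceding lemma, each such summand satisfies $F_{(a_0,b_1)}\cap F_{(a,b)}=U_a^o\times(F_b\cap F_{b_1})$. Since $U_a^o$ has minimum $a$, collapsing the first factor gives a homotopy equivalence $U_a^o\times(F_b\cap F_{b_1})\simeq F_b\cap F_{b_1}$, so I may replace each such summand up to weak equivalence by $\bS(F_b\cap F_{b_1})$. I then split on whether $b\leq b_1$. If $b\leq b_1$, then $F_{b_1}\subset F_b$ and $F_b\cap F_{b_1}=F_{b_1}$, which is contractible because it has minimum $b_1$; consequently this summand is contractible and can be dropped from the wedge. If $b\not\leq b_1$, then $b_1\in F_{b_1}\setminus F_b$, so $F_b\cap F_{b_1}\subsetneq F_{b_1}$ and $\card{F_b\cap F_{b_1}}<\card{F_{b_1}}$.

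Combining these, $I(U_{a_0},F_{b_0})$ is weakly equivalent to a wedge of spaces $\bS(F_b\cap F_{b_1})$ with $\card{F_b\cap F_{b_1}}<\card{F_{b_1}}$ together with copies of $S^1=\bS^2\emptyset$, which establishes that $I(U_{a_0},F_{b_0})$ splits into spaces smaller than $F_{b_1}$. Since $U_{a_0}\cup F_{b_0}\simeq_w\bS I(U_{a_0},F_{b_0})$ and $\bS$ preserves splitting into spaces smaller than any fixed space, the same conclusion transfers to $U_{a_0}\cup F_{b_0}$. The $B_1=\emptyset$ case follows by the dual argument using $I^X(U_{a_0},F_{b_0})\cong I^{X^o}(F_{b_0},U_{a_0})$ with the roles of $(a_0,A_1)$ and $(b_0,B_1)$ exchanged. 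The only delicate point is the $b\leq b_1$ case, where the naive summand has cardinality $\card{F_{b_1}}$; the resolution is that it is contractible and hence disappears from the wedge, leaving only genuinely smaller pieces.
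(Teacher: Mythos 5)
Your proposal is correct and follows essentially the same route as the paper: apply \cref{specialinterval} with $F_{(a_1,b_0)}=\emptyset$, identify each summand via $F_{(a_0,b_1)}\cap F_{(a,b)}=U_a^o\times(F_b\cap F_{b_1})\simeq F_b\cap F_{b_1}$, and observe that each such piece is either contractible (when $b\leq b_1$) or of cardinality strictly less than $\card{F_{b_1}}$, with the statement for $U_{a_0}\cup F_{b_0}$ following from $U_{a_0}\cup F_{b_0}\simeq_w \bS I(U_{a_0},F_{b_0})$ and duality handling $B_1=\emptyset$. Your explicit dichotomy on $b\leq b_1$ just spells out the paper's terse remark that $\card{F_b\cap F_{b_1}}<\card{F_{b_1}}$ or $F_b\cap F_{b_1}\simeq *$.
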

\begin{proof}
 We consider the case where $A_1=\emptyset$. 

 In this case, $I(U_{a_0},F_{b_0})$ is homotopy equivalent to 
 $F_{(a_0,b_1)}\cup \mxl\cup \mnl$, 
 and each connected component  
 is weak homotopy equivalent to a wedge of some copies of 
 $S^1$ and $\bS(F_{(a_0,b_1)} \cap F_{(a,b)})$ 
 for some $(a,b)\in I(U_{a_0},F_{b_0})$.
 We have 
 \begin{align*}
  \bS\left(F_{(a_0,b_1)} \cap F_{(a,b)}\right) 
  &= \bS\left(U_a^o \times (F_b\cap F_{b_1})\right) \simeq \bS\left(F_b\cap F_{b_1}\right)
 \end{align*}
 and $\card{F_b\cap F_{b_1}} < \card{F_{b_1}}$ or $F_b\cap F_{b_1} \simeq *$.
\end{proof}

\begin{coro}
 \label{bodyissimple}
 Let $X$ be a connected finite $T_0$ space. 
 Suppose the following holds:
 \begin{enumerate}
  \item All the elements of $\cB-\mxl(\cB)$ are up beat points of $\cB$. 
  \item One of the connected components of $\cB$ is a chain. 
	Let $\cB_0$ be a connected component which is a chain. 
  \item There exists a point $a_0\in \mxl(X)$ such that $\cB_0 - U_{a_0}\ne \emptyset$.  
	We put $b_0=\min(\cB_0-U_{a_0})$. 
 \end{enumerate}
 Then, $U_{a_0}\cup F_{b_0}$ is weak homotopy equivalent to a wedge of spheres of dimension at most $2$.
\end{coro}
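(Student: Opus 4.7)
The plan is to apply \cref{specialinterval} to $(a_0, b_0)$; the chain hypothesis on $\cB_0$ will force the intersections appearing in the resulting splitting to be antichains, keeping all dimensions low. Write $\cB_0 = \{c_0 < c_1 < \cdots < c_n\}$ with $b_0 = c_k$. Since $\cB_0$ is a chain and $b_0 = \min(\cB_0 - U_{a_0})$, the elements of $\cB_0$ below $b_0$ all lie in $U_{a_0}$, so $A_1 = \cB \cap U_{a_0} \cap U_{b_0} = \{c_0, \ldots, c_{k-1}\}$; also $B_1 = \emptyset$, since any $y \in B_1$ would satisfy $y \geq a_0 \in \mxl(X)$, forcing $y = a_0$ and contradicting $y \in F_{b_0} - \mxl(X)$.

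To verify the hypotheses of \cref{specialinterval}: for (1a), an element $x \in A_0 \setminus \{a_0\} \subset \cB$ is either in $\mxl(\cB)$, so $\widehat{F}_x^{U_{a_0}} = \{a_0\}$ already has a minimum, or else $x$ is an up beat point of $\cB$ by hypothesis (1) of the corollary, with cover $\hat{x} = \min \widehat{F}_x^{\cB}$; in the latter case, if $\widehat{F}_x^{\cB} \cap U_{a_0}$ is non-empty, any witness $y$ forces $\hat{x} \leq y \leq a_0$, so $\hat{x} \in U_{a_0}$ is the required minimum. For (1b), each $c_j \in B_0 \setminus \{b_0\}$ (with $j > k$) satisfies $\widehat{U}_{c_j}^{F_{b_0}} = \{c_k, \ldots, c_{j-1}\}$, a chain with maximum $c_{j-1}$. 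For (2), an element $(a, b) \in I(A_0, B_0)$ would have $a \in \cB$ and $b \in \cB_0$ with $a \leq b$ and $a \not\leq b_0$; the chain structure then forces $a \in \cB_0$ with $a > b_0$, which contradicts $a \in U_{a_0}$.

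If $A_1 = \emptyset$, the conclusion follows immediately from \cref{specialinterval-two-empty}. Otherwise set $a_1 = \max A_1 = c_{k-1}$; by \cref{specialinterval}, the connected component of (the reduction of) $I(U_{a_0}, F_{b_0})$ containing $F_{(a_1, b_0)}$ is weakly equivalent to a wedge of circles together with $\bS\bigl(F_{(a_1, b_0)} \cap F_{(a, b)}\bigr)$ for $(a, b) \in \mnl(I(U_{a_0}, F_{b_0}))$ with non-empty intersection. By the preceding lemma, $F_{(a_1, b_0)} \cap F_{(a, b)} = (U_a \cap U_{a_1})^o \times F_b$, and since $F_b$ is contractible (having minimum $b$), $\bS$ of this product is weakly equivalent to $\bS(U_a \cap U_{a_1})$.

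The heart of the proof is the claim that $U_a \cap U_{a_1}$ is always either contractible or an antichain. A case analysis on $a$ yields: if $a = a_1$, then $U_a \cap U_{a_1} = U_{a_1}$ is contractible; if $a \in \mnl(X)$, then $U_a \cap U_{a_1}$ has at most one element; if $a \in \cB_0$ with $a \neq a_1$, then minimality of $(a, b) \in \mnl(I(U_{a_0}, F_{b_0}))$ combined with the chain structure of $\cB_0$ and $a_1 = \max A_1$ forces $a = a_1$, a contradiction; and if $a$ belongs to a connected component of $\cB$ other than $\cB_0$, then any common lower bound of $a$ and $a_1$ lying in $\cB$ would have to lie in the intersection of two distinct components, which is empty, so $U_a \cap U_{a_1} \subseteq \mnl(X)$ is an antichain. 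Consequently $\bS(U_a \cap U_{a_1})$ is always a wedge of circles or contractible, and taking one further non-Hausdorff suspension to recover $U_{a_0} \cup F_{b_0} \simeq_w \bS I(U_{a_0}, F_{b_0})$ (together with \cref{suspdisjoint} to handle additional connected components of the reduced poset) produces a wedge of spheres of dimension at most $2$. The main obstacle is precisely this case analysis, which uses essentially both the chain hypothesis on $\cB_0$ and the disjointness of distinct components of $\cB$.
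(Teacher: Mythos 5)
Your proof is correct and takes essentially the same route as the paper: you verify the hypotheses of \cref{specialinterval} with the same identifications ($A_1=\cB_0\cap U_{a_0}$, $B_1=\emptyset$, the beat-point arguments for $A_0\setminus\{a_0\}$ and $B_0\setminus\{b_0\}$, and $I(A_0,B_0)=\emptyset$ via comparability forcing membership in $\cB_0$), and then conclude through \cref{specialinterval-two-empty,specialinterval-one-empty}. The only (harmless) deviation is the final step: the paper shows $U_a\cap U_{a_1}$ is homotopy equivalent to a discrete space for \emph{every} $(a,b)$, directly from $U_{a_1}\subset \cB_0\cup\mnl(X)$ with $\cB_0$ a chain, whereas you invoke minimality of $(a,b)$ to reduce to the contractible/antichain cases --- both work, though your case list silently skips the (immediately impossible, since $a_0\not\geq b_0$) case $a=a_0$.
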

\begin{proof}
 %
 We use \cref{specialinterval}. 
 
 Since $X$ is connected, $\mxl(X)\cap \mnl(X)=\emptyset$, 
 and since $a_0\in \mxl(X)$, we have $a_0\not\in \mnl(X)$.
 Since $b_0\in \cB$, we have $b_0\not\in \mxl(X)$. 
 Clearly, $a_0\not\leq b_0$. 

 Since $a_0\in \mxl(X)$, we have $B_1=\left(F_{b_0}-\mxl(X)\right)\cap F_{a_0}=\emptyset$. 
 Since $b_0\in \cB$, we have 
 \begin{align*}
  \left(U_{a_0}-\mnl(X)\right) \cap U_{b_0} &\subset  U_{a_0} \cap U_{b_0} \cap \cB \\
  &\subset U_{a_0}\cap \cB_0 \\ 
  &\subset  \left(U_{a_0}-\mnl(X)\right) \cap U_{b_0}
 \end{align*}
 and since $b_0=\min(\cB_0-U_{a_0})\in \cB_0$, $\cB-\cB_0$ and $\cB_0$ are incomparable, 
 and $\cB_0-U_{a_0}$ is an up set of  $\cB_0$, we have
 \begin{align*}
  F_{b_0}\cap \cB&=F_{b_0}\cap \cB_0 =\cB_0-U_{a_0},
  \\ 
  F_{b_0}&\subset (\cB_0-U_{a_0} ) \cup \mxl(X).
 \end{align*}
 Therefore, we have
 \begin{align*}
  A_0 &= \left(U_{a_0}-\mnl(X)\right)-U_{b_0}, \\
  A_1&= \left(U_{a_0}-\mnl(X)\right) \cap U_{b_0} = \cB_0\cap U_{a_0}, \\
  B_0&=F_{b_0}-\mxl(X)=\cB_0 - U_{a_0}, \\\
  B_1&=\emptyset .
 \end{align*}

 We show that points of $A_0-\{a_0\}$ are up beat points of $U_{a_0}$. 
 Suppose $x\in A_0-\{a_0\}$. 
 Since $A_0-\{a_0\}\subset \cB$, $x$ is either maximal or up beat point of $\cB$
 by the assumption.
 Note that $\hat{F}_x = \left(\hat{F}_x \cap \cB\right) \cup \left(\hat{F}_x \cap \mxl(X)\right)$. 

 If $x$ is a maximal element of $\cB$, then $\hat{F}_x\subset \mxl(X)$ and 
 \[
  a_0\in \hat{F}_x\cap U_{a_0}\subset \mxl(X)\cap U_{a_0}=\{a_0\},   
 \]
 and hence $\hat{F}_x\cap U_{a_0}=\{a_0\}$. 
 Therefore, $x$ is an up beat point of $U_{a_0}$.

 If $x$ is an up beat point of $\cB$, then we put $\hat{x}=\min(\hat{F}_x\cap \cB)$.  We have 
 \begin{align*}
  \hat{F}_x\cap U_{a_0}&= \hat{F}_x \cap (\cB \cup \mxl(X))\cap U_{a_0} \\
  &=\left(\hat{F}_x \cap \cB \cap U_{a_0}\right) \cup \{a_0\}.
 \end{align*}
 If  $\hat{x}\not\in U_{a_0}$, then $\left(\hat{F}_x\cap \cB\right) \cap U_{a_0}=\emptyset$
 and $\hat{F}_x\cap U_{a_0}=\{a_0\} $.
 If $\hat{x}\in U_{a_0}$, then $\hat{x}=\min(\hat{F}_x \cap U_{a_0})$.
 In any case, $x$ is an up beat point of $U_{a_0}$.

 Because $B_0=F_{b_0}-\mxl(X)=\cB_0-U_{a_0}$ is a chain and $F_{b_0}\subset (\cB_0-U_{a_0} ) \cup \mxl(X)$, 
 every element of $B_0-\{b_0\}$ is a down beat point of $F_{b_0}$. 
 
 We show that $I(A_0,B_0)=\emptyset$.
 Since
 \[
 (A_0-\{a_0\})\cap \cB_{0}
 =\left(U_{a_0}\cap \cB - U_{b_0}\right)\cap \cB_0 \\
 =(U_{a_0}\cap \cB_0) - U_{b_0}=\emptyset ,
 \]
 we see that $A_0-\{a_0\}\subset \cB-\cB_0$, 
 and since $a_0\in \mxl(X)$ and $a_0\not\in B_0$, 
 we have 
 \[
 I(A_0,B_0)=I(A_0-\{a_0\},B_0) \subset I(\cB-\cB_0,\cB_0)=\emptyset. 
 \]

 Finally, if $A_1\ne\emptyset$, then  
 $A_1=\cB_0\cap U_{a_0}$ is a nonempty finite chain and so there exists $\max A_1$. 

 Therefore, the assumption of \cref{specialinterval} holds and $B_1=\emptyset$. 

 If $A_1=\emptyset$, then by \cref{specialinterval-two-empty}, 
 $U_{a_0}\cup F_{b_0}$ is weak homotopy equivalent to a wedge of spheres of dimension at most $2$.

 If $A_1\ne\emptyset$, then by \cref{specialinterval-one-empty}, 
 $I(U_{a_0},F_{b_0})$ splits into some copies of $S^1$ and 
 $\bS(U_a\cap U_{a_1})$ for some $a$.  
 Since $U_{a_1}\subset \cB_0\cup \mnl(X)$ and $\cB_0$ is a chain, 
 we see that $U_a\cap U_{a_1}$ is homotopy equivalent to a discrete space. 
 Therefore  $U_{a_0}\cup F_{b_0}$ is weak homotopy equivalent to a wedge of spheres of dimension at most $2$.
\end{proof}

\section{Some small finite spaces}
\label{sec-verysmallfinite}

\begin{defn}
 \label{2npointscircle}
 We denote the finite space of \cref{Fig2npointscircle} by $S^1_n$,  
 that is, the underlying set of 
 $S^1_n$ is the $2n$-element set
 \(
 S^1_n=\{a_0,\dots,a_{n-1},b_0,\dots,b_{n-1}\}
 \)
 and the order is given by 
 $b_i< a_i$ and $b_i < a_{i+1}$, where we consider $a_{n}=a_0$.
 Clearly, $S^1_n\simeq_w S^1$.

\begin{figure}[h]
 \centering
 \begin{tikzpicture}[scale=1.2]
  \node (a0) at (0,0) {$a_0$};
  \node (a1) at (1,0) {$a_1$};
  \node at (2,0) {$\dots$};
  \node (an1) at (3,0) {$a_{n-2}$};
  \node (an) at (4,0) {$a_{n-1}$};
  \node (b0) at (0,-1) {$b_0$};
  \node (b1) at (1,-1) {$b_1$};
  \node at (2,-1) {$\dots$};
  \node (bn1) at (3,-1) {$b_{n-2}$};
  \node (bn) at (4,-1) {$b_{n-1}$};
  \draw (a0) -- (b0) -- (a1) -- (b1) -- (1.5,-.5);
  \draw[dotted] (1.5,-.5) -- (1.8,-.2);
  \draw[dotted] (2.2,-.8) -- (2.5,-.5);
  \draw (2.5,-.5) -- (an1) -- (bn1) -- (an) -- (bn) -- (a0);
 \end{tikzpicture}
 \caption{$S^1_n$}
 \label{Fig2npointscircle}
\end{figure}
\end{defn}

\begin{ex}
\label{S12andS13}
\begin{align*}
 S^1_2&=
 \begin{tikzpicture}[baseline=-.6cm]
  \node (a0) at (0,0) {$a_0$};
  \node (a1) at (1,0) {$a_1$};
  \node (b0) at (0,-1) {$b_0$};
  \node (b1) at (1,-1) {$b_1$};
  \draw (a0) -- (b0) -- (a1) -- (b1) -- (a0);
 \end{tikzpicture} 
 \cong \bS S^0
\\
 S^1_3&=
 \begin{tikzpicture}[baseline=-.6cm]
  \node (a0) at (0,0) {$a_0$};
  \node (a1) at (1,0) {$a_1$};
  \node (a2) at (2,0) {$a_{2}$};
  \node (b0) at (0,-1) {$b_0$};
  \node (b1) at (1,-1) {$b_1$};
  \node (b2) at (2,-1) {$b_2$};
  \draw (a0) -- (b0) -- (a1) -- (b1) -- (a2) -- (b2) -- (a0);
 \end{tikzpicture}
 =
 \begin{tikzpicture}[baseline=-.6cm]
  \node (a0) at (1,0) {$a_0$};
  \node (a1) at (0,0) {$a_1$};
  \node (a2) at (2,0) {$a_{2}$};
  \node (b0) at (0,-1) {$b_0$};
  \node (b1) at (1,-1) {$b_1$};
  \node (b2) at (2,-1) {$b_2$};
  \draw (a0) -- (b0) -- (a1) -- (b1) -- (a2) -- (b2) -- (a0);
 \end{tikzpicture}
 \cong
 \begin{tikzpicture}[baseline=-.6cm]
  \node (a0) at (1,0) {$a_1$};
  \node (a1) at (0,0) {$a_2$};
  \node (a2) at (2,0) {$a_{0}$};
  \node (b0) at (0,-1) {$b_0$};
  \node (b1) at (1,-1) {$b_1$};
  \node (b2) at (2,-1) {$b_2$};
  \draw (a0) -- (b0) -- (a1) -- (b1) -- (a2) -- (b2) -- (a0);
 \end{tikzpicture}
\end{align*} 
\end{ex}

 It is straightforward to see the following:
\begin{lem}
 \label{3by3bipartite}
 Any connected $(3,3)$-bipartite graph with no degree $2$ vertex is isomorphic to 
 one of the graphs in \cref{Fig3by3bipartite}.
 \begin{figure}[h]
  \centering
  \begin{tikzpicture}
   \foreach \i in {0,...,5} {
   \node (a\i) at (\i,1) {$\bullet$};
   \node (b\i) at (\i,0) {$\bullet$};
   }
   \draw (a1) -- (b0);
   \draw (a1) -- (b1);
   \draw (a1) -- (b2);
   \draw (a0) -- (b1) -- (a2);
   \draw (a3) -- (b3) -- (a4) -- (b4) -- (a5) -- (b5) -- (a3) -- (b4);
   \draw (b3) -- (a5);
   \draw (b5) -- (a4);
  \end{tikzpicture}
  \caption{$(3,3)$-bipartite graphs with no degree $2$ vertex.}
  \label{Fig3by3bipartite}
 \end{figure}
\end{lem}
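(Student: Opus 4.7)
The plan is to do a straightforward case analysis on the degree sequences of the two color classes. Let $A=\{a_1,a_2,a_3\}$ and $B=\{b_1,b_2,b_3\}$ be the two parts. Since the graph is connected, every vertex has degree at least $1$; since there is no degree $2$ vertex and the maximum degree in a $(3,3)$-bipartite graph is $3$, every vertex has degree $1$ or $3$. Also, the sum of degrees on the $A$-side equals the sum on the $B$-side (both equal the number of edges), so the two sides have the same degree multiset up to these possibilities.

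First I would enumerate the possible common edge counts. On each side the degree sum lies in $\{3,5,7,9\}$, corresponding to the degree multisets $\{1,1,1\}$, $\{1,1,3\}$, $\{1,3,3\}$, $\{3,3,3\}$. The sum $3$ case is a perfect matching, which is disconnected, so it is ruled out. The sum $9$ case forces both sides to be all-$3$, hence the graph is $K_{3,3}$, which is the right-hand graph in the figure. The sum $5$ case has a unique degree-$3$ vertex on each side, say $a_2$ and $b_2$; then $a_2$ is joined to all of $B$ and $b_2$ is joined to all of $A$, and since the remaining vertices have degree $1$, there are no other edges. Relabeling gives the left-hand graph in the figure.

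The remaining case (sum $7$) is the one that needs a small argument, and I expect it to be the only mildly tricky step. Here each side has two vertices of degree $3$ and one of degree $1$. But the two degree-$3$ vertices on the $A$-side are each joined to every vertex in $B$, so every $b_i$ already has degree at least $2$; combined with the no-degree-$2$ hypothesis this forces every $b_i$ to have degree $3$, contradicting the assumption that one vertex in $B$ has degree $1$. Hence this case is empty.

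Collecting the surviving cases (sum $5$ and sum $9$) and checking connectedness in each (immediate, since each has a vertex adjacent to everything on the other side) gives the classification. No further homotopy-theoretic input is needed; the whole argument is elementary graph theory.
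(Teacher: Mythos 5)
Your case analysis is correct: degrees are forced to be $1$ or $3$, the equal edge count on both sides (with possible sums $3,5,7,9$ all distinct) forces the same number of degree-$3$ vertices per side, and your elimination of the sum-$3$ and sum-$7$ cases and identification of the sum-$5$ and sum-$9$ cases with the two graphs of the figure are all valid. The paper offers no proof (it declares the lemma straightforward), and your elementary degree-counting argument is exactly the kind of verification intended, so nothing further is needed.
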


\begin{lem}
 \label{4by4bipartite}
 Any $(4,4)$-bipartite graph whose all the vertices have degree $2$ is isomorphic to 
 $S^1_4$ or $S^1_2\amalg S^1_2$.
\end{lem}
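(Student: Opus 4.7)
The plan is to invoke the standard classification of $2$-regular simple graphs: any finite simple graph in which every vertex has degree exactly $2$ is a disjoint union of simple cycles. First I would decompose the given bipartite graph into its cycle components. Because the ambient graph is bipartite, every cycle has even length, and because the graph is simple every cycle has length at least $4$.

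Next I would count. A cycle of length $2k$ alternates between the two sides of the bipartition and therefore uses exactly $k$ vertices from each side. So if the cycle components have lengths $2k_1, \dots, 2k_r$, then $\sum_i k_i = 4$ with $k_i \geq 2$ for every $i$. The only ways to express $4$ as such a sum are $4$ and $2+2$, corresponding respectively to a single $8$-cycle spanning all vertices, and to a disjoint union of two $4$-cycles each spanning two vertices on each side.

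Finally I would identify these two cases with $S^1_4$ and $S^1_2 \amalg S^1_2$: traversing a cycle of length $2n$ whose vertices alternate between the two parts and labeling them consecutively as $a_0, b_0, a_1, b_1, \dots, a_{n-1}, b_{n-1}$ reproduces exactly the incidence pattern defining $S^1_n$ (see \cref{2npointscircle}). I do not expect any real obstacle; the only point requiring a moment of care is confirming $k_i \geq 2$, which follows from simplicity (no multi-edges, hence no ``$2$-cycles''), and this in turn rules out all intermediate partitions such as $3+1$ or $2+1+1$.
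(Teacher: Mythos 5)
Your argument is correct: decomposing a $2$-regular simple graph into cycles, using bipartiteness for even length and simplicity to exclude $2$-cycles, and then counting $\sum_i k_i=4$ with $k_i\geq 2$ gives exactly the $8$-cycle $S^1_4$ or two $4$-cycles $S^1_2\amalg S^1_2$. The paper states this lemma without proof as a straightforward fact, and your argument is precisely the standard verification it implicitly relies on.
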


We list up connected finite $T_0$ spaces of cardinality $4$ or less.

\begin{table}[h]
 \begin{tabular}{|c|c|c|c|c|}
  \hline
  \diagbox{$\card{\mxl(X)}$\rule[-7pt]{0pt}{0pt}}{$\card{X}$\rule{0pt}{14pt}} & 1 & 2 & 3 &  4 \\ \hline
  1 &  \begin{tikzpicture}[yscale=-1]
       \node at (0,0) {$\bullet$}; 
       \end{tikzpicture}
     & \begin{tikzpicture}[baseline=-.8cm,yscale=-1]
	\node (a) at (0,0) {$\bullet$};
	\node (b) at (0,1) {$\bullet$};
	\draw (a) -- (b);
       \end{tikzpicture}
     & \begin{tikzpicture}[baseline=-1.3cm,yscale=-1,xscale=.8]
   \begin{scope}
    \node (a) at (0,0) {$\bullet$};
    \node (b) at (0,1) {$\bullet$};
    \node (c) at (0,2) {$\bullet$};
    \draw (a) -- (b) -- (c);
   \end{scope}

   \begin{scope}[xshift = 1.2cm]
    \node (a) at (0,0) {$\bullet$};
    \node (b) at (-.5,1) {$\bullet$};
    \node (c) at (.5,1) {$\bullet$};
    \draw (b) -- (a) -- (c);
   \end{scope}
       \end{tikzpicture}
 & \begin{tikzpicture}[baseline=-1.8cm,yscale=-1,xscale=.8]
   \node (a) at (0,0) {$\bullet$};
   \node (b) at (0,1) {$\bullet$};
   \node (c) at (0,2) {$\bullet$};
   \node (d) at (0,3) {$\bullet$};
   \draw (a) -- (b) -- (c) -- (d);
  
  \begin{scope}[xshift = 1.2cm]
   \node (a) at (0,0) {$\bullet$};
   \node (b) at (-.5,1) {$\bullet$};
   \node (c) at (.5,1) {$\bullet$};
   \node (d) at (0,2) {$\bullet$};
   \draw (a) -- (b) -- (d) -- (c) -- (a);
  \end{scope}

  \begin{scope}[xshift = 2.8cm]
   \node (a) at (0,0) {$\bullet$};
   \node (b) at (-.5,1) {$\bullet$};
   \node (c) at (-.5,2) {$\bullet$};
   \node (d) at (.5,2) {$\bullet$};
   \draw (a) -- (b) -- (c);
   \draw (a) -- (d);
  \end{scope}

  \begin{scope}[xshift = 4.2cm]
   \node (a) at (0,0) {$\bullet$};
   \node (b) at (0,1) {$\bullet$};
   \node (c) at (-.5,2) {$\bullet$};
   \node (d) at (.5,2) {$\bullet$};
   \draw (d) -- (b) -- (c);
   \draw (a) -- (b);
  \end{scope}

  \begin{scope}[xshift = 6cm]
   \node (a) at (0,0) {$\bullet$};
   \node (b) at (-1,1) {$\bullet$};
   \node (c) at (0,1) {$\bullet$};
   \node (d) at (1,1) {$\bullet$};
   \draw (b) -- (a) -- (c);
   \draw (a) -- (d);
  \end{scope}
    \end{tikzpicture}
\\ \hline
  2 & & & \begin{tikzpicture}[baseline=.3cm,xscale=.9]
	   \node (a) at (0,0) {$\bullet$};
	   \node (b) at (-.5,1) {$\bullet$};
	   \node (c) at (.5,1) {$\bullet$};
	   \draw (b) -- (a) -- (c);
	  \end{tikzpicture}
 & \begin{tikzpicture}[baseline=.8cm,xscale=.9]
  \begin{scope}
   \node (a) at (0,0) {$\bullet$};
   \node (b) at (-.5,1) {$\bullet$};
   \node (c) at (-.5,2) {$\bullet$};
   \node (d) at (.5,2) {$\bullet$};
   \draw (a) -- (b) -- (c);
   \draw (a) -- (d);
  \end{scope}

  \begin{scope}[xshift = 1.5cm]
   \node (a) at (0,0) {$\bullet$};
   \node (b) at (0,1) {$\bullet$};
   \node (c) at (-.5,2) {$\bullet$};
   \node (d) at (.5,2) {$\bullet$};
   \draw (d) -- (b) -- (c);
   \draw (a) -- (b);
  \end{scope}

  \begin{scope}[xshift = 3cm]
   \node (a) at (-.5,0) {$\bullet$};
   \node (b) at (.5,0) {$\bullet$};
   \node (c) at (-.5,1) {$\bullet$};
   \node (d) at (.5,1) {$\bullet$};
   \draw (c) -- (a);
   \draw (c) -- (b);
   \draw (d) -- (b);
  \end{scope}

  \begin{scope}[xshift = 4.5cm]
   \node (a) at (-.5,0) {$\bullet$};
   \node (b) at (.5,0) {$\bullet$};
   \node (c) at (-.5,1) {$\bullet$};
   \node (d) at (.5,1) {$\bullet$};
   \draw (c) -- (a);
   \draw (c) -- (b);
   \draw (d) -- (b);
   \draw (d) -- (a);
  \end{scope}
   \end{tikzpicture}
\\ \hline
  3& & & & \begin{tikzpicture}[baseline=.3cm,xscale=.9]
	    \node (a) at (0,0) {$\bullet$};
	    \node (b) at (-1,1) {$\bullet$};
	    \node (c) at (0,1) {$\bullet$};
	    \node (d) at (1,1) {$\bullet$};
	    \draw (b) -- (a) -- (c);
	    \draw (a) -- (d);
	   \end{tikzpicture}
\\ \hline
 \end{tabular}
 \caption{Connected finite $T_0$ spaces of cardinality at most $4$.}
 \label{reallysmallspaces}
\end{table}

\section{\texorpdfstring{$\card{\mxl(X)}\leq 3$}{|mxl(X)| <= 3}}
\label{sec-mleq3}

In this section, we assume that $\card{X}>1$ and 
$X$ is a connected minimal finite space, that is, 
$X$ is a connected finite $T_0$ space without beat points. 
In this case, $\mxl(X)\cap \mnl(X)=\emptyset$ and $\card{X}\geq 4$. 

Following Cianci-Ottina \cite{CianciOttina}, we use the following notations:

\begin{defn}
 We put 
  \begin{align*}
   \cB&=X-\mxl(X)-\mnl(X), &    l&=\card{\cB }, \\ 
   m&=\card{\mxl(X)}, &   n&=\card{\mnl(X)}, \\
   m'&=\card{\mxl(\cB)}, &    n'&=\card{\mnl(\cB)},
   \intertext{and for $x\in X$ and $a\in \mxl(X)$, we put }
  \alpha_x&=\card{\mxl(F_x)}=\card{\mxl(X)\cap F_x},\\
  \beta_x&=\card{\mnl(U_x)}=\card{\mnl(X)\cap U_x}, \\
   \gamma_a&=\card{U_a\cap \mxl(\cB)}
 \end{align*}
 Since $X$ does not have beat points, $\alpha_x\geq 2$ if $x\not\in \mxl(X)$ 
 and $\beta_x\geq 2$ if $x\not\in\mnl(X)$.
\end{defn}

Note that 
\begin{align*}
 \card{I(\mxl(\cB), \mxl(X))} &= \sum_{b\in \mxl(\cB)}\alpha_b = \sum_{a\in \mxl(X)}\gamma_a 
 \intertext{because}
 I(\mxl(\cB), \mxl(X))&=\Set{(b,a)\in \mxl(\cB)\times \mxl(X) | b\leq a} \\
 &=\bigcup_{b\in \mxl(\cB)} p_1^{-1}(b) = \bigcup_{b\in \mxl(\cB)} \{b\}\times \left(\mxl(X)\cap F_b\right) \\
 &=\bigcup_{a\in \mxl(X)} p_2^{-1}(a) = \bigcup_{a\in \mxl(X)} \left(U_a\cap \mxl(\cB)\right) \times \{a\}.
\end{align*}

We study the weak homotopy type of $X$ of $m\leq 3$. 

\begin{lem}
 \label{msthan2}
 If $m\leq 2$, then $X$ splits into smaller spaces.
\end{lem}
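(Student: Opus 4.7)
The plan is to reduce immediately to $m=2$ and then apply the gluing principle that a union of two contractible down sets is weak homotopy equivalent to the Non-Hausdorff suspension of their intersection.

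First, I would rule out $m=1$ under the standing hypotheses. If $\mxl(X)=\{a\}$, then every element lies below some maximal element, so $X=U_a$. Since $\card{X}>1$, pick any $y$ maximal in $X\setminus\{a\}$; then $\widehat{F}_y=\{a\}$, so $y$ is an up beat point, contradicting minimality of $X$. Hence $m\leq 2$ forces $m=2$, and we may write $\mxl(X)=\{a_1,a_2\}$.

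Next, since every element of $X$ lies below some maximal element, we get the decomposition $X=U_{a_1}\cup U_{a_2}$. Both $U_{a_i}$ have a maximum, hence are contractible (in particular homotopically trivial) down sets of $X$. Applying \cref{unionhtptriv} to this decomposition yields
\[
X\simeq_w \bS(U_{a_1}\cap U_{a_2}).
\]
Because $X$ is connected and $U_{a_1},U_{a_2}$ are open, their intersection must be nonempty; moreover neither $a_1$ nor $a_2$ lies in $U_{a_1}\cap U_{a_2}$, so $\card{U_{a_1}\cap U_{a_2}}\leq \card{X}-2<\card{X}$. This exhibits $X$ as $\bS^1 A$ for a strictly smaller space $A$, i.e.\ $X$ splits into smaller spaces in the sense of the definition.

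I do not expect any real obstacle: the argument is essentially a direct application of \cref{unionhtptriv} once the trivial case $m=1$ is excluded, and the only bookkeeping is the verification that the intersection $U_{a_1}\cap U_{a_2}$ is both nonempty (by connectedness) and strictly smaller than $X$ (because it omits $a_1$ and $a_2$).
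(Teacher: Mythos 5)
Your proposal is correct and takes essentially the same route as the paper: in the case $m=2$ the paper also writes $X=U_{a_1}\cup U_{a_2}\simeq_w \bS(U_{a_1}\cap U_{a_2})$ and observes that the intersection has cardinality strictly less than $\card{X}$. The only cosmetic difference is the case $m=1$, which the paper handles by noting that $X$ then has a maximum and is contractible (hence splits trivially), whereas you show it cannot occur for a minimal space with $\card{X}>1$; both are fine.
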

\begin{proof}
 If $m=1$, then $X$ has the maximum and is contractible.

 If $m=2$ and $\mxl(X)=\{a,b\}$, then  
 $X=U_a\cup U_b\simeq_w \bS(U_a \cap U_b)$ 
 and $U_a\cap U_b\simeq *$ or $\card{U_a\cap U_b}<\card{X}$.  
\end{proof}

\begin{lem}
 \label{misthreempistwo}
 If $m=3$ and $m'= 2$, 
 then $X$ splits into smaller spaces.
\end{lem}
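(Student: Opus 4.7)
The plan is to locate an $a\in\mxl(X)$ whose down set $U_a$ contains every element of $\mxl(\cB)$. Once such an $a$ is found, every element of $\cB$ (which lies below some element of $\mxl(\cB)$) will be in $U_a$, so $X$ will have the form $U_a\cup\mxl(X)\cup\mnl(X)$, and \cref{Uamxlmnlsplit} will finish the job.

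To find such an $a$, I would write $\mxl(\cB)=\{b_1,b_2\}$ and use the double-counting identity recorded just after the definitions of $\alpha$, $\beta$, $\gamma$, namely $\sum_{a\in\mxl(X)}\gamma_a = \alpha_{b_1}+\alpha_{b_2}$. Since $X$ is minimal and neither $b_i$ is maximal in $X$, the paper already gives $\alpha_{b_i}\geq 2$, so the sum is at least $4$. With only three summands (as $\card{\mxl(X)}=3$) and each $\gamma_a\leq\card{\mxl(\cB)}=2$, a pigeonhole argument produces some $a\in\mxl(X)$ with $\gamma_a=2$, i.e.\ $\{b_1,b_2\}\subseteq U_a$.

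With $\{b_1,b_2\}\subseteq U_a$ in hand, each element of $\cB$ is $\leq b_1$ or $\leq b_2$, hence in $U_a$, so $\cB\subseteq U_a$ and $X = U_a\cup\mxl(X)\cup\mnl(X)$. Applying \cref{Uamxlmnlsplit} yields that $X$ splits into smaller spaces.

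The main obstacle is essentially nonexistent: the argument rests entirely on the pigeonhole step, which is immediate once the bounds $\alpha_{b_i}\geq 2$ and $\gamma_a\leq 2$ are in place. If anything could go wrong it would be a degenerate hypothesis --- for instance if one of the $b_i$ were actually maximal in $X$ --- but the assumption that $\cB=X-\mxl(X)-\mnl(X)$ rules this out by definition.
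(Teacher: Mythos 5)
Your proof is correct and follows the same route as the paper: the double-counting identity $\sum_a\gamma_a=\sum_b\alpha_b\geq 2m'=4>3$ forces some $a\in\mxl(X)$ with $\gamma_a=2=m'$, whence $\cB\subset U_a$, $X=U_a\cup\mxl(X)\cup\mnl(X)$, and \cref{Uamxlmnlsplit} applies. Your extra remark that every element of $\cB$ lies below some element of $\mxl(\cB)$ just makes explicit a step the paper leaves implicit.
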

\begin{proof}
 Since $\sum_{b\in \mxl(\cB)}\alpha_b\geq 2m'= 4 > 3 = 1\cdot m$, 
 there exists $a\in \mxl(X)$ such that $\gamma_a>1$, namely, $\gamma_a=2=m'$. 
 Therefore, we have
 \[
  X=U_a\cup \mxl(X) \cup \mnl(X)
 \]
 and by \cref{Uamxlmnlsplit}, $X$ splits into smaller spaces.
\end{proof}

\begin{lem}
 \label{misthreeandcontractible}
 If $m=3$ and there exist two points $a_0,a_2\in \mxl(X)$ such that 
 $U_{a_0}\cap U_{a_2}$ is homotopically trivial, 
 then $X$ splits into smaller spaces.
\end{lem}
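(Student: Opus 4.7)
The plan is to reduce the problem to a double application of \cref{unionhtptriv}. Since every element of $X$ lies below some maximal element and $\mxl(X)=\{a_0,a_1,a_2\}$, I first observe that $X=U_{a_0}\cup U_{a_1}\cup U_{a_2}$. The key move is to bundle $U_{a_0}$ and $U_{a_2}$ together: set $A=U_{a_0}\cup U_{a_2}$ and $B=U_{a_1}$. Both are down sets in $X$, and $B=U_{a_1}$ is contractible since it has a maximum.

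To see that $A$ is also homotopically trivial I would apply \cref{unionhtptriv} to the contractible down sets $U_{a_0}$ and $U_{a_2}$, obtaining $A\simeq_w \bS(U_{a_0}\cap U_{a_2})$; by the hypothesis on $U_{a_0}\cap U_{a_2}$, its suspension is contractible, so $A\simeq_w *$. Applying \cref{unionhtptriv} a second time, now to the decomposition $X=A\cup B$, yields
\[
 X\simeq_w \bS(A\cap B),\qquad A\cap B=(U_{a_0}\cap U_{a_1})\cup(U_{a_1}\cap U_{a_2}).
\]
The cardinality bound is automatic: $A\cap B\subset U_{a_1}$, and since $a_0,a_2$ are maximal elements distinct from $a_1$ we have $a_0,a_2\notin U_{a_1}$, so $\card{A\cap B}\leq \card{X}-2$.

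To finish, the intersection $A\cap B$ may be disconnected, so I would invoke \cref{suspdisjoint,invariance-of-susp-and-wedge} to rewrite $\bS(A\cap B)$ as a wedge of $\bS C_i$, where $C_i$ ranges over the connected components of $A\cap B$, together with some copies of $S^1\simeq_w \bS^2\emptyset$. Each $\card{C_i}\leq \card{A\cap B}<\card{X}$ and $\card{\emptyset}=0<\card{X}$, so $X$ splits into smaller spaces. I do not expect any real obstacle: the hypothesis on the homotopical triviality of $U_{a_0}\cap U_{a_2}$ is exactly what is needed to make the outer application of \cref{unionhtptriv} work, and the rest is bookkeeping with suspensions of disjoint unions.
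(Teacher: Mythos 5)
Your proposal is correct and follows essentially the same route as the paper: both apply \cref{unionhtptriv} first to see $U_{a_0}\cup U_{a_2}\simeq_w \bS(U_{a_0}\cap U_{a_2})\simeq_w *$, then again to $X=U_{a_1}\cup (U_{a_0}\cup U_{a_2})$ to get $X\simeq_w \bS\left(U_{a_1}\cap (U_{a_0}\cup U_{a_2})\right)$ with the intersection strictly smaller than $X$. Your final passage to connected components via \cref{suspdisjoint} is harmless but unnecessary, since the definition of splitting into smaller spaces does not require the $A_i$ to be connected.
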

\begin{proof}
 Suppose $\mxl(X)=\{a_0,a_1,a_2\}$.

 Since $U_{a_0} \cup U_{a_2}$ is a down set, by \cref{unionhtptriv}, we see that
 \begin{align*}
  U_{a_0} \cup U_{a_2}&\simeq_w \bS\left(U_{a_0} \cap U_{a_2}\right) \simeq_w * \\ 
  X&=U_{a_1}\cup \left(U_{a_0} \cup U_{a_2} \right) \\
  &\simeq_w \bS\left(U_{a_1}\cap \left(U_{a_0} \cup U_{a_2}\right)\right)  
 \end{align*}
 and $U_{a_1}\cap \left(U_{a_0} \cup U_{a_2}\right) \subsetneq U_{a_1}$.
\end{proof}

\begin{rem}
 In fact, we can show that 
 $U_{a_0}\cup U_{a_2}=X-\{a_1\}$, 
 $U_{a_1}\cap \left(U_{a_0} \cup U_{a_2}\right) = \hat{U}_{a_1}$ and 
 $X\simeq_w \bS(\hat{U}_{a_1})$.

 More generally, if $X$ is a connected minimal finite space and $a_0\in \mxl(X)$, 
 then 
 \[
 \bigcup_{\mathclap{a\in \mxl(X)-\{a_0\}}}U_{a} = X-\{a_0\}.
 \]

 Actually, since $a_0\not\in U_a$, we have 
 \[
 \bigcup_{\mathclap{a\in \mxl(X)-\{a_0\}}}U_{a} \subset X-\{a_0\}.
 \]
 On the other hand, we have 
 $X-\{a_0\}=\left(\mxl(X)-\{a_0\}\right)\cup \cB \cup \mnl(X)$,  
 and clearly, $\mxl(X)-\{a_0\}$ is contained in the left hand side. 

 If $b\in \mxl(\cB)$, then $\card{\hat{F}_b}\geq 2$ because 
 $b$ is not a beat point. Hence, there exists $a\in \mxl(X)-\{a_0\}$ such that $b<a$. 
 Therefore, $\cB$ is contained in the left hand side.

 If $c\in \mnl(X)$,  then $\hat{F}_c\ne \emptyset$ since $X$ is connected. 
 If $\hat{F}_c\cap \cB\ne\emptyset$, then $c$ is contained in the left hand side.
 If $\hat{F}_c\cap \cB=\emptyset$, then $\hat{F}_c\subset \mxl(X)$, 
 and since $\card{\hat{F}_c}\geq 2$, $c$ is contained in the left hand side.
\end{rem}

\begin{lem}
 \label{misthreeand1connected}
 Suppose $\mxl(X)=\{a_0,a_1,a_2\}$. 
 If $U_{a_0}\cap U_{a_1}$ is connected and 
 $\hat{U}_{a_2}$ is weak homotopy equivalent to 
 a simplicial complex of dimension at most $1$, 
 then $X$ splits into smaller spaces.
\end{lem}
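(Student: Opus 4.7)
The plan is to decompose $X=U_{a_0}\cup U_{a_1}\cup U_{a_2}$ (using $\mxl(X)=\{a_0,a_1,a_2\}$) in two stages. Set $Y\coloneqq U_{a_0}\cup U_{a_1}$. Since $U_{a_0}$ and $U_{a_1}$ are contractible down sets whose intersection is connected, \cref{unionhtptriv} yields $Y\simeq_w\bS(U_{a_0}\cap U_{a_1})$, which is simply connected because its $\bS$-argument is connected.

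Now write $X=Y\cup U_{a_2}$ with $U_{a_2}$ contractible. The poset splitting and \cref{intersectioninterval} part~3 identify $\ordcpx{X}\simeq_w K_1\cup K_2$ with $K_1\coloneqq\ordcpx{\exch{Y}}\simeq_w Y$, $K_2\coloneqq\ordcpx{\exch{U_{a_2}}}\simeq *$, and $K_1\cap K_2\simeq_w Y\cap U_{a_2}$. The simplicial identity $K\simeq K_1\vee \susp(K_1\cap K_2)$ (valid when $K_2\simeq *$ and $K_1\cap K_2\to K_1$ is null homotopic, as recalled in the first example of \cref{sec-preliminaries} and used in the proof of \cref{weakgamma}) then gives
\[
X\simeq_w Y\vee \bS(Y\cap U_{a_2})
\]
provided the inclusion $\ordcpx{Y\cap U_{a_2}}\to\ordcpx{Y}$ is null homotopic. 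Since $a_2\notin Y$, one has $Y\cap U_{a_2}\subseteq \hat U_{a_2}\subsetneq X$ and $|Y|<|X|$, so granted this null homotopy we obtain a splitting into smaller spaces.

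For the null homotopy, observe that $\ordcpx{Y\cap U_{a_2}}$ is a subcomplex of $\ordcpx{\hat U_{a_2}}$, which by hypothesis is homotopy equivalent to a one-dimensional complex, hence a $K(F,1)$ with $F$ free. Any map from a space of the weak homotopy type of a one-dimensional complex into a simply connected space is automatically null homotopic, since all obstructions in $H^{n+1}(\,\cdot\,;\pi_n(Y))$ with $n\geq 1$ vanish above cohomological dimension one. The main obstacle I anticipate is verifying that $Y\cap U_{a_2}=(U_{a_0}\cap U_{a_2})\cup(U_{a_1}\cap U_{a_2})$ itself has the weak homotopy type of a one-dimensional complex, as being a subcomplex of a complex only weak homotopy equivalent to a one-dimensional complex is not in general enough. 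I would aim to settle this by leveraging the down set structure of $Y\cap U_{a_2}$ inside $\hat U_{a_2}$ together with the minimality of $X$, and likely by reducing to the interval analysis of \cref{sec-interval}, in particular \cref{specialinterval-two-empty,bodyissimple}, which identify precisely when subposets of this shape have one-dimensional weak homotopy type.
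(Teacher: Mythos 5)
Your reduction is essentially the paper's argument in different packaging, but it stops exactly at the decisive point. What you need is that $K_1\cap K_2\simeq_w Y\cap U_{a_2}$ maps null-homotopically into the simply connected $K_1\simeq_w Y=U_{a_0}\cup U_{a_1}$, and your intended argument for this requires $Y\cap U_{a_2}$ to be weak homotopy equivalent to a complex of dimension at most $1$. You correctly observe that being a subcomplex of $\ordcpx{\widehat{U}_{a_2}}$ proves nothing (the hypothesis only constrains the weak homotopy type of $\widehat{U}_{a_2}$, and a subcomplex can carry higher homotopy), and you then defer the verification to ``the down set structure together with minimality, likely by reducing to the interval analysis.'' That deferral is the gap: as written, the null homotopy is not established, and the lemmas you point to (\cref{specialinterval-two-empty}, \cref{bodyissimple}) concern unions of the form $U_{a}\cup F_{b}$ and do not obviously say anything about $\left(U_{a_0}\cup U_{a_1}\right)\cap U_{a_2}$.

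The missing step has a one-line resolution, and it is exactly the observation with which the paper's proof begins. Throughout \cref{sec-mleq3} the space $X$ is a connected \emph{minimal} finite space, and by the Remark following \cref{misthreeandcontractible} one has $U_{a_0}\cup U_{a_1}=X-\{a_2\}$; since $a_2\notin Y$ this gives $Y\cap U_{a_2}=\widehat{U}_{a_2}$ on the nose, so the hypothesis applies verbatim and your splitting becomes $X\simeq_w \left(X-\{a_2\}\right)\vee \bS(\widehat{U}_{a_2})$, which is the paper's conclusion. The paper gets there by noting $\widehat{C}_{a_2}=\widehat{U}_{a_2}$ and that $\geop{X-\{a_2\}}\simeq \susp\geop{U_{a_0}\cap U_{a_1}}$ is simply connected, then invoking \cref{weakgamma}; your $\exch$-splitting of $X=Y\cup U_{a_2}$ is the same mechanism (it reproves \cref{weakgamma} in this special case). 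So once you insert the identification $Y\cap U_{a_2}=\widehat{U}_{a_2}$, your proof closes and essentially coincides with the paper's.
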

\begin{proof}

 Note that
 $\widehat{C}_{a_2} = \widehat{U}_{a_2}$ 
 and $X-\{a_2\}=U_{a_0} \cup U_{a_1}$.

 Since $U_{a_0}\cap U_{a_1}$ is connected, 
 $\geop{U_{a_0} \cup U_{a_1}}\simeq \susp{\geop{U_{a_0} \cap U_{a_1}}} $ is simply connected. 
 Hence the inclusion
 \[
  \geop{\hat{U}_{a_2}}=\geop{\hat{C}_{a_2}}  \to \geop{X-\{a_2\}}=\geop{U_{a_0} \cup U_{a_1}}
 \]
 is null homotopic. Therefore, by \cref{weakgamma},  we have
 \[
  X \simeq_w \left(X-\{a_2\}\right) \vee \bS(\hat{U}_{a_2}).
 \]
\end{proof}

\begin{lem}
 \label{mandmpis3}
 If $m=m'=3$, 
 then $X$ splits into smaller spaces, or 
 $\mxl(X)\cup \mxl(\cB)$ is isomorphic to $S^1_3$.
 \begin{figure}[h]
  \centering
 \begin{tikzpicture}
  \foreach \i 
  [evaluate=\i as \j using int(2-\i)]
  in {0,1,2}{
  \node (a\j) at (\i,1) {$a_{\j}$};
  \node (b\i) at (\i,0) {$b_{\i}$};
  }
  \draw (a2) -- (b0) -- (a1) -- (b2) -- (a0) -- (b1) -- (a2);
 \end{tikzpicture}
  \caption{$S^1_3$}
  \label{33circle}
 \end{figure}
\end{lem}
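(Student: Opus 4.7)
The plan is to use the double counting identity, recorded just before this section, that
\[
\sum_{b\in \mxl(\cB)} \alpha_b \;=\; \lvert I(\mxl(\cB),\mxl(X))\rvert \;=\; \sum_{a\in \mxl(X)} \gamma_a.
\]
Because $X$ is minimal, no $b\in \mxl(\cB)$ is an up beat point of $X$, so $\alpha_b\geq 2$. With $m'=3$ this forces both sums to be at least $6$.

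First I would dispose of the easy case where some $\gamma_{a_0}$ achieves the maximum possible value $m'=3$. Then $\mxl(\cB)\subset U_{a_0}$, and since every element of $\cB$ lies below some element of $\mxl(\cB)$ while $U_{a_0}$ is a down set, it follows that $\cB\subset U_{a_0}$. Hence $X = U_{a_0}\cup \mxl(X)\cup \mnl(X)$, and \cref{Uamxlmnlsplit} immediately splits $X$ into smaller spaces, giving the first alternative of the conclusion.

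Otherwise $\gamma_a\leq 2$ for every $a\in\mxl(X)$, and therefore $\sum_a\gamma_a\leq 2m=6$. Combined with the opposite inequality already established, every $\alpha_b$ and every $\gamma_a$ is forced to equal $2$. Consequently $I(\mxl(\cB),\mxl(X))$ is a $2$-regular bipartite graph on $3+3$ vertices, which must decompose into even cycles; the only such decomposition compatible with three vertices on each side is a single $6$-cycle, which I may label $a_0-b_0-a_1-b_1-a_2-b_2-a_0$. Since $\mxl(X)$ and $\mxl(\cB)$ are each antichains sitting on different levels, the induced order on $\mxl(X)\cup \mxl(\cB)$ is determined by this bipartite graph, and it matches exactly the description of $S^1_3$ in \cref{2npointscircle}.

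The only mildly delicate step is the propagation in the first case from $\mxl(\cB)\subset U_{a_0}$ to $\cB\subset U_{a_0}$ via the down-set property; the bipartite classification used in the second case is a one-line combinatorial observation, and no additional homotopy-theoretic machinery is required.
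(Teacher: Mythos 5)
Your argument is correct and follows the same route as the paper: dispose of the case $\gamma_a=3$ via $X=U_a\cup\mxl(X)\cup\mnl(X)$ and \cref{Uamxlmnlsplit}, then use the double count $\sum_b\alpha_b=\sum_a\gamma_a$ with $\alpha_b\geq 2$ and $\gamma_a\leq 2$ to force all degrees equal to $2$, so that the comparability graph on $\mxl(X)\cup\mxl(\cB)$ is a single $6$-cycle, i.e.\ $S^1_3$. Your explicit justification of $\cB\subset U_{a_0}$ and of the bipartite $2$-regular classification merely fills in steps the paper labels as immediate or straightforward.
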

\begin{proof}
 Since $\card{\mxl(\cB)}=m'=3$, if there exists an element $a\in \mxl(X)$ such that 
 $\gamma_a=\card{U_a\cap \mxl(\cB)}=3$,  
 then $X=U_{a}\cup \mxl(X)\cup \mnl(X)$ and $X$ splits into smaller spaces by \cref{Uamxlmnlsplit}. 

 Assume that $\gamma_a\leq 2$ for all $a\in \mxl(X)$. 
 Recall that $\alpha_b=\card{F_b\cap \mxl(X)}\geq 2$ for all $b\in \mxl(\cB)$. 
 Since 
 \[
  2\cdot 3\geq \sum_{a\in \mxl(X)}\gamma_a = \sum_{b\in \mxl(\cB)}\alpha_b\geq 2\cdot 3, 
 \]
 we see that 
 $\gamma_a = 2$ for all $a\in \mxl(X)$ and $\alpha_b=2$ for all $b\in \mxl(\cB)$. 
 Now, it is straightforward to see 
 that $\mxl(X)\cup \mxl(\cB)$ is isomorphic to $S^1_3$.

\end{proof}

\begin{lem}
 \label{mandmpis3andweakbeatpt}
 If $m=m'=3$ and 
 there exists $b\in \cB$ such that $\card{\hat{F}^{\cB}_b}=\card{\hat{F}_b\cap \cB}=2$, 
 then $X$ splits into smaller spaces.
\end{lem}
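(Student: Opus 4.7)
The plan is to apply \cref{mandmpis3} to reduce to the case $\mxl(X)\cup\mxl(\cB)\cong S^1_3$, and then to show that $b$ itself is a $\gamma$-point. The $\gamma$-point proposition will then give $X\simeq_w X\setminus\{b\}$, which exhibits $X$ as $\bS^0(X\setminus\{b\})$, i.e.\ a split into a strictly smaller space.

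After this reduction, I would label $\mxl(X)=\{a_0,a_1,a_2\}$ and $\mxl(\cB)=\{b_0,b_1,b_2\}$ so that $b_i<a_i$ and $b_i<a_{i+1}$ (indices mod $3$), and write $\widehat{F}_b\cap\cB=\{c_1,c_2\}$. First I would observe that at least one of $c_1,c_2$ must lie in $\mxl(\cB)$: otherwise each $c_i$ has a $\cB$-successor lying in $\widehat{F}_b^{\cB}\setminus\{c_i\}$, which forces $c_1<c_2<c_1$. So WLOG $c_2\in\mxl(\cB)$.

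Next I would split into two cases. If $c_1\in\mxl(\cB)$ as well, writing $c_1=b_i$ and $c_2=b_j$ with $i\ne j$, the $S^1_3$ combinatorics give $\{a_i,a_{i+1}\}\cup\{a_j,a_{j+1}\}=\mxl(X)$, so $\widehat{F}_b=\{b_i,b_j\}\cup\mxl(X)$; only length-$1$ chains arise, and $\ordcpx{\widehat{F}_b}$ turns out to be a path on five vertices, hence a contractible tree. If instead $c_1\notin\mxl(\cB)$, then $c_1<c_2$ and $\widehat{F}_{c_1}^{\cB}=\{c_2\}$, so $c_1$ is an up beat point of $\cB$; minimality of $X$ then produces some $a_k\in\widehat{F}_{c_1}\cap\mxl(X)$ with $c_2\not\leq a_k$, and because $S^1_3$ admits exactly one maximal element not above $c_2$, combining with the two that are above $c_2$ yields $c_1<a_0,a_1,a_2$. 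Hence $\widehat{F}_b=\{c_1,c_2\}\cup\mxl(X)$ with $c_1$ below every other element, so $\ordcpx{\widehat{F}_b}$ is a cone with apex $c_1$, again contractible.

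In both cases $\ordcpx{\widehat{F}_b}$ is contractible. Since every element of $\widehat{U}_b$ lies strictly below every element of $\widehat{F}_b$ via $b$, $\widehat{C}_b$ is the ordinal sum $\widehat{U}_b\oplus\widehat{F}_b$ of posets, so $\ordcpx{\widehat{C}_b}=\ordcpx{\widehat{U}_b}*\ordcpx{\widehat{F}_b}$ is a join with a contractible nonempty factor and is therefore contractible. Hence $b$ is a $\gamma$-point and the conclusion follows. The main obstacle is the second case: one must use the minimality of $X$ (no up beat points) together with the cyclic structure of $S^1_3$ to push $c_1$ below every element of $\mxl(X)$, which is precisely what turns $\ordcpx{\widehat{F}_b}$ into a cone.
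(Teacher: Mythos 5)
Your proof is correct and takes essentially the same route as the paper: reduce via \cref{mandmpis3} to the case $\mxl(X)\cup\mxl(\cB)\cong S^1_3$, then delete $b$ after checking that its punctured link is homotopically trivial. The only cosmetic differences are that the paper eliminates your second case outright (there $\widehat{F}_b$ would have a minimum, making $b$ a beat point and contradicting minimality) and phrases the conclusion via weak beat points ($\widehat{F}_b$ contractible) rather than via $\gamma$-points and the join decomposition $\ordcpx{\widehat{C}_b}=\ordcpx{\widehat{U}_b}*\ordcpx{\widehat{F}_b}$.
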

\begin{proof}
 By \cref{mandmpis3}, we may assume that 
 $\mxl(X)\cup \mxl(\cB) = S^1_3$ of \cref{33circle}.

 We show that $\hat{F}^{\cB}_b\subset \mxl(\cB)$. 
 Since $\hat{F}^{\cB}_b\ne \emptyset$, we have $b\not\in \mxl(\cB)$ 
 and $\hat{F}^{\cB}_b \cap \mxl(\cB)\ne \emptyset$. 

 Suppose that $\hat{F}^{\cB}_b\not\subset \mxl(\cB)$. 
 Then we have $\card{\hat{F}^{\cB}_b \cap \mxl(\cB)}=1$. 
 We may assume that $b_0\in F_b$ and $b_1,b_2\not\in F_b$.
 Suppose $\hat{F}^{\cB}_b=\{b',b_0\}$. 
 Since $b<b'\in \cB$ and $b'\not\in \mxl(\cB)$, 
 we have $b<b'<b_0$. 
 Since $b'\not< b_1,b_2$ and $b'$ is not a beat point, 
 we have $b'<a_0$.
 Therefore, we have $\hat{F}_b=\{b',b_0,a_0,a_1,a_2\}$ and 
 $b'=\min \hat{F}_b$. Then $b$ is a beat point, which contradicts to the assumption. 
 
  \begin{figure}[h]
  \centering
 \begin{tikzpicture}
  \foreach \i 
  [evaluate=\i as \j using int(2-\i)]
  in {0,1,2}{
  \node (a\j) at (\i,1) {$a_{\j}$};
  \node (b\i) at (\i,0) {$b_{\i}$};
  }
  \draw (a2) -- (b0) -- (a1) -- (b2) -- (a0) -- (b1) -- (a2);
  \node (b) at (0,-2) {$b$};
  \node (b') at (0,-1) {$b'$};
  \draw (b) -- (b') -- (b0);
 \end{tikzpicture}
   \caption{$\widehat{F}_b\not\subset \mxl(\cB)$.}
  \end{figure}

 Therefore, $\hat{F}^{\cB}_b\subset \mxl(\cB)$. 

 We show that $b$ is a weak beat point. 
 
 We may suppose  $\hat{F}^{\cB}_b=\{b_0,b_2\}$. 
 Then, since $\mxl(X)=\{a_0,a_1,a_2\}\subset F_b$, we have 
 \begin{align*}
  \hat{F}_b&=(\hat{F}_b \cap \cB) \cup \mxl(X) \\
  &=\{a_0,a_1,a_2,b_0,b_2\}
 \end{align*}
 which is contractible, and so $b$ is a weak beat point. 

 Therefore $X\simeq_w X-\{b\}$ hence $X$ splits into smaller spaces.
\end{proof}

\begin{lem}
 \label{mis3mpis4}
 If $m=3$ and $m'=4$, 
 then $X$ splits into smaller spaces, or 
 $\mxl(X)\cup \mxl(\cB)$ is isomorphic to 
 one of spaces of \cref{figmis3mpis4}.

\begin{figure}[h]
 \centering

 \begin{tikzpicture}
  \node (b0) at (-.5,0) {$b_0$};
  \foreach \i 
  [evaluate=\i as \j using int(\i+1)]  
  in {0,1,2}{
  \node (a\i) at (\j,1.5) {$a_{\i}$};
  \node (b\j) at (\j,0) {$b_{\j}$};
  \draw (b0) -- (a\i);
  }
  \draw (a0) -- (b1) -- (a1) -- (b3) -- (a2) -- (b2) -- (a0);

  \node at (1.5,-1) {(a)};

 \begin{scope}[xshift=5cm]
  \foreach \i 
  in {0,1,2}{
  \node (a\i) at (\i,1) {$a_{\i}$};
  \node (b\i) at (\i,0) {$b_{\i}$};
  }
  \node (b3) at (3,0) {$b_3$};
  \draw (a0) -- (b0) -- (a1) -- (b1) -- (a0);
  \draw (a0) -- (b2);
  \draw (a1) -- (b3);
  \draw (b2) -- (a2) -- (b3);

  \node at (1.5,-1) {(b)};
 \end{scope}
 \end{tikzpicture}
 \caption{$m=3$ and $m'=4$.}
 \label{figmis3mpis4}
\end{figure}
\end{lem}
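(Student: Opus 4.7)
The plan is a counting argument based on the identity
\[
\sum_{b\in \mxl(\cB)}\alpha_b \;=\; \sum_{a\in \mxl(X)}\gamma_a,
\]
together with the bounds $\alpha_b\geq 2$ (valid since each $b\in\mxl(\cB)$ is not an up beat point and $\widehat{F}_b\subset\mxl(X)$) and $\gamma_a\leq m'=4$. With $m=3$ and $m'=4$, the common value of the two sums lies in $\{8,9,\ldots,12\}$.

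First I would dispose of the case $\gamma_a=m'=4$ for some $a\in\mxl(X)$: then $\mxl(\cB)\subset U_a$, and since every element of $\cB$ lies below some element of $\mxl(\cB)$, we get $\cB\subset U_a$, whence $X=U_a\cup\mxl(X)\cup\mnl(X)$ and \cref{Uamxlmnlsplit} yields the splitting. So I may assume $\gamma_a\leq 3$ for every $a\in\mxl(X)$, which forces $\sum_a \gamma_a\in\{8,9\}$, giving two subcases.

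If the sum equals $8$, then every $\alpha_b=2$ and $(\gamma_{a_0},\gamma_{a_1},\gamma_{a_2})$ is a permutation of $(3,3,2)$; taking $\gamma_{a_2}=2$, the two elements of $\mxl(\cB)$ below $a_2$ are determined, and the remaining two (each of $\alpha$-value $2$) must lie below both $a_0$ and $a_1$. The count $\gamma_{a_0}=3$ then pins down exactly one of the first two $b_i$'s as lying below $a_0$ and the other below $a_1$, yielding figure (b) after relabeling. If the sum equals $9$, then all three $\gamma_a=3$ and the multiset $\{\alpha_b\}$ equals $\{3,2,2,2\}$; the unique $b$ with $\alpha_b=3$ lies below every $a_j$, and each $a_j$ fails to cover exactly one of the three remaining $b_i$'s. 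Two of the $a_j$ missing the same $b$ would force $\alpha_b\leq 1$, a contradiction, so the ``missing'' correspondence is a bijection, and this reproduces figure (a) up to relabeling.

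The argument is pure combinatorics on the bipartite incidence graph between $\mxl(X)$ and $\mxl(\cB)$; there is no topology beyond the single reduction via \cref{Uamxlmnlsplit}. The only mild obstacle is keeping the case analysis organized so that each admissible pair of $\alpha$-multiset and $\gamma$-multiset determines a unique graph isomorphism type.
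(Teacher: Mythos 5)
Your proposal is correct and follows essentially the same route as the paper: eliminate the case $\gamma_a=4$ via \cref{Uamxlmnlsplit}, then use the double count $\sum_b\alpha_b=\sum_a\gamma_a$ with $\alpha_b\geq 2$ and $\gamma_a\leq 3$ to force the value $8$ or $9$, and in each case determine the bipartite incidence graph (hence the poset $\mxl(X)\cup\mxl(\cB)$) up to isomorphism, giving figures (b) and (a) respectively. Your write-up merely supplies the elementary graph-determination details that the paper compresses into ``we easily see.''
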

\begin{proof}
 This can be shown similarly to \cref{mandmpis3}. 
 If there exists an element $a\in \mxl(X)$ such that 
 $\gamma_a=4$,  
 then $X=U_{a}\cup \mxl(X)\cup \mnl(X)$ and $X$ splits into smaller spaces. 
 If $\gamma_a\leq 3$ for all $a\in \mxl(X)$, 
 then, 
 since
 \[
  3\cdot 3 \geq \sum_{a\in \mxl(X)}\gamma_a = \sum_{b\in \mxl(\cB)}\alpha_b\geq 2\cdot 4, 
 \]
 we see that 
 $\gamma_a=2$ or $3$ for all $a\in \mxl(X)$ and 
 $\gamma_a=2$ for at most one of them, 
 and that $\alpha_b=2$ or $3$ for all $b\in \mxl(\cB)$ and 
 $\alpha_b=3$ for at most one of them. 
 If there exists an element $a\in  \mxl(X)$ such that $\gamma_a=2$, 
 then $\alpha_b=2$ for all $b\in \mxl(\cB)$ and 
 we easily see that $\mxl(X)\cup \mxl(\cB)$ is isomorphic to the space (b) of \cref{figmis3mpis4}. 
 Otherwise, $\gamma_a=3$ for all $a$ and there exists one element $b\in \mxl(\cB)$ such that 
 $\alpha_b=3$, and we see that 
 $\mxl(X)\cup \mxl(\cB)$ is isomorphic to the space (a) of \cref{figmis3mpis4}. 
\end{proof}

\section{\texorpdfstring{$\card{\cB}\leq 5$}{|B|<= 5}}
\label{sec-lleq5}

We assume that $X$ is a connected minimal finite space of $\card{X}>1$ 
in this section, too, 
and we study the weak homotopy type of $X$ of $l=\card{\cB}\leq 5$.

\begin{lem}
 \label{liszero}
 If $l=0$, namely, $X=\mxl(X)\cup \mnl(X)$, 
 then $X$ is weak homotopy equivalent to a wedge of $S^1$'s.
\end{lem}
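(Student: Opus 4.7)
The plan is to reduce the statement to the classical fact that every connected finite graph has the homotopy type of a wedge of circles. Since $l=\card{\cB}=0$ and $X$ is connected and minimal with $\card{X}>1$, we have $\mxl(X)\cap \mnl(X)=\emptyset$ and $X=\mxl(X)\amalg \mnl(X)$ as sets. In particular every chain in $X$ has length at most one, so $\hgt(X)\leq 1$.

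Consequently $\ordcpx{X}$ is a $1$-dimensional simplicial complex whose vertex set is $X$ and whose edges are exactly the pairs $\{b,a\}$ with $b\in \mnl(X)$, $a\in \mxl(X)$, and $b<a$; this is (the geometric realization of) a bipartite graph. Since $X$ is connected, so is this graph, and the standard argument of collapsing a spanning tree shows $\ordcpx{X}\simeq \bigvee_k S^1$ for some $k\geq 0$. Combining with the weak homotopy equivalence $X\simeq_w \ordcpx{X}$ from McCord's theorem finishes the proof.

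There is no substantive obstacle; everything is either packaged into McCord's theorem or the classical description of the homotopy type of a finite graph. The only remark worth recording is that the paper's convention permitting a wedge of zero circles formally covers the trivial case where $\ordcpx{X}$ is a tree. Under the minimality hypothesis on $X$ this case cannot actually occur, since the absence of beat points forces every vertex of $\ordcpx{X}$ to have degree at least two, but this sharper statement is not needed for the conclusion as stated.
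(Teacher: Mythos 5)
Your argument is correct and is essentially the paper's own proof: the paper simply observes that $\ordcpx{X}$ is a connected $1$-dimensional simplicial complex, hence a wedge of circles, and invokes McCord's weak equivalence. Your write-up just fills in the same steps (bipartite graph structure, spanning tree) in more detail.
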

\begin{proof}
 In this case, $\ordcpx{X}$ is a connected $1$-dimensional simplicial complex.
\end{proof}

\begin{lem}
 \label{lisone}
 If $m'=1$, namely, if there exists $\max \cB$, 
 then $X$ splits into smaller spaces.
 
 In particular, if $l=1$,  then $X$ splits into smaller spaces.
\end{lem}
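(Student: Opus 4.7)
The plan is a direct reduction to \cref{Uamxlmnlsplit}. Set $a := \max \cB$, whose existence is precisely the hypothesis $m' = 1$. Since every element of $\cB$ lies below its maximum, $\cB \subseteq U_a$; combining this with the (automatically disjoint) partition $X = \mxl(X) \sqcup \cB \sqcup \mnl(X)$ valid for a connected minimal finite space with $\Card{X} > 1$, we immediately obtain
\[
X = U_a \cup \mxl(X) \cup \mnl(X).
\]
Then \cref{Uamxlmnlsplit} applies and expresses $X$ up to weak homotopy equivalence as a wedge of some copies of $S^1$ together with spaces of the form $\bS(U_a \cap U_c)$ for $c \in \mxl(X)$ with $U_a \cap U_c \neq \emptyset$.

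It remains only to verify that each summand $\bS(U_a \cap U_c)$ genuinely involves a space of cardinality strictly less than $\Card{X}$, so that this is a splitting into smaller spaces in the sense of the definition. Since $a \in \cB$ we have $a \notin \mxl(X)$, while $c \in \mxl(X)$; thus $c \neq a$, and maximality of $c$ forces $c \not\leq a$, i.e.\ $c \notin U_a$. Hence $U_a \cap U_c \subseteq U_a \subsetneq X$, giving the required strict inequality $\Card{U_a \cap U_c} < \Card{X}$.

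For the ``in particular'' clause, when $l = 1$ the body $\cB$ is a one-element set, which trivially has a maximum, so $m' = 1$ and the preceding argument applies verbatim. There is no real obstacle here; I would expect the subtler bookkeeping to arise in the forthcoming cases (such as $l = 2$), where $\cB$ may carry several maximal elements and one would have to appeal to the more elaborate poset splittings \cref{UaFbmxlmnlsplit,suspensionposetlemma} in place of \cref{Uamxlmnlsplit}.
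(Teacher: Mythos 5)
Your proposal is correct and is essentially the paper's own argument: take $a=\max\cB$, observe $X=U_a\cup\mxl(X)\cup\mnl(X)$, and apply \cref{Uamxlmnlsplit}. The extra check that $\card{U_a\cap U_c}<\card{X}$ is harmless but already subsumed in the "in particular" clause of \cref{Uamxlmnlsplit}.
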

\begin{proof}
 Let $b=\max \cB $. Clearly, we have $X=U_b\cup \mxl(X)\cup \mnl(X)$, 
 and the result follows from \cref{Uamxlmnlsplit}.
\end{proof}

\begin{lem}
 \label{listwo}
 If there exists an element $b\in \cB $ such that  $\cB - U_b$ is a chain, 
 then $X$ splits into smaller spaces.
 
 In particular, if $l=2$,  then $X$ splits into smaller spaces.
\end{lem}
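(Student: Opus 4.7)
The plan is to find a decomposition of $X$ of the form $U_a \cup F_{b'} \cup \mxl(X) \cup \mnl(X)$ and invoke \cref{UaFbmxlmnlsplit}. If $\cB - U_b = \emptyset$, then $b = \max\cB$, so $X = U_b \cup \mxl(X) \cup \mnl(X)$ and \cref{lisone} finishes the job. Otherwise let $c_1 = \min(\cB - U_b)$; the chain hypothesis guarantees that every element of $\cB - U_b$ is $\geq c_1$ and hence lies in $F_{c_1}$, while $\cB\cap U_b \subset U_b$. Together with $\mxl(X)$ and $\mnl(X)$ this gives $X = U_b \cup F_{c_1} \cup \mxl(X) \cup \mnl(X)$, and \cref{UaFbmxlmnlsplit} produces the required splitting except in the subcase where $U_b$ and $F_{c_1}$ are comparable and $U_b \cup F_{c_1} = X$.

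In that remaining subcase one has $\mxl(X) \subset F_{c_1}$ and $\mnl(X) \subset U_b$. I would first argue that $b$ and $c_1$ are incomparable in $X$: $c_1 \not\leq b$ holds by construction, and if $b < c_1$, then every $y \in \widehat{F}_b$ must lie in $F_{c_1}$ and satisfy $y \geq c_1$, forcing $c_1 = \min \widehat{F}_b$ and contradicting the minimality of $X$. With $b$ and $c_1$ incomparable I would apply \cref{specialinterval} with $a_0 = b$, $b_0 = c_1$, together with its corollaries \cref{specialinterval-two-empty,specialinterval-one-empty}. The chain structure of $\cB - U_b$ makes the $B$-side hypotheses transparent: every element of $B_0 \setminus \{c_1\}$ is a down beat point of $F_{c_1}$ inherited from the chain, and $\min B_1$ exists automatically whenever $B_1 \neq \emptyset$.

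For the ``in particular'' clause $l = 2$, the poset $\cB$ is either a two-element chain (handled by the first case) or an antichain $\{b, c_1\}$. In the antichain case $|\cB\cap U_b| = 1$ forces $A_0 = \{b\}$, $B_0 = \{c_1\}$, and $A_1 = B_1 = \emptyset$, so the hypotheses of \cref{specialinterval} hold vacuously and \cref{specialinterval-two-empty} directly shows that $U_b \cup F_{c_1}$ is weak homotopy equivalent to a wedge of spheres of dimension at most two; combined with the $S^1$-summands from \cref{UaFbmxlmnlsplit} this splits $X$ into smaller spaces.

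The main obstacle in full generality is verifying the $A$-side hypotheses of \cref{specialinterval}, namely that every element of $A_0 \setminus \{b\}$ is an up beat point of $U_b$, that $I(A_0, B_0) = \emptyset$, and that $\max A_1$ exists when $A_1 \neq \emptyset$. These should reduce to a careful case analysis exploiting the incomparability of $b$ with elements of $\cB - U_b$ together with the minimality of $X$, but form the technical heart of the argument in the generic case.
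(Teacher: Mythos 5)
Your reduction $X=U_b\cup F_{c_1}\cup\mxl(X)\cup\mnl(X)$ (with $c_1=\min(\cB-U_b)$), the case $\cB-U_b=\emptyset$ via \cref{lisone}, and the $l=2$ clause are fine, but the leftover subcase — $U_b$ and $F_{c_1}$ comparable and $U_b\cup F_{c_1}=X$ — is not just a "careful case analysis": the $A$-side hypotheses of \cref{specialinterval} with $a_0=b$, $b_0=c_1$ can genuinely fail, so the route you defer cannot be completed as stated. Concretely, take $X$ with $\mnl(X)=\{p,q,r\}$, $\cB=\{x,y_1,y_2,b,c_1\}$, $\mxl(X)=\{M_1,M_2,M_3\}$, and order generated by $x,c_1>p,q$; $\ y_1,y_2>x,r$; $\ b>y_1,y_2$; $\ M_1,M_2>b,c_1$; $\ M_3>y_1,y_2,c_1$. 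This is a connected minimal finite space, $\cB-U_b=\{c_1\}$ is a chain, $U_b\cup F_{c_1}=X$, and $U_b$, $F_{c_1}$ are comparable, so you land in the problematic subcase; but $x\in A_0\setminus\{b\}$ has $\widehat{F}_x\cap U_b=\{y_1,y_2,b\}$ with no minimum, so $x$ is not an up beat point of $U_b$ and hypothesis 1(a) of \cref{specialinterval} fails. (This $X$ does split — here $\cB\subset U_{M_1}$, so $X=U_{M_1}\cup\mxl(X)\cup\mnl(X)$ and \cref{Uamxlmnlsplit} applies — but not by your argument, and no other choice of $b$ satisfying the chain hypothesis is available.)

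The missing idea, and the point where the paper's proof diverges from yours, is not to insist on $U_b$ itself. Setting $C=\cB-U_b$ and $U=\bigcup_{a\in F_b}U_a$, the paper argues: if $C\subset U$, then $\max C\in U_a$ for some $a\in F_b$, hence $\cB\subset U_a$ and $X=U_a\cup\mxl(X)\cup\mnl(X)$, settled by \cref{Uamxlmnlsplit}; if $C\cap U=\emptyset$, then $F_{c_1}\cap F_b=\emptyset$, so $U_b\cup F_{c_1}$ misses $\widehat{F}_b\ne\emptyset$ and is a proper subset; and in the mixed case it replaces $(U_b,F_{c_1})$ by $(U_a,F_{d_1})$ with $a\in F_b$ above $d_0=\max(C\cap U)$ and $d_1=\min(C-U)$, using \cref{Fb-Uanonempty} to see that $F_b-U_a$ is nonempty and disjoint from $U_a\cup F_{d_1}$, so the union is again proper. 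In every case the union is either a full $U_a$-decomposition or a proper subset of $X$, so the "in particular" clause of \cref{UaFbmxlmnlsplit} applies directly and one never has to analyze a union equal to $X$ — which is exactly the configuration where your plan gets stuck.
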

\begin{proof}
 If $\cB- U_b=\emptyset$, then $b=\max \cB$ and the result follows from \cref{lisone}.

 Assume that $\cB -  U_b\ne \emptyset$ and put
 \[
  C=\cB - U_b, \quad c_0 = \max C,\quad c_1=\min C, \quad U=\bigcup_{a\in F_b}U_a.
 \]
 By the assumption, $C$ is a nonempty finite chain. 

 If $C-U=\emptyset$, namely, if $C\subset U$, 
 then $c_0\in U$, and hence there exists an element $a\in F_b$ such that $c_0\in U_a$. 
 Since $c_0=\max C$, we have $C\subset U_a$, 
 and since $a\in F_b$, we have $U_b\subset U_a$. 
 Therefore, $\cB\subset U_b\cup C\subset U_a$ and 
 \[
  X=U_a\cup \mxl(X) \cup \mnl(X).
 \]
 Hence, $X$ splits into smaller spaces.

 If $C\cap U=\emptyset$, then  
 $c_1\not\in U$ and so $F_{c_1}\cap F_b=\emptyset$. 
 Hence $\left(U_b\cup F_{c_1}\right) \cap  \hat{F}_b=\emptyset$, 
 and since $b\in \cB$, $\hat{F}_b\ne \emptyset$, 
 and so $U_b\cup F_{c_1} \subsetneqq X$. 
 Since $\cB\subset U_b\cup C\subset U_b\cup F_{c_1}$, we have  
 \[
  X=U_b\cup F_{c_1} \cup \mxl(X) \cup \mnl(X).
 \]
 Therefore, $X$ splits into smaller spaces by \cref{UaFbmxlmnlsplit}.

 Suppose $C-U\ne\emptyset$ and $C\cap U\ne \emptyset$, and put  
 $d_1 = \min (C-U)$ and $d_0=\max(C\cap U)$.
 Since $d_0\in U$, there exists an element $a\in F_b$ such that $d_0\in U_a$. 
 Since $d_0=\max(C\cap U)$, we have $C\cap U\subset U_a$, 
 since $a\in F_b$, we have $U_b\subset U_a$, and since $d_1=\min (C-U)$, 
 we have $C-U\subset F_{d_1}$. 
 Therefore, $\cB\subset U_b\cup C\subset U_a\cup F_{d_1}$ and we see that
 \[
  X=U_a\cup F_{d_1} \cup \mxl(X) \cup \mnl(X).
 \]
 Since $d_1\not\in U$, 
 we have $F_{d_1}\cap F_b=\emptyset$ and so 
 $F_b-U_a\subset F_b\subset F_{d_1}^c$.
 Therefore, we have 
 \[
 F_b-U_a \subset U_a^c \cap F_{d_1}^c = \left(U_a\cup F_{d_1}\right)^c.
 \]
 Since $X$ is a minimal finite space and $b$ is not a maximal element, 
 by \cref{Fb-Uanonempty}, $F_b-U_a\ne\emptyset$ 
 and hence $U_a\cup F_{d_1} \subsetneq X$. 
 Therefore, by \cref{UaFbmxlmnlsplit}, $X$ splits into smaller spaces.
\end{proof}

\begin{coro}
 \label{mxlbistwo}
 If $l\leq 5$ and $m'\leq 2$, then $X$ splits into smaller spaces.
\end{coro}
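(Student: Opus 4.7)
The plan is to split on the value of $m'$ and reduce each subcase to an earlier result. The case $m' = 0$ forces $l = 0$, so \cref{liszero} gives $X \simeq_w \bigvee S^1$; rewriting each $S^1$ as $\bS^2 \emptyset$ exhibits $X$ as a wedge of spaces $\bS^{n_i} A_i$ with $\card{A_i} = 0 < \card{X}$, which is a splitting into smaller spaces. The case $m' = 1$ is exactly \cref{lisone}. It therefore suffices to handle $m' = 2$.

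Write $\mxl(\cB) = \{b_1, b_2\}$ and, for $i \in \{1, 2\}$, set $C_i = \cB - U_{b_i}$. Since every element of $\cB$ lies below some element of $\mxl(\cB)$, we have $C_i \subseteq U^{\cB}_{b_{3-i}}$, and in particular $b_{3-i} = \max C_i$. Because $b_{3-i}$ dominates $C_i$, the poset $C_i$ is a chain if and only if $C_i \setminus \{b_{3-i}\}$ is a chain. My goal is to produce some $b_i$ for which $C_i$ is a chain; once I have it, \cref{listwo} finishes the proof.

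Suppose, for contradiction, that neither $C_1$ nor $C_2$ is a chain. Then each $C_i \setminus \{b_{3-i}\}$ contains two incomparable elements and hence has cardinality at least $2$. Observe that $C_1 \setminus \{b_2\}$ consists of elements of $\cB$ that are $\leq b_2$ but not $\leq b_1$, while $C_2 \setminus \{b_1\}$ consists of those $\leq b_1$ but not $\leq b_2$; these two sets are disjoint and both contained in $\cB \setminus \{b_1, b_2\}$. Hence $l - 2 \geq 4$, contradicting $l \leq 5$. So some $C_i$ is a chain, as required. I anticipate no real obstacle: the whole corollary reduces to this short combinatorial count together with \cref{lisone} and \cref{listwo}.
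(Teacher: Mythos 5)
Your proof is correct and takes essentially the same route as the paper: dispose of $m'\leq 1$ via \cref{liszero,lisone}, and for $m'=2$ use the bound $l\leq 5$ to show that $\cB-U_{b_1}$ or $\cB-U_{b_2}$ is a chain, then invoke \cref{listwo}. The only cosmetic difference is that you run the cardinality count as a contradiction, whereas the paper splits on whether $\card{U_{b_1}\cap\cB}\geq 3$ or $\leq 2$.
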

\begin{proof}
 If $m'<2$, the result follows from \cref{liszero,lisone}. 

 Suppose $m'=2$ and $\mxl(\cB)=\{b_1,b_2\}$. 
 Since $\card{\cB}=l\leq 5$, 
 if $\card{U_{b_1}\cap \cB}\geq 3$, then $\cB-U_{b_1}$ is a chain, 
 and if $\card{U_{b_1}\cap \cB}\leq 2$, then $\cB-U_{b_2}$ is a chain. 
 The result follows from \cref{listwo}.
\end{proof}

\begin{lem}
 \label{mp3m5ormp5m3chain}
 Assume that all the connected components of $\cB$ are chains. 

 If $m'\leq3$ and $m\leq 5$, or $m'\leq 5$ and $m\leq 3$, 
 then $X$ splits into smaller spaces. 
\end{lem}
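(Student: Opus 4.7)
The plan is to locate a maximal element $a_{0}\in\mxl(X)$ such that at most one connected component of $\cB$ is not contained in $U_{a_{0}}$, and then combine \cref{Uamxlmnlsplit,bodyissimple,UaFbmxlmnlsplit}. Since every component of $\cB$ is a chain, $m'$ equals the number of components; writing them as $\cB_{0},\dots,\cB_{m'-1}$ with unique maxima $t_{0},\dots,t_{m'-1}\in\mxl(\cB)$, the condition $\cB_{i}\subset U_{a}$ is equivalent to $t_{i}\leq a$, so $\gamma_{a}$ counts exactly the components of $\cB$ that sit inside $U_{a}$, and ``at most one escapes $U_{a_{0}}$'' reads as $\gamma_{a_{0}}\geq m'-1$.

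The cases $m'\leq 2$ are already handled by \cref{lisone,listwo}; for $m'=2$ one takes $b=t_{1}$ so that $\cB-U_{b}=\cB_{0}$ is a chain and \cref{listwo} applies. For $m'\geq 3$ I will argue by pigeonhole: assume $\gamma_{a}\leq m'-2$ for every $a\in\mxl(X)$. Since $\alpha_{b}\geq 2$ for all $b\in\mxl(\cB)$ (minimality of $X$), the identity $\sum_{a\in\mxl(X)}\gamma_{a}=\sum_{b\in\mxl(\cB)}\alpha_{b}$ yields
\[
2m'\;\leq\;\sum_{b\in\mxl(\cB)}\alpha_{b}\;=\;\sum_{a\in\mxl(X)}\gamma_{a}\;\leq\;m(m'-2).
\]
Both hypothesis regimes make this inequality fail: $m'\leq 3,\ m\leq 5$ forces $6\leq 5$, while $m'\leq 5,\ m\leq 3$ forces $2m'\leq 3(m'-2)$, i.e.\ $m'\geq 6$, contradicting $m'\leq 5$. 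Hence some $a_{0}\in\mxl(X)$ satisfies $\gamma_{a_{0}}\geq m'-1$.

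If $\gamma_{a_{0}}=m'$, then $\cB\subset U_{a_{0}}$ and $X=U_{a_{0}}\cup\mxl(X)\cup\mnl(X)$, so \cref{Uamxlmnlsplit} concludes. Otherwise exactly one component $\cB_{0}$ fails to lie inside $U_{a_{0}}$; set $b_{0}=\min(\cB_{0}-U_{a_{0}})$. Because $\cB$ is a disjoint union of chains, every element of $\cB-\mxl(\cB)$ is an up beat point of $\cB$, so the hypotheses of \cref{bodyissimple} are satisfied and $U_{a_{0}}\cup F_{b_{0}}$ is weak homotopy equivalent to a wedge of spheres of dimension at most $2$. A wedge of spheres splits into smaller spaces (each $S^{n}\simeq_{w}\bS^{n+1}\emptyset$ and $\card{\emptyset}=0<\card{X}$). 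The components of $\cB$ other than $\cB_{0}$ lie in $U_{a_{0}}$ and $\cB_{0}\subset U_{a_{0}}\cup F_{b_{0}}$ by construction, so $X=U_{a_{0}}\cup F_{b_{0}}\cup\mxl(X)\cup\mnl(X)$ and \cref{UaFbmxlmnlsplit} then splits $X$. The pigeonhole step isolating $a_{0}$ is the only genuine obstacle; everything after it is a mechanical application of the splitting results already established.
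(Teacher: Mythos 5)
Your proof is correct and is essentially the paper's own argument: dispatch $m'\leq 2$ by the earlier lemmas, use the counting identity $\sum_a\gamma_a=\sum_b\alpha_b\geq 2m'$ to find $a_0$ with $\gamma_{a_0}\geq m'-1$, and then apply \cref{Uamxlmnlsplit} when $\gamma_{a_0}=m'$, or \cref{bodyissimple} together with \cref{UaFbmxlmnlsplit} with $b_0=\min(\cB_0-U_{a_0})$ when $\gamma_{a_0}=m'-1$. The only (trivial) omission is the case $\cB=\emptyset$, which the paper covers by the $l=0$ lemma and which in your setup is immediate since then $X=U_a\cup\mxl(X)\cup\mnl(X)$ for any $a$.
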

\begin{proof}
 The case $m\leq 2$ follows from \cref{msthan2}, 
 the case $m'=0$ follows from \cref{liszero}, 
 the case $m'=1$ follows from \cref{lisone}, 
 and the case $m'= 2$ follows from \cref{listwo}. 
 Hence, we may assume  $m'\geq 3$ and $m\geq 3$, 
 that is, we may assume $m'=3$ and $3\leq m\leq 5$, 
 or $3\leq m'\leq 5$ and $m=3$.
 In these cases, since
 \[
  \left(\sum_{b\in \mxl(\cB)} \alpha_{b}\right) - (m'-2)m 
  \geq 2m' - (m'-2)m =  4 -  (m'-2)(m-2) 
  >0, 
 \]
 there exists an element $a\in \mxl(X)$ such that 
 $\gamma_a=\card{U_a\cap \mxl(\cB)}> m'-2$.

 If $\gamma_a=m'$, then $\mxl(\cB)\subset U_a$ and 
 \[
  X=U_a \cup \mxl(X) \cup \mnl(X), 
 \]
 therefore $X$ splits into smaller spaces. 

 When $\gamma_a=m'-1$, suppose $\mxl(\cB)-U_a=\{b\}$ 
 and let $\cB_0$ be the connected component of $\cB$ containing $b$, 
 then $\cB_0-U_a$ is a nonempty chain. 
 Put $b_0=\min(\cB_0-U_a)$, then we have  
 \[
  X=U_a \cup F_{b_0} \cup \mxl(X) \cup \mnl(X) 
 \]
 and by \cref{bodyissimple}, $U_a \cup F_{b_0}$ is weak homotopy equivalent to a wedge of spheres, 
 hence $X$ splits into smaller spaces by \cref{UaFbmxlmnlsplit}. 
\end{proof}

\begin{coro}
 \label{listhree}
 If $l=3$ and one of the following holds, then $X$ splits into smaller spaces.
 \begin{enumerate}
  \item $m'\leq 2$.
  \item $m\leq 5$.
 \end{enumerate}
\end{coro}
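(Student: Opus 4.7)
The plan is to reduce both cases directly to lemmas already established in the section, using the smallness of $l=3$ to force a simple structure on $\cB$.

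For part 1 ($m'\leq 2$), there is nothing to do beyond invoking \cref{mxlbistwo}: the hypothesis $l=3\leq 5$ combined with $m'\leq 2$ is exactly what that corollary requires, so $X$ splits into smaller spaces.

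For part 2 ($m\leq 5$), I would split on the value of $m'$. If $m'\leq 2$, then we are back in the situation of part 1 (equivalently, \cref{mxlbistwo}) and we are done. Otherwise $m'\geq 3$; but since $m'\leq l=3$, this forces $m'=3=l$. This means every element of $\cB$ is maximal in $\cB$, so $\cB$ is an antichain. In particular, each connected component of $\cB$ is a single point, hence a (trivial) chain. With $m'=3$ and $m\leq 5$, \cref{mp3m5ormp5m3chain} now applies directly and yields that $X$ splits into smaller spaces.

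There is essentially no obstacle here: the work has all been done in \cref{mxlbistwo,mp3m5ormp5m3chain}, and the corollary is simply the observation that the bookkeeping case analysis of $m'$ for $l=3$ exhausts these two lemmas.
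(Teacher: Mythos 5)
Your proof is correct, and it reaches the goal by a slightly different case split than the paper. For part 2 the paper distinguishes whether $\cB$ is connected: if $\cB$ is connected, a $3$-element connected poset has a maximum or a minimum, so either \cref{lisone} applies directly or one passes to the opposite $X^o$ (using $X\simeq_w X^o$); if $\cB$ is disconnected, its components are chains and \cref{mp3m5ormp5m3chain} finishes. You instead split on $m'$: the case $m'\leq 2$ is absorbed into \cref{mxlbistwo} (exactly as in part 1), and the only remaining case is $m'=3=l$, where $\mxl(\cB)=\cB$ forces $\cB$ to be an antichain, so each component is a singleton chain and \cref{mp3m5ormp5m3chain} applies with $m'\leq 3$, $m\leq 5$. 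Both arguments ultimately rest on the same pool of lemmas; your version avoids the connectivity analysis of $\cB$ and the passage to the opposite poset, at the cost of routing the $m'\leq 2$ case through \cref{mxlbistwo} rather than \cref{lisone}. Either way the hypotheses of the cited lemmas are genuinely met ($l=3\leq 5$ for \cref{mxlbistwo}; $m'\leq 3$ and $m\leq 5$ with all components of $\cB$ chains for \cref{mp3m5ormp5m3chain}), so there is no gap.
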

\begin{proof}
 Part 1 follows from \cref{mxlbistwo}. 

 Consider part 2. Note that $\card{\cB}=l=3$.

 If  $\cB$ is connected, then $\cB$ has maximum or minimum. 
 If $\max\cB$ exists, then the result follows from \cref{lisone}. 
 If $\min\cB$ exists, then the opposite $X^o$ splits into smaller spaces, and so does $X$ 
 since $X\simeq_w X^o$.

 If $\cB$ is not connected, then all the connected components of $\cB$ are chains 
 and the result follows from  \cref{mp3m5ormp5m3chain}.
\end{proof}

\begin{coro}
 \label{card13andl3}
 If $\card{X}\leq 13$ and $l\leq 3$, then $X$ splits into smaller spaces.
\end{coro}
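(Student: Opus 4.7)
The plan is a short case analysis on $l=\Card{\cB}$ that reduces to splitting results already established in this section, with the residual subcase settled by appealing to the self weak-equivalence $X\simeq_w X^o$ and a cardinality count.

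For $l\leq 2$ I would dispatch the cases directly. If $l=0$, then \cref{liszero} together with \cref{splitexpl} (via $\bS^{2}\emptyset\simeq_w S^{1}$) shows that $X$ splits into smaller spaces. If $l=1$, the single element of $\cB$ is its maximum, so \cref{lisone} applies. The case $l=2$ is exactly \cref{listwo}.

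The interesting case is $l=3$. I would first invoke \cref{listhree}, which asserts that $X$ splits whenever $m'\leq 2$ or $m\leq 5$. Since $m'\leq l=3$, the only outstanding subcase is $m'=3$ and $m\geq 6$. To handle it I would pass to the opposite poset $X^o$: it is again a connected minimal finite space, $X^o\simeq_w X$, and its invariants satisfy $l^o=l=3$, $m^o=n$, and $m'^o=n'$. Applying \cref{listhree} to $X^o$ then shows that $X$ splits unless also $n'=3$ and $n\geq 6$. But in this last surviving subcase we would have
\[
\Card{X}=l+m+n\geq 3+6+6=15,
\]
contradicting the hypothesis $\Card{X}\leq 13$.

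I do not foresee any real obstacle here; the whole argument is a bookkeeping exercise combining the splitting lemmas proved earlier with the observation that passing to the opposite exchanges the pair $(m',m)$ with $(n',n)$. The only thing worth double-checking is that a connected minimal finite space remains connected and minimal upon taking opposites, so that \cref{listhree} may legitimately be applied to $X^o$.
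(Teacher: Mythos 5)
Your argument is correct and is essentially the paper's proof: both reduce $l\leq 2$ to \cref{liszero,lisone,listwo} and handle $l=3$ by combining \cref{listhree} with the opposite trick ($X\simeq_w X^o$ exchanges $(m,m')$ with $(n,n')$) and the count $\card{X}=l+m+n$. The paper merely normalizes $m\leq n$ first and deduces $m\leq 5$ from $13\geq 3+2m$, whereas you apply \cref{listhree} to both $X$ and $X^o$ and rule out the residual case by $3+6+6=15>13$; this is the same argument in slightly different bookkeeping.
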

\begin{proof}
 By considering the opposite if necessary, 
 we may assume $m\leq n$. 
 We also may assume $m\geq 3$ by \cref{msthan2}.
 If $l\leq 2$, then the result follows from \cref{liszero,lisone,listwo}. 
 If $3\leq m\leq n$ and $l=3$, then we have 
 \begin{align*}
  13&\geq \card{X} = l+m+n \geq l+2m=3+2m.
 \end{align*}
 Therefore $m\leq 5$ and the result follows from \cref{listhree}.
\end{proof}

\begin{coro}
 \label{lisfour}
 If $l=4$ and one of the following holds, then $X$ splits into smaller spaces.
 In particular, if $l=4$ and $m\leq 3$, then $X$ splits into smaller spaces.
 \begin{enumerate}
  \item $m'\leq 2$.
  \item $m'=3$ and $m\leq 5$.
  \item $m'=4$ and $m\leq 3$.
 \end{enumerate}
\end{coro}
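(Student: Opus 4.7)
The plan is to reduce each of the three listed conditions to a splitting result already established, treating them in order of increasing difficulty.

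First, I would dispatch parts (1) and (3) in a line each. For (1), with $l=4\leq 5$ and $m'\leq 2$, \cref{mxlbistwo} applies directly. For (3), the equality $l=m'=4$ forces $\cB$ to be an antichain, so every connected component of $\cB$ is a singleton (hence a chain); combined with $m'=4\leq 5$ and $m\leq 3$, \cref{mp3m5ormp5m3chain} closes the case.

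For part (2), I would write $\mxl(\cB)=\{b_1,b_2,b_3\}$, let $b$ be the unique element of $\cB-\mxl(\cB)$, and case-split on the invariant $k:=\card{\hat{F}^{\cB}_b}\in\{1,2,3\}$. When $k=1$, after relabelling so that $\hat{F}^{\cB}_b=\{b_1\}$, the connected components of $\cB$ are the chain $\{b<b_1\}$ and the two singletons $\{b_2\},\{b_3\}$; all components are chains, so \cref{mp3m5ormp5m3chain} applies. When $k=3$, one has $b=\min\cB$, so $b=\max\cB^o$ in the opposite; \cref{lisone} then splits $X^o$, and hence $X$ via $X\simeq_w X^o$.

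The hard subcase will be $k=2$: the component of $b$ in $\cB$ is a $V$-shape, and $\cB$ admits neither a maximum nor a minimum, so the previous chain/extremum-based lemmas do not apply directly. The trick I would use is to pass to $X^o$ once more. With $\hat{F}^{\cB}_b=\{b_1,b_2\}$ and $b_3$ isolated in $\cB$, the minimal elements of $\cB$ are exactly $b$ and $b_3$, so the ``$m'$''-parameter of $X^o$ equals $2$ while $l=4$ is unchanged; therefore \cref{mxlbistwo} splits $X^o$, and so does $X$. Finally, the ``in particular'' clause follows since $l=4$ forces $1\leq m'\leq 4$, and under $m\leq 3$ one of conditions (1)--(3) (or \cref{lisone}, in the residual case $m'=1$) always triggers.
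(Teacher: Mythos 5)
Your proposal is correct and follows essentially the same route as the paper: parts (1) and (3) via \cref{mxlbistwo} and \cref{mp3m5ormp5m3chain}, and for part (2) your case split on $\card{\hat{F}^{\cB}_b}$ is exactly the paper's enumeration of the three possible posets $\cB$ (\cref{l4m3B}), resolved the same way — all components chains when $k=1$, and passing to $X^o$ with $n'\leq 2$ when $k=2,3$ (the paper invokes \cref{mxlbistwo} there where you use \cref{lisone} for $k=3$, an immaterial difference).
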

\begin{proof}
 Part 1 follows from \cref{mxlbistwo}. 
 Part 3 follows from \cref{mp3m5ormp5m3chain} since $\cB$ is an antichain if $l=m'$. 

 If $l=4$ and $m'=3$, then $\cB$ is isomorphic to one of spaces of \cref{l4m3B}.

 \begin{figure}[h]
  \centering 

  \begin{tikzpicture}
   \foreach \i in {0,1,2}{
   \node (mn\i) at (4*\i, 0) {$\bullet$};
   \foreach \j in {0,1,2} {
   \node (mx\i-\j) at (4*\i + \j ,1) {$\bullet$};
   }
   \draw[thick] (mn\i) -- (mx\i-0);
   }
   \draw[thick] (mn0) -- (mx0-1);
   \draw[thick] (mn0) -- (mx0-2);
   \draw[thick] (mn1) -- (mx1-1);
  \end{tikzpicture}
  \caption{$\cB$ for $l=4$ and $m'=3$.}
  \label{l4m3B}
 \end{figure}

 Since $n'\leq 2$ in the first two cases, by taking the opposite, 
 we see that $X$ splits into smaller spaces. 
 In the last case, every connected component of $\cB$ is a chain 
 and the result follows from \cref{mp3m5ormp5m3chain}.
\end{proof}

We consider the case $l=5$.

\begin{lem}
 \label{l5mp3m3}
 If $l=5$, $m'=3$ and $m=3$, then $X$ splits into smaller spaces.
\end{lem}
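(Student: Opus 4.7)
The plan is to perform a case analysis based on the structure of the two non-maximal elements of $\cB$. By \cref{mandmpis3}, we may assume $\mxl(X) \cup \mxl(\cB) \cong S^1_3$ with the standard labeling: $\mxl(X) = \{a_0, a_1, a_2\}$, $\mxl(\cB) = \{b_0, b_1, b_2\}$, and $b_i < a_i, a_{i+1}$ with indices modulo $3$. By \cref{mandmpis3andweakbeatpt}, we may additionally assume that $|\widehat{F}_b \cap \cB| \neq 2$ for every $b \in \cB$. Let $c$ and $d$ denote the two elements of $\cB - \mxl(\cB)$.

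First I would handle the case $c < d$ (up to swapping). Since $d \notin \mxl(\cB)$, $\widehat{F}^\cB_d$ is a nonempty subset of $\mxl(\cB)$, so $|\widehat{F}^\cB_d| \in \{1, 3\}$. Using the transitivity relation $\widehat{F}^\cB_c \supseteq \{d\} \cup \widehat{F}^\cB_d$ together with $|\widehat{F}^\cB_c| \neq 2$, a short enumeration of the possibilities for the pair $(|\widehat{F}^\cB_c|, |\widehat{F}^\cB_d|)$ shows that $\mnl(\cB)$ consists of $c$ together with at most one $b_i$, so $n' \leq 2$. Applying \cref{mxlbistwo} to $X^o$ (which still has $l = 5$ and has $m'(X^o) = n'(X) \leq 2$) and using $X \simeq_w X^o$, we conclude that $X$ splits into smaller spaces.

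Next I would treat the case where $c$ and $d$ are incomparable, so $\{c, d\} \subseteq \mnl(\cB)$ and $|\widehat{F}^\cB_c|, |\widehat{F}^\cB_d| \in \{1, 3\}$. If either equals $3$, say $d < b_0, b_1, b_2$, then every $b_i$ has $d$ below it, forcing $\mnl(\cB) = \{c, d\}$ and $n' = 2$; as before, $X$ splits. The remaining subcase is $\widehat{F}^\cB_c = \{b_i\}$ and $\widehat{F}^\cB_d = \{b_j\}$. Then $c$ and $d$ are up beat points of $\cB$, which verifies condition $1$ of \cref{bodyissimple}, and the connected components of $\cB$ are either $\{c, b_i\}, \{d, b_j\}, \{b_k\}$ when $i \neq j$, or $\{c, d, b_i\}, \{b_{i+1}\}, \{b_{i+2}\}$ when $i = j$. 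In either subcase there is a singleton chain component $\{b_k\}$ with $k \notin \{i, j\}$.

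Finally I would apply \cref{bodyissimple} with $\cB_0 \coloneqq \{b_k\}$ and maximal element $a_{k-1}$ (so that $b_k \not< a_{k-1}$ by the cyclic structure of $S^1_3$). The lemma gives that $U_{a_{k-1}} \cup F_{b_k}$ is weak homotopy equivalent to a wedge of spheres of dimension at most $2$, which splits into smaller spaces. Checking the cyclic indexing: since $i, j \neq k$, both $b_i, b_j < a_{k-1}$, and therefore $c < b_i < a_{k-1}$ and $d < b_j < a_{k-1}$; combined with $\mxl(X) \subset U_{a_{k-1}} \cup F_{b_k}$ (as $a_{k-1} \in U_{a_{k-1}}$ and $a_k, a_{k+1} \in F_{b_k}$), this yields $X = U_{a_{k-1}} \cup F_{b_k} \cup \mxl(X) \cup \mnl(X)$, and \cref{UaFbmxlmnlsplit} then finishes the argument. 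The main obstacle is this last bookkeeping step, where the cyclic choice of $k$ must simultaneously place both $b_i$ and $b_j$, and hence $c$ and $d$, in $U_{a_{k-1}}$; the key observation making this work is that the singleton chain component $\{b_k\}$ can always be chosen with $k$ distinct from both $i$ and $j$.
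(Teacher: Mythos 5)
Your proof is correct and follows essentially the same route as the paper's: after the reductions via \cref{mandmpis3}, \cref{mandmpis3andweakbeatpt}, and passage to the opposite with \cref{mxlbistwo} whenever $n'\leq 2$, the remaining configurations are exactly those the paper faces, and you resolve them the same way, by applying \cref{bodyissimple} to a singleton chain component of $\cB$ and then \cref{UaFbmxlmnlsplit}. The only difference is bookkeeping: you organize the cases by the order relation between the two non-maximal elements of $\cB$, whereas the paper organizes them by the number of connected components of $\cB$.
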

\begin{proof}
 Note that, if $n'=\card{\mnl(\cB)}\leq 2$, 
 by taking the opposite, 
 we see that $X$ splits into smaller spaces by \cref{mxlbistwo}. 

 If $\cB$ is connected, then $\mxl(\cB)\cap \mnl(\cB)=\emptyset$. Since $\card{\mxl(\cB)}=m'=3$, 
 $\card{\mnl(\cB)}\leq 2$ and  $X$ splits into smaller spaces.

 Suppose $\cB$ is not connected. 
 By \cref{mandmpis3}, we may assume that $\mxl(X)\cup \mxl(\cB)$ is isomorphic to $S^1_3$. 
 Since $\card{\mxl(\cB)}=3$, 
 the number of connected components is at most $3$. 

 \begin{enumerate}[label=(\alph*)]
  \item The case where $\cB$ has $3$ connected components.
	
	Since the cardinalities of each component are  
	\[
	 5=1+1+3 =1+2+2, 
	\]
	there exists a component which consists of a single element, say $\{b_0\}$.  
	Since $\mxl(X)\cup \mxl(\cB) \cong S^1_3$, 
	there exists an element $a_0\in \mxl(X)$ such that $b_0\not\in U_{a_0}$. 
	Then we have 
	\[
	X=U_{a_0}\cup F_{b_0} \cup \mxl(X) \cup \mnl(X) .
	\]
	Since  all the elements of $\cB-\mxl(\cB)$ are up beat points of $\cB$, 
	$X$ splits into smaller spaces by \cref{bodyissimple}.

	\begin{figure}[h]
	 \centering
	 \begin{tikzpicture}
	  \foreach \i in {0,1}{
	  \node (m\i) at (\i,1) {$\bullet$};
	  \node (mb\i) at (1+\i,0) {$\bullet$};
	  }
	  \node (a0) at (2,1) {$a_0$};
	  \node (b0) at (0,0) {$b_0$};
	  \node (c0) at (2,-1) {$\bullet$};
	  \node (c1) at (2,-2) {$\bullet$};
	  \draw (m0) -- (b0) -- (m1) -- (mb1) -- (a0) -- (mb0) -- (m0);
	  \draw (c1) -- (c0) -- (mb1);

	  \begin{scope}[xshift=5cm]
	   \foreach \i in {0,1}{
	  \node (m\i) at (\i,1) {$\bullet$};
	  \node (mb\i) at (1+\i,0) {$\bullet$};
	  \node (c\i) at (1+\i,-1) {$\bullet$};
	  }
	  \node (a0) at (2,1) {$a_0$};
	  \node (b0) at (0,0) {$b_0$};
	  \draw (m0) -- (b0) -- (m1) -- (mb1) -- (a0) -- (mb0) -- (m0);
	  \draw (c0) -- (mb1) -- (c1);
	  \end{scope}
	  
	  \begin{scope}[xshift=10cm]
	   \foreach \i in {0,1}{
	   \node (m\i) at (\i,1) {$\bullet$};
	   \node (mb\i) at (1+\i,0) {$\bullet$};
	   \node (c\i) at (1+\i,-1) {$\bullet$};
	   }
	   \node (a0) at (2,1) {$a_0$};
	   \node (b0) at (0,0) {$b_0$};
	   \draw (m0) -- (b0) -- (m1) -- (mb1) -- (a0) -- (mb0) -- (m0);
	   \draw (c0) -- (mb0) (mb1) -- (c1);
	  \end{scope}
	 \end{tikzpicture}
	 \caption{$\mxl(X)\cup \cB$ with $3$ components for $l=5$, $m=m'=3$.}
	\end{figure}

  \item The case where $\cB$ has $2$ connected components.
	
	The cardinalities of each component are  
	\[
	 5=1+4 =2+3.
	\]
	In the latter case, 
	$\card{\mnl(\cB)}=2$ and the result follows.
	Consider the former case. 
	Let $\cB_0=\{b_0\}$ be the component with a single element, 
	$\cB_1$ the component with $\card{\cB_1}=4$, 
	and $\mxl(\cB_1)=\{b_1,b_2\}$.
	Since $\cB_1$ is connected and $\card{\mxl(\cB_1)}=2$, we see that $\card{\mnl(\cB_1)}\leq 2$. 
	If $\card{\mnl(\cB_1)}=1$,  then $\card{\mnl(\cB)}=2$ and the result follows.
	Suppose $\card{\mnl(\cB_1)}=2$. 
	Since $\cB_1$ is connected, there exists an element $b\in \mnl(\cB_1)$ such that 
	$b_1,b_2\in F_b$ and, since $b_0\not\in F_b$, $\hat{F}^{\cB}_b=\{b_1,b_2\}$. 
	The result follows from \cref{mandmpis3andweakbeatpt}.
 \end{enumerate}
\end{proof}

\begin{lem}
 \label{l5mp4m3}
 If $l=5$, $m'=4$ and $m=3$, then $X$ splits into smaller spaces.
\end{lem}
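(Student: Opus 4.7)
The plan is to analyse $X$ according to the unique element $b$ of $\cB - \mxl(\cB)$ (which exists because $l - m' = 1$) and the set $S := \Set{b_j \in \mxl(\cB) | b < b_j}$. Since $b$ is not maximal in $\cB$, we have $S \neq \emptyset$; because no element of $\mxl(\cB)$ can lie strictly below $b$, $\widehat{U}_{b}^{\cB} = \emptyset$, so $\mnl(\cB) = \{b\} \cup (\mxl(\cB) - S)$ has cardinality $5 - \card{S}$. By \cref{mis3mpis4} we may assume $\mxl(X) \cup \mxl(\cB)$ is isomorphic to configuration (a) or (b) of \cref{figmis3mpis4}; otherwise $X$ already splits. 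If $\card{S} \geq 3$, then $n' = \card{\mnl(\cB)} \leq 2$, and \cref{mxlbistwo} applied to the opposite $X^o$ (which satisfies $l^o = 5$ and $m'^o = n' \leq 2$) delivers the splitting.

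When $\card{S} = 1$, the point $b$ is an up beat point of $\cB$, and the components of $\cB$ are $\{b\} \cup S$ together with three singletons $\{b_k\}$ for $b_k \in \mxl(\cB) - S$. In both configurations one can choose a maximal $a_0 \in \mxl(X)$ with $\gamma_{a_0} = 3$ and whose unique missing element $b_k \in \mxl(\cB)$ lies outside $S$: in (a) every $a_i$ has $\gamma_{a_i} = 3$, and in (b) the missing elements of $a_0, a_1$ fill $\{b_2, b_3\}$, and $\{b_2, b_3\} - S$ is always nonempty. Then $b \in U_{a_0}$ (via $b < b_j < a_0$ for the unique $b_j \in S$), so $\cB - U_{a_0} = \{b_k\}$ and $\cB_0 := \{b_k\}$ is a chain component with $\cB_0 - U_{a_0} \neq \emptyset$; the hypotheses of \cref{bodyissimple} hold, yielding $U_{a_0} \cup F_{b_k}$ weak homotopy equivalent to a wedge of spheres of dimension at most $2$. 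Since $\cB \subset U_{a_0} \cup F_{b_k}$, the decomposition $X = U_{a_0} \cup F_{b_k} \cup \mxl(X) \cup \mnl(X)$ is valid, and \cref{UaFbmxlmnlsplit} finishes (each sphere being $\bS^n \emptyset$).

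The main obstacle is $\card{S} = 2$, where $b$ has two incomparable covers in $\cB$ and is \emph{not} up beat, rendering \cref{bodyissimple} inapplicable. Here the strategy is to exhibit a decomposition $X = U_a \cup F_{b'} \cup \mxl(X) \cup \mnl(X)$ and invoke \cref{specialinterval} with \cref{specialinterval-one-empty} or \cref{specialinterval-two-empty}. In configuration (a) and in every subcase of (b) except $S = \{b_0, b_1\}$, one can pick $a_i \in \mxl(X)$ with $\gamma_{a_i} = 3$ whose unique missing element $b_j \in \mxl(\cB)$ lies in $S$; set $a = a_i$ and $b' = b_j$. The resulting sets satisfy $A_0 - \{a\} = \mxl(\cB) - \{b_j\}$ (each point being up beat in $U_a$ with upper set $\{a\}$), $B_0 = \{b_j\}$ with $B_0 - \{b_j\} = \emptyset$, $I(A_0, B_0) = \emptyset$ (maximals of $\cB$ are pairwise incomparable), $A_1 = \{b\}$, and $B_1 = \emptyset$; hence \cref{specialinterval-one-empty} splits $U_a \cup F_{b_j}$ into spaces smaller than $U_b$, which has cardinality at most $1 + n < \card{X}$. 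In the remaining subcase (configuration (b) with $S = \{b_0, b_1\}$), no such $a_i$ works; instead take $a = a_2$ and $b' = b$ itself. Then $F_b \cap \cB = \{b, b_0, b_1\}$ and $U_{a_2} \supset \{b_2, b_3\} = \mxl(\cB) - S$, so $\cB \subset U_{a_2} \cup F_b$; one checks that $A_0 - \{a_2\} = \{b_2, b_3\}$ consists of up beat points of $U_{a_2}$, that $B_0 - \{b\} = \{b_0, b_1\}$ consists of down beat points of $F_b$ (with maximum $b$ in each $\widehat{U}^{F_b}_{b_i}$), and that $I(A_0, B_0) = \emptyset$; depending on whether $b \in U_{a_2}$, \cref{specialinterval-one-empty} (with $A_1 = \{b\}$) or \cref{specialinterval-two-empty} (with $A_1 = B_1 = \emptyset$) then yields the splitting of $U_{a_2} \cup F_b$. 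In every subcase, \cref{UaFbmxlmnlsplit} concludes.
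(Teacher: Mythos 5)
Your proof is correct and follows essentially the same route as the paper: the cases where the unique non-maximal element of $\cB$ has at least three covers are dispatched by passing to $X^o$ and \cref{mxlbistwo}, and the two-cover case is handled by the same decompositions $X=U_a\cup F_{b'}\cup\mxl(X)\cup\mnl(X)$, with the same data $A_0,A_1,B_0,B_1$ fed into \cref{specialinterval-one-empty,specialinterval-two-empty} and then \cref{UaFbmxlmnlsplit}. The only cosmetic differences are that you organize the analysis by $\card{S}$ together with the \cref{mis3mpis4} configurations rather than by $\gamma_{a_0}\in\{2,3\}$ for an $a_0$ above the two isolated maximals of $\cB$, and that you treat the one-cover (all components chains) case via \cref{bodyissimple} directly, where the paper simply cites \cref{mp3m5ormp5m3chain}, whose proof is the same argument.
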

\begin{proof}
  In this case, $\cB$ is isomorphic to one of spaces of \cref{Bl5mp4m3}.

 \begin{figure}[h]
  \centering 
  \begin{tikzpicture}
   \foreach \i in {0,1}{
   \node (mn\i) at (5*\i, 0) {$\bullet$};
   \foreach \j in {0,1,2,3} {
   \node (mx\i-\j) at (5*\i + \j ,1) {$\bullet$};
   }
   \draw[thick] (mn\i) -- (mx\i-0);
   }
   \draw[thick] (mn0) -- (mx0-1);
   \draw[thick] (mn0) -- (mx0-2);
   \draw[thick] (mn0) -- (mx0-3);
   \draw[thick] (mn1) -- (mx1-1);
   \draw[thick] (mn1) -- (mx1-2);

   \node at (1.5,-.5) {(a)};
   \node at (6.5,-.5) {(b)};

   \begin{scope}[yshift=-3cm]
   \foreach \j in {2,3} {
   \node (b\j) at (\j ,1) {$b_{\j}$};
   }
    \node (b) at (0,1) {$b$};
    \node (bp) at (1,1) {$b'$};
   \node (a1) at (0,0) {$c$}; 
   \draw[thick] (a1) -- (b);
   \draw[thick] (a1) -- (bp);
   \foreach \j in {0,1,2,3} {
   \node (mx2-\j) at (5 + \j ,1) {$\bullet$};
   }
    \node (mn2) at (5,0) {$\bullet$};
    \draw[thick] (mn2) -- (mx2-0);

   \node at (1.5,-.5) {(c)};
   \node at (6.5,-.5) {(d)};
   \end{scope}
  \end{tikzpicture}
  \caption{$\cB$ for $l=5$ and $m'=4$.}
  \label{Bl5mp4m3}
 \end{figure}

 In the cases (a) and (b), since $\card{\mnl{\cB}}\leq 2$, $X^o$ splits into smaller spaces by 
 \cref{mxlbistwo} and so does $X$.
 The case (d) follows from \cref{mp3m5ormp5m3chain}. 

 Consider the case (c). 
 Since $X$ is minimal and $m=\card{\mxl(X)}=3$, we have 
 $F_{b_2}\cap F_{b_3}\cap \mxl(X)\ne \emptyset$ 
 and hence, there exists an element $a_0\in \mxl(X)$ 
 such that $b_2,b_3\in U_{a_0}$. See \cref{Bl5mp4m3C}.

 \begin{figure}[h]
  \centering
  \begin{tikzpicture}
   \foreach \j in {2,3} {
   \node (b\j) at (\j ,1) {$b_{\j}$};
   }
    \node (b) at (0,1) {$b$};
    \node (bp) at (1,1) {$b'$};
   \node (a1) at (0,0) {$c$}; 
   \draw[thick] (a1) -- (b);
   \draw[thick] (a1) -- (bp);
   \node  (a0) at (3,2) {$a_0$};
   \draw[thick] (b2) -- (a0) -- (b3);
   \draw[dashed] (a0) -- (bp);
   \draw[dashed] (a0) -- (a1);
  \end{tikzpicture}
  \caption{The case (c).}
  \label{Bl5mp4m3C}
 \end{figure}

 We may assume that $\gamma_{a_0}=\card{U_{a_0}\cap\mxl(\cB)}=2$ or $3$ (see \cref{mis3mpis4}). 

 If $\gamma_{a_0}=2$, 
 then we have 
 \[
  X=U_{a_0}\cup F_{c} \cup \mxl(X) \cup \mnl(X). 
 \]
 We put
 \begin{align*}
  A_0&=\left(U_{a_0} - \mnl(X)\right)- U_{c} = \{a_0, b_2,b_3\}, & 
  A_1&=\left(U_{a_0} - \mnl(X)\right)\cap U_{c} = \emptyset \text{ or } \{c\},\\
  B_0&=\left(F_{c} - \mxl(X)\right)- F_{a_0} = \{b,b',c\}, &
  B_1&=\left(F_{c} - \mxl(X)\right)\cap F_{a_0} =\emptyset, 
 \end{align*}
 then $b_2$ and $b_3$ are up beat points of $U_{a_0}$, 
 $b$ and $b'$ are down beat points of $F_c$, 
 and $I(A_0,B_0)=\emptyset$.  
 If $A_1\ne \emptyset$, then $c=\max A_1$. 
 Therefore $X$ splits into smaller spaces by \cref{specialinterval-two-empty,specialinterval-one-empty}.

 If $\gamma_{a_0}=3$, we may assume
 that $b\not\in U_{a_0}$ and $b'\in U_{a_0}$, 
 and we have
 \[
  X=U_{a_0}\cup F_{b} \cup \mxl(X) \cup \mnl(X).
 \]
 We put 
 \begin{align*}
  A_0&=\left(U_{a_0} - \mnl(X)\right)- U_{b} = \{a_0, b', b_2,b_3\}, &
  A_1&=\left(U_{a_0} - \mnl(X)\right)\cap U_{b} = \{c\},\\
  B_0&=\left(F_{b} - \mxl(X)\right)- F_{a_0} = \{b\}, &
  B_1&=\left(F_{b} - \mxl(X)\right)\cap F_{a_0} =\emptyset,
 \end{align*}
 then $b'$, $b_2$, and $b_3$ are up beat points of $U_{a_0}$, 
 $B_0\setminus \{b\}=\emptyset$, 
 $I(A_0,B_0)=\emptyset$, 
 and $c=\max A_1$. 
 Therefore $X$ splits into smaller spaces by \cref{specialinterval-one-empty}.
\end{proof}

\begin{coro}
 \label{lisfive}
 If $l=5$ and one of the following holds, then $X$ splits into smaller spaces.
 \begin{enumerate}
  \item $m'\leq 2$
  \item $m\leq 3$
 \end{enumerate}
\end{coro}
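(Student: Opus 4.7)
The plan is to assemble this corollary from a straightforward case analysis against the lemmas already proved in this section, with essentially no new content.

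For Part 1, the hypothesis $l=5$ together with $m'\leq 2$ fits exactly the hypothesis of \cref{mxlbistwo} (which already handled $l\leq 5$ and $m'\leq 2$), so Part 1 is immediate.

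For Part 2, I would assume $l=5$ and $m\leq 3$. The case $m\leq 2$ is taken care of by \cref{msthan2}, so we may assume $m=3$. Now I split on the possible values of $m'$. Since $l=5>0$, $\cB$ is nonempty so $m'\geq 1$, and trivially $m'\leq l=5$. The cases are then: $m'=1$ follows from \cref{lisone}; $m'=2$ follows from Part 1 (or directly from \cref{mxlbistwo}); $m'=3$ with $m=3$ is exactly the hypothesis of \cref{l5mp3m3}; $m'=4$ with $m=3$ is exactly the hypothesis of \cref{l5mp4m3}; and finally $m'=5$ forces $l=m'$, so $\cB$ is an antichain, hence every connected component of $\cB$ is a single point and in particular a chain, whence \cref{mp3m5ormp5m3chain} applies with $m'=5\leq 5$ and $m=3\leq 3$.

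The only thing to check is that these cases are exhaustive and that the hypotheses line up with the cited lemmas, which they do. There is no real obstacle: the hard work was already absorbed into \cref{l5mp3m3,l5mp4m3,mp3m5ormp5m3chain,mxlbistwo,lisone,msthan2}, and the present statement is merely their package.
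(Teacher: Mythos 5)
Your proposal is correct and follows essentially the same route as the paper: Part 1 via \cref{mxlbistwo}, and Part 2 by reducing to $m=3$ and splitting on $m'$, invoking \cref{l5mp3m3}, \cref{l5mp4m3}, and \cref{mp3m5ormp5m3chain} for $m'=3,4,5$ respectively. Your version merely makes explicit the reductions ($m\leq 2$ via \cref{msthan2}, $m'\leq 2$ via Part 1, and why $m'=5$ makes $\cB$ an antichain) that the paper leaves implicit.
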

\begin{proof}
 Part 1 follows from \cref{mxlbistwo}. 
 For part 2, we may assume $m=3$ and $3\leq m' \leq 5$.
 The case $m'=3$ follows from \cref{l5mp3m3}, $m'=4$ from \cref{l5mp4m3}, 
 and $m'=5$ from \cref{mp3m5ormp5m3chain}.
\end{proof}

\begin{coro}
 \label{card11}
 If $\card{X}\leq 11$, then $X$ splits into smaller spaces.
\end{coro}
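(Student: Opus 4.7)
My plan is to reduce to the minimal $T_0$ case and then run a case analysis on $l = \card{\cB}$, chaining the lemmas from the preceding two sections. If $X$ is not $T_0$ or is $T_0$ but has beat points, its core $X'$ is a strictly smaller space with $X \simeq_w X'$, so $X \simeq_w \bS^0 X'$ is already a splitting into smaller spaces. Hence I may assume $X$ is a connected minimal finite $T_0$ space with $1 < \card{X} \leq 11$. By replacing $X$ with $X^o$ if necessary (which preserves weak homotopy type and cardinality, and interchanges $(m,n)$ with $(n,m)$ and $(m',n')$ with $(n',m')$) I may further assume $m \leq n$.

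The crucial observation is that, since $\mxl(X) \cap \mnl(X) = \emptyset$ for a connected minimal space of cardinality $>1$,
\[
 11 \geq \card{X} = l + m + n \geq l + 2m,
\]
so that $m \leq (11 - l)/2$. I would then dispatch on the value of $l$: for $l \in \{0, 1, 2\}$ I apply \cref{liszero,lisone,listwo} directly; for $l = 3$ the inequality $\card{X} \leq 11 \leq 13$ lets me invoke \cref{card13andl3}; for $l = 4$ the bound forces $m \leq 3$, which is precisely the hypothesis of \cref{lisfour}; for $l = 5$ the same bound gives $m \leq 3$, so \cref{lisfive} applies; and for $l \geq 6$ we have $m \leq 2$, handled by \cref{msthan2}. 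Every case is thus absorbed by a previously proved result.

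There is essentially no obstacle at this stage, since all the substantive combinatorial and geometric work lives in the lemmas of Sections \ref{sec-mleq3} and \ref{sec-lleq5}. The two mildly delicate formal points are the opposite reduction (routine, since $X\simeq_w X^o$ by \cref{invariance-of-susp-and-wedge} and a splitting witness $\bigvee_i \bS^{n_i}A_i$ for $X^o$ gives one for $X$ by passing to opposites of the $A_i$) and the reduction to the minimal $T_0$ case (also routine via cores). What the final writeup should really emphasize is the arithmetic inequality $\card{X}\geq l+2m$, which is what funnels every configuration into one of the regimes already established.
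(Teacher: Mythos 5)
Your proof is correct and follows essentially the same route as the paper: after reducing to a connected minimal $T_0$ space with $m\leq n$, the inequality $\card{X}=l+m+n\geq l+2m$ funnels every case into \cref{msthan2,liszero,lisone,listwo,card13andl3,lisfour,lisfive}, exactly the lemmas the paper uses (the paper merely organizes the arithmetic slightly differently, first assuming $3\leq m\leq n$ to get $l\leq 5$ and then deducing $m\leq 3$ when $l\geq 4$). The only difference is your explicit reduction to the minimal case, which in the paper is a standing assumption of the section and is harmless here.
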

\begin{proof}
 We may assume that $3\leq m\leq n$. In this case, we have 
 \begin{align*}
  11&\geq \card{X} = l+m+n \geq l+6, 
 \end{align*}
 and hence $l\leq 5$. 

 By \cref{card13andl3}, we may assume that $l\geq 4$. 
 Hence we have
 \begin{align*}
  11&\geq \card{X} = l+m+n \geq l+2m\geq 4+2m 
 \end{align*}
 and so $m\leq 3$. 
 The result follows from \cref{lisfour,lisfive}.
\end{proof}

\section{\texorpdfstring{$\card{X}=12$}{|X|=12}}
\label{sec-card12}

In this section, we assume that $X$ is a connected minimal finite space of $\card{X}=12$ 
and show that $X$ splits into smaller spaces.
We need laborious case by case analysis.

\begin{prop}
 \label{card12}
 If $X$ is a connected minimal finite space with $\card{X}=12$, 
 then $X$ splits into smaller spaces.
\end{prop}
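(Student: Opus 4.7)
The plan is to reduce to a short list of residual triples $(m,n,l)$ by a cardinality count, and then finish each by explicit appeals to the splitting toolkit of \cref{sec-posetsplitting,sec-interval}. Since $X \simeq_w X^o$, by passing to the opposite we may assume $m \leq n$; by \cref{msthan2} we may assume $m \geq 3$, hence $l = 12 - m - n \leq 12 - 2m \leq 6$. Cross-referencing the range of $l$ against \cref{card13andl3,lisfour,lisfive} eliminates every $(m,n,l)$ except
\[
  (m,n,l) = (4,4,4) \quad \text{and} \quad (m,n,l) = (3,3,6).
\]

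For $(m,n,l) = (4,4,4)$: \cref{lisfour} splits $X$ unless $m' = 4$, and by the opposite unless $n' = 4$. So it suffices to treat the situation where $\cB$ is a four-element antichain and $X = \mxl(X) \cup \cB \cup \mnl(X)$ is three-layered. Because $\cB$ is an antichain, the hypothesis of \cref{unchainsplit} is satisfied for every pair $(a_0,b_0)$ with $a_0 \in \mxl(X)$, $b_0 \in \cB$, and $a_0 \not\geq b_0$: in each case $U_{a_0} \cup F_{b_0}$ is a wedge of spheres of dimension at most $3$. The plan is to choose $(a_0,b_0)$ so that in addition $\cB \setminus \{b_0\} \subset U_{a_0}$, giving $X = U_{a_0} \cup F_{b_0} \cup \mxl(X) \cup \mnl(X)$, whereupon \cref{UaFbmxlmnlsplit} completes the splitting. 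Existence of such a pair is forced by the double count $\sum_{a \in \mxl(X)} \gamma_a = \sum_{b \in \mxl(\cB)} \alpha_b$ together with $\alpha_b \geq 2$ and the dual count on $\mnl$; a failure to find one would produce a beat point, contradicting minimality of $X$.

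For $(m,n,l) = (3,3,6)$: the argument stratifies on $(m', n')$, using the opposite freely. \cref{lisone} removes $m'=1$ and $n'=1$; \cref{listwo} removes every configuration in which some $b \in \cB$ has $\cB - U_b$ or $\cB - F_b$ a chain; \cref{mp3m5ormp5m3chain} removes every configuration in which each connected component of $\cB$ is a chain (since $m=3 \leq 3$ and $m' \leq l = 6$, the range $m' \leq 5$ is covered and the case $m' = 6$ reduces to $\cB$ being an antichain of chains trivially). Within what remains, \cref{mandmpis3,mandmpis3andweakbeatpt} handle $m' = 3$ (which by \cref{mandmpis3} is forced into $\mxl(X) \cup \mxl(\cB) \cong S^1_3$), \cref{mis3mpis4} handles $m' = 4$, and direct enumeration of the possible $\cB$ dispatches $m' \in \{5,6\}$. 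In each residual subcase the plan is to exhibit an explicit pair $(a_0, b_0)$ realizing $X = U_{a_0} \cup F_{b_0} \cup \mxl(X) \cup \mnl(X)$ whose data satisfy either \cref{bodyissimple} or one of \cref{specialinterval,specialinterval-two-empty,specialinterval-one-empty}; the resulting splitting of $U_{a_0} \cup F_{b_0}$ into spaces smaller than $X$ then combines with \cref{UaFbmxlmnlsplit} to finish.

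The main obstacle is the subcase $(m,n,l) = (3,3,6)$ with both $m'$ and $n'$ in the middle range $\{2,3,4\}$, where the bipartite structure connecting $\cB$ to $\mxl(X)$ and $\mnl(X)$ admits many non-equivalent configurations and no single reduction suffices uniformly. The principal tool for certifying the structural hypotheses on $A_0, B_0$ in \cref{specialinterval} is to exploit the up and down beat points that are forced to exist inside $\cB$ (or inside $\hat U_a$ and $\hat F_b$) once $X$ itself is minimal, and carrying this out requires a patient by-hand enumeration.
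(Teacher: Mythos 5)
Your opening reduction (pass to the opposite, assume $3\leq m\leq n$, get $l\leq 6$, and eliminate everything except $l=m=n=4$ with $\cB$ an antichain and $l=6$, $m=n=3$) is exactly the paper's skeleton. The gaps are in how you close the two residual cases. In the $(4,4,4)$ case, your key claim --- that a pair $(a_0,b_0)$ with $\cB\setminus\{b_0\}\subset U_{a_0}$ must exist, and that ``a failure to find one would produce a beat point'' --- is false. If no maximal element has $\gamma_a\geq 3$, the count $\sum_a\gamma_a=\sum_b\alpha_b\geq 8$ forces $\gamma_a=2$ and $\alpha_b=2$ throughout, so $\mxl(X)\cup\cB$ is $S^1_4$ or $S^1_2\amalg S^1_2$ (\cref{4by4bipartite}); these configurations are minimal (no beat points) and admit no such pair, e.g.\ the $12$-point space of \cref{s12x4}. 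The paper closes exactly this situation not with \cref{UaFbmxlmnlsplit} but with \cref{weakgammamaximal}: one removes a maximal point $a$ with $\hat U_a\simeq_w\bigvee S^0$ and $X-\{a\}$ connected. Without some replacement for this step your $(4,4,4)$ case is open.

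In the $(3,3,6)$ case there are two further problems. First, the $m'=6$ subcase ($\cB$ an antichain) is not ``covered trivially'' by \cref{mp3m5ormp5m3chain}: that lemma's hypothesis is $m'\leq 5$, and its proof rests on the inequality $4-(m'-2)(m-2)>0$, which fails at $m'=6$, $m=3$; the paper needs the separate counting argument of \cref{l6mn3Bantichain} (the Cianci--Ottina count $r<mn$ producing a decomposition $X=U_a\cup F_b\cup\mxl(X)\cup\mnl(X)$, then \cref{unchainsplit}). Second, your announced endgame --- in every residual configuration exhibit $(a_0,b_0)$ with $X=U_{a_0}\cup F_{b_0}\cup\mxl(X)\cup\mnl(X)$ satisfying \cref{bodyissimple} or \cref{specialinterval} --- does not go through in the hardest configurations. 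For instance, in the configuration of \cref{3333-1}(e) (where $X\simeq_w S^3$) every admissible pair has $U_{a_0}\cup F_{b_0}=X$, so the ``in particular'' clause of \cref{UaFbmxlmnlsplit} is circular, and $A_1$ is a three-element antichain with no maximum, so hypothesis 3 of \cref{specialinterval} fails; similar failures occur in \cref{3333-114}(c), \cref{3333-15}(d), and \cref{3333-1}(c), where $X\simeq_w S^2\vee S^2\vee S^2$. The paper dispatches these with different tools: \cref{misthreeandcontractible} ($U_{a_0}\cap U_{a_2}$ homotopically trivial, giving $X\simeq_w\bS$ of a proper down set), \cref{misthreeand1connected}, removal of weak (beat) points, and \cref{weakgammamaximal}. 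Your proposal never invokes these, and without them the enumeration cannot be completed as described.
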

\begin{proof}
 We may assume that $3\leq m\leq n$. In this case, we have  
 \[
  12= \card{X} = l+m+n \geq 
 \begin{cases}
  l+6 \\
   l+2m 
  \end{cases}
 \]
 and hence 
 \begin{align*}
  l&\leq 6, \\
  m&\leq (12-l)/2.
 \end{align*}

 The case $l\leq 3$ follows from \cref{card13andl3}.

 The case $l=4$. 
 In this case, we have $m\leq 4$. 
 If $m\leq 3$ or $m=4$ and $m'\leq 3$, then 
 $X$ splits into smaller spaces by \cref{lisfour}.
 The remaining case is where $m=4$ and $m'=4$, 
 that is, $l=m=n=4$ and $\cB$ is an antichain. 
 We show this case in \cref{l4m4n4}. 

 If $l=5$, then $m\leq 3$ and $X$ splits into smaller spaces by \cref{lisfive}.

 If $l=6$, then  $m\leq 3$ and hence $m=n=3$. 
 We show this case in \cref{l6mn3}.
\end{proof}

 \begin{lem}
  \label{l4m4n4}
  If 
  $l=m=n=4$ and $\cB$ is an antichain,  then $X$ splits into smaller spaces.
 \end{lem}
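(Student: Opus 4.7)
The plan is to case-split on $M := \max_{a \in \mxl(X)} \gamma_a$ where $\gamma_a = \card{U_a \cap \cB}$.

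If $M = 4$, then $U_{a_0}$ contains all of $\cB$ for some $a_0$, whence $X = U_{a_0} \cup \mxl(X) \cup \mnl(X)$ and \cref{Uamxlmnlsplit} yields the splitting. If $M = 3$, fix $a_0$ with $\gamma_{a_0} = 3$ and let $\cB \setminus U_{a_0} = \{b_0\}$. Since $\cB$ is an antichain, the singleton $\{b_0\}$ is a connected component of $\cB$ which is trivially a chain, the condition on $\cB - \mxl(\cB) = \emptyset$ is vacuous, and $\cB_0 - U_{a_0} = \{b_0\}$, so the hypotheses of \cref{bodyissimple} hold with this $a_0$ and $\cB_0 = \{b_0\}$. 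Thus $U_{a_0} \cup F_{b_0}$ is weak homotopy equivalent to a wedge of spheres of dimension at most $2$; since $\cB \subseteq U_{a_0} \cup F_{b_0}$, we have $X = U_{a_0} \cup F_{b_0} \cup \mxl(X) \cup \mnl(X)$, and \cref{UaFbmxlmnlsplit} concludes this case.

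The principal obstacle is $M \le 2$. The double-count $\sum_a \gamma_a = \sum_b \alpha_b$ combined with the minimality bound $\alpha_b \ge 2$ forces $\alpha_b = \gamma_a = 2$ throughout, and by passing to $X^o$ (using $X \simeq_w X^o$) we may likewise assume $\beta_b = \delta_c = 2$ for all $b, c$, where $\delta_c = \card{F_c \cap \cB}$. So both bipartite graphs $\mxl(X)$-$\cB$ and $\cB$-$\mnl(X)$ are $2$-regular on $4+4$ vertices and, by \cref{4by4bipartite}, each is isomorphic to $S^1_4$ or to $S^1_2 \amalg S^1_2$. In this rigid regime no single $U_a$ nor any union $U_a \cup F_b$ can cover $\cB$, so neither \cref{Uamxlmnlsplit} nor \cref{UaFbmxlmnlsplit} applies directly. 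Moreover, for every $b \in \cB$ transitivity through $b$ automatically fills in all four relations $c < a$ with $c \in \widehat{U}_b$, $a \in \widehat{F}_b$, so $\widehat{C}_b \cong K_{2,2} \simeq_w S^1$ and in particular $b$ is neither a weak point nor a $\gamma$-point.

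To finish I would further subdivide by the isomorphism types of the two bipartite graphs (four subcases) together with the possible direct covering relations between $\mxl(X)$ and $\mnl(X)$. In each subcase, either the ambient structure supplies an alternative $3$-chain $c < b' < a$ with $b' \ne b$ for every edge of the loop $\widehat{C}_b$ — killing the $S^1$ in $\geop{X - \{b\}}$ so that \cref{weakgamma} applies and splits off a summand $\bS\widehat{C}_b \simeq_w S^2$ — or one selects two maximals $a_0, a_1$ with $(U_{a_0} \cup U_{a_1}) \cap \cB = \cB$ and $U_{a_0} \cap U_{a_1} \subseteq \mnl(X)$ (hence discrete) and applies \cref{suspensionposetlemma} to a decomposition $X = A \cup B$ whose pieces, obtained by regrouping the $U_{a_i}$ with the remaining maximal and minimal elements, are contractible and pairwise incomparable; the pairwise intersections $\exch{A_i} \cap \exch{B_j}$ are then controlled by \cref{intersectioninterval}. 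The bookkeeping is tedious but finite, since \cref{4by4bipartite} leaves only a handful of configurations to inspect.
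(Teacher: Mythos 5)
Your handling of $\max_a\gamma_a\geq 3$ is correct (the paper gets these cases at one stroke from \cref{unchainsplit}, but your route through \cref{Uamxlmnlsplit}, \cref{bodyissimple} and \cref{UaFbmxlmnlsplit} is fine), and your reduction of the remaining case to the two $2$-regular bipartite graphs $\mxl(X)\cup\cB$ and $\cB\cup\mnl(X)$, each isomorphic to $S^1_4$ or $S^1_2\amalg S^1_2$ by \cref{4by4bipartite}, coincides with the paper's. The problem is the endgame: from that point on you only announce a case analysis ("I would further subdivide\dots the bookkeeping is tedious but finite") without carrying out a single configuration, and the dichotomy you propose does not stand up to inspection. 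For the first branch, note that in the $2$-regular situation an edge $c<a$ of the $4$-cycle $\widehat{C}_b$ can only be rerouted through the unique second element of $\cB$ above $c$, which must also be the unique second element of $\cB$ below $a$; so "alternative $3$-chains for every edge" forces a single $b^\ast$ with $c_1,c_2<b^\ast<a_1,a_2$, and when $\mxl(X)\cup\cB\cong S^1_4$ no such $b^\ast$ exists (two maximal elements share at most one common element of $\cB$). Rerouting different edges through different middle points does not make the loop null homotopic, so \cref{weakgamma} at a middle point is not available there. For the second branch, the decomposition is not pinned down: the pieces you name ($U_{a_0}$, $U_{a_1}$, the leftover maximal and minimal elements) are in general pairwise comparable (any leftover maximal sits over two elements of $\cB$, which may lie in $U_{a_0}$), and a union such as $U_{a_0}\cup U_{a_1}$ need not be homotopically trivial, so the hypotheses of \cref{suspensionposetlemma} are not verified.

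The missing idea is that in this rigid regime one should delete a \emph{maximal} element, not a middle one. Since any two elements of $\cB$ share at most one common minimal element in both configurations, for every $a\in\mxl(X)$ the poset $\widehat{U}_a$ (two elements of $\cB$ plus minimal elements) contains no cycle, hence $\widehat{U}_a\simeq_w\bigvee_n S^0$ or is contractible, while $X-\{a\}$ is connected; \cref{weakgammamaximal} then gives $X\simeq_w (X-\{a\})\vee\bigvee_n S^1$, which splits $X$ into smaller spaces. (When both graphs are $S^1_2\amalg S^1_2$, connectivity forces a unique $X$ and the same argument applies.) This is how the paper concludes, and as it stands your proposal does not reach that conclusion.
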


 \begin{proof}
  Note that $\mxl(\cB)=\cB$. 
  If there exists a maximal element $a\in \mxl(X)$ such that $\gamma_a=\card{U_a\cap \mxl(\cB)}\geq 3$, 
  then there exists an element $b\in \cB$ such that 
  \[
   X=U_a\cup F_b \cup \mxl(X) \cup \mnl(X).
  \]
  Therefore,  $X$ splits into smaller spaces by \cref{unchainsplit}. 
  
  Suppose $\gamma_a\leq 2$ for all $a\in \mxl(X)$. 

  Since $X$ does not have any beat point, $\alpha_b=\card{\mxl(X)\cap F_b}\geq 2$ for all $b\in \cB$. 
  Since 
  \[
  2\cdot 4 \geq \sum_{a\in \mxl(X)}\gamma_a = \sum_{b\in \cB}\alpha_b \geq 2\cdot 4 ,
  \]
  we have $\alpha_b=2$ for all $b\in \cB$ and $\gamma_a=2$ for all $a\in \mxl(X)$. 
  Therefore, $\mxl(X)\cup \cB$ is isomorphic to either $S^1_2\amalg S^1_2$ or $S^1_4$ 
  by \cref{4by4bipartite}. 
  Similarly, or by considering the opposite, 
  we may assume that $\cB\cup \mnl(X)$ is isomorphic to $S^1_2\amalg S^1_2$ or $S^1_4$.

  \begin{figure}[h]
   \centering
   \begin{tikzpicture}
    \foreach \i 
    [evaluate=\i as \j using int(2-\i)]
    in {0,1,2,3}{
    \node (a\i) at (2*\i,1) {$\bullet$};
    \node (b\i) at (2*\i,0) {$\bullet$};
    }
    \foreach \i 
    [evaluate=\i as \j using {int(mod(1+\i,4))}]
    in {0,1,2,3}{
    \draw (a\i) -- (b\i) -- (a\j);
    }

    \begin{scope}[yshift=2cm]
     \foreach \i 
    [evaluate=\i as \j using int(2-\i)]
    in {0,1,2,3}{
    \node (a\i) at (2*\i,1) {$\bullet$};
    \node (b\i) at (2*\i,0) {$\bullet$};
    }
    \foreach \i 
    [evaluate=\i as \j using {int(mod(1+\i,4))}]
    in {0,1,2,3}{
    \draw (a\i) -- (b\i);
    }
    \draw (a0) -- (b1);
    \draw (a1) -- (b0);
    \draw (a2) -- (b3);
    \draw (a3) -- (b2);
    \end{scope}

   \end{tikzpicture}
   \caption{  $S^1_2\amalg S^1_2$ and $S^1_4$}
  \end{figure}
  
  Consider the case where at least one of $\mxl(X) \cup \cB$ and $\cB\cup \mnl(X)$ 
  is isomorphic to $S^1_4$. 
  We may assume that $\cB\cup \mnl(X)$ is isomorphic to $S^1_4$. 
  In this case, we easily see that, for any $a\in \mxl(X)$, 
  $\hat{U}_a$ is contractible or homotopy equivalent to  $S^0$, 
  and $X-\{a\}$ is connected. Therefore,  
  $X$ splits into smaller spaces by \cref{weakgammamaximal}. 
  
  If both $\mxl(X) \cup \cB$ and $\cB\cup \mnl(X)$ are isomorphic to 
  $S^1_2\amalg S^1_2$, then we see that $X$ is isomorphic to the space of \cref{s12x4} 
  because $X$ is connected. 
  We see that, for any $a\in \mxl(X)$, 
  $\hat{U}_a$ is homotopy equivalent to  $S^0$ 
  and $X-\{a\}$ is connected. Therefore,  
  $X$ splits into smaller spaces by \cref{weakgammamaximal}.
  (In fact, one can easily see that $X\simeq_w \bigvee_5 S^1$.)

  \begin{figure}[h]
   \centering
   \begin{tikzpicture}
    \foreach \i 
    [evaluate=\i as \j using int(2-\i)]
    in {0,1,2,3}{
    \node (a\i) at (2*\i,2) {$\bullet$};
    \node (b\i) at (2*\i,1) {$\bullet$};
    }
    \node (c0) at (0,-1) {$\bullet$};
    \node (c1) at (2,0) {$\bullet$};
    \node (c2) at (4,0) {$\bullet$};
    \node (c3) at (6,-1) {$\bullet$};

    \foreach \i 
    [evaluate=\i as \j using {int(mod(1+\i,4))}]
    in {0,1,2,3}{
    \draw (a\i) -- (b\i) -- (c\i);
    }
    \draw (a0) -- (b1);
    \draw (a1) -- (b0);
    \draw (a2) -- (b3);
    \draw (a3) -- (b2);
    \draw (c0) -- (b3);
    \draw (c3) -- (b0);
    \draw (c1) -- (b2);
    \draw (c2) -- (b1);
   \end{tikzpicture}
   \caption{ $X$ when $\mxl(X) \cup \cB$ and $\cB\cup \mnl(X)$ are $S^1_2\amalg S^1_2$. }
   \label{s12x4}
  \end{figure}

 \end{proof}

 \begin{lem}
  \label{l6mpmn3}
  If 
  $l=6$, $m=n=3$ and $m'=3$, then $X$ splits into smaller spaces.
 \end{lem}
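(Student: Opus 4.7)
I plan to apply the structural reductions from the earlier lemmas to pin down the top and bottom of $X$, then exhibit a decomposition of the form $X = U_a \cup F_b \cup \mxl(X) \cup \mnl(X)$ and conclude via Lemma~\ref{UaFbmxlmnlsplit} together with Lemma~\ref{specialinterval}. First, by Lemma~\ref{mandmpis3} I may assume $\mxl(X) \cup \mxl(\cB) \cong S^1_3$; label $\mxl(X) = \{a_0, a_1, a_2\}$ and $\mxl(\cB) = \{b_0, b_1, b_2\}$ so that $b_i < a_i$ and $b_i < a_{i+1}$ for $i \in \mathbb{Z}/3$. By Lemma~\ref{mandmpis3andweakbeatpt}, I may assume $\card{\widehat{F}^{\cB}_b} \neq 2$ for every $b \in \cB$. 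Passing to $X^o$, which satisfies the same hypotheses $l = 6$, $m = n = 3$, I may also assume $n' \geq 3$, that $\mnl(X) \cup \mnl(\cB)$ is dually isomorphic to $S^1_3$, and that $\card{\widehat{U}^{\cB}_b} \neq 2$ for every $b \in \cB$.

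The key structural observation is that $\cB \subseteq U_{a_0} \cup U_{a_1}$: indeed $b_0 \in U_{a_0} \cap U_{a_1}$, $b_2 \in U_{a_0}$, $b_1 \in U_{a_1}$, and every $c \in \cB - \mxl(\cB)$ lies below some $b_j \in \mxl(\cB)$, hence below at least one of $a_0, a_1$ (since each $b_j$ is below two elements of $\mxl(X)$, at least one of which belongs to $\{a_0, a_1\}$). Consequently, any $c \in \cB - \mxl(\cB)$ that is not in $U_{a_0}$ must satisfy $c < b_1$. Set $C = \{c \in \cB - \mxl(\cB) : c \not\leq a_0\}$, so $C \subseteq \widehat{U}_{b_1}^{\cB}$. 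If $C = \emptyset$, then $X = U_{a_0} \cup F_{b_1} \cup \mxl(X) \cup \mnl(X)$ and $U_{a_0} \cap F_{b_1} = \emptyset$ (because $b_1 \not\leq a_0$), so Lemma~\ref{UaFbmxlmnlsplit} in the incomparable case produces the splitting, with the edge case $U_{a_0} \cup F_{b_1} = X$ (which forces $\mnl(X) \subseteq U_{a_0}$) handled separately via Lemma~\ref{weakgammamaximal} applied to $a_0$, using the dual $S^1_3$ structure at the bottom. If $C \neq \emptyset$, the non-$2$ assumption on $\card{\widehat{F}^{\cB}_c}$ forces the elements of $C \cup \{b_1\}$ to form a chain in $\cB$, say $c_k < \cdots < c_1 < b_1$. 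Setting $b = c_k = \min C$ gives $X = U_{a_0} \cup F_b \cup \mxl(X) \cup \mnl(X)$, and Lemma~\ref{specialinterval} applies with $A_0 = \{a_0, b_0, b_2\}$ and $B_0 = C \cup \{b_1\}$, where $b_0$ and $b_2$ are up beat points of $U_{a_0}$ (since $a_0$ is the unique element of $\mxl(X) \cap U_{a_0}$ above either of them), the chain $B_0$ consists of down beat points of $F_b$, and $I(A_0, B_0) = \emptyset$ because $b_0, b_2 \not\leq b_1$ forces $b_0, b_2 \not\leq $ anything in $B_0 \subseteq U_{b_1}$. Corollary~\ref{specialinterval-two-empty} or \ref{specialinterval-one-empty} then yields that $U_{a_0} \cup F_b$ splits into spaces smaller than $F_{b_1}$, and Lemma~\ref{UaFbmxlmnlsplit} concludes.

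The main obstacle lies in the case $C \neq \emptyset$: verifying the chain structure of $C \cup \{b_1\}$ rigorously requires the dual non-$2$ condition $\card{\widehat{U}^{\cB}_b} \neq 2$ combined with the bound $\card{\cB - \mxl(\cB)} = 3$, ruling out a branching ``$\cY$'' configuration below $b_1$. The edge case $U_{a_0} \cup F_b = X$ is the technically most delicate, but follows from the observation that in this situation the dual $S^1_3$ structure below places strong constraints on how minimal elements can cover, and Lemma~\ref{weakgammamaximal} applied to an appropriate maximal element splits off an $S^1$. Once these hypotheses of Lemma~\ref{specialinterval} are checked and the edge case addressed, the splitting of $X$ into smaller spaces is automatic.
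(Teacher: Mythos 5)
Your overall strategy --- force a decomposition $X=U_{a_0}\cup F_b\cup\mxl(X)\cup\mnl(X)$ out of the $S^1_3$ structure on $\mxl(X)\cup\mxl(\cB)$ and feed it to \cref{UaFbmxlmnlsplit} and \cref{specialinterval} --- is genuinely different from the paper's proof (which runs a case analysis on the connected components of $\cB$, using \cref{mp3m5ormp5m3chain,bodyissimple,mandmpis3andweakbeatpt,misthreeandcontractible}), and the observation that $\cB\subset U_{a_0}\cup U_{b_1}$, hence $X=U_{a_0}\cup F_{b_1}\cup\mxl(X)\cup\mnl(X)$ when $C=\emptyset$, is correct and attractive. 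But there are genuine gaps. First, your reduction to the dual picture is not available: you only know $n'\geq 3$, yet you invoke the dual of \cref{mandmpis3} (the $S^1_3$ structure on $\mnl(X)\cup\mnl(\cB)$) and the dual of \cref{mandmpis3andweakbeatpt} ($\card{\widehat{U}^{\cB}_b}\neq 2$), both of which require $n=n'=3$. With $l=6$ and $m'=3$ one can perfectly well have $n'=4$ or $5$ (e.g.\ $\cB$ with components of sizes $1+1+4$ where the $4$-point component has one maximum and three minimal elements gives $n'=5$); handling those by duality would need \cref{l6mn3mp4,l6mn3mp5}, which come later and whose proofs cite the present lemma, so that route is circular, and your argument as written simply does not cover these configurations (the paper disposes of them inside its component analysis via \cref{bodyissimple}).

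Second, in the case $C=\emptyset$ the sets $U_{a_0}$ and $F_{b_1}$ are never incomparable in the paper's sense: $b_0\in U_{a_0}$ and $a_1\in F_{b_1}$ are comparable, so you are always in the comparable branch of \cref{UaFbmxlmnlsplit}. That is harmless when $U_{a_0}\cup F_{b_1}\subsetneq X$ (the proper-subset clause still gives the splitting), but the real content of the lemma sits exactly in your ``edge case'' $\mnl(X)\subset U_{a_0}$, i.e.\ $X=U_{a_0}\cup\{b_1,a_1,a_2\}$: these are precisely the exceptional spaces the paper must identify explicitly (the configurations with $X\simeq_w S^2\vee S^2\vee S^2$ or $S^3$). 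There your plan --- \cref{weakgammamaximal} applied to $a_0$ --- does not work, because $\widehat{U}_{a_0}$ is an $8$-point poset of height $2$ and is in general not weak homotopy equivalent to a wedge of $0$-spheres (in these configurations the relevant $\widehat{U}_a$ is a wedge of circles or worse, which is why the paper falls back on \cref{misthreeandcontractible}); appealing to ``the dual $S^1_3$ structure at the bottom'' does not repair this, and in the $n'\geq 4$ configurations that structure does not even exist. Finally, the $C\neq\emptyset$ branch is asserted rather than checked: any maximal element $c$ of $C$ has $\widehat{F}^{\cB}_c\subset\{b_1\}$, hence $\widehat{F}_c=\{b_1,a_1,a_2\}$ with minimum $b_1$, so $c$ is an up beat point of $X$, contradicting minimality --- so this branch is actually vacuous (or, one step earlier, caught by $\card{\widehat{F}^{\cB}_c}=2$ and \cref{mandmpis3andweakbeatpt}), and the chain-plus-\cref{specialinterval} machinery you set up for it never applies. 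In short, the two places where your sketch waves its hands ($n'\geq 4$, and the edge case $\mnl(X)\subset U_{a_0}$) are exactly the hard part of the lemma, and they are not proved.
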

 \begin{proof}
  If $n'=\card{\mnl(\cB)}\leq 2$, 
  then the opposite $X^o$ splits into smaller spaces by \cref{misthreempistwo,lisone} 
  and so does $X$. 
  By \cref{mandmpis3}, 
  we may assume that $\mxl(X)\cup \mxl(\cB)$ is identified with $S^1_3$  in \cref{33circle},  
  and so $\mxl(X)=\{a_0,a_1,a_2\}$ and $\mxl(\cB)=\{b_0,b_1,b_2\}$.

 Since $\card{\mxl(\cB)}=3$, 
 the number of connected components of $\cB$ is at most $3$. 

 \begin{enumerate}[label=(\alph*)]
  \item The case where $\cB$ has $3$ connected components.
	
	The cardinalities of each component are  
	\[
	6 = 1+1+4 = 1+2+3 = 2+2+2.
	\]
	The case $2+2+2$ follows from \cref{mp3m5ormp5m3chain} and 
	the case $1+2+3$ follows from \cref{bodyissimple} 
	as in the case (a) of the proof of \cref{l5mp3m3}. 

	Consider the case $1+1+4$. 
	Let $\cB_4$ be the connected component with $\card{\cB_4}=4$. 
	Since $\cB_4$ is connected, we have $\card{\mnl(\cB_4)}\leq 3$. 
	If $\card{\mnl(\cB_4)}=2$ or $3$, then the result follows from \cref{bodyissimple} 
	similarly to the case $1+2+3$ (see \cref{reallysmallspaces}).

	Suppose $\card{\mnl(\cB_4)}=1$. We may assume that $b_1\in \cB_4$. 
	Let $\cB_4=\{b_1,b_3,b_4,b_5\}$ and $b_4=\min\cB_4$. 
	In this case, we have $\mnl(\cB)=\{b_0,b_2,b_4\}$ 
	and hence $n=n'=3$, therefore we may assume that  
	$\mnl(\cB)\cup \mnl(X)$ is isomorphic to the one in \cref{3333-114} (a) 
	by \cref{mandmpis3}. 

  \begin{figure}[h]
  \centering
 \begin{tikzpicture}[scale=.85]
  \foreach \i 
  [evaluate=\i as \j using int(2-\i)]
  in {0,1,2}{
  \node (a\j) at (2*\i,1) {$a_{\j}$};
  \node (c\j) at (2*\i,-3) {$\bullet$};
  }
  \node (b0) at (0,-1) {$b_{0}$};
  \node (b1) at (2,0) {$b_{1}$};
  \node (b2) at (4,-1) {$b_{2}$};

  \node (b3) at (1,-1) {$b_3$};
  \node (b4) at (2,-2) {$b_4$};
  \node (b5) at (3,-1) {$b_5$};

  \draw (a2) -- (b0) -- (a1) -- (b2) -- (a0) -- (b1) -- (a2);
  \draw (c2) -- (b0) -- (c1) -- (b2) -- (c0) -- (b4) -- (c2);

  \node at (2, -4) {(a)};

  \begin{scope}[xshift=5cm]
  \foreach \i 
  [evaluate=\i as \j using int(2-\i)]
  in {0,1,2}{
  \node (a\j) at (2*\i,1) {$a_{\j}$};
  \node (c\j) at (2*\i,-3) {$\bullet$};
  }
  \node (b0) at (0,-1) {$b_{0}$};
  \node (b1) at (2,0) {$b_{1}$};
  \node (b2) at (4,-1) {$b_{2}$};

  \node (b3) at (1,-1) {$b_3$};
  \node (b4) at (2,-2) {$b_4$};
  \node (b5) at (3,-1) {$b_5$};

  \draw (a2) -- (b0) -- (a1) -- (b2) -- (a0) -- (b1) -- (a2);
  \draw (c2) -- (b0) -- (c1) -- (b2) -- (c0) -- (b4) -- (c2);
  \draw (b1) -- (b3) -- (b4) -- (b5) -- (b1);
   
  \node at (2, -4) {(b)};
  \end{scope}

  \begin{scope}[xshift=10cm]
  \foreach \i 
  [evaluate=\i as \j using int(2-\i)]
  in {0,1,2}{
  \node (a\j) at (2*\i,1) {$a_{\j}$};
  \node (c\j) at (2*\i,-3) {$\bullet$};
  }
  \node (b0) at (0,-1) {$b_{0}$};
  \node (b1) at (2,0) {$b_{1}$};
  \node (b2) at (4,-1) {$b_{2}$};

  \node (b3) at (1,-1) {$b_3$};
  \node (b4) at (2,-2) {$b_4$};
  \node (b5) at (3,-1) {$b_5$};

  \draw (a2) -- (b0) -- (a1) -- (b2) -- (a0) -- (b1) -- (a2);
  \draw (c2) -- (b0) -- (c1) -- (b2) -- (c0) -- (b4) -- (c2);
  \draw (a1) -- (b3) -- (c1) -- (b5) -- (a1);
  \draw (b1) -- (b3) -- (b4) -- (b5) -- (b1);
   
  \node at (2, -4) {(c)};
  \end{scope}
 \end{tikzpicture}
   \caption{$1+1+4$}
   \label{3333-114}
  \end{figure}
	

	Since $\cB_4$ is connected and has maximum and minimum, 
	$\cB_4$ is either a chain or isomorphic to the one in 
	\cref{3333-114} (b), but since $X$ does not have any beat point, 
	$\cB_4$ is not a chain.

	Now, since $\{b_1,a_0,a_2\}\subset \hat{F}_{b_3}\subset \{b_1,a_0,a_1,a_2\}$ 
	and $b_3$ is not a beat point, we have $b_3\prec a_1$, and  
	proceeding similarly, we see that $X$ is isomorphic to the space \cref{3333-114} (c). 
	Since $U_{a_0}\cap U_{a_2}=U_{b_1}\simeq *$, 
	$X$ splits into smaller spaces by \cref{misthreeandcontractible}. 
	In fact, we easily see that 
	\begin{align*}
	 \hat{U}_{a_1}&\simeq_w S^1\vee S^1 \vee S^1, \\
	 X\simeq_w \bS(\hat{U}_{a_1}) &\simeq_w S^2\vee S^2 \vee S^2.
	\end{align*}










  \item The case where $\cB$ has $2$ connected components.
	
	The cardinalities of each component are  
	\[
	6 = 1+5 = 2+4 = 3+3.
	\]
	In the case $3+3$, there exists an element $b\in \cB$ such that $\card{\hat{F}^{\cB}_{b}}=2$, 
	and the result follows from \cref{mandmpis3andweakbeatpt}.

	In the case $2+4$, let $\cB_4$ be the connected component with $\card{\cB_4}=4$. 
	Since $\card{\mxl(\cB_4)}=2$, we have $\card{\mnl(\cB_4)}\leq 2$. 
	If $\card{\mnl(\cB_4)}=1$, then $\card{\mnl(\cB)}=2$ and hence the result follows.  
	If $\card{\mnl(\cB_4)}=2$, then we see that 
	there exists an element $b\in \cB$ such that $\card{\hat{F}^{\cB}_{b}}=2$ (see \cref{reallysmallspaces}), 
	and the result follows. 

	Consider the case $1+5$, and let $\cB_5$ be the connected component with $\card{\cB_5}=5$. 
	Since $\card{\mxl(\cB_5)}=2$, we have $\card{\mnl(\cB_5)}\leq 3$. 
	If $\card{\mnl(\cB_5)}=1$, then $\card{\mnl(\cB)}=2$, 
	and if $\card{\mnl(\cB_5)}=3$, then 
	there exists an element $b\in \cB$ such that $\card{\hat{F}^{\cB}_{b}}=2$, 
	and hence the result follows. 

	Suppose $\card{\mnl(\cB_5)}=2$. We may assume that $b_0\not\in \cB_5$. 
	Let $\cB_5=\{b_1,b_2, c, d_1,d_2\}$ and $\mnl(\cB_5)=\{d_1,d_2\}$. 
	We have $\mnl(\cB)=\{b_0,d_1,d_2\}$ and hence $n=n'=3$, therefore we may assume that 
	$\mnl(\cB)\cup \mnl(X)$ is isomorphic to the one in \cref{3333-15} (a).

  \begin{figure}[h]
  \centering
 \begin{tikzpicture}[scale=.85]
  \foreach \i 
  [evaluate=\i as \j using int(2-\i)]
  in {0,1,2}{
  \node (a\j) at (2*\i,1) {$a_{\j}$};
  \node (c\j) at (2*\i,-3) {$c_{\j}$};
  }
  \node (b0) at (0,-1) {$b_{0}$};
  \node (b1) at (2,0) {$b_{1}$};
  \node (b2) at (4,0) {$b_{2}$};

  \node (c) at (1,-1) {$c$};
  \node (d1) at (2,-2) {$d_1$};
  \node (d2) at (4,-2) {$d_2$};

  \draw (a2) -- (b0) -- (a1) -- (b2) -- (a0) -- (b1) -- (a2);
  \draw (c2) -- (b0) -- (c1) -- (d2) -- (c0) -- (d1) -- (c2);

  \node at (2,-4) {(a)};

\begin{scope}[xshift=5cm]
   \foreach \i 
  [evaluate=\i as \j using int(2-\i)]
  in {0,1,2}{
  \node (a\j) at (2*\i,1) {$a_{\j}$};
  \node (c\j) at (2*\i,-3) {$c_{\j}$};
  }
  \node (b0) at (0,-1) {$b_{0}$};
  \node (b1) at (2,0) {$b_{1}$};
  \node (b2) at (4,0) {$b_{2}$};

  \node (c) at (1,-1) {$c$};
  \node (d1) at (2,-2) {$d_1$};
  \node (d2) at (4,-2) {$d_2$};

  \draw (a2) -- (b0) -- (a1) -- (b2) -- (a0) -- (b1) -- (a2);
  \draw (c2) -- (b0) -- (c1) -- (d2) -- (c0) -- (d1) -- (c2);
  \draw (b1) -- (c) -- (d1);

  \node at (2,-4) {(b)};
\end{scope}

\begin{scope}[xshift=10cm]
   \foreach \i 
  [evaluate=\i as \j using int(2-\i)]
  in {0,1,2}{
  \node (a\j) at (2*\i,1) {$a_{\j}$};
  \node (c\j) at (2*\i,-3) {$c_{\j}$};
  }
  \node (b0) at (0,-1) {$b_{0}$};
  \node (b1) at (2,0) {$b_{1}$};
  \node (b2) at (4,0) {$b_{2}$};

  \node (c) at (1,-1) {$c$};
  \node (d1) at (2,-2) {$d_1$};
  \node (d2) at (4,-2) {$d_2$};

  \draw (a2) -- (b0) -- (a1) -- (b2) -- (a0) -- (b1) -- (a2);
  \draw (c2) -- (b0) -- (c1) -- (d2) -- (c0) -- (d1) -- (c2);
  \draw (d2) -- (b1) -- (c) -- (d1) -- (b2);

  \node at (2,-4) {(c)};
\end{scope}

\begin{scope}[xshift=2cm, yshift=-6cm]
   \foreach \i 
  [evaluate=\i as \j using int(2-\i)]
  in {0,1,2}{
  \node (a\j) at (2*\i,1) {$a_{\j}$};
  \node (c\j) at (2*\i,-3) {$c_{\j}$};
  }
  \node (b0) at (0,-1) {$b_{0}$};
  \node (b1) at (2,0) {$b_{1}$};
  \node (b2) at (4,0) {$b_{2}$};

  \node (c) at (1,-1) {$c$};
  \node (d1) at (2,-2) {$d_1$};
  \node (d2) at (4,-2) {$d_2$};

  \draw (a2) -- (b0) -- (a1) -- (b2) -- (a0) -- (b1) -- (a2);
  \draw (c2) -- (b0) -- (c1) -- (d2) -- (c0) -- (d1) -- (c2);
  \draw (d2) -- (b1) -- (c) -- (d1) -- (b2);
  \draw (c1) -- (c) -- (a1);
  \draw (c1) -- (b2);
  \draw (d2) -- (a1);
 
  \node at (2,-4) {(d)};
\end{scope}

\begin{scope}[xshift=8cm,yshift=-6cm]
  \foreach \i 
  [evaluate=\i as \j using int(2-\i)]
  in {0,1,2}{
  \node (a\j) at (2*\i,1) {$a_{\j}$};
  \node (c\j) at (2*\i,-3) {$c_{\j}$};
  }
  \node (b0) at (0,-1) {$b_0$};
  \node (b1) at (2,0) {$b_{1}$};
  \node (b2) at (4,0) {$b_2$};

  \node (c) at (1,-1) {$c$};
  \node (d1) at (2,-2) {$d_1$};
  \node (d2) at (4,-2) {$d_2$};

  \draw[dotted] (b0) -- (a2) -- (b1) -- (a0) -- (b2);
  \draw[dotted] (c2) -- (b0) -- (c1);
  \draw (d2) -- (c0) -- (d1) -- (c2);
  \draw (d2) -- (b1) -- (c) -- (d1);
  \draw[dotted] (d1) -- (b2);
  \draw (c1) -- (c);
  \draw[dotted] (c1) -- (b2);

  \node at (2,-4) {(e)};
\end{scope}

 \end{tikzpicture}
   \caption{1+5}
   \label{3333-15}
  \end{figure}

	Since $c\not\in \mnl(\cB_5)\cup \mxl(\cB_5)$ and $\hat{F}^{\cB}_c\subset \{b_1,b_2\}$, 
	we have $0<\card{\hat{F}^{\cB}_c}\leq 2$. 
	We may assume that $\card{\hat{F}^{\cB}_c}=1$  
	and, by considering the opposite, $\card{\hat{U}^{\cB}_c}=1$. 
	We may assume that $\hat{F}^{\cB}_c=\{b_1\}$ and $\hat{U}^{\cB}_c=\{d_1\}$. 

	If $d_2<b_2$, then, since $\cB_5$ is connected, we have  $d_2<b_1$ or $d_1<b_2$,  
	and hence $\card{\hat{F}^{\cB}_{d_2}}=2$ or $\card{\hat{U}^{\cB}_{b_2}}=2$, 
	therefore $X$ splits into smaller spaces. 

	Suppose $d_2\not<b_2$. 
	Since $\cB_5$ is connected, we have $d_2<b_1$ and $d_1<b_2$. 
	Since $c$, $b_2$, and $d_2$ are not beat points, 
	we have $c<a_1$, $c> c_1$, $b_2>c_1$ and $d_2<a_1$. 
	Therefore $X$ is isomorphic to the space of \cref{3333-15} (d). 
	Since $U_{a_0}\cap U_{a_2} =U_{b_1}\simeq *$, 
	$X$ splits into smaller spaces by \cref{misthreeandcontractible}.
	In fact, we easily see that 
	\begin{align*}
	 \hat{U}_{a_1}&\simeq_w S^1\vee S^1 \vee S^1, \\
	 X\simeq_w \bS(\hat{U}_{a_1}) &\simeq_w S^2\vee S^2 \vee S^2.
	\end{align*}

	
	

  \item The case where $\cB$ is connected.
	We have $\card{\mnl(\cB)}\leq 3$ 
	and we may assume that $\card{\mnl(\cB)}=3$, that is, $m=m'=n=n'=3$.  
	In this case, we may assume that 
	$\mxl(X)\cup \mxl{\cB}$ and $\mnl(X)\cup \mnl(\cB)$ 
	are isomorphic to $S^1_3$ as in \cref{3333-1} (a).
	We may also assume that $\card{\hat{F}^{\cB}_b}\ne 2$ and $\card{\hat{U}^{\cB}_b}\ne 2$ 
	for all $b\in \cB$, therefore $\cB$ is isomorphic to one of graphs in \cref{3by3bipartite}. 

	If there exists an element $b\in \cB$ such that $\card{\hat{F}^{\cB}_b}=1$, 
	say, $\card{\hat{F}^{\cB}_{d_0}}=1$, then 
	$\cB$ is isomorphic to the one in \cref{3333-1} (b). 
	Since $d_0$ is not a beat point and $d_0\not< b_0,b_2$, we see that $d_0\prec a_1$. 
	Proceeding similarly, we see that $X$ is isomorphic to the space of \cref{3333-1} (c).  
	Since $U_{a_0}\cap U_{a_2}=U_{b_1}\simeq *$ (see \cref{3333-1} (d)), 
	$X$ splits into smaller spaces by \cref{misthreeandcontractible}
	(In fact, $X\simeq_wS^2\vee S^2 \vee S^2$). 

	If $\card{\hat{F}^{\cB}_b}=3$ for all $b\in B$, 
	then $X$ is isomorphic to the space of \cref{3333-1} (e).
	Since $U_{a_0}\cap U_{a_2}=U_{b_1}\simeq *$ (see \cref{3333-1} (f)), 
	$X$ splits into smaller spaces by \cref{misthreeandcontractible}
	(In fact, $X\simeq_wS^3$). 

\begin{figure}[h]
 \centering
 \begin{tikzpicture}[scale=.85]
  \foreach \i 
  [evaluate=\i as \j using int(2-\i)]
  in {0,1,2}{
  \node (a\j) at (2*\i,3) {$a_{\j}$};
  \node (b\i) at (2*\i,2) {$b_{\i}$};
  \node (d\i) at (2*\i,1) {$d_{\i}$};
  \node (c\j) at (2*\i,0) {$c_{\j}$};
  }
  \draw (b0) -- (a2) -- (b1) -- (a0) -- (b2) -- (a1) -- (b0);
  \draw (d0) -- (c2) -- (d1) -- (c0) -- (d2) -- (c1) -- (d0);
  
  \node at (2,-1) {(a)};

  \begin{scope}[xshift=5cm]
  \foreach \i 
  [evaluate=\i as \j using int(2-\i)]
  in {0,1,2}{
  \node (a\j) at (2*\i,3) {$a_{\j}$};
  \node (b\i) at (2*\i,2) {$b_{\i}$};
  \node (d\i) at (2*\i,1) {$d_{\i}$};
  \node (c\j) at (2*\i,0) {$c_{\j}$};
  }
  \draw (b0) -- (a2) -- (b1) -- (a0) -- (b2) -- (a1) -- (b0);
  \draw (d0) -- (c2) -- (d1) -- (c0) -- (d2) -- (c1) -- (d0);
  \draw (d0) -- (b1);
  \draw (d1) -- (b0);
  \draw (d1) -- (b1);
  \draw (d1) -- (b2);
  \draw (b1) -- (d2);

  \node at (2,-1) {(b)};
  \end{scope}

  \begin{scope}[xshift=10cm]
  \foreach \i 
  [evaluate=\i as \j using int(2-\i)]
  in {0,1,2}{
  \node (a\j) at (2*\i,3) {$a_{\j}$};
  \node (b\i) at (2*\i,2) {$b_{\i}$};
  \node (d\i) at (2*\i,1) {$d_{\i}$};
  \node (c\j) at (2*\i,0) {$c_{\j}$};
  }
  \draw (b0) -- (a2) -- (b1) -- (a0) -- (b2) -- (a1) -- (b0);
  \draw (d0) -- (c2) -- (d1) -- (c0) -- (d2) -- (c1) -- (d0);
  \draw (d0) -- (b1);
  \draw (d1) -- (b0);
  \draw (d1) -- (b1);
  \draw (d1) -- (b2);
  \draw (b1) -- (d2);
  \draw (d0) -- (a1);
  \draw (b0) -- (c1);
  \draw (d2) -- (a1);
  \draw (b2) -- (c1);

  \node at (2,-1) {(c)};
  \end{scope}

  \begin{scope}[yshift=-5cm]
  \foreach \i 
  [evaluate=\i as \j using int(2-\i)]
  in {0,1,2}{
  \node (a\j) at (2*\i,3) {$a_{\j}$};
  \node (b\i) at (2*\i,2) {$b_{\i}$};
  \node (d\i) at (2*\i,1) {$d_{\i}$};
  \node (c\j) at (2*\i,0) {$c_{\j}$};
  }
  \draw[dotted] (b0) -- (a2) -- (b1) -- (a0) -- (b2) -- (a1) -- (b0);
  \draw (d0) -- (c2) -- (d1) -- (c0) -- (d2) -- (c1) -- (d0);
  \draw (d0) -- (b1);
  \draw[dotted] (d1) -- (b0);
  \draw (d1) -- (b1);
  \draw[dotted] (d1) -- (b2);
  \draw (b1) -- (d2);
  \draw[dotted] (d0) -- (a1);
  \draw[dotted] (b0) -- (c1);
  \draw[dotted] (d2) -- (a1);
  \draw[dotted] (b2) -- (c1);

  \node at (2,-1) {(d)};
  \end{scope}

  \begin{scope}[xshift=5cm,yshift=-5cm]
     \foreach \i 
  [evaluate=\i as \j using int(2-\i)]
  in {0,1,2}{
  \node (a\j) at (2*\i,3) {$a_{\j}$};
  \node (b\i) at (2*\i,2) {$b_{\i}$};
  \node (d\i) at (2*\i,1) {$d_{\i}$};
  \node (c\j) at (2*\i,0) {$c_{\j}$};
  }
  \draw (b0) -- (a2) -- (b1) -- (a0) -- (b2) -- (a1) -- (b0);
  \draw (d0) -- (c2) -- (d1) -- (c0) -- (d2) -- (c1) -- (d0);
  \draw (d0) -- (b0);
  \draw (d0) -- (b1);
  \draw (d0) -- (b2);
  \draw (d1) -- (b0);
  \draw (d1) -- (b1);
  \draw (d1) -- (b2);
  \draw (b1) -- (d2);
  \draw (d2) -- (b0);
  \draw (d2) -- (b2);

   \node at (2,-1) {(e)};
  \end{scope}

  \begin{scope}[xshift=10cm,yshift=-5cm]
     \foreach \i 
  [evaluate=\i as \j using int(2-\i)]
  in {0,1,2}{
  \node (a\j) at (2*\i,3) {$a_{\j}$};
  \node (b\i) at (2*\i,2) {$b_{\i}$};
  \node (d\i) at (2*\i,1) {$d_{\i}$};
  \node (c\j) at (2*\i,0) {$c_{\j}$};
  }
  \draw[dotted] (b0) -- (a2) -- (b1) -- (a0) -- (b2) -- (a1) -- (b0);
  \draw (d0) -- (c2) -- (d1) -- (c0) -- (d2) -- (c1) -- (d0);
  \draw[dotted] (d0) -- (b0);
  \draw (d0) -- (b1);
  \draw[dotted] (d0) -- (b2);
  \draw[dotted] (d1) -- (b0);
  \draw (d1) -- (b1);
  \draw[dotted] (d1) -- (b2);
  \draw (b1) -- (d2);
  \draw[dotted] (d2) -- (b0);
  \draw[dotted] (d2) -- (b2);

   \node at (2,-1) {(f)};
  \end{scope}

  \end{tikzpicture}
 \caption{$l=6$, $m=m'=n=n'=3$, and $\cB$ is connected.}
 \label{3333-1}
\end{figure}

 \end{enumerate}
\end{proof}

 \begin{lem}
  \label{l6mn3mp4}
  If 
  $l=6$, $m=n=3$, and $m'=4$, then $X$ splits into smaller spaces.
 \end{lem}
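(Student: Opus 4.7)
The plan is to imitate the case analyses of \cref{l5mp4m3,l6mpmn3}. Write $\mxl(\cB)=\{b_1,b_2,b_3,b_4\}$ and $\cB-\mxl(\cB)=\{c_1,c_2\}$. First I dispose of the subcase $n'\leq 2$: then $X^o$ satisfies $m^o=3$ and $(m')^o=n'\leq 2$, so \cref{misthreempistwo} shows that $X^o$ splits into smaller spaces, and by \cref{invariance-of-susp-and-wedge} so does $X$. Hence I may assume $n'=3$.

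Next, if some $a\in\mxl(X)$ satisfies $\gamma_a=4$, then $X=U_a\cup\mxl(X)\cup\mnl(X)$ and \cref{Uamxlmnlsplit} finishes. Otherwise \cref{mis3mpis4} forces $\mxl(X)\cup\mxl(\cB)$ to be isomorphic to one of the two graphs of \cref{figmis3mpis4} and supplies $a_0\in\mxl(X)$ with $\gamma_{a_0}=3$; let $b_0$ be the unique element of $\mxl(\cB)-U_{a_0}$. For this pair $(a_0,b_0)$ the setup of \cref{specialinterval} simplifies drastically: since $b_0\in\mxl(\cB)$, no element of $\cB$ lies strictly above $b_0$, so $B_0=\{b_0\}$ and $B_1=\emptyset$; moreover $I(A_0,B_0)=I(A_0,\{b_0\})=\emptyset$ holds by the very definition of $A_0$. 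Thus the entire $B$-side of \cref{specialinterval} is automatic.

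The heart of the argument is a case split on the placements of $c_1,c_2$ relative to $U_{a_0}$. If $\{c_1,c_2\}\subset U_{a_0}$, then $\cB\subset U_{a_0}\cup\{b_0\}$ and
\[
 X=U_{a_0}\cup F_{b_0}\cup\mxl(X)\cup\mnl(X);
\]
once every $x\in A_0-\{a_0\}$ is checked to be an up beat point of $U_{a_0}$ (immediate for $x\in\mxl(\cB)\cap U_{a_0}$, since then $\hat F^{U_{a_0}}_x=\{a_0\}$, and obtained for $x=c_i$ from the absence of beat points in $X$ combined with the explicit form of $\mxl(X)\cup\mxl(\cB)$), \cref{specialinterval-one-empty} or \cref{specialinterval-two-empty} exhibits $U_{a_0}\cup F_{b_0}$ as a wedge of spheres of dimension at most $2$, and \cref{UaFbmxlmnlsplit} completes the split. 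When some $c_i\notin U_{a_0}$ the displayed decomposition fails, and I switch strategy: in the symmetric case~(a) of \cref{figmis3mpis4} one can replace $a_0$ by a different maximal above $c_i$ (each has $\gamma=3$); in case~(b) one either locates a weak beat point to remove via \cref{weakgamma}, invokes \cref{bodyissimple} when a chain component of $\cB$ leaves $U_{a_0}$, or uses that $U_{a_0}\cap U_{a_2}$ becomes contractible so that \cref{misthreeandcontractible} applies.

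The hard part will be the combinatorial bookkeeping rather than any single splitting invocation: with two shapes for $\mxl(X)\cup\mxl(\cB)$, several placements of $c_1,c_2$ among the $U_{b_j}$ (depending on whether $c_1,c_2$ are comparable and on which $b_j$'s lie above each $c_i$), and a mirror structure on $\mnl(X)\cup\mnl(\cB)$ constrained by $n'=3$, every combined configuration must be inspected to certify minimality of $X$, to supply the up beat-point witnesses required by \cref{specialinterval}, and to confirm that the resulting pieces are strictly smaller than $X$. No new machinery should be needed beyond the tools already assembled in \cref{sec-posetsplitting,sec-interval,sec-mleq3,sec-lleq5}.
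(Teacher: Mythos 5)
Your proposal has two concrete problems before one even reaches the deferred casework. First, the reduction on $n'$ is wrong: after excluding $n'\leq 2$ you may not "assume $n'=3$" --- with $m'=4$ the value $n'$ can be $3$, $4$ or $5$, and you cannot dualize $n'=5$ away (that would require the $m'=5$ statement, \cref{l6mn3mp5}, which comes later and relies on this lemma), nor quietly drop $n'=4$, which in the paper is exactly the hard core of the proof (the $1{+}1{+}4$ component configuration of \cref{114np4}). The paper makes the opposite reduction: it disposes of $n'\leq 3$ by passing to $X^o$ and invoking \cref{lisone,misthreempistwo,l6mpmn3}, and then confronts $n'=4$ directly; by keeping $n'=3$ alive you both lose that simplification and omit the cases that actually carry the difficulty.

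Second, the key verification in your branch $\{c_1,c_2\}\subset U_{a_0}$ --- that minimality of $X$ plus the shape of $\mxl(X)\cup\mxl(\cB)$ forces every $x\in A_0-\{a_0\}$ to be an up beat point of $U_{a_0}$ --- is false, and with it the appeal to \cref{specialinterval-two-empty,specialinterval-one-empty}. Take shape (a) of \cref{figmis3mpis4} and let $c_1\prec b_1,b_2$ and $c_2\prec b_0,b_1$, attaching the minimal points so that $X$ has no beat points (for instance $c_1>d_1,d_2$, $c_2>d_0,d_1$, $b_0>d_2$, $b_2>d_0$, $b_3>d_0,d_2$). This $X$ is minimal, has $n'=3$, no $\gamma_a=4$, and $c_1,c_2\in U_{a_0}$; yet $\widehat{F}_{c_1}\cap U_{a_0}=\{b_1,b_2,a_0\}$ has no minimum, so assumption 1(a) of \cref{specialinterval} fails for the pair $(a_0,b_3)$. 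Your fallback of switching the maximal element does not rescue it: for $a_1$ the same failure occurs at $c_2$, and for $a_2$ one gets $A_1=\{c_1,c_2\}$ with no maximum, so assumption 3(a) fails; \cref{bodyissimple} is also unavailable since $c_1$ is not an up beat point of $\cB$. (This particular poset can be handled by a weak point, or by duality through \cref{l6mpmn3} as the paper does, but neither move is part of your stated case-(a) plan.) Finally, everything past this point in your proposal is an acknowledged to-do list ("every combined configuration must be inspected") rather than an argument; that inspection is the lemma. The paper organizes it by the number of connected components of $\cB$, settles the chain-like cases by \cref{mp3m5ormp5m3chain,bodyissimple}, pushes all two-component and connected cases into $n'\leq 3$, and then works the $1{+}1{+}4$, $n'=4$ configurations by hand using weak points, \cref{weakgammamaximal} and \cref{misthreeand1connected} --- none of which is supplied, or replaced, by your sketch.
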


\begin{proof}
 If $n'=\card{\mnl(\cB)}\leq 3$, 
 then the opposite $X^o$ splits into smaller spaces by \cref{misthreempistwo,lisone,l6mpmn3} 
 and so does $X$. 


 By \cref{mis3mpis4}, we may assume that $\mxl(X)\cup \mxl(\cB)$ is (a) or (b) of \cref{figmis3mpis4-re}.

\begin{figure}[h]
 \centering

 \begin{tikzpicture}
  \node (b0) at (-.5,0) {$b_0$};
  \foreach \i 
  [evaluate=\i as \j using int(\i+1)]  
  in {0,1,2}{
  \node (a\i) at (\j,1.5) {$a_{\i}$};
  \node (b\j) at (\j,0) {$b_{\j}$};
  \draw (b0) -- (a\i);
  }
  \draw (a0) -- (b1) -- (a1) -- (b3) -- (a2) -- (b2) -- (a0);

  \node at (1.5,-1) {(a)};

 \begin{scope}[xshift=5cm]
  \foreach \i 
  in {0,1,2}{
  \node (a\i) at (\i,1) {$a_{\i}$};
  \node (b\i) at (\i,0) {$b_{\i}$};
  }
  \node (b3) at (3,0) {$b_3$};
  \draw (a0) -- (b0) -- (a1) -- (b1) -- (a0);
  \draw (a0) -- (b2);
  \draw (a1) -- (b3);
  \draw (b2) -- (a2) -- (b3);

  \node at (1.5,-1) {(b)};
 \end{scope}
 \end{tikzpicture}
 \caption{$\mxl(X)\cup \mxl(\cB)$ for $m=3,m'=4$.}
 \label{figmis3mpis4-re}
\end{figure}


 Since $\card{\mxl(\cB)}=4$, 
 the number of connected components of $\cB$ is at most $4$. 

 \begin{enumerate}[label=(\alph*)]
  \item The case where $\cB$ has $4$ connected components.

	Note that each component has maximum since $\card{\mxl(\cB)}=4$. 
	The cardinalities of each component are  
	\[
	6 = 1+1+1+3 =1+1+2+2. 
	\]
	The case $1+1+2+2$ follows from \cref{mp3m5ormp5m3chain}.

	In the case $1+1+1+3$, let $b$ be the maximum element of the 
	component with $3$ elements. 
	Note that $b\leq a_0$ or $b\leq a_1$.
	If $b\leq a_0$, then $\mxl(\cB) - U_{a_0}=\{b_3\}$, and we see that
	$X=U_{a_0}\cup F_{b_3}\cup \mxl(X)\cup \mnl(X)$ 
	and the assumptions of \cref{bodyissimple} hold, therefore the result holds. 
	The case $b\leq a_1$ is similar.

  \item The case where $\cB$ has $3$ connected components.

	The possible cardinalities of each component are  
	\[
	   6=1+1+4 =1+2+3 =2+2+2,
	\]
	but, since $\card{\mxl(\cB)}=4$, the last case does not occur. 

	In the case $1+2+3$, we see that $n'=3$ and the result follows.

	In the case $1+1+4$, 
	we see that $n'=\card{\mnl(\cB)}=3$ or $4$. 
	We have to consider the case $n'=4$. 
	Let $\cB_4$ be the connected component with $\card{\cB_4}=4$. 
	We have $\card{\mxl(\cB_4)}=2$, and  
	in the case $n'=4$, we have 
	$\card{\mnl(\cB_4)}=2$. We put $\mnl(\cB_4)=\{b_4,b_5\}$. 
	Since $\cB_4$ is connected, there exists an element $b\in \mnl(\cB_4)$ such that 
	$\widehat{F}^{\cB_4}_{b}=\cB_4\cap \widehat{F}_b=\mxl(\cB_4)$ (see \cref{reallysmallspaces}).  
	In the case where $\mxl(X)\cup \mxl(\cB)$ is (b) of \cref{figmis3mpis4-re}, 
	if $\mxl(\cB_4)\ne \{b_0,b_1\}$, then 
	we see that $\widehat{F}_{b}\simeq *$, that is, $b$ is a weak beat point. 
	Hence $X\simeq_w X-\{b\}$ and the result follows.
	In the case where $\mxl(X)\cup \mxl(\cB)$ is (a) of \cref{figmis3mpis4-re}, 
	if $b_0\not\in \mxl(\cB_4)$, 
	then 
	we see that 
	$b$ is a weak beat point
	and the result follows. 
	By the symmetry, we may consider the case where $\mxl(\cB_4)=\{b_0,b_1\}$.

	Suppose $\mxl(\cB_4)=\{b_0,b_1\}$.
	By considering the opposite, we may suppose that $\mxl(X) \cup \mxl(\cB)$ and 
	$\mnl(X)\cup \mnl(\cB)$ are those of the spaces in \cref{114np4}.



	  
	  
	   

	\begin{figure}[h]
	 \centering 
	  \begin{tikzpicture}
	   
	   \node (b0) at (0,.5) {$b_{0}$};
	   \node (b1) at (1,.5) {$b_{1}$};
	   \node (b2) at (2,0) {$b_{2}$};
	   \node (b3) at (3,0) {$b_{3}$};
	   \node (b4) at (0,-.5) {$b_{4}$};
	   \node (b5) at (1,-.5) {$b_{5}$};

	   \foreach \i 
	   in {0,1,2}{
	   \node (a\i) at (\i,1.5) {$a_{\i}$};
	   \node (c\i) at (\i,-1.5) {$c_{\i}$};
	   }

	   \draw (a0) -- (b0) -- (a1) -- (b1) -- (a0);
	   \draw (a0) -- (b2);
	   \draw (a1) -- (b3);
	   \draw (b2) -- (a2) -- (b3);

	   \draw (c0) -- (b4) -- (c1) -- (b5) -- (c0);
	   \draw (c0) -- (b2);
	   \draw (c1) -- (b3);
	   \draw (b2) -- (c2) -- (b3);


	   \draw[dashed] (-.25,-.75) rectangle (1.25,.75);

	   \node at (1.5,-2.5) {(a)};

	   \begin{scope}[xshift=4cm]
	    \node (b0) at (0,.5) {$b_{0}$};
	    \node (b1) at (1,.5) {$b_{1}$};
	    \node (b2) at (2,0) {$b_{2}$};
	    \node (b3) at (3,0) {$b_{3}$};

	    \node (b4) at (0,-.5) {$b_{4}$};
	    \node (b5) at (1,-.5) {$b_{5}$};

	    \foreach \i 
	    [evaluate=\i as \j using int(\i+1)]  
	    in {0,1,2}{
	    \node (a\i) at (\i,1.5) {$a_{\i}$};
	    \node (c\i) at (\j,-1.5) {$c_{\i}$};
	    }

	    \draw (a0) -- (b0) -- (a1) -- (b1) -- (a0);
	    \draw (a0) -- (b2);
	    \draw (a1) -- (b3);
	    \draw (b2) -- (a2) -- (b3);

	    \draw (c0) -- (b5) -- (c1) -- (b3) -- (c2) -- (b2) -- (c0);
	    \draw (c0) -- (b4);
	    \draw (c1) -- (b4);
	    \draw (c2) -- (b4);

	   \draw[dashed] (-.25,-.75) rectangle (1.25,.75);
 
	   \node at (1.5,-2.5) {(b)};

	   \end{scope}

	   \begin{scope}[xshift=8cm]
	    \node (b0) at (0,.5) {$b_{0}$};
	    \node (b1) at (1,.5) {$b_{1}$};
	    \node (b2) at (2,0) {$b_{2}$};
	    \node (b3) at (3,0) {$b_{3}$};

	    \node (b4) at (0,-.5) {$b_{4}$};
	    \node (b5) at (1,-.5) {$b_{5}$};

	    \foreach \i 
	    [evaluate=\i as \j using int(\i+1)]  
	    in {0,1,2}{
	    \node (a\i) at (\j,1.5) {$a_{\i}$};
	    \node (c\i) at (\j,-1.5) {$c_{\i}$};
	    \draw (b0) -- (a\i);
	    \draw (b4) -- (c\i);
	    }
	    \draw (a0) -- (b1) -- (a1) -- (b3) -- (a2) -- (b2) -- (a0);
	    \draw (c0) -- (b5) -- (c1) -- (b3) -- (c2) -- (b2) -- (c0);

	   \draw[dashed] (-.25,-.75) rectangle (1.25,.75);

	    
	   \node at (1.5,-2.5) {(c)};

	   \end{scope}

	  \end{tikzpicture}
	 \caption{$1+1+4$, $n'=4$.}
	 \label{114np4}
	\end{figure}

	We use \cref{misthreeand1connected}.

	Note that, in any case, we have $U_{a_0}=X-\{a_1,a_2,b_3\}$, $U_{a_1}=X-\{a_0,a_2, b_2\}$, and
	$U_{a_0}\cap U_{a_1}=X-\mxl(X)-\{b_2,b_3\}=\cB_4 \cup \mnl(X)$, 
	and in the cases (a) and (b), 
	$\{b_2,b_3\}\cup \mnl(X) \subset \widehat{U}_{a_2}\subset \{b_2,b_3,b_4,b_5\}\cup \mnl(\cB)$ and so $\hgt\left(\widehat{U}_{a_2}\right)=1$, 
	therefore $\widehat{U}_{a_2}$ is weak homotopy equivalent to a simplicial complex of dimension at most $1$.

	Consider the case (a). If $c_2<b_0$ or $c_2<b_1$, then $U_{a_0}\cap U_{a_1}$ is connected, 
	and hence $X$ splits into smaller spaces by \cref{misthreeand1connected}. 
	Otherwise, we see that $\widehat{F}_{c_2}=\{b_2,b_3\}\cup \mxl(X)\simeq *$, namely, $c_2$ is a weak beat point, and the result follows.

	In the case (b), $U_{a_0}\cap U_{a_1}$ is connected and the result follows.

	In the case (c), $U_{a_0}\cap U_{a_1}$ is connected. 

	Note that $U_{b_2}\cup U_{b_3}\simeq *$ 
	and 
	$\{b_0,b_2,b_3\}\cup \mnl(X) \subset \widehat{U}_{a_2}\subset \{b_0,b_2,b_3,b_4,b_5\}\cup \mnl(X)$. 

	If $b_5\not<b_0$, then $b_4<b_0$ and we see that $b_0$ is a down beat point, which contradicts the minimality of $X$, therefore, $b_5 < b_0$.

	If $\{b_4,b_5\}\subset U_{b_0}$, then we have 
	$\widehat{U}_{a_2}=U_{b_0}\cup U_{b_2} \cup U_{b_3}$ and 
	$U_{b_0}\cap \left(U_{b_2} \cup U_{b_3}\right) = \mnl(X)$. 
	Therefore, $\widehat{U}_{a_2}\simeq_w \bS \mnl(X) \simeq_w S^1\vee S^1$.

	If $b_4\not< b_0$, then we have $b_4<b_1$ and $b_5<b_0, b_1$. 
	Since $b_4$ is not an up beat point, we have $b_4<a_2$, 
	and since $b_0$ is not a down beat point, we have $c_2<b_0$.
	We see that $b_5$ is an up beat point of $\widehat{U}_{a_2}$, 
	and hence $\widehat{U}_{a_2}$ is homotopy equivalent to a space of height $1$. 
	See \cref{fig-l6mn3mp4-c-c}.

	Therefore, $X$ splits into smaller spaces by \cref{misthreeand1connected}.

	\begin{figure}[h]
	 \centering

	  \begin{tikzpicture}

	   \node (b0) at (0,.5) {$b_{0}$};
	   \node (b1) at (1,.5) {$b_{1}$};
	   \node (b2) at (2,0) {$b_{2}$};
	   \node (b3) at (3,0) {$b_{3}$};

	   \node (b4) at (0,-.5) {$b_{4}$};
	   \node (b5) at (1,-.5) {$b_{5}$};

	   \foreach \i 
	   [evaluate=\i as \j using int(\i+1)]  
	   in {0,1,2}{
	   \node (a\i) at (\j,1.5) {$a_{\i}$};
	   \node (c\i) at (\j,-1.5) {$c_{\i}$};
	   \draw[thin,gray] (b0) -- (a\i);
	   \draw[thin,gray] (b4) -- (c\i);
	   }
	   \draw[thin,gray] (a0) -- (b1) -- (a1) -- (b3) -- (a2) -- (b2) -- (a0);

	   \draw[thin,gray] (c0) -- (b5) -- (c1) -- (b3) -- (c2) -- (b2) -- (c0);
	   
	   \draw[thin,gray] (b0) -- (b5) -- (b1) -- (b4);
	   \draw[thin,gray] (b4) -- (a2); 
	   \draw[thin,gray] (c2) -- (b0);

	   \draw (a2) -- (b0);
	   \draw (a2) -- (b2);
	   \draw (a2) -- (b3);
	   \draw (a2) -- (b4);

	   \draw[very thick] (c0) -- (b4) -- (c1) -- (b5) -- (b0) -- (c2) -- (b2) -- (c0) -- (b5);
	   \draw[very thick] (b4) -- (c2) -- (b3) -- (c1);

	  \end{tikzpicture}
	 \caption{$\widehat{U}_{a_2}$.}
	 \label{fig-l6mn3mp4-c-c}
	\end{figure}

  \item The case where $\cB$ has $2$ connected components.
	The cardinalities of each component are  
	\[
	6=1+5 =2+4 =3+3 
	\]
	and we see that $n'\leq 3$. 
  \item The case where $\cB$ is connected. In this case, we see that $n'\leq 2$.
 \end{enumerate}

\end{proof}

\begin{lem}
 \label{l6mn3mp5}
 If 
 $l=6$, $m=n=3$, and $m'=5$, then $X$ splits into smaller spaces.
\end{lem}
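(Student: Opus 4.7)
The plan is to exploit the double-counting identity $\sum_{a\in\mxl(X)}\gamma_a=\sum_{b\in\mxl(\cB)}\alpha_b\geq 2m'=10$ combined with $|\mxl(X)|=3$ to extract some $a_0\in\mxl(X)$ with $\gamma_{a_0}\in\{4,5\}$, and then to apply the splitting machinery of \cref{sec-posetsplitting} and \cref{sec-interval}. Since $m'=5$, the set $\cB$ has exactly one non-maximal element, which I call $c$; the proof branches on the value of $\gamma_{a_0}$ and on the comparabilities of $c$ with $a_0$ and the unique $b^{\ast}\in\mxl(\cB)-U_{a_0}$ (when $\gamma_{a_0}=4$).

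If $\gamma_{a_0}=5$, then $\mxl(\cB)\subseteq U_{a_0}$; if $c\leq a_0$ then $X=U_{a_0}\cup\mxl(X)\cup\mnl(X)$ and \cref{Uamxlmnlsplit} suffices, whereas if $c\not\leq a_0$ then any $b\in\widehat{F}^{\cB}_c$ would satisfy $c<b\leq a_0$, forcing $\widehat{F}^{\cB}_c=\emptyset$; the decomposition $X=U_{a_0}\cup F_c\cup\mxl(X)\cup\mnl(X)$ then meets the hypotheses of \cref{specialinterval} with $b_0=c$ and $A_1=B_1=\emptyset$, so \cref{specialinterval-two-empty,UaFbmxlmnlsplit} conclude. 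When $\gamma_{a_0}=4$, write $\mxl(\cB)\cap U_{a_0}=\{b_1,\ldots,b_4\}$ and distinguish: (i)~if $c\leq a_0$ and $c\leq b^{\ast}$, then \cref{specialinterval} with $b_0=b^{\ast}$ produces $A_1=\{c\}$, $B_1=\emptyset$, $I(A_0,B_0)=\emptyset$, and all the required beat points, so \cref{specialinterval-one-empty,UaFbmxlmnlsplit} split $X$ into spaces smaller than $U_c$; (ii)~if $c\leq b^{\ast}$ but $c\not\leq a_0$, then $\widehat{F}^{\cB}_c=\{b^{\ast}\}$ (otherwise some $b_i\geq c$ forces $c\leq a_0$), and \cref{specialinterval} with $b_0=c$ yields $A_1=B_1=\emptyset$; (iii)~if $c\leq a_0$, $c\not\leq b^{\ast}$ and $|\widehat{F}^{\cB}_c\cap U_{a_0}|\leq 1$, then $c$ is an up beat point of $U_{a_0}$ and \cref{specialinterval} with $b_0=b^{\ast}$ applies directly.

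The main obstacle is the residual sub-case $c\leq a_0$, $c\not\leq b^{\ast}$ and $|\widehat{F}^{\cB}_c\cap U_{a_0}|\geq 2$, in which $\widehat{F}_c\cap U_{a_0}$ is a height-one bipartite poset with no minimum, so $c$ is neither a beat point nor an up beat point of $U_{a_0}$ and \cref{specialinterval} cannot be applied verbatim. To resolve it I would either swap $a_0$ for another $a_j\in\mxl(X)$ with $\gamma_{a_j}\geq 3$ and re-run the argument, or examine $\widehat{F}_c$ and $\widehat{C}_c$ directly: since $\widehat{F}_c$ has height at most one and sits inside $\{b_1,\ldots,b_4\}\cup\mxl(X)$, it is either contractible (making $c$ an up weak beat point and allowing removal) or its order complex is a one-dimensional graph on few vertices, in which case I would apply \cref{suspensionposetlemma} together with \cref{intersectiononepoint} to the covering $U_{a_0}\cup F_{b^{\ast}}\cup\mxl(X)\cup\mnl(X)$ and evaluate the contributions of $c$ and its neighbours explicitly, invoking \cref{weakgamma,weakgammamaximal} when convenient. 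Because $\hgt(X)\leq 3$ and every $U_a\cap U_b$, $F_a\cap F_b$ or $I(U_{a_0},F_{b^{\ast}})$ encountered has height at most $2$ after the beat-point reductions of \cref{beatptsinterval}, all the intermediate pieces are wedges of spheres of dimension at most $3$, so in every branch $X$ splits into smaller spaces as required.
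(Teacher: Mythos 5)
Most of your branches do go through: in the $\gamma_{a_0}=5$ branch the sub-case $c\not\leq a_0$ is in fact vacuous (the unique non-maximal element $c$ of $\cB$ satisfies $\emptyset\ne\widehat{F}^{\cB}_c\subset\mxl(\cB)\subset U_{a_0}$, so $c\leq a_0$ automatically), and your sub-cases (i)--(iii) for $\gamma_{a_0}=4$ are correct applications of \cref{specialinterval,specialinterval-one-empty,specialinterval-two-empty,UaFbmxlmnlsplit}. The genuine gap is exactly the residual sub-case you flag and then leave unresolved: $\gamma_{a_0}=4$, $c\leq a_0$, $c\not\leq b^{\ast}$, and $c$ below at least two of the $b_i\in\mxl(\cB)\cap U_{a_0}$. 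None of your proposed rescues is automatic. A second maximal element with $\gamma\geq 4$ need not exist (the $\gamma$-vector can be $(4,3,3)$); $\widehat{F}_c$ need not be contractible (nothing rules out $\widehat{F}_c\cong S^1_2$, so $c$ need not be a weak point); and in this configuration one can have $U_{a_0}\cup F_{b^{\ast}}=X$, so \cref{UaFbmxlmnlsplit} gives nothing unless you actually determine the weak homotopy type of $U_{a_0}\cup F_{b^{\ast}}$, i.e.\ of $I(U_{a_0},F_{b^{\ast}})$ --- which is precisely what \cref{specialinterval} cannot deliver here, because $c\in A_0\setminus\{a_0\}$ fails to be an up beat point of $U_{a_0}$. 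The closing blanket justification is also unsound: height at most $2$ does not imply the weak homotopy type of a wedge of spheres (the face poset of a triangulation of $\mathbb{R}P^2$ has height $2$), and since the piece in question can be all of $X$ and $I(U_{a_0},F_{b^{\ast}})$ need not have fewer than $\card{X}$ points, you can neither invoke ``smallness'' nor the induction behind the main theorem; the assertion is what remains to be proved.

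For comparison, the paper disposes of this lemma by a duality reduction, which is the idea missing from your proposal. Since $m'=5$ and $l=6$, the unique non-maximal element of $\cB$ lies under $k\geq 1$ elements of $\mxl(\cB)$, so $\cB$ has $6-k$ components and $n'=6-k$. If $k=1$, every component of $\cB$ is a chain and \cref{mp3m5ormp5m3chain} applies; if $k\geq 2$, then $n'\leq 4$, and the opposite poset $X^o$ (still with $l=6$ and $m=n=3$) splits into smaller spaces by \cref{lisone,misthreempistwo,l6mpmn3,l6mn3mp4}, hence so does $X$. Note that your troublesome sub-case has $k\geq 2$, hence $n'\leq 4$: it is exactly the situation the already-established lemmas handle once you pass to $X^o$.
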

\begin{proof}
 If $\cB$ has $5$ connected components, then $X$ splits into smaller spaces by \cref{mp3m5ormp5m3chain}. 
 Otherwise, we see that $n'\leq 4$ and hence $X^o$ splits into smaller spaces by \cref{lisone,misthreempistwo,l6mpmn3,l6mn3mp4}, 
 and so does $X$.
\end{proof}

 \begin{lem}
  \label{l6mn3Bantichain}
  If 
  $l=6$, $m=n=3$, and $\cB$ is an antichain, then $X$ splits into smaller spaces.
 \end{lem}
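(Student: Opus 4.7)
The plan is to reduce $X$, via two layers of double counting, to a highly symmetric \emph{regular} case, and then split based on the connectivity of $U_{a_0}\cap U_{a_1}$ for some pair of maxima. I would begin with the counting reduction. Since $\cB$ is an antichain, $\mxl(\cB)=\cB$ and $\alpha_b\geq 2$ for each $b\in\cB$ (lest $b$ be an up beat point), so
\[
\sum_{a\in\mxl(X)}\gamma_a \;=\; \sum_{b\in\cB}\alpha_b \;\geq\; 12 .
\]
If $\gamma_a=6$ for some $a$, then $X=U_a\cup\mxl(X)\cup\mnl(X)$ and \cref{Uamxlmnlsplit} applies. If $\gamma_a=5$, picking the unique $b\in\cB\setminus U_a$ gives $X=U_a\cup F_b\cup\mxl(X)\cup\mnl(X)$ with $\cB$ still antichain, so \cref{unchainsplit} splits $X$. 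Otherwise every $\gamma_a\leq 4$; together with $\sum\gamma_a\geq 12$ this forces $\gamma_a=4$ and $\alpha_b=2$ for all $a$ and $b$. Applying the same argument to $X^o\simeq_w X$ further reduces to the regular case where $|F_c\cap\cB|=4$ and $\beta_b=2$ for every $c\in\mnl(X)$ and every $b\in\cB$.

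In this regular case I would write $\cB=\cB_{01}\sqcup\cB_{02}\sqcup\cB_{12}$ where $\cB_{ij}=\cB\cap U_{a_i}\cap U_{a_j}$; the conditions $\gamma_a=4$ and $\alpha_b=2$ force $|\cB_{ij}|=2$. Dually, for $b\in\cB$ set $D_b=\widehat{U}_b\cap\mnl(X)$, a $2$-element subset of $\mnl(X)$. Since $|F_c\cap\cB|=4>2=|\cB_{jk}|$ for any $j,k$, each minimum $c$ lies below each maximum $a_i$, so
\[
U_{a_0}\cap U_{a_1} \;=\; \cB_{01}\cup\mnl(X),
\]
a height-$1$ bipartite poset with $\cB_{01}=\{b_1,b_2\}$ on top and three minima below, each $b_i$ having exactly two minima below.

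I would then split on whether $D_{b_1}=D_{b_2}$. In \emph{Case A}, $D_{b_1}\neq D_{b_2}$: the two $2$-subsets of the $3$-element set $\mnl(X)$ intersect in exactly one element and cover $\mnl(X)$, so $U_{a_0}\cap U_{a_1}$ is connected. Since $\widehat{U}_{a_2}=\cB_{02}\cup\cB_{12}\cup\mnl(X)$ has height $1$, $\ordcpx{\widehat{U}_{a_2}}$ is $1$-dimensional, and \cref{misthreeand1connected} splits $X$. In \emph{Case B}, $D_{b_1}=D_{b_2}$: the elements $b_1,b_2$ are \emph{twins}. Using $b_i\in\cB_{01}$ and that $\cB$ is an antichain, $\widehat{F}_{b_1}=\widehat{F}_{b_2}=\{a_0,a_1\}$; together with the assumption $\widehat{U}_{b_1}=\widehat{U}_{b_2}$ this gives $\widehat{C}_{b_1}=\widehat{C}_{b_2}$. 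I then apply \cref{weakgamma} with $x=b_2$: in $\ordcpx{X-\{b_2\}}$ the link $\lk(b_1)$ equals $\ordcpx{\widehat{C}_{b_1}}=\ordcpx{\widehat{C}_{b_2}}$, which therefore sits inside the contractible closed star $\st(b_1)$, so the inclusion $\ordcpx{\widehat{C}_{b_2}}\to\ordcpx{X-\{b_2\}}$ is null homotopic. \cref{weakgamma} yields $X\simeq_w (X-\{b_2\})\vee\bS\widehat{C}_{b_2}$, and since $|X-\{b_2\}|=11$ and $|\widehat{C}_{b_2}|=4$, $X$ splits into smaller spaces.

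The main obstacle is the combinatorial identification of the regular case and its two subcases: once all $\gamma$'s are forced to equal $4$, one must recognize that the \emph{only} obstruction to applying \cref{misthreeand1connected} is the presence of a twin pair in some $\cB_{ij}$, and this is precisely the situation \cref{weakgamma} is designed to handle through the link/star identification.
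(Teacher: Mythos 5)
Your proof is correct, but it takes a genuinely different route from the paper. The paper invokes the Cianci--Ottina triple count: with $\cR=\{(a,b,x)\in\mxl(X)\times\mnl(X)\times\cB \mid a\not\geq x,\ b\not\leq x\}$ one has $\card{\cR}=\sum_{x\in\cB}(3-\alpha_x)(3-\beta_x)\leq 6<9=mn$, so by pigeonhole over the pairs $(a,b)$ there exist $a\in\mxl(X)$, $b\in\mnl(X)$ with $X=U_a\cup F_b\cup\mxl(X)\cup\mnl(X)$, and \cref{unchainsplit} finishes in one stroke (this even works for $l\leq 8$, as the paper remarks). You instead count only over maxima ($\sum_a\gamma_a=\sum_b\alpha_b\geq 12$), which disposes of $\gamma_a\in\{5,6\}$ via \cref{Uamxlmnlsplit} and \cref{unchainsplit}, but leaves the regular configuration $\gamma_a=4$, $\alpha_b=\beta_b=2$, $\card{F_c\cap\cB}=4$ to be analysed by hand; there you correctly identify $U_{a_0}\cap U_{a_1}=\cB_{01}\cup\mnl(X)$, settle the non-twin case with \cref{misthreeand1connected}, and the twin case ($\widehat{C}_{b_1}=\widehat{C}_{b_2}$) with \cref{weakgamma}, where your link-in-the-star argument (equivalently, $\widehat{C}_{b_2}\subset C_{b_1}\subset X-\{b_2\}$ with $C_{b_1}$ contractible) is sound. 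Your version is more elementary and yields explicit structural information about the exceptional poset, at the cost of length and of being specific to $l=6$; it is worth noting that even in your regular case the paper's averaging still applies directly --- since $\card{D_{b_1}}+\card{D_{b_2}}=4>3$, some minimal $b$ lies below both elements of $\cB_{01}$, so $X=U_{a_2}\cup F_b\cup\mxl(X)\cup\mnl(X)$ and \cref{unchainsplit} would have finished without the case split.
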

 \begin{proof}
  We rely on a result of Cianci-Ottina \cite{CianciOttina}. 
  Let
  \[
  \cR=\Set{(a,b,x)\in \mxl(X)\times \mnl(X)\times \cB | a\not\geq x \text{ and } b\not\leq x } 
  \]
  and $r=\card{\cR}$.
  Note that $r=\sum_{x\in \cB } (m-\alpha_x)(n-\beta_x)$, 
  and, for $(a,b)\in \mxl(X)\times \mnl(X)$ and $x\in X$,  $(a,b,x)\in \cR$ if and only if $x\not\in U_a\cup F_b\cup \mxl(X) \cup \mnl(X)$. 
  By considering the projection $\cR \to \mxl(X)\times \mnl(X)$, one sees that 
  if $r<mn$, then there exist elements $a\in \mxl(X)$ and $b\in \mnl(X)$ such that 
  $X=U_a \cup F_b \cup \mxl(X) \cup \mnl(X)$.

  In our case, since $2\leq \alpha_x,\beta_x\leq 3$ for all $x\in \cB$, we see that 
  \[
   r=\sum_{x\in \cB}(3-\alpha_x)(3-\beta_x)\leq \sum_{x\in \cB}(3-2)(3-2)=\card{\cB}=6<3\cdot 3 =mn.
  \]
  Therefore, $X$ splits into smaller spaces by \cref{unchainsplit}.
 \end{proof}

 \begin{rem}
  As one sees, if $l\leq 8$, $m=n=3$, and $\cB$ is an antichain, then $X$ splits into smaller spaces.
 \end{rem}

 \begin{coro}
  \label{l6mn3}
  If 
  $l=6$ and $m=n=3$, then $X$ splits into smaller spaces.
 \end{coro}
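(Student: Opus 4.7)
The plan is to argue by case analysis on $m'=\card{\mxl(\cB)}$, since every possible value from $1$ to $6$ is already handled somewhere earlier in the section. Since $\card{\cB}=l=6$, we have $1\leq m'\leq 6$.

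First I would dispose of the extremes. If $m'=1$, then $\cB$ has a maximum and \cref{lisone} applies directly. If $m'=6$, then $\cB$ is an antichain and the conclusion is exactly \cref{l6mn3Bantichain}. If $m'=2$, then since $m=3$ we may appeal to \cref{misthreempistwo}, which gives the splitting regardless of $l$.

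For the intermediate values, the three key lemmas are already in place: $m'=3$ is \cref{l6mpmn3}, $m'=4$ is \cref{l6mn3mp4}, and $m'=5$ is \cref{l6mn3mp5}. So the role of the corollary is simply to collect these into a single statement and observe that the six cases exhaust all possibilities.

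The only minor subtlety is checking that every possibility is genuinely covered; for instance, one might worry whether some $m'$ forces a contradiction with $m=n=3$ and $l=6$. The easiest way to dispatch this is to note that the lemmas cited above make no further restriction beyond $l=6$ and $m=n=3$ (resp.\ $m=3$ for the $m'=2$ case), so the cases $m'\in\{1,\dots,6\}$ combine without gaps. There is no hard step here—the whole proof is the concatenation of the preceding lemmas—so the main obstacle was really to prove those preceding lemmas (in particular \cref{l6mpmn3} and \cref{l6mn3mp4}), and the corollary itself is then essentially a bookkeeping remark.
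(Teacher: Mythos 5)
Your proposal is correct and matches the paper's argument: the paper's proof is exactly this case split on $m'\in\{1,\dots,6\}$, citing \cref{lisone} for $m'=1$ and \cref{misthreempistwo} for $m'=2$, with the cases $m'=3,4,5$ and the antichain case $m'=6$ handled by \cref{l6mpmn3,l6mn3mp4,l6mn3mp5,l6mn3Bantichain} respectively. Your write-up is simply a more explicit version of the same bookkeeping.
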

 \begin{proof}
  We have $1\leq m'\leq 6$. The case $m'=1$ follows from \cref{lisone} and the case $m'=2$ follows from \cref{misthreempistwo}.
 \end{proof}

\section{Proof of the main theorem}
\label{sec-proof}

\begin{thm}
 If $X$ is a connected finite space with $\Card{X}\leq 12$ or $\Card{X}=13$ and $l\leq 3$, then 
 $X$ has a weak homotopy type of a wedge of spheres.
\end{thm}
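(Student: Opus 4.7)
The plan is a straightforward induction on $\Card{X}$, with the accumulated splitting results of \cref{sec-mleq3,sec-lleq5,sec-card12} providing the engine. The base case $\Card{X}=1$ is immediate, since by convention a one-point space is a wedge of $0$-copies of spheres.

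For the inductive step, suppose the result holds for every connected finite space of cardinality strictly less than $\Card{X}$, and let $X$ be a connected finite space satisfying the hypothesis of the theorem. Passing to the maximal $T_0$ quotient preserves the homotopy type without increasing the cardinality, so we may assume $X$ is a finite $T_0$ space; then by Stong's theorem we may take a core and assume $X$ is a minimal finite $T_0$ space. These reductions preserve the hypothesis, since both operations are non-increasing on $\Card{X}$ and removing beat points cannot increase $l=\Card{\cB}$ (in fact it reduces $\Card{X}$ weakly and does not enlarge $\cB$). Now \cref{card11} in case $\Card{X}\leq 11$, \cref{card12} in case $\Card{X}=12$, and \cref{card13andl3} in case $\Card{X}=13$ with $l\leq 3$ each assert that $X$ splits into smaller spaces, so there exist finite spaces $A_i$ with $\Card{A_i}<\Card{X}$ and nonnegative integers $n_i$ such that
\[
X \simeq_w \bigvee_i \bS^{n_i} A_i.
\]

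By the inductive hypothesis, every connected component of each $A_i$ is weak homotopy equivalent to a wedge of spheres. Combining \cref{suspdisjoint} with \cref{invariance-of-susp-and-wedge} shows that $\bS A$ is weak homotopy equivalent to a wedge of spheres whenever every connected component of $A$ is; iterating this observation $n_i$ times, each $\bS^{n_i} A_i$ is weak homotopy equivalent to a wedge of spheres. Since a wedge of wedges of spheres is itself a wedge of spheres (again by \cref{invariance-of-susp-and-wedge}, which ensures that the weak homotopy type of such a wedge is well-defined and invariant), we conclude that $X$ is weak homotopy equivalent to a wedge of spheres, closing the induction.

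The substantive content was entirely carried by the case-by-case analyses of \cref{sec-mleq3,sec-lleq5,sec-card12}. At this final stage the only conceptual subtlety is verifying that the reductions to a minimal $T_0$ core at each induction step do not break the splitting hypothesis, but this is automatic from the fact that $T_0$-quotient and beat-point removal are both non-increasing on $\Card{X}$ and on $l$.
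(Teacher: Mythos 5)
Your proposal is correct and follows essentially the same route as the paper: induction on $\Card{X}$, reduction to a minimal finite $T_0$ space via the $T_0$ quotient and beat-point removal, then the splitting results \cref{card11,card12,card13andl3}, concluding with the induction hypothesis together with \cref{suspdisjoint,invariance-of-susp-and-wedge}. The only cosmetic difference is that the paper applies the induction hypothesis directly to the strictly smaller space produced by a non-trivial reduction rather than arguing that the reductions preserve the hypothesis, but both arguments are sound.
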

\begin{proof}
 Induction on $\card{X}$. 
 
 If $\card{X}=1$, then $X$ is a wedge of $0$-copies of spheres.

 Suppose $\card{X}>1$ and the result holds for spaces whose cardinalities are smaller than $\card{X}$. 

 If $X$ is not $T_0$, then $X$ is homotopy equivalent to its maximal quotient poset, 
 whose cardinality is smaller than $X$. 
 If $X$ is $T_0$ and has a beat point $b$, then $X\simeq X-\{b\}$. 
 If $X$ is minimal, then, by \cref{card11,card12,card13andl3}, $X$ splits into smaller spaces. 
 Since each wedge summand is a wedge of spheres by the induction hypothesis, so does $X$.
\end{proof}

\appendix

\section{A proof of \cref{suspensionlemma}}
\label{a-suspensionlemma}

We give a proof of \cref{suspensionlemma} by induction on $l+m$.

If $l=m=1$, then we have
\begin{align*}
 K&=L_1\cup M_1 \\
 &\simeq \susp (L_1\cap M_1) 
\end{align*}
and, since $K$ is connected, $L_1\cap M_1\ne \emptyset$. 
Therefore $ n=1-1-1+1=0$ and the result holds in this case.

Suppose $l+m>2$. We may assume $l>1$.
Put $L'=\coprod_{i>1}L_i$. 
Let $K_1,\dots,K_k$ be the connected components of $L'\cup M$.
Since $L_i$ and $M_j$ are connected, we see that 
there exist decompositions 
\begin{align*}
 \{2,\dots,l\}&=\coprod_{i=1}^k I_i &
 \{1,\dots,m\}&=\coprod_{i=1}^k J_i 
\end{align*}
and
\[
  K_i=\bigcup_{s\in I_i}L_s \cup \bigcup_{t\in J_i}M_t 
\]
We put 
\begin{align*}
 L^i &= \bigcup_{s\in I_i}L_s, \\ 
 M^i&=\bigcup_{t\in J_i}M_t, \\
 n_1^i&=\card{\Set{j\in J_i| L_1\cap M_j \ne \emptyset}} - 1.
\end{align*}
Note that, if $i\ne j$, then 
$L_s\cap M_t=\emptyset$ for all $s\in I_i$ and $t\in J_j$ 
because $K_i\cap K_j=\emptyset$.

Since $K_i=L^i\cup M^i$ and $\card{I_i}+\card{J_i}\leq l-1+m <l+m$, 
by the induction hypothesis, 
we have
\begin{align*}
 K_i &\simeq \left(\bigvee_{\substack{s\in I_i \\ t\in J_i \\ L_s\cap M_t\ne \emptyset}}
 \susp (L_s\cap M_t) \right) \vee \left(\bigvee_{n_i} S^1\right), \\
 n_i&=\card{\Set{(s,t)\in I_i\times J_i | L_s\cap M_t\ne\emptyset }} - \card{I_i} - \card{J_i} +1.
\end{align*}
We have
\begin{align*}
 K=L_1\cup L'\cup M &= L_1 \cup \left(\coprod K_i\right), \\
 L_1\cap \left(\coprod K_i\right) &=\coprod (L_1 \cap K_i),  \\
 L_1\cap K_i 
 &= L_1 \cap \left(L^i\cup M^i\right) 
 = L_1 \cap M^i
 = \coprod_{\substack{j\in J_i \\ L_1\cap M_j\ne \emptyset}} (L_1 \cap M_j),
\end{align*}
and, since $K$ is connected, $L_1\cap K_i \ne \emptyset$. 
Since $L_1\simeq *$
and the inclusion $\coprod (L_1\cap M_j) \to K_i$ is null homotopic 
because $L_1 \cap M_j \to M_j \subset  K_i$ is null homotopic and $K_i$ is connected, we have 
\begin{align*}
 K&\simeq K/L_1 \\ 
 &\cong \frac{\coprod K_i}{\coprod (L_1\cap K_i)} \\
 &=\bigvee_i \frac{K_i}{L_1\cap K_i} \\
 &=\bigvee_i \frac{K_i}{\coprod\limits_{\substack{j\in J_i \\ L_1\cap M_j\ne\emptyset}} (L_1\cap M_j)} \\
 &\simeq \bigvee_i\left(K_i \vee \susp \left(\coprod_{\substack{j\in J_i \\ L_1\cap M_j\ne\emptyset}} (L_1\cap M_j)\right)\right) 
\end{align*}
We have
\begin{align*}
 \bigvee_i \susp \left(\coprod_{\substack{j\in J_i \\ L_1\cap M_j\ne\emptyset}} (L_1\cap M_j)\right) 
 &\simeq \bigvee_i\left(\bigvee_{\substack{j\in J_i \\ L_1\cap M_j\ne\emptyset}} \susp(L_1\cap M_j) \vee 
 \bigvee_{n_1^i}S^1\right)
 \\
 &= \left(\bigvee_{\substack{j \\ L_1\cap M_j\ne\emptyset}} \susp(L_1\cap M_j) \right) \vee 
 \left(  \bigvee_{\sum n_1^i} S^1\right),
 \\
 \bigvee_i K_i
 &\simeq  \bigvee_i\left(
 \left(\bigvee_{\substack{s\in I_i \\ t\in J_i \\ L_s\cap M_t\ne\emptyset}} \susp (L_s\cap M_t) \right) 
 \vee \left(\bigvee_{n_i} S^1\right)
 \right) \\
 &= \left(\bigvee_{\substack{i>1, j \\ L_i\cap M_j\ne\emptyset}} \susp (L_i\cap M_j) \right) 
 \vee \left(\bigvee_{\sum n_i} S^1\right),
\end{align*}
and
\begin{align*}
 \sum n_1^i&=\sum_{i=1}^k \left(\card{\Set{j\in J_i| L_1\cap M_j \ne \emptyset}} - 1\right) \\
 &=\card{\Set{j\in J| L_1\cap M_j \ne \emptyset}} - k\\ 
 \sum n_i&=\sum_{i=1}^k
 \left(\card{\Set{(s,t)\in I_i\times J_i | L_s\cap M_t\ne\emptyset }} - \card{I_i} - \card{J_i} +1\right)\\ 
 &=\card{\Set{(i,j) | i>1, \; L_i\cap M_j\ne\emptyset }} - (l-1) - m +k\\ 
 \sum n_i + \sum n_1^i
 &=\card{\Set{(i,j) | L_i\cap M_j\ne\emptyset }} - l + 1 - m =n. 
\end{align*}
Therefore we have 
\[
   K\simeq \left(\bigvee_{\substack{i,j \\ L_i\cap M_j\ne\emptyset}} \susp \left(L_i\cap M_j\right)  \right)  \vee \left(\bigvee_{n} S^1\right)
\]
as desired. \qed

\subsection*{Acknowledgements}
The authors would like to thank T.~Matsushita and K.~Tanaka for helpful comments.

\noindent{Department of Mathematical Sciences, 
University of the Ryukyus\\
Nishihara-cho, Okinawa 903-0213, 
Japan}

\end{document}